\documentclass[12pt]{amsart}
\oddsidemargin = 1.2cm \evensidemargin = 1.2cm
\textwidth = 6.1in
\textheight =8.0in

\usepackage{amsmath,amssymb,amsthm, mathtools}
\usepackage[usenames,dvipsnames]{color}
\usepackage{hyperref}  
\usepackage{tikz-cd}
\numberwithin{equation}{section}

\def\O{{\mathcal{O}}}
\def\p{{\mathfrak{p}}}

\def\C{\mathbb{C}}
\def\Z{\mathbb{Z}}
\def\Q{\mathbb{Q}}

\def\F{\mathbb{F}}

\def\ord{{\rm ord}}

\newcommand{\invlim}{\displaystyle \lim_{\leftarrow}}

\newcommand{\inflim}[1]{\displaystyle \lim_{#1 \rightarrow \infty}}

\def\Z{{\mathbb Z}}

\newcommand\rs{\widehat{\mathbb{C}}}

\newcommand{\Fqbar}{\overline{\mathbb{F}_q}}
\newcommand{\kbar}{\overline{k}}
\newcommand{\PP}{\mathbb{P}}
\newcommand\Fq{\mathbb{F}_q}
\newcommand\Fp{\mathbb{F}_p}

\def\O{{\mathcal{O}}}

\newcommand{\m}{\mathfrak{m}}

\newcommand\sphere{\mathbb S^2}

\newcommand\FPP{\text{FPP}}
\newcommand\IMG{\text{IMG}}

\newtheorem{theorem}{Theorem}[section]
\newtheorem{lemma}[theorem]{Lemma}
\newtheorem{proposition}[theorem]{Proposition}
\newtheorem{proposition-definition}[theorem]{Proposition-Definition}
\newtheorem{corollary}[theorem]{Corollary}

\theoremstyle{definition}

\newtheorem{question}[theorem]{Question} 
\newtheorem{definition}[theorem]{Definition}

\theoremstyle{remark}
\newtheorem*{remark}{Remark}

\begin{document}
\title[Iterated monodromy groups and periodic points]{Iterated monodromy groups of rational functions and periodic points over finite fields}
\date{\today}

\author[Bridy]{Andrew Bridy}
\email{andrewbridy@gmail.com}
\address{Departments of Political Science and Computer Science, Yale University, 125 Prospect St, New Haven, CT 06511, USA}

\author{Rafe Jones}
\email{rfjones@carleton.edu}
\address{Department of Mathematics and Statistics, Carleton College, 1 North College St, Northfield, MN 55057, USA}

\author{Gregory Kelsey}
\email{gkelsey@bellarmine.edu}
\address{Department of Mathematics, Bellarmine University, 2001 Newburg Rd., Louisville, KY 40205, USA}

\author[Lodge]{Russell Lodge}
\email{russell.lodge@indstate.edu}
\address{Department of Mathematics and Computer Science, Indiana State University, 200 North Seventh Street, Terre Haute, IN 47809, USA}

\begin{abstract}
Let $q$ be a prime power and $\phi$ a rational function with coefficients in a finite field $\mathbb{F}_q$. For $n \geq 1$, each element of $\mathbb{P}^1(\F_{q^n})$ is either periodic or strictly preperiodic under iteration of $\phi$. Denote by $a_n$ the proportion of periodic elements. Little is known about how $a_n$ changes as $n$ grows, unless $\phi$ is a power map or Chebyshev polynomial. We give the first results on this question for a wider class of rational functions:  $a_n$ has lim inf $0$ when $q$ is odd and $\phi$ is quadratic and neither Latt\`es nor conjugate to a one-parameter family of exceptional maps. We also show that $a_n$ has limit $0$ when $\phi$ is a non-Chebyshev quadratic polynomial with strictly preperiodic finite critical point and $q$ is an odd square. Our methods yield additional results on periodic points for reductions of post-critically finite (PCF) rational functions defined over number fields.


The difficulty of understanding $a_n$ in general is that $\mathbb{P}^1(\F_{q^n})$ is a finite set with no ambient geometry. In fact, $\phi$ can be lifted to a PCF rational map on the Riemann sphere, where we show that $a_n$ is given by counting elements of the iterated monodromy group (IMG) that act with fixed points at all levels of the tree of preimages. Using a martingale convergence theorem, we translate the problem to determining whether certain IMG elements exist. This in turn can be decisively addressed using the expansion of PCF rational maps in the orbifold metric.     
\end{abstract}

\thanks{The authors thank Sarah Koch for suggesting the collaboration. Greg and Russell thank Kevin Pilgrim for fruitful research visits, and Russell acknowledges the generous support of NCTS in Taipei.}

\maketitle

\section{Introduction}

\label{introduction}


Let $\Fq$ denote a finite field of characteristic $p$, with algebraic closure $\Fqbar$. Every $\phi(x) \in \Fq(x)$ acts on $\PP^1(\Fqbar)$, and the orbit of every point under this action is defined over a finite extension of $\Fq$, and hence eventually enters a cycle. This allows us to make a fundamental distinction between two kinds of points in $\PP^1(\Fqbar)$: those that lie in a cycle under $\phi$, which we call \textit{periodic}, and those that do not. For any set $S$ on which $\phi$ is a self-map, denote by $\Per(\phi, S)$ the set of points of $S$ that are periodic under $\phi$.
\begin{question} \label{mainquestion1}
Fix a prime power $q$ and rational function $\phi \in \Fq(x)$ of degree at least two. How does $\#\Per(\phi, \PP^1(\F_{q^{n}}))/(q^n+1)$ vary as $n \to \infty$?
\end{question}


There has been recent interest in questions about the periodic points of mappings in finite fields, partially motivated by an attempt to provide a rigorous analysis of Pollard's famous ``rho method" for integer factorization \cite{pollard}. Despite this, almost nothing is known about a general answer to Question \ref{mainquestion1}, even in a qualitative sense, except for highly constrained mappings such as power maps. Pollard's analysis of the rho method uses the heuristic that the dynamics of specific mappings mimic those of random mappings. A random mapping on a set of size $k$ has $O(\sqrt{k})$ periodic points (see e.g. \cite[Theorem 2]{Flajolet}), so by this heuristic, $\#\Per(\phi, \PP^1(\F_{q^{n}}))/(q^n+1)$ should approach zero as $n$ grows. However, because $\phi$ is a rational function, it must exhibit certain non-random behavior. Crucially, the actions of $\phi$ on $\PP^1(\F_{q^{n}})$ as $n$ varies are not independent of one another. Table \ref{tab:data} presents some data on Question \ref{mainquestion1} for $q = 3$ and $\deg \phi = 2$, and suggests the complexities involved. 

The answer to Question \ref{mainquestion1} is well understood in the case that $\phi$ is a power map or Chebyshev polynomial \cite{manes}. Recent work of Garton \cite{garton2021periodic} sheds some light on the complementary problem of finding $\#\Per(\phi, \PP^1(\F_{q^{n}}))/(q^n+1)$ when $n$ is fixed and $\phi$ varies, while Juul \cite{juul2} studies the size of the image set $\phi^m(\PP^1(\F_{q^{n}}))$ for fixed $m$ as $n$ grows, under certain hypotheses on $\phi$.

\begin{table}
    \centering
    {\renewcommand{\arraystretch}{1.5}
    \begin{tabular}{|c|c|c|c|c|c|c|}
        \hline 
        $n$ & $x^2$ & $x^2-1$ & $x^2-2$ & $\frac{x^2-2}{x^2}$ & $\frac{x^2-2}{x^2-1}$ & $\frac{x^2-1}{x^2}$ \\
        \hline 
        1 &  0.750 & 0.750 & 0.500 & 0.250 & 0.500 & 0.750 \\
        \hline
        2 & 0.300 & 0.500 & 0.400 & 0.300 & 0.200 & 0.500 \\
        \hline
        3 & 0.536 & 0.214 & 0.393 & 0.250 & 0.286 & 0.321 \\
        \hline
        4 & 0.085 & 0.061 & 0.293 & 0.329 & 0.073 & 0.159 \\
        \hline
        5 & 0.504 & 0.299 & 0.377 & 0.250 & 0.254 & 0.176 \\
        \hline
        6 & 0.127 & 0.060 & 0.314 & 0.325 & 0.052 & 0.105 \\
        \hline
        7 & 0.501 & 0.085 & 0.375 & 0.250 & 0.250 & 0.043 \\
        \hline
        8 & 0.032 & 0.017 & 0.266 & 0.315 & 0.023 & 0.046 \\
        \hline
        9 & 0.500 & 0.031 & 0.375 & 0.250 & 0.250 & 0.014 \\
        \hline
        10 & 0.125 & 0.011 & 0.313 & 0.328 & 0.003 & 0.021 \\
        \hline
        
    \end{tabular}
    }
    \bigskip
    
    \caption{$\#\Per(\phi, \PP^1(\F_{3^{n}}))/(3^n+1)$ for various quadratic $\phi \in \F_3(x)$. Note that $x^2 - 2$ is a Chebyshev polynomial and $\frac{x^2 - 2}{x^2}$ is a Latt\`es map.}
    \label{tab:data}
\end{table}

Question \ref{mainquestion1} is in some sense a ``vertical" question, because one moves up a tower of finite fields. A ``horizontal" question of similar flavor may be posed for a rational function defined over a number field $K$. Given $\phi \in K(x)$, for all but finitely many primes $\p$ in the ring of integers $\O_K$ of $K$, one may reduce the coefficients of $\phi$ modulo $\p$ to obtain a morphism $\phi_\p : \mathbb{P}^1(\mathbb{F}_\p) \to \mathbb{P}^1(\mathbb{F}_\p)$ with $\deg \phi = \deg \tilde{\phi}$, where $\mathbb{F}_\p$ is the residue field $\O_K/\p$. Denote by $N(\p)$ the norm of $\p$, so that $1 + N(\p)$ is the size of $\mathbb{P}^1(\mathbb{F}_\p)$.

\begin{question} \label{mainquestion2}
Let $K$ be a number field, and let $\phi \in K(x)$ have degree at least two. How does $\#\Per(\phi_\p, \PP^1(\F_{\p}))/(1 + N(\p))$ vary as $N(\p) \to \infty$?
\end{question}

The known approaches to Questions \ref{mainquestion1} and \ref{mainquestion2} proceed via Galois theory. 
When all the critical points of $\phi$ have independent, infinite orbits, the Galois groups that arise (see Definition \ref{pgimgdef}) are relatively well-understood, and in fact are iterated wreath products in general. This has led to significant progress on Question \ref{mainquestion2} in this case \cite{juul1}. At the other extreme lie $\phi$ for which all critical points have finite orbits, called post-critically finite (PCF). Here the relevant Galois groups are quite different -- they are finitely generated and so far little understood in arithmetic contexts. By definition every $\phi \in \F_q(x)$ is PCF, and this in large part accounts for our collective state of ignorance on Question \ref{mainquestion1}.

However, Galois groups related to PCF rational functions have been studied in some depth in the setting of complex dynamics. In this article we harness ideas from complex dynamics to give results on Question \ref{mainquestion1} for quadratic maps, and to address Question \ref{mainquestion2} in the PCF case.



\begin{theorem} \label{main1}
Let $\Fq$ be a finite field of odd characteristic, and let $\phi(x) \in \Fq(x)$ have degree $2$. Assume that $\phi$ is not a Latt\`es map or M\"obius-conjugate over $\overline{\Fq}$ to a map of the form $(x^2 + a)/(x^2 - (a+2))$ for $a \in \overline{\Fq}$.
Then 
\begin{equation} \label{vertliminf}
\liminf_{n \to \infty} \frac{\#\Per(\phi, \PP^1(\F_{q^{n}}))}{q^{n}+1} = 0.
\end{equation}
\end{theorem}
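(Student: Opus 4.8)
The plan is to realize $\phi$ as the reduction of a PCF rational map on $\mathbb{P}^1(\mathbb{C})$ and transfer the counting of periodic points into a question about the iterated monodromy group. Concretely, I would first lift $\phi \in \mathbb{F}_q(x)$ to a rational function $\Phi$ defined over a number field $K$ whose reduction modulo a prime over $p$ recovers $\phi$; since $\phi$ is quadratic it is automatically PCF, and after conjugating we may assume $\Phi$ is PCF on the Riemann sphere with the same post-critical combinatorics. The quantity $\#\Per(\phi,\mathbb{P}^1(\mathbb{F}_{q^n}))/(q^n+1)$ is then interpreted via the Frobenius action on the tree of iterated preimages of a basepoint: a point is periodic of period dividing $n$ essentially when the corresponding branch of the preimage tree is fixed, so $a_n$ is governed by the proportion of elements in the arboreal Galois image (a subgroup of $\mathrm{Aut}$ of the binary rooted tree, contained in the geometric iterated monodromy group) that fix a point at every level. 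This is the content flagged in the abstract, and I would invoke it as the structural input: $\liminf a_n = 0$ is equivalent to the \emph{nonexistence}, arbitrarily deep in the tree, of group elements forced to have fixed points at all levels — i.e., to the statement that the ``fixed-point-everywhere'' elements form a set of measure zero (or at least have shrinking density) in the relevant profinite group.

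The second step is the measure-theoretic translation. I would set up a martingale: on the boundary of the binary tree, with the natural $\mathrm{Aut}$-invariant measure, consider for a random Galois (or IMG) element $g$ the random variable $X_k$ = measure of the set of level-$k$ vertices fixed by $g$, suitably normalized. The sequence of conditional densities of fixed points along a random branch forms a martingale (each level refines the previous), so by the martingale convergence theorem it converges almost surely; the limit is positive on a positive-measure set of $g$ exactly when $a_n$ does \emph{not} go to $0$ along a subsequence. Hence proving \eqref{vertliminf} reduces to exhibiting, for each level $k$ (or along a cofinal set of levels), a substantial supply of IMG elements that act \emph{without} a fixed branch — equivalently, elements whose action on level $k$ is fixed-point-free on a large subtree. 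This reframes everything as a concrete existence/abundance problem inside the iterated monodromy group.

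The third and decisive step is to produce such elements using the geometry of PCF maps, namely the orbifold metric. A PCF rational map that is not a Lattès map (and not in the excluded exceptional family) is expanding with respect to the canonical orbifold metric on the complement of the post-critical set; this expansion means that small loops around the basepoint, when pulled back under high iterates, shrink, so that generators of the IMG obtained from pulling back loops around post-critical points act on deep levels of the tree with long cycles rather than many fixed points. I would make this quantitative: expansion by a factor $\lambda > 1$ forces the fraction of fixed (or short-cycle) vertices at level $k$ to decay, because a fixed vertex corresponds to a loop that the map $\Phi^k$ sends to a homotopic loop of comparable orbifold length, which expansion prohibits except in a controlled (exponentially small) number of cases. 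Combining this density decay with the martingale argument yields $\liminf_n a_n = 0$. The Lattès maps are genuinely excluded because they are \emph{isometries} (not expansions) for the orbifold metric — the affine/torus model gives elements with many fixed points at all levels — and the one-parameter family $(x^2+a)/(x^2-(a+2))$ must be excluded for an analogous degenerate reason (its post-critical orbit portrait produces an IMG too small or too ``fixed-point-rich'' for the expansion estimate to bite).

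The main obstacle I anticipate is the third step: converting qualitative orbifold expansion into an effective upper bound on the proportion of level-$k$ vertices lying on short cycles of a generic IMG element, uniformly enough to feed the martingale. One must control how orbifold length of pulled-back loops relates to the cycle structure of the tree action — passing between the analytic statement (loops shrink) and the combinatorial statement (few fixed points) requires care about basepoint choice, the finitely many post-critical points, and the possibility of loops that stay short by winding around the orbifold cone points. A secondary subtlety is the descent from $\mathbb{C}$ back to $\mathbb{F}_q$: one must ensure the arboreal Galois image over $\mathbb{F}_{q^n}$ is large enough to ``see'' the IMG elements produced analytically, i.e., that reduction does not collapse the relevant monodromy, which is where the odd-characteristic hypothesis and the exclusion of the exceptional maps enter on the arithmetic side. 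Identifying and cleanly handling the precise exceptional family $(x^2+a)/(x^2-(a+2))$ — showing it is exactly the locus where the argument fails and understanding its post-critical portrait — will require a separate, more hands-on analysis.
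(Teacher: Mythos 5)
The decisive third step of your plan does not work as stated, and the reason is visible in the paper's own exceptional cases. You claim that orbifold expansion by itself forces the proportion of level-$k$ vertices fixed by a generic IMG element to decay, and that Latt\`es maps are excluded because they are \emph{isometries} of the orbifold metric. Both claims are false: a quadratic Latt\`es map expands its flat orbifold metric by a factor $\sqrt{2}$, and the Chebyshev map $x^2-2$ (which lies in the excluded family $(x^2+a)/(x^2-(a+2))$) is likewise subhyperbolic, i.e.\ expanding away from periodic post-critical points --- yet for these maps the liminf in \eqref{vertliminf} is provably \emph{positive} ($1/4$ for Chebyshev, $\geq 1/8$ for certain Latt\`es maps). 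So any argument of the form ``expansion $\Rightarrow$ exponentially few fixed vertices'' proves too much and must fail. The actual dividing line is dynamical exceptionality: the existence of a finite set $\Gamma$ with $\phi^{-1}(\Gamma)\setminus C_\phi=\Gamma$, and Proposition \ref{dynamically exceptional characterization} shows the dynamically exceptional quadratics are exactly the Latt\`es maps together with the family $(x^2+a)/(x^2-(a+2))$ --- that is where the excluded family comes from, not an ``analogous degenerate reason.'' In the paper, expansion is used only qualitatively: it shows (i) that $\IMG(f)$ is contracting, and (ii) that any $g$ with $g(w)=w$ and $g|_w=g$ (the set $\mathcal{N}_1$) must be peripheral about a repelling periodic post-critical point; one then shows that such a $g$ fixing only finitely many ends forces every backward orbit of that point to stay in $P_f$ or meet a critical point, i.e.\ forces exceptionality (Theorem \ref{maincx2}). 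The counting itself is done by the martingale-plus-contraction argument (Theorem \ref{crystal}), never by a quantitative length-versus-cycle-structure estimate.

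Two further steps you treat as routine are genuine theorems. First, the fixed-point process is \emph{not} automatically a martingale because ``each level refines the previous'': Theorem \ref{martchar} shows it is a martingale precisely when the kernel $H_n$ of $G_n\to G_{n-1}$ acts transitively on every fiber $v*$, and securing this for IMGs of quadratic rational maps --- where there is no spherically transitive element and the sign maps may be trivial --- requires the self-similarity/level-transitivity argument with $d$ prime (or recurrence plus double transitivity) of Section \ref{fpprocess}. Second, the arithmetic transfer is not a ``secondary subtlety'': one needs Pink's liftability theorem to know that $\text{pgIMG}(\phi)/\F_q$ is conjugate on the tree to the IMG of a complex lift with the same ramification portrait (hence equal $\FPP$), and on the finite-field side the paper does not identify periodic points with fixed branches of Frobenius; rather it bounds $\Per(\phi,\PP^1(\F_{q^{mk}}))$ by the image of $\phi^n$ and controls that image by the function-field Chebotarev theorem applied to $\FPP(G_n)$, with the constant-field extension $\F_{q^m}$ the reason the conclusion is a liminf along the subsequence $q^{mk}$ rather than a limit.
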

Indeed we show something slightly stronger (see Theorem \ref{fppreduction}): for every $\epsilon > 0$ there exists $m \geq 1$ such that 
\begin{equation} \label{limepsilonintro}
\frac{\#\Per(\phi, \PP^1(\F_{q^{mk}}))}{q^{mk}+1} < \epsilon
\end{equation}
for sufficiently large integers $k$.  
See Section \ref{exceptions} for a definition of Latt\`es maps over $\Fq$ and a classification of the maps to which Theorem \ref{main1} does not apply. We remark that being $\Fqbar$-conjugate to a map of the form $(x^2 + a)/(x^2 - (a+2))$ is equivalent to having a critical point that maps to a fixed point after two iterations; in particular, this family includes the degree-2 Chebyshev polynomial. Among quadratic maps up to $\Fqbar$-conjugacy, there are eight Latt\`es maps, unless $\Fq$ has characteristic 7 (see Section \ref{exceptions}). None of the maps in Table \ref{tab:data} apart from $x^2 - 2$ and $\frac{x^2 - 1}{x^2}$ is $\overline{\F_3}$-conjugate to a map of the form $(x^2 + a)/(x^2 - (a+2))$.

The equality \eqref{vertliminf} in Theorem \ref{main1} does not hold for all quadratic $\phi \in \Fq(x)$. For the degree-two monic Chebyshev polynomial $\phi(x) = x^2 - 2$, it is shown in \cite{manes} that the lim inf in \eqref{vertliminf} is $1/4$, and indeed a complete accounting of $\#\Per(\phi, \PP^1(\F_{q^{n}}))/(q^{n}+1)$ is given for this map \cite[Theorem 5.6]{manes}. We prove in Theorem \ref{lattesthm} that the lim inf in \eqref{vertliminf} is at least $1/8$ for a certain class of quadratic Latt\`es maps. We suspect that the lim inf is positive for other Latt\`es maps of degree 2, but that the lim inf is zero for non-Chebyshev, non-Latt\`es maps that are $\overline{\F_q}$-conjugate to $(x^2 + a)/(x^2 - (a+2))$. However, our methods do not allow us to prove this at present. 

The integer $m$ in \eqref{limepsilonintro} depends on the constant field extension contained within the splitting fields of $\phi^n(x) - t$ over $\F_q(t)$ (we use $\phi^n$ to denote the $n$th iterate of $\phi$, and take $\phi^0(x) = x$). When $\phi$ is a quadratic polynomial with non-periodic critical point, results of Pink \cite{pink2} imply that $m \leq 2$ for all $\epsilon$, provided that $\phi$ is not conjugate to a Chebyshev polynomial. In fact, when $q$ is a square, $m = 1$ regardless of $\epsilon$, and we obtain: 




\begin{theorem} \label{main12}
Let $\F_q$ be a finite field of odd characteristic, and let $\phi \in \F_q[x]$ have degree 2. Suppose that $q$ is a square and the unique finite critical point of $\phi$ is strictly preperiodic. If $\phi$ is not $\Fqbar$-conjugate to a Chebyshev polynomial, then  
\begin{equation} \label{limzero}
   \lim_{k \to \infty}\frac{\#\Per(\phi, \PP^1(\F_{q^{k}}))}{q^{k}+1} = 0. 
\end{equation}
\end{theorem}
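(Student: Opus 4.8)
The plan is to put $\phi$ in the normal form $x^2+c$, verify the hypotheses of Theorem~\ref{main1}, and then use its refinement \eqref{limepsilonintro} to reduce \eqref{limzero} to the assertion that a certain constant field extension is trivial when $q$ is an odd square. Since $\Fq$ has odd characteristic, every quadratic $\phi\in\Fq[x]$ is conjugate over $\Fq$, by an affine change of coordinate, to $x^2+c$ for a unique $c\in\Fq$; such a conjugacy is defined over $\Fq$ and hence preserves $\#\Per(\phi,\PP^1(\F_{q^k}))$ for all $k$, so I may assume $\phi(x)=x^2+c$. The unique finite critical point is then $0$; the hypothesis that it is strictly preperiodic forces $c=\phi(0)\neq 0$ (otherwise $0$ is fixed), and the hypothesis that $\phi$ is not $\Fqbar$-conjugate to a Chebyshev polynomial forces $c\neq -2$. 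I then check that Theorem~\ref{main1} applies: a polynomial of degree $2$ is not a Latt\`es map, and $\phi$ is not $\Fqbar$-conjugate to $(x^2+a)/(x^2-(a+2))$, because by the discussion following Theorem~\ref{main1} that is equivalent to $\phi^2(0)$ being a fixed point, i.e.\ $\phi^3(0)=\phi^2(0)$, which for $\phi=x^2+c$ becomes $(c^2+c)^2=c^2$ and forces $c\in\{0,-2\}$. Thus Theorem~\ref{main1}, and the stronger statement \eqref{limepsilonintro} (Theorem~\ref{fppreduction}), apply to $\phi$.

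By \eqref{limepsilonintro}, for each $\epsilon>0$ there is $m\geq 1$ with $\#\Per(\phi,\PP^1(\F_{q^{mk}}))/(q^{mk}+1)<\epsilon$ for all large $k$, and $m$ may be taken to be $\sup_n\,[\Fqbar\cap\Omega_n:\Fq]$, where $\Omega_n$ is the splitting field of $\phi^n(x)-t$ over $\Fq(t)$. So it suffices to show $\Fqbar\cap\Omega_n=\Fq$ for all $n$ when $q$ is an odd square: granting this, $m=1$ is admissible for every $\epsilon$, whence $\limsup_{k}\#\Per(\phi,\PP^1(\F_{q^k}))/(q^k+1)\leq\epsilon$ for all $\epsilon>0$, which is exactly \eqref{limzero}.

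For the constant field extension I would invoke Pink's results~\cite{pink2}: for $x^2+c$ with non-periodic critical point and $c\neq -2$ they give $[\Fqbar\cap\Omega_n:\Fq]\leq 2$ and identify the only quadratic subextension that could be constant. It then remains to see that this subextension is geometrically connected, which I do by computing a discriminant. Writing $f=\phi$, one has $(f^n)'(x)=2^n\prod_{j=0}^{n-1}f^j(x)$, and for $0\leq j<n$ every root $\beta$ of the monic polynomial $f^j$ satisfies $f^n(\beta)=f^{n-j}(0)$, so $\mathrm{Res}_x\bigl(f^n(x)-t,\,f^j(x)\bigr)=\bigl(f^{n-j}(0)-t\bigr)^{2^j}$; multiplying these gives
\begin{equation*}
\mathrm{disc}_x\bigl(f^n(x)-t\bigr)=\pm\,2^{\,n2^n}\prod_{i=1}^{n}\bigl(f^i(0)-t\bigr)^{2^{\,n-i}}.
\end{equation*}
Modulo squares in $\Fq(t)^\times$, every factor with $i<n$ has even exponent and $2^{\,n2^n}$ is a square; and since $q$ is an odd square we have $q\equiv 1\pmod 8$, so $-1$ (hence the sign) and $2$ are squares in $\Fq$. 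Therefore $\mathrm{disc}_x\bigl(f^n(x)-t\bigr)\equiv f^n(0)-t\pmod{(\Fq(t)^\times)^2}$, a non-constant polynomial in $t$, so the quadratic subextension $\Fq(t)\bigl(\sqrt{f^n(0)-t}\,\bigr)$ of $\Omega_n$ is geometrically connected; together with Pink's description of the monodromy groups this yields $\Fqbar\cap\Omega_n=\Fq$, completing the argument.

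The step I expect to be the main obstacle is the last one, and specifically the passage from ``the discriminant-detected quadratic subextension is geometrically connected'' to ``$\Omega_n$ has no constant subextension whatsoever'': a priori $\Omega_n$ could contain a constant quadratic subextension invisible to the discriminant, and it is precisely Pink's determination of the arithmetic and geometric monodromy groups of $x^2+c$ that rules this out, so the delicate point is matching up the requisite statements from~\cite{pink2}. The hypothesis that $q$ is a square enters only to force $q\equiv 1\pmod 8$, making $-1$, $2$, and the constant part $\pm 2^{\,n2^n}$ of the discriminant squares in $\Fq$; for general odd $q$ one obtains only $m\leq 2$, which is why Theorem~\ref{main1} is phrased with a $\liminf$ along the subsequence $\{mk\}$.
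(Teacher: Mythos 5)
Your overall architecture coincides with the paper's: reduce to the normal form $x^2+c$ with $c\neq 0,-2$ (your check that $\phi^2(0)$ is fixed only for $c\in\{0,-2\}$, hence that $\phi$ is neither Latt\`es nor conjugate to the family $(x^2+a)/(x^2-(a+2))$, is exactly right), then note that by Theorem \ref{fppreduction} and Corollary \ref{fppreductioncor} the refinement \eqref{limepsilonintro} can be run with $m=1$ — hence gives the full limit \eqref{limzero} — provided every $K_n^{\text{arith}}$ is geometric over $\F_q(t)$; the needed input $\FPP(\text{pgIMG}(\phi)/\F_q)=0$ is legitimately imported from the machinery behind Theorem \ref{main1} (Theorem \ref{maincx1} plus liftability). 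This is precisely how the paper proves Theorem \ref{main12}, via Corollary \ref{infsup}, Corollary \ref{limzerocor}, and Corollary \ref{finalliftcor}.

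The gap is in your handling of the constant field extension. Your discriminant computation is correct and shows that the particular quadratic subextension $\F_q(t)\bigl(\sqrt{\mathrm{disc}_x(\phi^n(x)-t)}\bigr)=\F_q(t)\bigl(\sqrt{\phi^n(0)-t}\bigr)$ is geometric when $q\equiv 1\pmod 8$. But this only rules out the constant quadratic extension $\F_{q^2}(t)$ being equal to that one subextension. For $n\geq 2$, $\mathrm{Gal}(K_n^{\text{arith}}/\F_q(t))$ is a $2$-group with many index-$2$ subgroups, and the quotient $G_n^{\text{arith}}/G_n^{\text{geom}}$ cutting out the constant subfield need not be detected by the sign (discriminant) character; a constant quadratic subextension ``invisible to the discriminant'' is exactly what your computation cannot exclude, as you yourself concede. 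Deferring that point to ``Pink's determination of the arithmetic and geometric monodromy groups'' is deferring the entire content of the step: the statement one actually needs is the one the paper quotes as Theorem \ref{pinkconst}, namely $K_n^{\text{arith}}\cap\overline{\F_q}\subseteq\F_q(\zeta_8)$ for a strictly preperiodic, non-Chebyshev quadratic polynomial. Once that is invoked, the conclusion is immediate — $q$ an odd square forces $q\equiv 1\pmod 8$, so $\zeta_8\in\F_q$ and every $K_n^{\text{arith}}$ is geometric — and your discriminant computation becomes superfluous. So the proposal is right in outline, but as written the crucial step is either unproved (if you use only the bound $[\overline{\F_q}\cap\Omega_n:\F_q]\leq 2$) or amounts to citing the very theorem of Pink that the paper applies directly.
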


We turn now to Question \ref{mainquestion2}. The principal known results are those in \cite{juul1}, and concern the case where $\phi$ is ``post-critically generic" in the sense that for all $m,n \geq 0$ and all critical points $\gamma$ and $\gamma'$ of $\phi$, we have $\phi^n(\gamma) \neq \phi^m(\gamma')$ unless $m = n$ and $\gamma = \gamma'$. In this case, Theorem 1.3 of \cite{juul1} gives 
\begin{equation} \label{globalliminf}
\liminf_{N(\p) \to \infty} \frac{\#\Per(\phi_\p, \PP^1(\F_{\p}))}{1 + N(\p)} = 0.
\end{equation}

We establish \eqref{globalliminf} for many PCF rational functions. 
To state our result we require two definitions. First, a rational function with coefficients in a field $K$ is \textit{dynamically exceptional}\footnote{In other work, such as \cite{fpfree}, the terminology \textit{exceptional} is used. However, in the arithmetic setting treated in this article, an \textit{exceptional rational function} has a pre-existing, and quite distinct, meaning.} if there is $\Gamma \subset \PP^1(\overline{K})$ with $\phi^{-1}(\Gamma) \setminus C_\phi = \Gamma$, where $C_\phi \subset \PP^1(\overline{K})$ is the set of critical points of $\phi$. Observe that this condition implies that $\Gamma$ contains no critical points of $\phi$, and that $\phi^{-1}(\Gamma)$ consists of $\Gamma$ and a subset of $C_\phi$. Second, let $\phi \in \C(x),$ $P_\phi$ be the post-critical set of $\phi$ (see Definition \ref{postcritdef}), and $z_0 \in \C \setminus P_\phi$. We say $\phi$ has \textit{doubly transitive monodromy} if the monodromy action of $\pi^1((\mathbb{P}^1(\C) \setminus P_\phi), z_0)$ on $\phi^{-1}(z_0)$ is doubly transitive. Equivalently, the Galois group of $\phi(x) - t$ over $\C(t)$ acts doubly transitively on the roots of $\phi(x) - t$ in $\overline{\C(t)}$.

\begin{theorem} \label{main2}
Let $K$ be a number field and let $\phi \in K(x)$ have degree $d \geq 2$. Assume that $\phi$ is PCF and not dynamically exceptional. Then \eqref{globalliminf} is true if any of the following holds: 
\begin{enumerate} 
\item $d$ is prime; 
\item $\phi$ has doubly transitive monodromy;
\item $\phi$ is $\overline{K}$-conjugate to polynomial.
\end{enumerate}
\end{theorem}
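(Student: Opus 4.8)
The plan is to pass from $\phi$ to a complex PCF lift, reduce \eqref{globalliminf} to the vanishing of a fixed-point proportion of an iterated monodromy group, and then kill that proportion with a martingale argument powered by orbifold expansion.

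\emph{Step 1: reduction to a fixed-point proportion of $\IMG(\phi_\C)$.} Since $\phi$ is PCF over $K$, its post-critical set is finite and, after fixing $\overline K \hookrightarrow \C$, we obtain a complex PCF lift $\phi_\C \in \C(x)$ with the same post-critical combinatorics; the geometric monodromy group of $\phi^n(x)-t$ over $\overline K(t)$ coincides with that over $\C(t)$, namely the level-$n$ quotient $\IMG_n(\phi_\C)$. For all but finitely many primes $\p$ and every $x_0 \in \PP^1(\F_\p)$ away from branch points, the Frobenius at $\p$ acts on the tree of iterated $\phi_\p$-preimages of $x_0$, and $x_0$ is $\phi_\p$-periodic if and only if this Frobenius fixes a vertex on every level: pulling $x_0$ back through its cycle produces a fixed vertex at each level, while conversely König's lemma yields an infinite $\F_\p$-rational backward orbit, which must close up by finiteness of $\PP^1(\F_\p)$. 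Since $\#\Per(\phi_\p,\PP^1(\F_\p))$ is at most the number of $x_0$ whose level-$n$ preimage set has an $\F_\p$-point, i.e.\ at most $\#\phi_\p^n(\PP^1(\F_\p))$, the function-field Chebotarev theorem applied to the splitting field $L_n$ of $\phi^n(x)-t$ over $K(t)$ gives, for primes $\p$ that split completely in the (number-field) constant field of $L_n$ and with $N(\p)\to\infty$,
\[
\frac{\#\Per(\phi_\p,\PP^1(\F_\p))}{1+N(\p)} \;\le\; \mu_n\bigl(\{g\in\IMG_n(\phi_\C) : g \text{ fixes a vertex on level } n\}\bigr) + o(1),
\]
with $\mu_n$ the uniform measure. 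As $n\to\infty$ the right-hand proportion decreases to $\FPP(\overline{\IMG(\phi_\C)})$, the Haar measure of elements of the profinite closure $\overline{\IMG(\phi_\C)}\le\mathrm{Aut}(T)$ fixing a vertex on every level of the rooted $d$-ary tree $T$. Hence it suffices to show $\FPP(\overline{\IMG(\phi_\C)})=0$; the constant-field dependence (the ``$m$'' of \eqref{limepsilonintro}) is only needed to handle the residual cases below, where one instead chooses a non-geometric Frobenius coset.

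\emph{Step 2: the martingale.} Put $G=\overline{\IMG(\phi_\C)}$ and, for Haar-random $g\in G$, let $Z_n=\#\{\text{level-}n\text{ vertices fixed by }g\}$. Because $\PP^1$ is irreducible, $\phi_\C^n(x)-t$ is irreducible over $\C(t)$, so $G$ is transitive on level $n$ and $\mathbb E[Z_n]=1$; moreover for a level-$n$ vertex $v$ coming from $w\in\phi_\C^{-n}(z_0)$ one has $\C(t)(w)=\C(w)$ with $\phi_\C(x)-w$ irreducible over $\C(w)$, so (using that $\IMG(\phi_\C)$ is self-replicating) the pointwise level-$n$ stabilizer restricts transitively to the $d$ children of $v$. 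Burnside's lemma then shows that, conditioned on the level-$n$ action of $g$, each fixed $v$ has on average exactly one fixed child, so $(Z_n)$ is a non-negative martingale. By the martingale convergence theorem $Z_n\to Z_\infty$ almost surely; since the $Z_n$ are integer-valued and $\{Z_{n+1}\ge1\}\subseteq\{Z_n\ge1\}$, we get $\FPP(G)=\mu\bigl(\bigcap_n\{Z_n\ge1\}\bigr)=\mu(Z_\infty\ge1)$. So the goal becomes: $Z_\infty=0$ almost surely.

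\emph{Step 3 (the crux): killing the martingale via orbifold expansion.} On $\{Z_\infty\ge1\}$ the element $g$ fixes a vertex on every level, hence an entire end $\xi$ of $T$, while $Z_n$ stays bounded, so $g$ acts non-trivially below $\xi_n$ for infinitely many $n$. When $\phi_\C$ is PCF and not a power map, Chebyshev, or Lattès map, its orbifold is hyperbolic and $\phi_\C$ is strictly expanding in the orbifold metric near its Julia set; by Nekrashevych's theory $\IMG(\phi_\C)$ is then contracting with a finite nucleus $\mathcal N$, and the sections $g|_{\xi_n}$ eventually lie in $\mathcal N$. Pigeonholing on $\mathcal N$, survival forces a non-trivial $h\in\mathcal N$ fixing an end along which an iterated section returns to $h$ --- a recurrent fixed-end loop in the Moore diagram of $\mathcal N$, equivalently a non-trivial finite-order orbifold symmetry of $\phi_\C$ anchored at a point of a periodic cycle; strict orbifold expansion along that cycle excludes this. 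Thus $\mathcal N$ has no such element, so on $\{Z_\infty\ge1\}$ the sections $g|_{\xi_n}$ would become eventually trivial, forcing $Z_n\to\infty$ and contradicting $Z_\infty<\infty$; hence $\FPP(\overline{\IMG(\phi_\C)})=0$. Hypotheses (1)--(3) enter precisely to reduce the general PCF non-dynamically-exceptional $\phi$ to this situation: doubly transitive monodromy makes the level-$1$ action primitive (killing the imprimitive block structures that could otherwise shelter a recurrent fixed-end element); prime degree reduces, by Burnside's classification of transitive groups of prime degree, to the doubly transitive case together with the cyclic-monodromy case $\phi_\C\cong z^d$ and the dihedral (Chebyshev-type, hence dynamically exceptional and excluded) case; and poly-conjugacy supplies a totally ramified fixed point at $\infty$, hence a full cycle in the monodromy. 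The remaining parabolic-orbifold cases surviving (1)--(3) --- power maps and prime-degree Lattès maps --- are handled directly from their explicitly known $\overline{\IMG}$: for $z^d$, $\overline{\IMG}=\Z_d$ acting as the $d$-adic adding machine, which has no non-trivial finite-order part, so $Z_\infty=0$ a.s.\ and $\FPP=0$; for the Lattès maps one computes the fixed-point proportion along the appropriate cyclotomic coset of the arithmetic monodromy (this is where the constant-field extension of Step~1 is genuinely used) and finds it vanishes in the limit.

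\emph{The main obstacle} is Step~3: converting survival of the martingale into a concrete recurrent configuration in the nucleus and excluding it via strict orbifold expansion, together with the bookkeeping needed to make (1)--(3) reduce the general case to this exclusion and to dispatch the parabolic-orbifold cases. The dynamically exceptional maps --- Chebyshev being the prototype, with $\overline{\IMG}$ the infinite dihedral group and $\FPP\ge1/4$ --- are exactly those for which the obstruction element genuinely exists, which is why they must be excluded.
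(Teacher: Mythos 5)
Your Step 1 reduction is essentially the paper's Theorem \ref{numfield} and is fine in spirit, but Steps 2 and 3 each contain a genuine gap. In Step 2 you justify the martingale property by noting that $\phi(x)-w$ is irreducible over $\C(w)$ and that $\IMG(\phi_\C)$ is self-replicating, and conclude that ``the pointwise level-$n$ stabilizer restricts transitively to the $d$ children of $v$.'' Irreducibility of $\phi(x)-w$ over $\C(w)$ only gives transitivity of the stabilizer of the \emph{single} vertex $w$ on its children; what the martingale needs (see the paper's Theorem \ref{martchar}, which shows this is an if-and-only-if) is transitivity of the much smaller group $H_{n+1}$, the kernel of $G_{n+1}\to G_n$, on each sibling set, and self-replication alone does not give this — a priori $H_{n+1}$ could even fail to move any children. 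This is precisely where the hypotheses (1)--(3) are used in the paper: prime degree forces the nontrivial normal subgroup $H_{n+1}$ to have all orbits of size $d$ (Lemma \ref{Horb}, Corollary \ref{transcor}, with nontriviality from self-similarity via Proposition \ref{valsurj}); double transitivity plus recurrence gives it by conjugating a nontrivial kernel element (Theorem \ref{thm:dt}); and the polynomial case is handled in \cite{fpfree} using the spherically transitive element coming from monodromy at infinity. Your proposal instead asserts the martingale unconditionally and assigns (1)--(3) the role of ``reducing to a hyperbolic orbifold,'' which is not where they are needed and leaves the martingale step unproved.

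In Step 3 the claimed exclusion is false: nontrivial $h\in\mathcal{N}$ with $h(w)=w$ and $h|_w=h$ (i.e.\ elements of $\mathcal{N}_1$) do exist for many non-exceptional PCF maps. By the paper's Proposition \ref{peripheral} such elements are exactly peripheral loops about repelling periodic post-critical points, and these occur whenever $P_f$ contains a repelling cycle (e.g.\ $z^2+i$, which is not dynamically exceptional). Orbifold expansion does not rule them out — it only forces them to be peripheral — so your deduction ``$\mathcal{N}$ has no such element, hence the sections become eventually trivial and $Z_n\to\infty$'' collapses, and the reformulation as a ``finite-order orbifold symmetry'' is not correct. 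The role of non-exceptionality in the paper is different: it guarantees that every element of $\mathcal{N}_1$ fixes \emph{infinitely many} ends (Theorem \ref{maincx2}, via Lemma \ref{periend} and Proposition \ref{condition}), whence by contraction any element fixing one end fixes infinitely many (Theorem \ref{crystal}), and the martingale convergence (Corollary \ref{evconstcor}) makes that set Haar-null, giving $\FPP=0$. Your side remarks about ``parabolic cases surviving (1)--(3)'' are also off target: Latt\`es maps are dynamically exceptional and hence excluded by hypothesis, and power maps need no separate treatment (their $\mathcal{N}_1$ is trivial), so no auxiliary computation along a ``cyclotomic coset of the arithmetic monodromy'' is needed for Theorem \ref{main2}.
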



The lim inf in \eqref{globalliminf} is not zero for all $\phi$. In \cite[Example 7.2]{juul1}, it is shown that when $\phi = T_d$, the degree-$d$ monic Chebyshev polynomial, the lim inf in \eqref{globalliminf} is $1/4$ when $d$ is a power of 2, $1/2$ when $d$ is a power of an odd prime, and $0$ otherwise.

Questions \ref{mainquestion1} and \ref{mainquestion2} are linked in more than an intuitive sense. By studying a single Galois-theoretic object, we prove Theorems \ref{main1} and \ref{main2} simultaneously.

\begin{definition} \label{pgimgdef}
Let $k$ be a field with algebraic closure $\kbar$ and let $\phi \in k(x)$ have degree $d \geq 2$. Assume that for all $n \geq 1$, $\phi^n(x) = t$ has $d^n$ distinct solutions in an algebraic closure of $\kbar(t)$. The \textit{profinite geometric iterated monodromy group} of $\phi$ over $k$, written $\text{pgIMG}(\phi)/k$, is the inverse limit
as $n \to \infty$ of the Galois groups of $\phi^n(x) - t$ over $\kbar(t)$. 
\end{definition}
The terminology \textit{geometric} in the definition is because the Galois groups are considered over the ground field $\kbar(t)$. One can also consider the Galois groups over $k(t)$, and this object is known as the profinite arithmetic iterated monodromy group of $\phi$. See Section \ref{reductions} for precise definitions and \cite[Section 2]{fpfree} or \cite{pink2} for more discussion. 

Crucially for the considerations in this article, $\text{pgIMG}(\phi)/k$ comes equipped with a natural action on the tree of preimages 
$$
T_k(\phi) := \bigsqcup_{n \geq 0} \phi^{-n}(t) \subset \overline{k(t)},
$$
where $\phi^{-n}(t) = \{\alpha \in \overline{k(t)} : \phi^n(\alpha) = t\}$ for $n \geq 0$ and 
edges are assigned according to the action of $\phi$. 
Let $d = \deg \phi$, and assume that the characteristic of $k$ is either $0$ or does not divide $d$. Then $\phi^n(x) = t$ has $d^n$ distinct solutions in an algebraic closure of $\kbar(t)$,
and hence $T_k(\phi)$ is a complete $d$-ary rooted tree, with root $t$. The action of $\text{pgIMG}(\phi)/k$ on $T_k(\phi)$ comes from the natural action of Galois groups on the roots of polynomials. 

We describe an abstract complete $d$-ary rooted tree as the set $X^*$ of all words in the alphabet $X = \{0, \ldots, d-1\}$, with an edge connecting $vx$ to $v$ for each $v \in X^*$ and $x \in X$. The root of $X^*$ is the empty word. Denote by $X^n$ the set of words in $X$ of length $n$, which gives the $n$th level of $X^*$. 
Let $\Aut(X^*)$ be the set of tree automorphisms, and note that any $G \leq \Aut(X^*)$ has quotient groups $G_n \leq \Aut(X^n)$ for $n \geq 1$ that are the image of the natural restriction maps.
Define the \textit{fixed-point proportion} of $G_n$ to be
\begin{equation} \label{fppdef}
\FPP(G_n) := \frac{\#\{g \in G_n : \text{$g$ fixes at least one element of $T_n$}\}}{\#G_n},
\end{equation}
and the fixed-point proportion of $G$ to be $\lim_{n \to \infty} \FPP(G_n)$.
Observe that the sequence is non-increasing, and hence the limit must exist. Through the action of $\text{pgIMG}(f)/k$ on $T_k(\phi)$, we identify the former with a subgroup of $\Aut(X^*)$. This subgroup is unique up to conjugacy in $\Aut(X^*)$, and in particular $\FPP(\text{pgIMG}(\phi)/k)$ is well-defined.

In Section \ref{reductions} we use the Chebotarev density theorem for function fields to show that if $\Fq$ is a finite field of characteristic $p$, $\phi \in \Fq(x)$ has degree $d$, and $p > d$, then 
\begin{equation*} 
\liminf_{n \to \infty} \frac{\#\Per(\phi, \PP^1(\F_{q^{n}}))}{q^{n}+1} \leq \FPP(\text{pgIMG}(\phi)/\Fq).
\end{equation*}
See Corollary \ref{infsup}. Building on results in \cite{juul1}, we show in Theorem \ref{numfield} that if $K$ is a number field and $\phi \in K(x)$, then 
\begin{equation} \label{fpp2}
\liminf_{N(\p) \to \infty} \frac{\#\Per(\phi_\p, \F_{\p})}{1 + N(\p)} \leq \FPP(\text{pgIMG}(\phi)/\C).
\end{equation}

We appeal to work of Pink \cite{pink1} to show that when $q$ is odd and $\phi \in \Fq(x)$ is quadratic, there is a map $\tilde{\phi} \in \C(x)$ with the same ramification portrait\footnote{This is the natural graph encoding the dynamics and local degrees of the critical orbits of $\phi$. See Section \ref{exceptions} for a precise definition.} as $\phi$, such that $\text{pgIMG}(\phi)/\Fq$ and $\text{pgIMG}(\tilde{\phi})/\C$ have conjugate actions on their respective trees (Theorem \ref{liftability}), and in particular \begin{equation} \label{fpp3}
\FPP(\text{pgIMG}(\phi)/\Fq) = \FPP(\text{pgIMG}(\tilde{\phi})/\C).
\end{equation}

In light of \eqref{fpp2} and \eqref{fpp3}, we study $\text{pgIMG}(f)/\C$ for arbitrary PCF $f \in \C(x)$. Let $P_f$ be the post-critical set of $f$, and $z_0 \in \C \setminus P_f$. The iterated monodromy group of $f$, denoted $\IMG(f)$, is the quotient of the fundamental group $\pi_1((\mathbb{P}^1(\C) \setminus P_f, z_0)$ by the subgroup acting trivially by monodromy on the tree of preimages $T_{f,z_0} \subset \C$ of $z_0$ under $f$ (Definition \ref{imgdef}). Through its action on $T_{f, z_0}$, one can identify $\IMG(f)$ with a subgroup of $\Aut(X^*)$ (even in an explicit way; see Definition \ref{standardactiondef} or \cite[Section 5.2]{nek}), which is unique up to conjugacy in $\Aut(X^*)$. After conjugating if necessary, we may assume 
$$\IMG(f) \subset \text{pgIMG}(f)/\C \subseteq \Aut(X^*).$$
Moreover, $\text{pgIMG}(f)/\C$ is the closure in $\Aut(X^*)$ of $\IMG(f)$ \cite[Proposition 6.4.2]{nek}, and thus both have the same quotients $G_n \leq \Aut(X^n)$. In particular, 
$$\FPP(\text{pgIMG}(f)/\C) = \FPP(\IMG(f)).$$



See Section \ref{subs:IMGs} for details. In light of this, we study $\FPP$ of iterated monodromy groups. The following is our main result in this direction. 

\begin{theorem} \label{maincx1}
Let $f$ be a PCF rational function of degree $d \geq 2$ with coefficients in $\C$, and assume that $f$ is not dynamically exceptional. If either $d$ is prime or $f$ has doubly transitive monodromy,
then $\FPP(\IMG(f)) = 0$. 
\end{theorem}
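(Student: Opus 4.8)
The plan is to combine the martingale convergence heuristic mentioned in the abstract with the orbifold-metric expansion of PCF maps. Fix a PCF rational $f$ of degree $d$ with post-critical set $P_f$ that is not dynamically exceptional, and realize $G = \IMG(f) \subset \Aut(X^*)$ via its standard action on the tree $T_{f,z_0}$ of preimages of a basepoint $z_0$. Picking $g \in G$ according to Haar measure on $\overline{G} = \text{pgIMG}(f)/\C$, I would let $A_n$ be the number of elements of $X^n$ fixed by $g$ and observe that $\mathbb{E}[A_n] = 1$ for all $n$ (each vertex at level $n$ corresponds to a root of $f^n(x)-z_0$, and its orbit under $\overline{G}$ has size equal to the number of conjugate roots, so the probability a given level-$n$ vertex is fixed is the reciprocal of its orbit size, and these reciprocals sum to $1$ — this uses that $\overline{G}$ is the full Galois/monodromy group, where transitivity on each level holds because $f^n(x)-z_0$ is irreducible over $\C(z_0)$, a consequence of $f^n$ being indecomposable-enough; more carefully one uses Burnside). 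The key point is that $(A_n)$, or rather a suitably normalized version tracking the number of fixed vertices along a random descending ray, is a nonnegative martingale, so it converges almost surely; and $\FPP(G) = \lim_n \FPP(G_n) = \Pr[A_n > 0 \text{ for all } n] = \Pr[\lim_n A_n > 0]$ since $\{A_n > 0\}$ is a decreasing sequence of events. Thus $\FPP(\IMG(f)) = 0$ precisely when, almost surely, the branch of fixed vertices dies out, i.e.\ when no element of $G$ fixes a point at every level.

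The second and central step is to show that, under the hypotheses, no $g \in \overline{G}$ fixes an infinite descending ray in $T_{f,z_0}$. An infinite fixed ray $z_0 \leftarrow z_1 \leftarrow z_2 \leftarrow \cdots$ corresponds to a point $\xi$ in the boundary $\partial T_{f,z_0}$ (equivalently a point of the inverse limit, or a ``backward orbit'' $(z_n)$ with $f(z_{n+1}) = z_n$) that is fixed by $g$; lifting the loop representing $g$ in $\pi_1(\PP^1(\C)\setminus P_f, z_0)$ along the tower of coverings $f^n$, a ray fixed by $g$ means the lift closes up at every level, hence $g$ lies in the intersection of the (conjugates of the) subgroups $\pi_1$ of the covers, which forces the associated backward orbit to be ``recurrent'' in a strong sense. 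Here is where the orbifold metric enters: $f$ is a PCF map not of Latt\`es/exceptional type (dynamical exceptionality is exactly excluded), so by the Thurston/Douady--Hubbard theory $f$ is strictly expanding with respect to the canonical orbifold metric $\mu_f$ on $\PP^1(\C) \setminus P_f$ — there is $\lambda > 1$ with $\|Df(z)\|_{\mu_f} \geq \lambda$ away from $P_f$, with the standard caveats that dynamical exceptionality is precisely the obstruction to this (the excluded case being where the orbifold is of parabolic/Euclidean type). Expansion means that the diameters (in the orbifold metric, or in the spherical metric on compact parts of the complement of $P_f$) of the ``tiles'' $f^{-n}(U)$ shrink geometrically. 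Consequently a loop $g$ at $z_0$ of bounded orbifold length, when lifted to level $n$, becomes a path of length $O(\lambda^{-n})$; for $n$ large this path is too short to be a nontrivial loop at the lifted basepoint (the lifted basepoints are separated, or the path lies in a simply connected tile), so the lift is either trivial or non-closed. Either way, $g$ cannot fix a ray at all levels unless $g$ is the identity on the whole tree. Hence $\FPP(\overline{G}) = \FPP(G) = 0$.

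The roles of the two structural hypotheses, "$d$ prime'' or "doubly transitive monodromy,'' are to guarantee level-transitivity of $\overline{G}$ on $T_{f,z_0}$ — equivalently irreducibility of $f^n(x) - t$ over $\C(t)$ for all $n$ — which is what makes $T_{f,z_0}$ a genuine $d$-ary tree on which the $\mathbb{E}[A_n]=1$ bookkeeping and the inverse-limit boundary description are valid, and which rules out the degenerate situation where $f$ restricted to some forward-invariant subtree is dynamically exceptional in disguise; if $d$ is prime, every transitive subgroup of $\Sym(d)$ on the first level propagates to level-transitivity via the self-similar structure, while double transitivity makes the iterated action transitive by a standard wreath-recursion argument. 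These hypotheses, together with non-exceptionality, also ensure that the post-critical set $P_f$ has at least three points, so the orbifold $\PP^1 \setminus P_f$ is hyperbolic and the orbifold metric exists.

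\medskip

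\noindent\textbf{Main obstacle.} The hard part will be making the ``lift of a bounded-length loop becomes too short to be a nontrivial loop'' argument precise and uniform. One must choose the basepoint $z_0$ and a generating set for $\IMG(f)$ consisting of loops of controlled orbifold length, track how the lengths and the separation of lifted basepoints behave under the $f^{-n}$, and rule out the possibility that the shrinking tiles wrap around punctures of $\PP^1 \setminus P_f$ in a way that produces short but homotopically nontrivial loops. This requires care near $P_f$ (where the orbifold metric degenerates) and a genuine use of the fact that $P_f$ is finite and the expansion constant is uniform on the compact complement of small neighborhoods of $P_f$ — essentially the content of the Douady--Hubbard expansion theorem for PCF maps, which I would invoke as a black box but whose hypotheses (non-exceptionality) must be matched exactly to the theorem's hypotheses. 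A secondary technical point is the passage from ``$g$ fixes no infinite ray'' to ``$\FPP = 0$'': this is where the martingale convergence theorem does the work, converting an a.s.\ statement about the limit random variable into the limit of the finite-level fixed-point proportions, and one must check the hypotheses (nonnegativity and the $L^1$-bounded martingale structure) hold in this tree setting.
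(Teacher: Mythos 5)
There is a genuine gap, and it is in what you call the ``second and central step.'' Your claim that no nontrivial $g$ in the closure of $\IMG(f)$ fixes an infinite descending ray is false for the very maps the theorem covers: many nontrivial elements do fix ends. For example, for the basilica $z^2-1$ the generator $b=(a,1)$ fixes every end through the vertex $1$, and more generally any primitive loop peripheral about a repelling periodic post-critical point $p$ fixes an end whenever $p$ has a backward orbit avoiding the critical set (Lemma \ref{periend}). Your expansion argument breaks exactly at the point you flag as the ``main obstacle'': lifted loops do become short, but short loops lying near a puncture of $\rs\setminus P_f$ can be homotopically nontrivial peripheral loops, and this is not a technicality to be engineered away --- it is the generic situation. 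The correct use of subhyperbolicity (which, contrary to your matching of hypotheses, holds for \emph{all} PCF rational maps and has nothing to do with dynamical exceptionality) is weaker: it shows only that elements with self-reproducing restrictions, i.e.\ the set $\mathcal{N}_1$ of \eqref{n1def}, are peripheral about repelling post-critical points (Proposition \ref{peripheral}); the non-exceptionality hypothesis then enters to show that such peripheral elements fix \emph{infinitely many} ends (Propositions \ref{condition} and \ref{modok}). One also needs that $\IMG(f)$ is contracting (Theorem \ref{cor:IMGnucleus}) to pass from ``$g$ fixes one end'' to ``some restriction of $g$ lies in $\mathcal{N}_1$,'' whence $g$ fixes infinitely many ends (Theorem \ref{crystal}). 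The conclusion is not that the set $\{Y_n>0\ \forall n\}$ is contained in $\{e\}$, but that it is contained in the set of elements fixing infinitely many ends, which has Haar measure zero by martingale convergence (Corollary \ref{evconstcor}).

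A second gap: you assert that the fixed-point process is a martingale, but this is precisely where the hypotheses ``$d$ prime'' or ``doubly transitive monodromy'' are needed, and your attribution of their role to level-transitivity is wrong. Level-transitivity is automatic for any standard action of $\IMG(f)$, since the action is recurrent (Propositions \ref{prop:recurrent} and \ref{recprop}); and having constant expectation $\mathbb{E}[A_n]=1$ (Burnside plus level-transitivity) does not make $(A_n)$ a martingale. The martingale property is equivalent to the kernel $H_n$ of $G_n\to G_{n-1}$ acting transitively on each sibling set $v*$ (Theorem \ref{martchar}), and establishing this requires nontriviality of every $H_n$ (via self-similarity and infinitude, Proposition \ref{valsurj}) together with either primality of $d$ (Corollary \ref{transcor}) or recurrence plus double transitivity on $X$ (Theorem \ref{thm:dt}). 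Without this step, the convergence argument you invoke has no foundation, and without the corrected end-behavior dichotomy above, the convergence alone does not yield $\FPP(\IMG(f))=0$.
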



Crucially for our proof of Theorem \ref{cxthm1}, $\IMG(f)$ is a self-similar, level-transitive, recurrent subgroup of $\Aut(X^*)$ (see Section \ref{subs:wreath} for definitions). 
In the case where $f$ is a PCF polynomial, Theorem 1.1 of \cite{fpfree} proves that $\FPP(\IMG(f)) = 0$. 
To prove Theorem \ref{maincx1}, we must generalize the group-theoretic tools of \cite{fpfree}, which presents considerable technical obstacles.

First, one loses the special element of $\IMG(f)$ that arises from monodromy at infinity. For polynomial $f$, this gives a spherically transitive element in $\IMG(f)$, which is used in \cite{fpfree} to prove the crucial assertion that the fixed-point process associated to $\IMG(f)$ is a martingale. See Section \ref{fpprocess} for definitions. 
To draw the same conclusion for non-polynomial $f$, we show that if $f$ has prime degree or doubly transitive monodromy, then the fixed-point process attached to $\IMG(f)$ is a martingale (Corollaries \ref{fpprocessmain} and \ref{dtranscor}). Indeed, when $d$ is prime Corollary \ref{fpprocessmain} gives the same conclusion for the fixed point process attached to any self-similar, level-transitive subgroup of $\Aut(X^*)$.

Second, once one knows that the fixed-point process of $\IMG(f)$ is a martingale, one can prove $\FPP(\IMG(f)) = 0$ provided that every element of the set 
\begin{equation} \label{n1def}
\mathcal{N}_1 := \{g \in \IMG(f) : \text{$g(w) = w$ and $g|_w = g$ for some $w \in X^*$}\}
\end{equation}
fixes infinitely many ends of $X^*$, i.e. infinite paths through $X^*$ beginning in $X^0$. (See Section \ref{background} for definitions and see Theorem \ref{cxthm1} for the result.)
When $f$ is a polynomial, this last assertion is proved in \cite{fpfree} using a result of Nekrashevych \cite[Corollary 6.10.7]{nek} showing that the actions on $\Aut(X^*)$ of a set of generators for $\IMG(f)$ may be given by the states of a finite automaton satisfying certain strong properties. No equivalent result exists for general rational functions, and indeed until recently very few IMGs have even been computed for non-polynomial rational functions. 

Using tools from complex dynamics, we show:
\begin{theorem} \label{maincx2}
Let $f$ be a PCF rational function of degree $d \geq 2$ with coefficients in $\C$. Then every element of $\mathcal{N}_1$ fixes infinitely many ends of $X^*$ if and only if $f$ is not dynamically exceptional.  
\end{theorem}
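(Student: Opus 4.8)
The plan is to analyze the two directions of the equivalence separately, translating the combinatorial condition on $\mathcal{N}_1$ into a statement about orbifold dynamics of $f$ on the sphere. Recall that an element $g \in \mathcal{N}_1$ is witnessed by a word $w \in X^*$ with $g(w) = w$ and $g|_w = g$; identifying vertices of $X^*$ with preimages $\phi^{-|w|}(z_0)$, such a $g$ comes from a loop $\ell$ based at $z_0$ in $\mathbb{P}^1(\C) \setminus P_f$ whose monodromy fixes a point $z_1 \in f^{-|w|}(z_0)$, together with the self-similarity condition that the ``first return'' loop $\ell_w$ at $z_1$ (obtained by lifting $\ell$ along the path in the tree to $w$) induces the same action as $\ell$ itself on the subtree hanging below $w$. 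The key geometric translation: the ends of $X^*$ below $w$ fixed by $g|_w = g$ correspond to infinite backward orbits $\ldots \mapsto \zeta_2 \mapsto \zeta_1 \mapsto z_1$ that are ``held fixed'' by the monodromy, and the number of such fixed ends is governed by whether the first-return map of $f$ near $z_1$ along the relevant branch is expanding or has a neutral/attracting behavior in the orbifold metric.

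For the forward direction, assume $f$ \emph{is} dynamically exceptional, with $\Gamma \subset \PP^1(\overline{K})$ satisfying $f^{-1}(\Gamma) \setminus C_f = \Gamma$. I would show this forces the existence of a ``bad'' element of $\mathcal{N}_1$ fixing only finitely many ends. The exceptional set $\Gamma$ is finite and totally invariant up to critical points, so some point of $\Gamma$ is periodic; pulling back repeatedly along the branches that stay inside $\Gamma$ produces a situation where a first-return loop at a point of $\Gamma$ acts as a ``finite'' permutation — concretely, the subtree of preimages that avoids $C_f$ is thin, and the monodromy element supported there can only fix the finitely many ends lying over $\Gamma$. One must check this element genuinely lies in $\mathcal{N}_1$, i.e. satisfies the self-similarity constraint $g|_w = g$; this should follow by choosing $w$ to be a cycle of the induced map on the relevant orbit of preimages inside $\Gamma$ and using level-transitivity/recurrence of $\IMG(f)$ to realize the requisite loop.

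For the reverse direction — the substantive one — assume $f$ is \emph{not} dynamically exceptional and take any $g \in \mathcal{N}_1$ witnessed by $w$. I would use the expansion of PCF rational maps in the orbifold metric: by Thurston's theory (as in \cite{nek}), a PCF $f$ that is not (essentially) a Lattès map is expanding on a neighborhood of its Julia set in the orbifold metric, and more relevantly the \emph{inverse} branches of $f^n$ contract. The first-return loop $\ell_w$ at $z_1$, lifted under iterates of $f$ along branches avoiding $P_f$, has lifts whose diameters shrink geometrically; a compactness/pigeonhole argument then produces infinitely many distinct infinite backward orbits from $z_1$ along which every lift of $\ell_w$ is null-homotopic relative to $P_f$, hence acts trivially — these give infinitely many ends fixed by $g|_w = g$, and pushing back up to the root gives infinitely many ends of $X^*$ fixed by $g$. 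The dynamically-non-exceptional hypothesis enters precisely to guarantee that there are infinitely many backward orbits avoiding $C_f$ (if every backward orbit eventually hit $C_f$, the non-critical preimages would form an exceptional $\Gamma$), so the pigeonhole has enough room.

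The main obstacle I anticipate is the reverse direction's quantitative orbifold-expansion argument, specifically handling the Lattès exclusion and the boundary behavior near post-critical points. Lattès maps are PCF but \emph{not} expanding in the orbifold metric (the orbifold metric is flat), so the contraction argument for inverse branches must be run carefully — here one uses that Lattès maps are nonetheless not dynamically exceptional and that inverse branches, while not contracting in the flat orbifold metric, still distribute backward orbits densely enough to produce infinitely many homotopically trivial lifted loops. I would isolate the Lattès case and treat it by the explicit torus/lattice model. A secondary technical point is verifying that the self-similarity condition $g|_w = g$ is compatible with, and preserved under, the branch-following and lifting operations — this requires care with basepoints and the identification \cite[Section 5.2]{nek} of $\IMG(f)$ inside $\Aut(X^*)$, but is ultimately bookkeeping rather than a genuine difficulty.
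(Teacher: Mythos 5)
Your high-level slogan (orbifold expansion plus the exceptional-set dichotomy) matches the paper's, but both directions are missing the decisive steps. In the direction where $f$ is not dynamically exceptional, your central mechanism --- lifts of the first-return loop shrink in the orbifold metric, hence become null-homotopic in $\rs\setminus P_f$, hence act trivially on subtrees --- fails as stated: a loop of arbitrarily small diameter encircling a post-critical point is not null-homotopic in $\rs\setminus P_f$. What the contraction actually yields (Proposition \ref{peripheral}) is that the shrinking lifts converge to a point which, for a nontrivial element of $\mathcal{N}_1^\pi$, must be a \emph{repelling periodic post-critical point} $p$, so the element is peripheral about $p$. The count of fixed ends is then governed not by backward orbits of the basepoint $z_1$ (where you invoke non-exceptionality) but by backward orbits of $p$: the element fixes infinitely many ends as soon as some backward orbit of $p$ avoids $C_f$ and is not contained in $P_f$ (Lemma \ref{periend}), and in the contrapositive the set $\Sigma$ of points whose forward orbit reaches $p$ without meeting $C_f$ must be shown to lie in $P_f$ --- this containment is exactly what makes $\Sigma$ finite, hence a legitimate exceptional set (Proposition \ref{condition}), and your argument never produces it. Your pigeonhole about ``infinitely many backward orbits of $z_1$ avoiding $C_f$'' neither uses non-exceptionality correctly nor yields fixed ends, since being a loop at every level is a condition on preimages of $p$, not of $z_1$. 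Also, the planned Latt\`es detour rests on a false premise: Latt\`es maps \emph{are} uniformly expanding in the flat orbifold metric; the genuine obstruction to expansion is superattracting cycles, which Milnor's subhyperbolic metric handles by deleting B\"ottcher disks (this is how the paper builds the compact set $K'$).

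In the direction where $f$ is dynamically exceptional, the step you defer --- that the peripheral loop at a periodic point of $\Gamma$ satisfies the exact self-similarity $g(w)=w$, $g|_w=g$ --- is the whole content, and ``level-transitivity/recurrence of $\IMG(f)$'' cannot deliver it: recurrence produces, for a given $h$, some $s$ with $s(w)=w$ and $s|_w=h$, but says nothing about an element restricting to \emph{itself}, let alone one that also fixes only finitely many ends. The paper gets $g|_w=g$ from local dynamics: after passing to an iterate so the point is fixed (which itself requires comparing the standard actions of $f$ and $f^m$, Lemma \ref{iterateok}), it linearizes at the repelling fixed point and uses a fundamental annulus to exhibit a primitive peripheral $g$ with $g|_w=g$ that fixes only the end $w^\infty$ (Proposition \ref{exceptional}). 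Finally, the passage between $\pi_1(\rs\setminus P_f)$ and $\IMG(f)$ is not mere bookkeeping: an element of $\mathcal{N}_1$ lifts only to a $\pi_1$-element whose restrictions $g|_{w^m}$ agree with $g$ modulo the faithful kernel, and since $\pi_1(\rs\setminus P_f)$ is not contracting one cannot quote the nucleus; the paper needs the orbifold-expansion argument of Lemma \ref{lem:sequencePeriodic} (eventual constancy of the restriction sequence) to conclude in Proposition \ref{modok}. Without the peripherality structure, the backward-orbit dichotomy at $p$, the self-similarity construction at the repelling fixed point, and the $\pi_1$-versus-$\IMG$ comparison, the proposal does not close either implication.
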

See Section \ref{sec:IMGofPCF}. The main ingredient in the proof of Theorem \ref{maincx2} is the fact that a PCF $f \in \C(x)$ is subhyperbolic, i.e., expanding (in some orbifold metric) away from post-critical periodic points. This expansion forces lifts of loops under iterates of $f$ to contract, which imposes strong conditions on elements of $\mathcal{N}_1$. In particular, an element of $\mathcal{N}_1$ that fixes only finitely many ends of $X^*$ must be a loop encircling (in $\rs \setminus P_f$) a single repelling periodic point in $P_f$, and moreover every backward orbit of this point must either remain in $P_f$ or contain a critical point. This forces $f$ to be dynamically exceptional.

\section{Dynamically exceptional rational functions over finite fields} \label{exceptions}

In this section we study the exceptions to Theorem \ref{main1}. In particular, we discuss Latt\`es maps over finite fields and give a characterization of dynamically exceptional quadratic rational functions over an arbitrary field of characteristic $\neq 2$.

Recall from Section \ref{introduction} that a rational function with coefficients in a field $K$ is dynamically exceptional if there is $\Gamma \subset \PP^1(\overline{K})$ with $\phi^{-1}(\Gamma) \setminus C_\phi = \Gamma$, where $C_\phi \subset \PP^1(\overline{K})$ is the set of critical points of $\phi$.
In this section we study dynamically exceptional rational functions of degree $2$ over an arbitrary field of characteristic different from 2. 

Let $K$ be a field with fixed algebraic closure $\overline{K}$, and let $\phi \in K(x)$. For $\alpha \in \PP^1(\overline{K})$ with $\alpha \neq \infty$ and $\phi(\alpha) \neq \infty$, the  \textit{ramification index} $e_\phi(\alpha)$ of $\phi$ at $\alpha$ is the multiplicity of $\alpha$ as a root of the numerator of $\phi(x) - \phi(\alpha)$. 
If $\alpha = \infty$ or $\phi(\alpha) = \infty$, then $e_\phi(\alpha) = e_{\mu \circ \phi \circ \mu^{-1}} (\mu(\alpha))$, where $\mu$ is a Mobius transformation mapping both $\alpha$ and $\phi(\alpha)$ away from infinity. We call $\alpha$ a critical point for $\phi$ if $e_\phi(\alpha) > 1$. 


Define the \textit{ramification portrait} of $\phi$ to be the edge-labeled directed graph whose vertex set is the union of the orbits of all critical points of $\phi \in \PP^1(\overline{K})$, and where each vertex $\alpha$ has an arrow to $\phi(\alpha)$ with label $e_\phi(\alpha)$. Note that the graph is not vertex-labeled, so we do not record the specific points involved.

For instance, if $K$ has characteristic not equal to 2, then $\phi(x) = (x^2 - 2)/x^2$ has critical points $0$ and $\infty$, with $0 \to \infty \to 1 \to -1 \to -1$. This gives ramification portrait
$\bullet \xrightarrow{2} \bullet \xrightarrow{2} \bullet\rightarrow \bullet \circlearrowleft$. Because we deal here with quadratic maps, and so every critical point $\alpha$ has $e_\phi(\alpha) = 2$, we rewrite this as 
\begin{equation} \label{lattes444}
\bullet \rightarrow \bullet \rightarrow \circ \rightarrow \circledcirc,
\end{equation} 
where $\bullet$ denotes a critical point, $\circ$ a non-critical point, and $\circledcirc$ a non-critical fixed point. Denote a critical fixed point by $\odot$. As another example, if $\phi$ is the degree-2 Chebyshev polynomial $x^2 - 2$, then $\phi$ has ramification portrait
\begin{equation} \label{chebram}
\odot \qquad \bullet \rightarrow \circ \rightarrow \circledcirc,
\end{equation} 
We note that the ramification portraits in \eqref{lattes444} and \eqref{chebram} uniquely determine $\phi$ up to Mobius conjugation. 

The next definition is used throughout the remainder of the paper. 
\begin{definition} \label{postcritdef}
Let $K$ be a field and $\phi \in K(x)$. Let $\gamma_1, \ldots, \gamma_j$ be the critical points of $\phi$, which lie in  $\mathbb{P}^1(\overline{K})$. The \textit{post-critical} set of $\phi$ is
$$
P_\phi := \bigcup_{i = 1}^j \bigcup_{k \geq 1} \phi^k(\gamma_i) \subset \mathbb{P}^1(\overline{K}).
$$
\end{definition}

For the purposes of this article, we define $\phi \in K[x]$ to be a \textit{Latt\`es map} if there exists a function $r : \PP^1(\overline{K}) \to \Z$ such that 
\begin{equation} \label{lattesdef}
\text{$r(\phi(\alpha)) = e_\phi(\alpha)r(\alpha)$ and $r(\alpha) = 1$ outside of $P_\phi$}.
\end{equation}
When $K$ is a finite field, these are precisely the liftable maps that lift to Latt\`es maps defined over $\C$ (see Section \ref{reductions} for a definition of lifting). This is because over $\C$, the existence of the function $r$ is equivalent to the usual definition of Latt\`es maps as given by a finite quotient of a self-map of an elliptic curve; see \cite[Theorem 4.1]{milnor}.

\begin{proposition} \label{quadraticlattesportraits}
Let $K$ be a field of characteristic not equal to 2, and let $\phi \in K(x)$ have degree 2. Then $\phi$ is a Latt\`es map if and only if the ramification portrait of $\phi$ is the one in \eqref{lattes444} or one of the following:
\begin{equation} \label{lattestypes1}
\bullet \rightarrow \circ \rightarrow \circledcirc \; \; \; \bullet \rightarrow \circ \rightarrow \circledcirc, \quad \quad \quad \bullet \rightarrow \circ \rightarrow \circ \rightleftarrows \circ \leftarrow \circ \leftarrow \bullet
\end{equation}
\begin{equation} \label{lattestypes2}
\begin{tikzcd}[row sep=small, column sep=small]
\bullet \arrow[r] &  \circ \arrow[r] & \circ \arrow[d] & \circ \arrow[l] & \bullet \arrow[l]\\
 & & \circledcirc & &
\end{tikzcd}
\end{equation}
\end{proposition}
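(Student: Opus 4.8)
The plan is to classify quadratic Latt\`es maps by the combinatorics of their ramification portraits, using the defining condition \eqref{lattesdef} as the main constraint. First I would observe that for a degree-2 map $\phi$, there are exactly two critical points (counted with multiplicity) in $\PP^1(\overline{K})$, each with ramification index $2$; by the Riemann--Hurwitz formula the map has exactly two critical values as well. The function $r$ in \eqref{lattesdef} takes value $1$ outside $P_\phi$, and the multiplicativity relation $r(\phi(\alpha)) = e_\phi(\alpha) r(\alpha)$ forces $r$ to grow by a factor of $2$ precisely when we pass through a critical point. Following each critical orbit forward, $r$ increases; but since $\PP^1(\overline K)$ is a fixed set and $r$ is integer-valued, the orbit structure must be constrained so that the relation is consistent everywhere, in particular on cycles. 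On a periodic cycle $C$, iterating the relation around the cycle multiplies $r$ by $\prod_{\alpha \in C} e_\phi(\alpha)$, which must equal $1$, so no critical point may lie on a cycle unless... (in fact this forces that a critical point on a cycle is impossible for a genuine Latt\`es map, eliminating the $\odot$ portrait of a critical fixed point, and more generally critical points in periodic orbits).

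Next I would enumerate the possibilities. Write $c_1, c_2$ for the critical points (possibly equal as a point but with the portrait recording two $\bullet$'s only when distinct; for quadratics in characteristic $\neq 2$ the two critical points are distinct, so there are two $\bullet$-vertices). Each generates a forward orbit that is eventually periodic. The key finiteness input is that $r$ must be globally well-defined with $r \equiv 1$ off $P_\phi$: this means that the ``defect'' introduced at each $\bullet$ must be absorbed, which is only possible if the relevant post-critical points accumulate onto a common structure. Concretely, set $r = 1$ on all of $\PP^1(\overline K) \setminus P_\phi$ and propagate: the value of $r$ at a post-critical point $\beta$ is determined by $r(\beta) = r(\phi(\beta'))/e_\phi(\beta')$ where $\beta' \in \phi^{-1}(\beta)$, and consistency across the (at most two) preimages of each post-critical point, together with the cycle condition, cuts the list of admissible portraits down to a finite set. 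I would then check, portrait by portrait, which ones admit a consistent $r$: these are exactly \eqref{lattes444}, the two portraits in \eqref{lattestypes1}, and the one in \eqref{lattestypes2}. For each surviving portrait I would also note (using the remark in the text that portraits like \eqref{lattes444} and \eqref{chebram} determine $\phi$ up to M\"obius conjugacy, or by a direct dimension count) that such $\phi$ actually exists, and exhibit the function $r$ explicitly (e.g. $r \in \{1,2,4\}$ for \eqref{lattes444}, $r \in \{1,2\}$ for the $(2,2,2,2)$-type portrait in \eqref{lattestypes1}, etc.), confirming these are genuinely Latt\`es.

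For the converse direction — that no other portrait is Latt\`es — the argument is that any quadratic $\phi$ whose critical orbits are not among the listed shapes either has a critical point in a cycle, or has a post-critical point with a preimage outside $P_\phi$ forcing $r = 1$ there while the multiplicativity relation pushing forward from a $\bullet$ demands $r > 1$, a contradiction. I would organize this as: (i) rule out critical periodic points; (ii) show both critical orbits must terminate in the \emph{same} periodic cycle (otherwise the two ``branches of defect'' cannot be reconciled, since each periodic point off the critical orbits would need $r=1$ yet sit downstream of a doubling); (iii) bound the pre-periodic tail lengths and the cycle length by the constraint that every point of $\PP^1(\overline K)$ has at most two preimages, which caps how the $r$-values can branch. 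Steps (ii) and (iii) together leave only finitely many combinatorial types, matching the list.

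I expect the main obstacle to be step (ii)–(iii): carefully arguing that the two critical orbits must merge into a common cycle and that the allowed tail/cycle lengths are exactly the small ones appearing in the list, rather than merely ``finitely many''. This requires tracking the $r$-values along \emph{backward} orbits and using that each post-critical point $\beta$ with $r(\beta) > 1$ must have \emph{every} preimage in $P_\phi$ (else $r$ would be forced to both $1$ and a power of $2$ there) — a ``backward-closure'' property of $\{r > 1\}$ that is essentially the dynamically-exceptional condition restricted to the post-critical set. Making this rigorous in characteristic $p$ (including the anomalous case hinted at in the text where characteristic $7$ changes the count) is where the real care is needed; the rest is the routine finite check over the surviving portraits.
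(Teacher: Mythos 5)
Your overall strategy---propagate $r$ along the critical orbits, observe that the set $\{r>1\}$ (which is $P_\phi$) must be backward-closed up to critical points, and rule out critical points on cycles---is the same starting point as the paper's proof. But your plan contains a false key step. Step (ii), that both critical orbits must terminate in the \emph{same} periodic cycle, is wrong, and in fact contradicts the very list you are trying to establish: the first portrait in \eqref{lattestypes1} consists of two disjoint chains $\bullet \rightarrow \circ \rightarrow \circledcirc$ ending at two \emph{distinct} non-critical fixed points, and it is realized by the Latt\`es map $(x^2+\alpha_3)/(x^2-(\alpha_3+2))$ of Proposition \ref{quadraticlattesclass}. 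A consistent $r$ exists there because each fixed point has preimage set consisting of itself and the corresponding critical value, both already in $P_\phi$, so no point outside $P_\phi$ is forced to carry $r>1$; nothing requires the two ``branches of defect'' to reconcile in a common cycle. An argument organized around (ii) would therefore prove a statement incompatible with the proposition.

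The second gap is that the finiteness step is never actually derived: (ii) is false, and (iii) (``the two-preimage constraint caps how the $r$-values can branch'') is a hope rather than a proof---and the proposition needs not merely finitely many types but exactly these four. The missing idea, which is the heart of the paper's argument, is a count on $\Delta := \{r>1\} = P_\phi$: the relation \eqref{lattesdef} gives $\phi^{-1}(\Delta)=\Delta\cup C_\phi$, and summing ramification indices over $\phi^{-1}(\beta)$ for $\beta\in\Delta$ yields $2\#\Delta \le \#\Delta + 2\#C_\phi = \#\Delta + 4$, hence $\#P_\phi\le 4$, with equality if and only if $P_\phi\cap C_\phi=\emptyset$. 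This dichotomy drives the whole classification: if $\#P_\phi<4$ then some critical point is post-critical, and a short argument (its preimage must be the other critical point, and neither it nor its image can be periodic) forces portrait \eqref{lattes444}; if $\#P_\phi=4$ then $P_\phi$ is disjoint from $C_\phi$, its cycle has length at most $2$, and fitting four post-critical points around such a cycle leaves exactly the portraits \eqref{lattestypes1} and \eqref{lattestypes2}---including the two-fixed-point case your step (ii) would exclude. Your identification of the backward-closure property is the right ingredient, but without this count you have neither the bound $\#P_\phi\le 4$ nor the critical-in-$P_\phi$ versus critical-disjoint dichotomy that organizes the cases. (Also, your worry about characteristic $7$ is misplaced: it affects only how many conjugacy classes of maps realize these portraits in Proposition \ref{quadraticlattesclass}, not the portrait classification, where only characteristic $\neq 2$ is used.)
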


\begin{proof}
Let $\Delta = \{\alpha \in \PP^1(\overline{K}) : r(\alpha) > 1\}$. By definition of $r$, we have $\Delta = P_\phi$ and $\phi^{-1}(\Delta) = \Delta \cup C_\phi$. Thus
\begin{equation} \label{preims}
2\#\Delta = \sum_{\alpha \in \phi^{-1}(\Delta)} e_\phi(\alpha) \leq \#\Delta + 2\#C_\phi,
\end{equation}
with equality if and only if $\Delta$ and $C_\phi$ are disjoint. Because $K$ has characteristic not equal to 2, $\#C_\phi = 2$, and we conclude from \eqref{preims} that $\#\Delta \leq 4$, with equality if and only if $\Delta \cap C_\phi = \emptyset$.

Suppose that $\#\Delta < 4$, and let $\gamma \in \Delta \cap C_\phi$. Observe that $\phi^{-1}(\gamma) \subset C_\phi \cup P_\phi$ and thus if $\phi^{-1}(\gamma)$ contains no critical points, then $\phi^{-1}(\gamma)$ consists of two post-critical points. But there is only one critical point of $\phi$ besides $\gamma$, so it is impossible for both points in $\phi^{-1}(\gamma)$ to be post-critical. Hence $\phi^{-1}(\gamma)$ consists of a critical point. Now $\gamma$ cannot be periodic, for otherwise $r(\gamma)$ is not well-defined. Hence if $\phi(\gamma)$ is periodic, then it is a fixed point. But then $\phi^{-1}(\phi(\gamma))$ contains both $\gamma$ and $\phi(\gamma)$, which is impossible. Hence $\phi(\gamma)$ cannot be periodic. Because $\#\Delta \leq 3$, it must be the case that $\phi^2(\gamma)$ is a fixed point, and we have ramification portrait \eqref{lattes444}.

Suppose now that $\#\Delta = 4$, and thus $P_\phi \cap C_\phi = \emptyset$. Because $\phi$ cannot have a periodic critical point, $P_\phi$ must contain a cycle, and for each $\alpha$ in this cycle, $\phi^{-1}(\alpha)$ cannot contain a critical point, as otherwise $\phi^{-1}(\alpha)$ consists only of a critical point, which must then be periodic. It follows that the length of this cycle can be at most 2. If $P_\phi$ contains a 2-cycle, one easily checks that the only possible ramification portrait is the second one in \eqref{lattestypes1}.

Now a fixed point in $P_\phi$ cannot have a pre-image that is a critical point, and hence $P_\phi$ can contain at most two fixed points. If there are exactly two, then we must have the first ramification portrait in \eqref{lattestypes1}. If there is only one, then we must have the ramification portrait in \eqref{lattestypes2}.
\end{proof}

We now describe quadratic Latt\`es maps over a field of characteristic not equal to 2. We use the normal form $\phi(x) = (x^2 + a)/(x^2 + b), a \neq b$, which exists for every degree-2 rational function except those conjugate over $\overline{K}$ to $x^{\pm 2}$, and can be obtained by conjugating a map's two critical points to $0$ and $\infty$, and then conjugating again so $\phi(\infty) = 1$. We observe that this conjugation is defined over $K$ if and only if the map's critical points lie in $K$; otherwise the conjugation is over a quadratic extension of $K$. The normal form is unique except that if $ab \neq 0$, then conjugation by $x \mapsto a/(bx)$ takes $(x^2 + a)/(x^2 + b)$ to $(x^2 + (a^2/b^3))/(x^2 + (a/b^2))$. This is the normal form found in \cite{pink1}, and is related to the normal form for critically marked quadratic rational functions given in \cite[Section 6]{milnorquadratic}. 

\begin{proposition} \label{quadraticlattesclass}
If $K$ is a field of characteristic not equal to 2, then every degree-2 Latt\`es map is conjugate (over $\overline{K}$) to one of the following:
\begin{equation} \label{quadraticlattesmaps}
\frac{x^2-2}{x^2}, \qquad \frac{x^2+\alpha_1}{x^2-\alpha_1}, \qquad \frac{x^2+\alpha_2}{x^2-\alpha_2}, \qquad \frac{x^2 + \alpha_3}{x^2 - (\alpha_3 + 2)}, \qquad \frac{x^2 + \frac{1}{\alpha_3}}{x^2 - \frac{1}{\alpha_3+2}},
\end{equation}
where $\alpha_1$ is a root of $y^2 + 1$ (in $\overline{K})$, $\alpha_2$ is a root of $y^2 - 2y - 1$, and $\alpha_3$ is a root of $y^2 + 5y + 8$. 
\end{proposition}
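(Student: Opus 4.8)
The plan is to combine Proposition \ref{quadraticlattesportraits}, which lists the finitely many possible ramification portraits of a quadratic Latt\`es map, with the remark in Section \ref{exceptions} that ramification portraits of the types in \eqref{lattes444}, \eqref{lattestypes1}, and \eqref{lattestypes2} each determine $\phi$ up to $\overline{K}$-conjugacy. Working in the normal form $\phi(x) = (x^2+a)/(x^2+b)$ with $a \neq b$ (available since a Latt\`es map is never $\overline{K}$-conjugate to $x^{\pm 2}$, as such maps have periodic critical points), each portrait imposes polynomial equations on $(a,b)$ coming from the prescribed critical orbit; solving these equations, and then using the normal-form ambiguity $(a,b) \sim (a^2/b^3, a/b^2)$ to identify solutions, will produce exactly the five families in \eqref{quadraticlattesmaps}.

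First I would set up the orbit of the two critical points. The critical points of $\phi(x) = (x^2+a)/(x^2+b)$ are $0$ and $\infty$, with $\phi(0) = a/b$, $\phi(\infty) = 1$, and $\phi(1) = (1+a)/(1+b)$. So the post-critical set begins $\{a/b,\ 1\}$ and its forward images. Then I would run through the four portrait shapes one at a time. For \eqref{lattes444}, $\bullet \to \bullet \to \circ \to \circledcirc$: one critical point maps to the other, so $a/b = 0$ or $a/b = \infty$; taking $a = 0$ (the case $b=0$ is handled by the normal-form symmetry) gives $\phi(x) = x^2/(x^2+b)$, then $\phi(\infty)=1$, $\phi(1) = 1/(1+b)$, and requiring $\phi(1)$ to be fixed gives $1/(1+b)$ a fixed point of $\phi$, a single polynomial condition in $b$; after rescaling this should land on $(x^2-2)/x^2$. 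For the first portrait in \eqref{lattestypes1}, two disjoint length-3 tails $\bullet \to \circ \to \circledcirc$: the two points $a/b$ and $1$ are each non-critical preimages of distinct non-critical fixed points, giving the conditions ``$\phi(a/b)$ is fixed'' and ``$\phi(1) = (1+a)/(1+b)$ is fixed,'' i.e.\ two polynomial equations; I expect these to force $a,b$ to satisfy $a = -b$ together with the minimal polynomial $y^2+1$ of $\alpha_1$, yielding $(x^2+\alpha_1)/(x^2-\alpha_1)$. The second portrait in \eqref{lattestypes1}, with a 2-cycle $\circ \rightleftarrows \circ$ reached from both critical points, and the portrait \eqref{lattestypes2}, with a single fixed point $\circledcirc$ fed by both critical orbits, are handled the same way: write down the defining equations (``$\phi^2$ fixes $a/b$ and fixes $1$, but $\phi$ does not'' in the first case; ``$\phi^2(a/b) = \phi(1)$ is a fixed point'' in the second), solve, and match against $y^2-2y-1$ (giving $\alpha_2$) and $y^2+5y+8$ (giving $\alpha_3$ and its companion $1/\alpha_3$, the two coming from the normal-form involution swapping the roles of the two critical tails). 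Throughout, I would use that the ramification portrait determines $\phi$ up to conjugacy to guarantee that each genuinely realizable portrait corresponds to exactly one $\overline{K}$-conjugacy class, so it suffices to exhibit one map per portrait.

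The main obstacle I anticipate is organizing the case-by-case computation so that (i) the polynomial systems are actually solved in closed form and shown to have solutions with $a \neq b$ in $\overline{K}$ (including verifying that the purported cycles/fixed points are genuinely as labeled — e.g.\ that the alleged 2-cycle is not secretly a fixed point, and that critical points are not accidentally periodic, which would make $r$ ill-defined), and (ii) the normal-form ambiguity $(a,b)\sim(a^2/b^3,\,a/b^2)$ is correctly accounted for, since it is exactly this involution that pairs up the two ``$\alpha_3$'' maps $(x^2+\alpha_3)/(x^2-(\alpha_3+2))$ and $(x^2+\frac{1}{\alpha_3})/(x^2-\frac{1}{\alpha_3+2})$ into a single conjugacy class, so one must check that these two expressions really are related by that substitution. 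A secondary point to get right is characteristic: the minimal polynomials $y^2+1$, $y^2-2y-1$, $y^2+5y+8$ have discriminants $-4$, $8$, $-7$, so in small characteristic ($2$, which is excluded; but also $7$) some of these roots collide or become rational, and the statement should be read with the convention that $\alpha_i$ is a root of the indicated polynomial in $\overline{K}$; no separate argument is needed, but the exposition should not implicitly assume the $\alpha_i$ are irrational. Apart from these bookkeeping issues the proof is a finite, mechanical verification once the four portraits of Proposition \ref{quadraticlattesportraits} are in hand.
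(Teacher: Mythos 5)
Your overall strategy --- the normal form $(x^2+a)/(x^2+b)$, the portrait list of Proposition \ref{quadraticlattesportraits}, and polynomial conditions on $(a,b)$ for each portrait --- is exactly the paper's. But several of your concrete claims are wrong, and one of them is load-bearing. The principle you invoke, that a ramification portrait determines the map up to $\overline{K}$-conjugacy so that ``it suffices to exhibit one map per portrait,'' is false here: the paper asserts portrait-rigidity only for \eqref{lattes444} and \eqref{chebram}, and it cannot hold for the Latt\`es portraits, since there are only four such portraits but (by the Remark after the proposition) eight conjugacy classes of quadratic Latt\`es maps. Concretely, portrait \eqref{lattestypes2} is shared by the three pairwise non-conjugate classes coming from the two roots of $y^2+1$ and from $\alpha_2$, and the first portrait in \eqref{lattestypes1} is shared by the two classes coming from the two roots of $y^2+5y+8$. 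So exhibiting one representative per portrait does not prove that every Latt\`es map is conjugate to a listed one; you must solve each polynomial system completely and check that all solutions, up to the normal-form identification, occur in \eqref{quadraticlattesmaps}, which is what the paper does.

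Second, your portrait-to-map assignments are permuted. For $(x^2+\alpha_1)/(x^2-\alpha_1)$ with $\alpha_1^2=-1$ one computes $0 \mapsto -1 \mapsto \alpha_1 \mapsto -\alpha_1$ (fixed) and $\infty \mapsto 1 \mapsto \alpha_1$, so the two critical orbits merge and this map has portrait \eqref{lattestypes2}, not the first portrait of \eqref{lattestypes1}; indeed whenever $b=-a$ one has $\phi(1)=\phi(-1)$, so the two orbits merge at the second step and can never terminate at two distinct fixed points. In fact \eqref{lattestypes2} forces $b=-a$ and $(a^2+1)(a^2-2a-1)=0$, producing the $\alpha_1$ and $\alpha_2$ maps, while the two portraits in \eqref{lattestypes1} produce the fourth and fifth maps. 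Consequently your claim that the normal-form involution $(a,b)\sim(a^2/b^3,\,a/b^2)$ fuses the fourth and fifth maps into a single conjugacy class is also false: they realize different portraits (two fixed-point tails versus a $2$-cycle), hence are never conjugate, and that is precisely why both must appear in the list; the only identification that occurs is within the $\alpha_2$ family, via $x\mapsto -1/x$. Finally, a smaller slip: in the case \eqref{lattes444} the normal form has $\phi(\infty)=1$, so ``one critical point maps to the other'' forces $b=0$ (i.e.\ $\phi(0)=\infty$), not $a=0$; with $a=0$ the point $0$ is a critical fixed point and the map is not Latt\`es at all. Setting $b=0$ and requiring $\phi^3(\infty)$ to be fixed gives $a=-2$, i.e.\ $(x^2-2)/x^2$.
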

\begin{remark}
The two maps $\frac{x^2+\alpha_2}{x^2-\alpha_2}$, where $\alpha_2$ is either root of $y^2 - 2y - 1$, are in fact conjugate to each other by $x \mapsto -1/x$. Otherwise, no two maps in \eqref{quadraticlattesmaps} are conjugate. Hence there are 8 conjugacy classes of Latt\`es maps (over $\overline{K}$) if $K$ has characteristic not equal to 7. If $K$ has characteristic 7, then $y^2 + 5y + 8$ has only one root in $\overline{K}$, and hence there are only 6 conjugacy classes of Latt\`es maps. 
\end{remark}

\begin{proof}
Let $\phi \in K(x)$ be a degree-2 Latt\`es map. It follows from Proposition \ref{quadraticlattesportraits} that $\phi$ is not conjugate to $x^{\pm 2}$, and hence we may write $\phi(x) = (x^2 + a)/(x^2 + b)$ for some $a,b \in \overline{K}$ with $a \neq b$. Each of the ramification portraits described in Proposition \ref{quadraticlattesportraits} then gives rise to two polynomial conditions on $a$ and $b$. For instance, the portrait in \eqref{lattestypes2} forces $\phi^2(0) = \phi^2(\infty)$, which implies $b = -a$. The same portrait implies $\phi^4(\infty) = \phi^3(\infty)$, which gives
$(a^2 + 1)(a^2 - 2a - 1)=0$. The ramification portrait \eqref{lattes444} leads to the first map in \eqref{quadraticlattesmaps}, and the portraits in \eqref{lattestypes1} lead to the fourth and fifth maps in \eqref{quadraticlattesmaps}, respectively. 
\end{proof}


We now give our characterization of dynamically exceptional quadratic rational functions. 
 
\begin{proposition} \label{dynamically exceptional characterization}
Let $K$ be a field of characteristic $\neq 2$, and let $\phi \in K(x)$ have degree 2. Then $\phi$ is dynamically exceptional if and only if $\phi$ is a Latt\`es map or conjugate over $\overline{K}$ to $(x^2 + a)/(x^2 - (a+2))$ for some $a \in \overline{K}$.
\end{proposition}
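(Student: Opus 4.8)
The plan is to prove both implications, in both cases reducing to the multiplicity count that underlies the proof of Proposition \ref{quadraticlattesportraits}, together with the normal form $(x^2+a)/(x^2+b)$. For the direction $(\Leftarrow)$, I would first note that being dynamically exceptional is invariant under $\overline K$-conjugacy: if $\psi=\mu\phi\mu^{-1}$ and $\phi^{-1}(\Gamma)\setminus C_\phi=\Gamma$ then $\psi^{-1}=\mu\phi^{-1}\mu^{-1}$ and $C_\psi=\mu(C_\phi)$, so $\psi^{-1}(\mu(\Gamma))\setminus C_\psi=\mu(\Gamma)$. Then I would exhibit a witness set in each case. For $\phi(x)=(x^2+a)/(x^2-(a+2))$ one checks directly that $C_\phi=\{0,\infty\}$, $\phi(1)=-1=\phi(-1)$, $\phi^{-1}(-1)=\{1,-1\}$ and $\phi^{-1}(1)=\{\infty\}$, so $\Gamma=\{1,-1\}$ is a witness. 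For $\phi$ a Latt\`es map, Proposition \ref{quadraticlattesportraits} says the ramification portrait of $\phi$ is one of \eqref{lattes444}, \eqref{lattestypes1}, \eqref{lattestypes2}; in the three portraits with $P_\phi\cap C_\phi=\emptyset$ one takes $\Gamma=P_\phi$, and in portrait \eqref{lattes444} one takes $\Gamma=P_\phi\setminus C_\phi$ (the two non-critical points of the critical orbit). A one-line count of preimages with multiplicity (each point of $\Gamma$ has multiplicity $1$, each relevant critical point multiplicity $2$) confirms $\phi^{-1}(\Gamma)=\Gamma\sqcup(C_\phi\cap\phi^{-1}(\Gamma))$ in every instance.

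\noindent\textbf{The direction $(\Rightarrow)$: the count.} Let $\Gamma$ be a (finite, nonempty) witness. The defining relation forces $\Gamma\cap C_\phi=\emptyset$ and $\phi(\Gamma)\subseteq\Gamma$; writing $J=\phi^{-1}(\Gamma)\cap C_\phi$, the same argument as in \eqref{preims} gives $2\#\Gamma=\#\Gamma+2\#J$, so $\#\Gamma=2\#J$. Since $\phi$ has degree $2$ and $\char K\neq 2$ there are exactly two critical points, so $1\le\#J\le 2$ and hence $\#\Gamma\in\{2,4\}$.

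\noindent\textbf{The two cases.} If $\#\Gamma=4$ then $\#J=2$: both critical points map into $\Gamma$, so $P_\phi\subseteq\Gamma$ and $P_\phi\cap C_\phi=\emptyset$. Here I would run the ``$\#\Delta=4$'' analysis from the proof of Proposition \ref{quadraticlattesportraits} with $\Gamma$ in place of $\Delta$: because each critical value of a degree-$2$ map in characteristic $\neq 2$ has a \emph{unique}, totally ramified critical preimage, the two critical values are distinct and have no preimage inside $\Gamma$, so $\phi|_\Gamma$ is $2$-to-$1$ onto a two-element subset $\{w_1,w_2\}$ while $\{v_1,v_2\}=\phi(C_\phi)$ are the other two points of $\Gamma$; examining the self-map $\phi|_{\{w_1,w_2\}}$ (both fixed / a $2$-cycle / a collapse) and discarding the possibilities inconsistent with the multiplicity count shows the ramification portrait is one of \eqref{lattestypes1}--\eqref{lattestypes2}, whence $\phi$ is Latt\`es by Proposition \ref{quadraticlattesportraits}. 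If $\#\Gamma=2$, write $\Gamma=\{a_1,a_2\}$ and $J=\{\gamma_1\}$; the multiplicity constraints force (after relabeling) $\phi^{-1}(a_2)=\{\gamma_1\}$ and $\phi^{-1}(a_1)=\{a_1,a_2\}$, so $\gamma_1\to a_2\to a_1$ with $a_1$ a fixed point and $\phi(\gamma_2)\notin\Gamma$; in particular $\phi$ has a critical point whose second iterate is a fixed point. Since $\phi(\gamma_1)=a_2\notin\{0,\infty\}$, conjugating $\gamma_1\mapsto\infty$, $\gamma_2\mapsto 0$ makes $\phi$ non-polynomial with $\phi(\infty)\neq 0,\infty$, and a rescaling puts it in the form $(x^2+a)/(x^2+b)$, $a\ne b$, with $\phi(\infty)=1$; then $a_2=1$ and $a_1=\phi(1)=(1+a)/(1+b)$ is a fixed point different from $0,1,\infty$. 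Imposing $\phi(a_1)=a_1$ and clearing denominators yields, with $u=1+a$ and $v=1+b$, the identity $(u-v)^2(u+v)=0$; as $u\ne v$ we get $u+v=0$, i.e. $b=-(a+2)$, so $\phi$ is $\overline K$-conjugate to $(x^2+a)/(x^2-(a+2))$.

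\noindent\textbf{Main obstacle and a caveat.} The genuinely technical step is the $\#\Gamma=4$ case: it needs a careful enumeration of the dynamics of $\phi|_\Gamma$ on a four-element set subject to the degree-$2$ ramification constraints, ruling out every portrait not in \eqref{lattestypes1}--\eqref{lattestypes2}; this is a mild strengthening of the computation already done for Proposition \ref{quadraticlattesportraits}, now carried out without the a priori identification $\Gamma=P_\phi$ (which only emerges at the end). One should also observe that the witness $\Gamma$ is to be taken finite: otherwise the power maps $x^{\pm 2}$ would also qualify, via the infinite witness $\PP^1(\overline K)\setminus\{0,\infty\}$, and so must be read as outside the scope of the statement.
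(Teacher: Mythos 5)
Your proof is correct, and in two respects it even covers more than the paper writes down: you prove the $(\Leftarrow)$ direction explicitly (the paper's proof only runs the $(\Rightarrow)$ argument, treating the converse as routine, with the Remark handling part of it), and your finiteness caveat is apt --- the definition as stated in Sections 1--2 omits ``finite, nonempty,'' which is restored when the definition is recalled in Section 5 and is genuinely needed, exactly as your $x^{\pm 2}$ example shows. The real divergence is in the $\#\Gamma=4$ case. The paper does no portrait enumeration at all: it defines $r(\alpha)=2$ for $\alpha\in\Gamma$ and $r(\alpha)=1$ otherwise and observes that $r$ satisfies \eqref{lattesdef}, so $\phi$ is Latt\`es directly from the definition adopted in Section 2 (this quietly uses $\Gamma\subseteq P_\phi$, which follows because the two critical values lie in $\Gamma$ and their forward orbits sweep out the remaining two points). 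Your route --- rerunning the $\#\Delta=4$ analysis with $\Gamma$ in place of $\Delta$ and then invoking Proposition \ref{quadraticlattesportraits} --- does work: the three possibilities for $\phi|_{\{w_1,w_2\}}$ (identity, swap, collapse) yield precisely the first portrait of \eqref{lattestypes1}, the second portrait of \eqref{lattestypes1}, and \eqref{lattestypes2}, respectively, so in fact nothing gets ``discarded''; that enumeration, which you correctly flag as the technical burden and only sketch, is exactly what the paper's $r$-trick renders unnecessary. In the $\#\Gamma=2$ case the two arguments are essentially identical; your explicit computation $(u-v)^2(u+v)=0$ in the normal form is a reasonable substitute for the paper's normalization $(c,\phi(c),\phi^2(c))\mapsto(\infty,1,-1)$, which leaves to the reader the small verification that the fixed-point condition at $-1$ forces the second critical point to land at $0$, so the map really is $(x^2+a)/(x^2-(a+2))$ on the nose.
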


\begin{remark}
Maps conjugate to the degree-2 Chebyshev polynomial, as well as Latt\`es maps with ramification portrait \eqref{lattestypes1}, are conjugate to $(x^2 + a)/(x^2 - (a+2))$ for appropriate $a \in \overline{K}$.
\end{remark}

\begin{proof}
By definition, there is $\Gamma \subset \PP^1(\overline{K})$ with $\phi^{-1}(\Gamma) \setminus C_\phi = \Gamma$. This implies that $\Gamma \subseteq \phi^{-1}(\Gamma)$ and $\Gamma \cap C_\phi = \emptyset$. Hence 
\begin{equation} \label{excepfact}
2\#\Gamma = \sum_{\alpha \in \phi^{-1}(\Gamma)} e_\phi(\alpha) = \#\Gamma + 2\#(\phi^{-1}(\Gamma) \cap C_\phi),    
\end{equation}
and it follows that $\#\Gamma \in \{2,4\}$, according to whether $\#(\phi^{-1}(\Gamma) \cap C_\phi)$ is $1$ or $2$. 

First suppose that $\#\Gamma = 2$ and $\phi^{-1}(\Gamma)$ contains a single critical point $c$. Because $\phi(\Gamma) \subseteq \Gamma$, $c$ cannot be periodic, for then $c \in \Gamma$. Similarly, $\phi(c)$ cannot be periodic, for then its unique preimage $c$ must be periodic as well. Thus $\phi^2(c)$ is a fixed point for $\phi$, and after conjugation we may assume $c = \infty$, $\phi(c) = 1$, and $\phi^2(c) = -1$, giving the map $(x^2 + a)/(x^2 - (a+2))$ for some $a \in \overline{K}$. We remark that any map with ramification portrait \eqref{lattestypes1} or \eqref{chebram}, and hence any map conjugate to the degree-2 Chebyshev polynomial, is a special case.

Now suppose that $\#\Gamma = 4$, and $\phi^{-1}(\Gamma)$ contains both critical points of $\phi$, i.e., $\phi^{-1}(\Gamma) = \Gamma \cup C_\phi$. Then we may define a function $r : \PP^1(\overline{K}) \to \Z$ satisfying \eqref{lattesdef} by taking $r(\alpha) = 2$ for $\alpha \in \Gamma$ and $r(\alpha) = 1$ for $\alpha \not\in \Gamma$. Hence $\phi$ is a Latt\`es map. 
\end{proof}

In general we expect a Latt\`es map $\phi$ defined over a finite field $\F_q$ to satisfy $$\liminf_{n \to \infty} \frac{\#\Per(\phi, \PP^1(\F_{q^{n}}))}{q^n+1} > 0,$$
much as happens with Chebyshev polynomials \cite{manes}. Using work of Ugolini \cite{ugolini1}, we prove this happens in a certain case:

\begin{theorem} \label{lattesthm}
Let $K = \F_p$ with $p \equiv 1 \bmod{4}$, and suppose that $\phi$ is conjugate over $K$ to the Latt\`es map $\frac{x^2+a}{x^2-a}$, where $a \in K$ and $a^2 + 1 = 0$. Then
\begin{equation} \label{prop}
    \liminf_{n \to \infty} \frac{\#\Per(\phi, \PP^1(\F_{p^{n}}))}{p^n+1} \geq \frac{1}{8}
\end{equation}
\end{theorem}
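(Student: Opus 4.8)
### Proof Proposal for Theorem \ref{lattesthm}

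The plan is to reduce the problem to a concrete count of fixed points of $\phi$ on $\PP^1(\F_{p^n})$ for a carefully chosen sequence of exponents $n$, and then to extract the $1/8$ bound from the known structure of the Latt\`es map $\psi(x) = (x^2+a)/(x^2-a)$ with $a^2 = -1$. First I would recall that $\psi$ is the Latt\`es map associated to multiplication by $1+i$ (equivalently, a $2$-isogeny) on an elliptic curve $E$ with $j$-invariant $1728$, i.e.\ $E: y^2 = x^3 + x$ (or a twist), so that $\psi$ fits into a diagram $E \to E/\langle \pm 1\rangle = \PP^1$ intertwining the endomorphism $[1+i]$ on $E$ with $\psi$ on $\PP^1$. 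Periodic points of $\psi$ on $\PP^1(\overline{\F_p})$ are exactly the images under $E \to \PP^1$ of points $Q \in E(\overline{\F_p})$ with $[(1+i)^n - 1]Q = O$ for some $n \geq 1$; concretely, the periodic points of period dividing $n$ are the images of the $((1+i)^n - 1)$-torsion. Counting which of these images are $\F_{p^n}$-rational is then a matter of understanding how Frobenius acts, and this is precisely the kind of computation carried out by Ugolini \cite{ugolini1}.

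Next I would invoke Ugolini's results: for $p \equiv 1 \bmod 4$, one has $p = \pi\bar\pi$ in $\Z[i]$ with $\pi \equiv 1 \bmod (1+i)^3$ (a primary prime), and the Frobenius at $p$ acts on the relevant torsion of $E$ as multiplication by $\pi$ (after the standard normalization for CM curves with good ordinary reduction). The point $\alpha \in \PP^1(\overline{\F_p})$ lying below $Q \in E$ is periodic under $\psi$ of period dividing $n$ and defined over $\F_{p^m}$ precisely when $[(1+i)^n - 1]Q = O$ and $\pi^m Q = \pm Q$. So $\#\Per(\psi, \PP^1(\F_{p^m}))$ is governed by counting $Q$ killed by $\gcd$-type combinations of $(1+i)^n - 1$ and $\pi^m \mp 1$ in the Dedekind domain $\Z[i]$. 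The key arithmetic input is that the $2$-adic (i.e.\ $(1+i)$-adic) valuations of $\pi^m - 1$ and $\pi^m + 1$ grow without bound along suitable subsequences of $m$: since $\pi \equiv 1 \bmod (1+i)^2$ (as $\pi$ is primary and $p \equiv 1 \bmod 4$), the valuation $v_{(1+i)}(\pi^{2^k} - 1)$ is at least $k + c$ for a fixed constant $c$, by a lifting-the-exponent argument in $\Z[i]_{(1+i)}$. This lets one produce, for each $k$, an exponent $n_k$ such that a positive proportion — at least $1/8$ in the limit — of the points below $E[(1+i)^{n_k}]$-type torsion are already $\F_{p^{n_k}}$-rational and periodic.

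The heart of the bound is the following combinatorial estimate, which I would isolate as a lemma. Among the $2^{2j}$ points of $E[(1+i)^{2j}]$, the images in $\PP^1$ form a set of size roughly $2^{2j-1}$ (two torsion points map to the same place iff they differ by sign), and the fraction of these whose underlying $Q$ satisfies $\pi^m Q = \pm Q$ — for $m$ chosen with $v_{(1+i)}(\pi^m \mp 1)$ large relative to $j$ — is at least $1/8$: the factor $1/2$ comes from the sign ambiguity $\pm$, the factor $1/2$ from the two associates $\pi^m - 1$ vs $\pi^m + 1$ competing to contain the torsion order, and a further factor $1/2$ from the mismatch between the ramified prime $(1+i)$ and the index-$2$ behavior of $[\pm 1]$ on the quotient. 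This is exactly the source of the denominator $8$ in \eqref{prop}, mirroring Ugolini's computations, and I would cite \cite{ugolini1} for the precise bookkeeping rather than redo it. Taking $n = n_k \to \infty$ along this subsequence forces the liminf to be at least $1/8$; the hypothesis $p \equiv 1 \bmod 4$ is what guarantees ordinary (rather than supersingular) reduction and the splitting $p = \pi\bar\pi$ that makes Frobenius act by a scalar in $\Z[i]$, which is essential for the valuation growth.

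Finally I would note the reduction at the start: $\phi$ is $K$-conjugate to $\psi$, and conjugation by a Möbius transformation defined over $K = \F_p$ induces a bijection $\PP^1(\F_{p^n}) \to \PP^1(\F_{p^n})$ commuting with the dynamics, hence $\#\Per(\phi, \PP^1(\F_{p^n})) = \#\Per(\psi, \PP^1(\F_{p^n}))$ for every $n$, so it suffices to treat $\psi$ itself. The main obstacle, and the step I expect to require the most care, is pinning down the precise interaction between the ramified prime $(1+i) \mid 2$ and the quotient map $E \to E/\{\pm 1\}$ — i.e.\ making the ``factor of $1/8$'' argument rigorous rather than heuristic — since naively one might expect $1/4$ or $1/2$, and getting the constant exactly right is where one must lean on Ugolini's explicit analysis of the orbit structure of these specific Latt\`es maps over $\F_p$.
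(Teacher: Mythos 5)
There is a genuine gap, and in fact two of them. First, your argument is structured to produce a \emph{subsequence} $n_k \to \infty$ along which the proportion of periodic points is at least $1/8$; that only bounds the $\limsup$ from below, whereas the theorem asserts $\liminf_{n\to\infty} \#\Per(\phi,\PP^1(\F_{p^n}))/(p^n+1) \geq 1/8$, which requires the bound for \emph{every} sufficiently large $n$. Second, and more seriously, the arithmetic input you propose points in the wrong direction. A point of $\PP^1(\overline{\F_p})$ lying under $Q \in E$ is periodic for the Latt\`es map exactly when the order of $Q$ is coprime to $(1+i)$; the images of the $(1+i)$-power torsion $E[(1+i)^{2j}]$ (except the image of $O$) are \emph{strictly preperiodic}, since $(1+i)^m \mp 1$ is a unit modulo $(1+i)$. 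Consequently, making $v_{(1+i)}(\pi^m \mp 1)$ large, as you propose via a lifting-the-exponent argument along $m = 2^k$, enlarges the $2$-primary part of $\ker(\pi^m \mp 1)$ and hence \emph{dilutes} the periodic proportion (the preimage trees hanging off each periodic point get deeper); it does not create rational periodic points. Your combinatorial lemma, which counts images of $E[(1+i)^{2j}]$ satisfying $\pi^m Q = \pm Q$, is therefore counting points that are not periodic at all, and the three heuristic factors of $1/2$ do not reflect the actual source of the constant $1/8$.

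The paper's proof exploits exactly the opposite phenomenon. After conjugating over $K$ to $\psi(x) = k(x + x^{-1})$ with $k^2 = -1/4$, one partitions $\PP^1(\F_{p^n})$ into two $\psi$-invariant halves $A_n$ and $B_n$ (essentially the $x$-coordinates of $E(\F_{p^n})$ and its complement), each of density tending to $1/2$ by the Hasse bound. By Ugolini's Theorem 3.5, every periodic point in $A_n$ (resp.\ $B_n$) is the root of a complete binary tree of strictly preperiodic points of depth $v_\p(\pi_p^n - 1)$ (resp.\ $v_\p(\pi_p^n + 1)$), where $\p = (1+i)$. The key observation is that these two valuations cannot both be large: $\min\{v_\p(\pi_p^n - 1),\, v_\p(\pi_p^n + 1)\} \leq v_\p(2) = 2$ for every $n$. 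Hence for \emph{each} $n$ at least one of $A_n$, $B_n$ consists of periodic points carrying trees of depth at most $2$, so the periodic proportion within that half is at least $1/4$, and combining with the density $1/2$ of that half yields the uniform lower bound $1/8$. If you want to salvage your torsion-theoretic framework, the step to add is precisely this ``the two valuations cannot simultaneously exceed $v_\p(2)$'' dichotomy applied to every $n$, rather than a valuation-growth argument along a subsequence.
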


\begin{remark}
There are $\phi$ that are $\overline{K}$-conjugate to $\frac{x^2+a}{x^2-a}$, where $a \in K$ with $a^2 + 1 = 0$, but not $K$-conjugate to any such map. Indeed, if $\phi$ is $\overline{K}$-conjugate to a map of this kind, then it is $K$-conjugate to such a map if and only if its critical points lie in $K$.  
\end{remark}

\begin{proof}
Because $\phi$ is conjugate over $K$ to a map whose critical points are defined over $K$, the critical points of $\phi$ must be defined over $K$. Applying a conjugacy that moves these critical points to $\pm 1$, we see that $\phi$ is conjugate over $K$ to $\psi(x) = k(x + x^{-1})$, where $k^2 + \frac{1}{4} = 0$. 
As detailed in \cite[Section 3]{ugolini1}, the map $\psi$ descends from a degree-2 endomorphism on the elliptic curve $y^2 = x^3 + x$ defined over $\F_p$, which has endomorphism ring $R:=\Z[i]$. Moreover, because $p \equiv 1 \bmod{4}$, the two degree-2 maps in $R$, namely $[1 \pm i]$, are both defined over $\F_p$, and indeed have the form $(x,y) \mapsto (\psi(x), y \tau(x))$ with $\tau(x) = c(x^2 - 1)/x^2 \in \Fp(x)$. 

Our analysis of the action of $\psi$ on $\PP^1(\F_{p^n})$ begins by partitioning $\PP^1(\F_{p^n})$ into two $\psi$-invariant sets which, by the Hasse bound, have approximately equal size when $p^n$ is large. Let $S$ be the three roots of $x^3 + x$, which lie in $\Fp$ since $p \equiv 1 \bmod{4}$. Set
\begin{align*}
 A_n = \begin{cases} \{x \in \F_{p^n} : \text{there is $y \in \F_{p^n}$ with $(x,y) \in E(\F_{p^n})$}\} \cup \{\infty\} & \text{if $\sqrt{2} \in \F_{p^n}$} \\ 
 \{x \in \F_{p^n} : \text{there is $y \in \F_{p^n}$ with $(x,y) \in E(\F_{p^n})$}\} \setminus S & \text{if $\sqrt{2} \not\in \F_{p^n}$}
 \end{cases}
 \end{align*}
 and take $B_n = \PP^1(\F_{p^n}) \setminus A_n$. 

 Because endomorphisms of $E$ preserve $E(\F_{p^n})$, we immediately have $\psi(A_n) \subseteq A_n$ if $\sqrt{2} \in \F_{p^n}$. If $\sqrt{2} \not\in \F_{p^n}$ and $\alpha \in A_n$, then $\psi(\alpha) \in A_n$ unless $\psi(\alpha) \in S$. But $\psi^{-1}(S) = S \cup \{\pm 1\}$, and $\pm 1 \not\in A_n$ since $\sqrt{2} \not\in \F_{p^n}$. Thus $\psi^{-1}(S) \cap A_n = \emptyset$.
 Suppose now that $\alpha \in B_n$, and let $\beta$ satisfy $(\alpha, \beta) \in E(\overline{\F}_{p})$. The $y$-coordinate of $[1\pm i](\alpha,\beta)$ has the form $\beta \tau(\alpha)$. But $\tau(\alpha) \in \F_{p^n}$, so $\beta \tau(\alpha) \in \F_{p^n}$ if and only if $\beta \in \F_{p^n}$ or $\tau(\alpha) = 0$ (i.e. $\alpha = \pm 1$). If $\sqrt{2} \in \F_{p^n}$, then $\{\pm 1\} \cap B_n = \emptyset$, whence $\psi(B_n) \subseteq B_n$. If $\sqrt{2} \not\in \F_{p^n}$, then the entire orbits of $\pm 1$ under $\psi$ are contained in $B_n$, and so again we have $\psi(B_n) \subseteq B_n$.
 
  If we put $f(n) = (\#A_n)/(p^n+1)$ and $g(n) = (\#B_n)/(p^n+1)$, then the Hasse bound implies that both $f(n)$ and $g(n)$ are $1/2 + O(p^{-n/2})$. In particular, 
 \begin{equation} \label{hasse}
\lim_{n \to \infty} \frac{\#A_n}{p^n+1} = \lim_{n \to \infty} \frac{\#B_n}{p^n+1} = \frac{1}{2}.
 \end{equation}

Let $\pi_p \in R$ denote the Frobenius endomorphism of $E$ (which is given explicitly by $(r + \sqrt{r^2 - 4p})/2$ where $r = p + 1 - \#E(\F_p)$), and let $\p$ be the ideal $(1+i)$ of $R$. Theorem 3.5 of \cite{ugolini1} implies that each periodic point in $A_n$ (resp. $B_n$) is the root of a complete binary rooted tree whose depth is given by $v_\p(\pi_p^n - 1)$ (resp. $v_\p(\pi_p^n + 1))$, where $v_\p$ denotes the $\p$-adic valuation. The only exception is the fixed point at $\infty$, whose tree includes the critical points $\pm 1$ but otherwise is a complete binary tree with depth given as in the previous sentence. 
We have
$$2 = v_\p(2) = v_\p((\pi_p + 1) - (\pi_p - 1)) \geq \min\{v_\p(\pi_p+1), v_\p(\pi_p-1)\},
$$
and it follows that either $A_n$ or $B_n$ is composed of periodic points for $\psi$, each one mapped to by a binary tree of non-periodic points of depth at most 2. Without loss of generality, say that $A_n$ satifies this condition. Then 
\begin{equation} \label{lattespdc}
\liminf_{n \to \infty} \frac{\#\Per(\psi, A_n)}{\#A_n} \geq 1/4.
\end{equation}
Combining \eqref{hasse} and \eqref{lattespdc} gives
\begin{align*}
\liminf_{n \to \infty}  \frac{\#\Per(\psi, \PP^1(\F_{p^{n}}))}{p^n+1} & \geq \liminf_{n \to \infty} \frac{\#\Per(\psi, A_n)}{p^n+1} \\
& = \liminf_{n \to \infty} \left( \frac{\#\Per(\psi, A_n)}{\#A_n} \cdot \frac{\#A_n}{p^n+1} \right) \\
& \geq \frac{1}{4} \cdot \frac{1}{2} = \frac{1}{8}.
\end{align*}
\end{proof}

To illustrate the results of this section, we give some further discussion of the maps in Table \ref{tab:data}, which gives data for $K = \F_3$ and all quadratic maps $\phi(x) = (x^2-a)/(x^2-b)$ with $a,b \in K$. The cases $(a,b) = (1,2)$ and $(a,b) = (2,1)$ produce maps that are conjugate over $\F_3$ and thus have the same dynamics on $\F_3^n$, while all other choices of $(a,b)$ with $a \neq b$ yield maps that are not conjugate over $\overline{\F_3}$. Taking $(a,b) = (0,2)$ gives a map with ramification portrait \eqref{chebram}, which is thus $\overline{\F_3}$-conjugate to the degree-2 Chebyshev polynomial $x^2 - 2$. Taking $(a,b) = (2,0)$ gives a Latt\`es map with ramification portrait \eqref{lattestypes1}. Taking $(a,b) \in \{(0,1), (1,2), (1,0)\}$ gives a map that is not dynamically exceptional. We note that $(a,b) = (0,2)$ gives a map conjugate to $x^2 - 1$. Table \ref{tab:data} shows $\#\Per(\phi, \PP^1(\F_{3^{n}}))/3^n$ for $n \leq 10$, and also includes the map $x^2$, whose periodic points in $\PP^1(\F_{3^{n}})$ are the same as $1/x^2$.

\section{Reducing Theorems \ref{main1} and \ref{main2} to statements about IMGs} \label{reductions}

In this section we show that to prove Theorems \ref{main1} and \ref{main2}, it is enough to prove Theorem \ref{maincx1}.



For each $n \geq 1$, let $K_n^\text{arith}$ be the extension of $\Fq(t)$ obtained by adjoining the roots of $\phi^n(x) - t$, and 
$K_n^\text{geom}$ the extension of $\overline{\Fq}(t)$ obtained by adjoining the roots of $\phi^n(x) - t$ (recall our standing assumption that $\phi^n(x) - t$ has $d^n$ distinct roots in $\overline{\Fq(t)}$). We note that $K_n^{\text{geom}}$ is equal to the compositum $K_n^{\text{arith}}\overline{\Q}$, which in turn is equal to the compositum $K_n^{\text{arith}}\overline{\Q}(t)$.

Denote by $G_n$ the Galois group of $K_n^\text{geom}$ over $\overline{\Fq}(t)$, and note that $G_n$ is the natural quotient of $\text{pgIMG}(\phi)/\F_q$ ($= \invlim G_n$) obtained by restricting its action on $T(\phi)$ to the set $T_n(\phi)$ of vertices having distance $n$ from the root of $T(\phi)$. The first main result of this section relates $\FPP(G_n)$ to certain counts of periodic points.

\begin{theorem} \label{fppreduction}
Let $\Fq$ be a finite field of characteristic $p$ and $\phi \in \Fq(x)$ have degree $d$ with $2 \leq d < p$. Let $n \geq 1$ and let $K_n^{\text{arith}} \cap \overline{\Fq} = \F_{q^m}$, so that $\F_{q^m}$ is the maximal constant field subextension of $K_n^{\text{arith}}$. Then for every $\delta > 0$ there is a constant $k_0$ such that
\begin{equation} \label{limepsilon}
\frac{\#\Per(\phi, \PP^1(\F_{q^{mk}}))}{q^{mk}+1} <
\FPP(G_n) + \delta
\end{equation}
for all $k > k_0$. 
\end{theorem}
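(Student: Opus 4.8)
The plan is to count periodic points in $\PP^1(\F_{q^{mk}})$ by relating them to fixed points of Frobenius acting on the tree $T_n(\phi)$, and then to invoke the Chebotarev density theorem for the function field $\F_{q^m}(t)$ to show that the proportion of such points is governed by $\FPP(G_n)$. First I would recall the standard dictionary between periodic points of $\phi$ in $\PP^1(\F_{q^{mk}})$ and the dynamics of $\phi$ on preimage trees: a point $\alpha \in \PP^1(\overline{\F_q})$ is periodic under $\phi$ precisely when it is not strictly preperiodic, and the periodic points of period dividing some bound, together with the tree of preimages hanging off them, stratify $\PP^1$. The key observation — which I would make precise using the identification of $\phi^{-n}(t)$ with the roots of $\phi^n(x)-t$ over $\F_q(t)$ — is that for a point $t_0 \in \PP^1(\F_{q^{mk}})$ not in the (finite) set of branch points of $\phi^n$, the number of $\F_{q^{mk}}$-rational elements of $\phi^{-n}(t_0)$ equals the number of fixed points of the Frobenius $\mathrm{Frob}_{q^{mk}}$ acting on the fiber, and this Frobenius, viewed inside $G_n = \mathrm{Gal}(K_n^{\text{geom}}/\overline{\F_q}(t))$ via the splitting $\mathrm{Gal}(K_n^{\text{arith}}/\F_{q^m}(t)) \cong G_n$ (valid because $\F_{q^m}$ is exactly the constant subfield), is a well-defined conjugacy class by Chebotarev.

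Next I would carry out the counting. Write $N = q^{mk}+1 = \#\PP^1(\F_{q^{mk}})$. I would split $\PP^1(\F_{q^{mk}})$ into the points lying over (iterated) branch loci of $\phi$ — a set of size $O(1)$, independent of $k$, hence negligible after dividing by $N$ — and the "generic" points. For a generic point $\alpha$, whether $\alpha$ is periodic is controlled by finitely much data, but the cleanest route to an \emph{upper} bound for $\#\Per(\phi,\PP^1(\F_{q^{mk}}))$ is to note that a periodic point $\alpha$ of period $r$ has the property that $\alpha \in \phi^{-n}(\phi^n(\alpha))$ and, tracking the orbit, one finds that being periodic forces the Frobenius conjugacy class at $t_0 = \alpha$ (or at a suitable point in the forward orbit) to have a fixed point on $T_n(\phi)$ — more carefully, I would argue that the proportion of $\F_{q^{mk}}$-points that are periodic is at most the proportion whose Frobenius class fixes at least one vertex of $T_n$, up to the negligible bad set. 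Then by the Chebotarev density theorem for the function field $\F_{q^m}(t)$ applied to the (geometrically irreducible, since we base-changed to the constant subfield) extension $K_n^{\text{arith}}/\F_{q^m}(t)$, the density of $t_0 \in \PP^1(\F_{q^{mk}})$ whose Frobenius lies in a union of conjugacy classes $C \subseteq G_n$ converges, as $k \to \infty$, to $\#C/\#G_n$ with error $O_n(q^{-mk/2})$. Taking $C$ to be the set of $g \in G_n$ fixing at least one element of $T_n$ gives density $\FPP(G_n)$, and choosing $k_0$ large enough that the error term plus the $O(1/N)$ contribution of the bad set is below $\delta$ yields \eqref{limepsilon}.

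The step I expect to be the main obstacle is the precise reduction of "$\alpha$ is periodic under $\phi$" to "the Frobenius at some point of $\alpha$'s forward orbit has a fixed point on $T_n(\phi)$," because periodicity is a forward-orbit condition while the tree $T_n$ encodes backward orbits of a single generic point. The resolution is to reverse perspective: if $\alpha$ is periodic with exact period $r$, then $\alpha = \phi^{r}(\alpha) = \phi^{n}(\phi^{r-n}(\alpha))$ once $r \geq n$ (and periods are bounded in terms of $n$ only for the portion of $\PP^1$ one is counting against $\delt a$... more honestly, one must handle short periods separately), so that the fiber $\phi^{-n}(\alpha)$, which is an $\F_{q^{mk}}$-stable set of size $d^n$, contains the $\F_{q^{mk}}$-rational point $\phi^{r-n}(\alpha)$; hence $\mathrm{Frob}_{q^{mk}}$ fixes a vertex of the level-$n$ preimage tree rooted at $\alpha$, and translating this to the generic tree $T_n(\phi)$ via the specialization $t \mapsto \alpha$ (legitimate for $\alpha$ outside the bad set) shows the Frobenius class of $\alpha$ lies in the set counted by $\FPP(G_n)$. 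Handling the finitely many short-period exceptions and the branch-locus points, and confirming that the specialization map on Frobenius classes is compatible with the tree action, are the technical points to nail down; everything else is the standard Chebotarev-for-function-fields estimate with its square-root error term.
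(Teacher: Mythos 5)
Your proposal is correct and follows essentially the same route as the paper's proof: every periodic point lies in $\phi^n(\PP^1(\F_{q^{mk}}))$ (this is Lemma \ref{perim}, and it handles all periods uniformly since a point on a cycle is the image under $\phi^n$ of another point on the same cycle, so your worry about short periods is vacuous), membership in the image of $\phi^n$ is detected by the Frobenius class having a fixed point on the generic fiber via specialization at unramified, integral, non-collapsing places, and the function-field Chebotarev theorem over $\F_{q^m}(t)$ with its $O(q^{mk/2})$ error term then bounds the count by $\FPP(G_n)q^{mk}$ plus negligible terms. The only cosmetic difference is that the paper counts places of degree exactly $k$ and disposes of elements of smaller degree with a separate crude bound (Lemma \ref{ffcount}), whereas you phrase the density statement directly for points of $\PP^1(\F_{q^{mk}})$; these are equivalent up to the same square-root error.
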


To prove Theorem \ref{fppreduction}, we begin with two elementary lemmas, the first of which is Lemma 5.2 of \cite{juul1}. 

\begin{lemma}[\cite{juul1}] \label{perim}
If $f$ is a function acting on a finite set $\mathcal{U}$, then $\text{Per}(f, \mathcal{U}) = \bigcap_{n \geq 0} f^n(\mathcal{U})$. In particular $\#\text{Per}(f, \mathcal{U}) \leq \#f^n(\mathcal{U})$ for every $n \geq 0$.
\end{lemma}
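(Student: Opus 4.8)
The plan is to exploit the fact that $\mathcal{U}$ is finite in two separate ways. First I would observe that since $f$ maps $\mathcal{U}$ into itself, the sets $f^n(\mathcal{U})$ form a nested decreasing chain $\mathcal{U} \supseteq f(\mathcal{U}) \supseteq f^2(\mathcal{U}) \supseteq \cdots$. A strictly decreasing chain of subsets of a finite set must terminate, so there is an integer $N$ with $f^n(\mathcal{U}) = f^N(\mathcal{U})$ for all $n \geq N$; call this stable set $V$, and note $V = \bigcap_{n \geq 0} f^n(\mathcal{U})$. Applying $f$ to the equality $f^{N+1}(\mathcal{U}) = f^N(\mathcal{U})$ yields $f(V) = V$, so $f$ restricts to a \emph{surjective} self-map of the finite set $V$, hence a bijection of $V$ onto itself.

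Next I would establish $\mathrm{Per}(f, \mathcal{U}) = V$ by two inclusions. For $V \subseteq \mathrm{Per}(f, \mathcal{U})$: because $f|_V$ is a permutation of a finite set, every element of $V$ lies on a cycle of $f|_V$ and is therefore periodic for $f$. For the reverse inclusion, suppose $x \in \mathrm{Per}(f, \mathcal{U})$, say $f^p(x) = x$ with $p \geq 1$. Given any $n \geq 0$, choose $k$ with $kp \geq n$; then, using that the chain $f^m(\mathcal{U})$ is decreasing, $x = f^{kp}(x) \in f^{kp}(\mathcal{U}) \subseteq f^n(\mathcal{U})$. Since $n$ was arbitrary, $x \in \bigcap_{n \geq 0} f^n(\mathcal{U}) = V$.

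The ``in particular'' clause is then immediate: $V \subseteq f^n(\mathcal{U})$ for every $n \geq 0$, so $\#\mathrm{Per}(f, \mathcal{U}) = \#V \leq \#f^n(\mathcal{U})$.

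I expect no real obstacle here; this is a short finiteness argument. The only point requiring care is that finiteness is used twice and is essential — once to guarantee the descending chain $f^n(\mathcal{U})$ stabilizes, and once to conclude that a surjective self-map of $V$ is a bijection — so I would not phrase the argument in a way that silently suggests it works for infinite $\mathcal{U}$.
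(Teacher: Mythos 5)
Your proof is correct and complete; note that the paper itself does not prove this lemma but simply cites it as Lemma 5.2 of \cite{juul1}, so there is no in-paper argument to compare against. Your argument is the standard one: the descending chain $\mathcal{U} \supseteq f(\mathcal{U}) \supseteq f^2(\mathcal{U}) \supseteq \cdots$ stabilizes by finiteness to a set $V$ on which $f$ is surjective, hence bijective, giving $V \subseteq \mathrm{Per}(f,\mathcal{U})$, while the reverse inclusion (which, as you implicitly note, needs no finiteness) follows from $x = f^{kp}(x) \in f^{kp}(\mathcal{U}) \subseteq f^n(\mathcal{U})$. Your closing remark correctly identifies exactly where finiteness enters, and the ``in particular'' clause follows immediately.
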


We say that the degree of $\beta \in \F_q$, written $\deg \beta$, is the degree of the minimal polynomial of $\beta$ over $\F_q$. 
\begin{lemma} \label{ffcount}
Let $\F_q$ be a finite field with $q$ elements, and let $k > 1$ be an integer. Then 
\begin{equation} \label{smalldeg}
    \#\{\beta \in \F_{q^k} : \deg \beta < k\} \leq 2q^{k/2}.
\end{equation}
\end{lemma}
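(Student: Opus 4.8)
The plan is to count elements of $\F_{q^k}$ of degree strictly less than $k$ by grouping them according to the proper divisors $j \mid k$ with $j < k$: an element $\beta \in \F_{q^k}$ has $\deg \beta < k$ precisely when $\beta \in \F_{q^j}$ for some proper divisor $j$ of $k$, and each such $\F_{q^j}$ has exactly $q^j$ elements. Since the union of the $\F_{q^j}$ over proper divisors $j$ of $k$ is contained in the union over $j \le k/2$ of $\F_{q^j}$, we get the crude bound
\begin{equation*}
\#\{\beta \in \F_{q^k} : \deg \beta < k\} \le \sum_{j=1}^{\lfloor k/2 \rfloor} q^j = \frac{q^{\lfloor k/2 \rfloor + 1} - q}{q - 1} < \frac{q^{\lfloor k/2 \rfloor + 1}}{q-1}.
\end{equation*}

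The remaining task is purely arithmetic: to show $q^{\lfloor k/2 \rfloor + 1}/(q-1) \le 2 q^{k/2}$. When $k$ is even this reads $q^{k/2+1}/(q-1) \le 2q^{k/2}$, i.e. $q/(q-1) \le 2$, which holds for all $q \ge 2$. When $k$ is odd, $\lfloor k/2 \rfloor + 1 = (k+1)/2$, and the inequality becomes $q^{(k+1)/2}/(q-1) \le 2 q^{k/2}$, i.e. $q^{1/2}/(q-1) \le 2$; since $q \ge 2$ gives $q - 1 \ge q^{1/2}/2$ (equivalently $2\sqrt q \ge q + \ldots$, easily checked for $q \ge 2$ as $(\sqrt q)^2 - 2\sqrt q \le 0$ fails only... ), one checks this directly for small $q$ and notes it is increasingly slack as $q$ grows. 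I expect no real obstacle here; the only mild care is handling the odd and even cases of $\lfloor k/2\rfloor$ separately and confirming the constant $2$ suffices uniformly in $q \ge 2$ and $k > 1$, the tightest case being $q = 2$.

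Alternatively, and perhaps more cleanly, one can bound the count by first noting that every proper divisor $j$ of $k$ satisfies $j \le k/2$, so the set in question is contained in $\bigcup_{j \le k/2} \F_{q^j}$, a set of size at most $q^{\lfloor k/2\rfloor} + q^{\lfloor k/2 \rfloor - 1} + \cdots + q \le q^{\lfloor k/2 \rfloor}\bigl(1 + q^{-1} + q^{-2} + \cdots\bigr) \le q^{\lfloor k/2\rfloor} \cdot \frac{q}{q-1} \le 2 q^{k/2}$, using $\lfloor k/2 \rfloor \le k/2$ and $q/(q-1) \le 2$. Either route finishes the proof.
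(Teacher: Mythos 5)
Your proposal is correct and follows essentially the same route as the paper: bound the count of elements of degree less than $k$ by the sizes of the proper subfields $\F_{q^j}$ with $j \mid k$, note every proper divisor satisfies $j \le k/2$, and sum the resulting geometric series to get the factor $2q^{k/2}$. Your second, ``cleaner'' variant is in fact verbatim the paper's argument, so there is nothing further to add.
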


\begin{proof}
The subfields of $\F_{q^k}$ are precisely $\F_{q^r}$ for $r \mid k$, and $\F_{q}(\beta) = \F_{q^{\deg \beta}}$. Thus $\#\{\beta \in \F_{q^k} : \deg \beta < k\}$ is bounded above by $\sum_{r \mid k, r \neq k} q^r$, and
$$
\sum_{r \mid k, r \neq k} q^r \leq q^{k/2} + q^{(k/2) - 1} + q^{(k/2) - 2} + \cdots = q^{k/2} \left(1 + \frac{1}{q} + \frac{1}{q^2} + \cdots \right) \leq 2q^{k/2}.
$$
\end{proof}

\begin{proof}[Proof of Theorem \ref{fppreduction}]
Begin by observing that $\#\phi^n(\PP^1(\F_{q^{mk}})) \leq \phi^n(\F_{q^{mk}}) + 1$, and so 
\begin{equation} \label{p1red}
\frac{\#\phi^n(\PP^1(\F_{q^{mk}}))}{q^{mk}+1} \leq \frac{\#\phi^n(\F_{q^{mk}})}{q^{mk}} + \frac{1}{q^{mk} + 1}. 
\end{equation}

We will bound $\frac{\#\phi^n(\F_{q^{mk}})}{q^{mk}}$ for sufficiently large $k$. To do so, we study the extension $K_n^{\text{arith}}/\F_{q^m}(t)$. Because $\F_{q^m}$ is the maximal constant field subextension of $K_n^{\text{arith}}$, we have $\Gal(K_n^{\text{arith}}/\F_{q^m}(t)) = G_n$.


Each place $P$ of $\F_{q^m}(t)$ (resp. $\p$ of $K_n^{\text{arith}}$), has a corresponding discrete valuation $v_P$ (resp. $v_\p$), and we denote by $\O_P$ (resp. $\O_\p$) the ring of integers $\{z \in \F_{q^m}(t)^* : v_P(z) \geq 0\}$ (resp. $\{z \in K_n^{\text{arith}*} : v_\p(z) \geq 0\}$) and we denote by $\m_P$ (resp. $\m_\p$) the maximal ideal $\{z \in \O_P : v_P(z) > 0\}$ (resp. $\{z \in \O_\p : v_p(z) > 0\}$). We denote the residue fields $\O_P/\m_P$ and $\O_\p/\m_\p$ by $\F_P$ and $\F_\p$, respectively, and we denote the canonical maps $\O_P \to \F_P$ and $\O_\p$ to $\F_\p$ by $\pi_P$ and $\pi_\p$, respectively. 

Let $\alpha_1, \ldots, \alpha_{d^n}$ be the roots of $\phi^n(x)-t$ in $\overline{\F_{q^m}(t)}$, and observe that these are all distinct. Let $T$ be the set of places $P$ of $\F_{q^m}(t)$ satisfying all of the following:
\begin{enumerate}
    \item $P$ is not ramified in $K_n^{\text{arith}}$;
    \item every extension $\p$ of $P$ to $K_n^{\text{arith}}$ satisfies $v_{\p}(\alpha_i - \alpha_j) = 0$ for all $i \neq j$;
    \item every extension $\p$ of $P$ to $K_n^{\text{arith}}$ satisfies $v_{\p}(\alpha_i) \geq 0$ for all $i$;
    \item $P$ is not the place at infinity.
\end{enumerate}   
(We remark that condition (2) implies condition (1), though we do not need that for the proof.) Let $P \in T$, and let $\p$ be an extension of $P$ to $K_n^{\text{arith}}$. Condition (4) ensures there is an irreducible polynomial $p(t) \in \F_{q^m}[t]$ of some degree $k \geq 1$ such that $v_P$ is given by $\ord_p(\cdot)$. 
In particular, $\F_P = \F_{q^m}[t]/(p(t))$, which is a finite field of $q^{mk}$ elements. 
Moreover, condition (3) ensures $\alpha_i \in \O_\p$ for all $i = 1, \ldots , d^n$, and condition (2) ensures  
\begin{equation} \label{distinctroots}
\text{$\pi_\p : \{\alpha_1, \ldots, \alpha_{d^n}\} \to \{\pi_p(\alpha_1), \ldots, \pi_\p(\alpha_{d^n})\}$ is a bijection.}
\end{equation}
Let $D(\p/P) \subset G_n$ be the decomposition group of $\p$, i.e. 
$$\{g \in G_n : \text{$v_\p(g(z)) = v_\p(z)$ for all $z \in K_n^{\text{arith}} \setminus \{0\}$}\}.$$
 Observe that any $g \in D(\p/P)$ gives a map $\O_\p \to \O_\p$ that descends to  $\overline{g} \in \Gal(\F_\p/\F_P)$ given by $\overline{g}(z + \p) = g(z) + \p$. For any $\alpha_i$, we have
$$
\pi_\p(g(\alpha_i)) = g(\alpha_i) + \p = \overline{g}(\alpha_i + \p) = \overline{g}(\pi_\p(\alpha_i)),
$$
and it follows from \eqref{distinctroots} that $g$ permutes $\{\alpha_1, \ldots, \alpha_{d^n}\}$ in the same way that $\overline{g}$ permutes $\{\pi_\p(\alpha_1), \ldots, \pi_\p(\alpha_{d^n})\}$. 

Because $\phi$ is defined over $\F_{q^m}$, it commutes with $\pi_\p$, so we have
\begin{equation} \label{preimeq}
\pi_\p(t) = \pi_\p(\phi^n(\alpha_i)) = \phi^n(\pi_\p(\alpha_i)),
\end{equation}
whence $\{\pi_\p(\alpha_1), \ldots, \pi_\p(\alpha_{d^n})\}$ are the preimages of $\pi_\p(t)$ under $\phi^n$. Now $\pi_\p(t)$ is a root of $p(t)$ in $\F_\p$, and hence lies in $\F_P$, since the latter is $\O_P/(p(t))$. Let $\beta = \pi_\p(t)$, and let $\beta'$ be any other root of $p(t)$ in $\F_P$. Then there is $\sigma \in \Gal(\F_\p/\F_P)$ with $\sigma(\beta) = \beta'$. Now $\sigma$ commutes with $\phi$, and so applying $\sigma$ to \eqref{preimeq} shows that the preimages of $\beta'$ under $\phi^n$ are 
$\{\sigma(\pi_\p(\alpha_1)), \ldots, \sigma(\pi_\p(\alpha_{d^n}))\}$. Moreover, $\Gal(\F_\p/\F_P)$ is abelian, and so $\overline{g}$ and $\sigma$ commute for any $g \in D(\p/P)$. It follows that $g$ has a fixed point in $\{\pi_\p(\alpha_1), \ldots, \pi_\p(\alpha_{d^n})\}$ if and only if it has a fixed point in $\{\sigma(\pi_\p(\alpha_1)), \ldots, \sigma(\pi_\p(\alpha_{d^n}))\}$.


Still assuming that $P \in T$, condition (1) implies that the map $g \to \overline{g}$ gives an isomorphism $D(\p/P) \to \Gal(\F_\p/\F_P)$ \cite[Theorem 9.6]{Rosen}. The inverse image of the Frobenius map $x \mapsto x^{q^{mk}}$ is denoted $\text{Frob}(\p/P)$, and the set $\{\text{Frob}(\p/P) : \text{$\p$ extends $P$}\}$ is a conjugacy class of $G_n$ \cite[Proposition 9.7]{Rosen}, which we denote $\text{Frob}(P)$. Observe that if $\text{Frob}(\p/P)$ fixes one of the $\alpha_i$ for some extension $\p$ of $P$, then so does every element of $\text{Frob}(P)$.



Now $\text{Frob}(\p/P)(\alpha_i) = \alpha_i$ is equivalent to $(\pi_p(\alpha_i))^{q^{mk}} = \pi_\p(\alpha_i)$, which is equivalent to $\alpha_i \in \F_{q^{mk}}$.  Thus if $\beta_1, \ldots, \beta_k$ are the roots in $\F_{q^{mk}}$ of $p(t)$, we have
\begin{multline} \label{Frobeq}
\text{Frob$(P)$ acts on $\{\alpha_1, \ldots, \alpha_{d^n}\}$ with at least one fixed point} \\ \Longleftrightarrow 
\text{for every $j \in \{1, \ldots, k\}$, there is $y \in \F_{q^{mk}}$ with $\phi^n(y) = \beta_j$}
\end{multline}
Observe that the latter condition in \eqref{Frobeq} is equivalent to $\{\beta_1, \ldots, \beta_k\} \subset \phi^n(\F_{q^{mk}})$. 

Let $U = \{\text{places $P$ of $\F_{q^m}(t)$ that are unramified in $K_n^{\text{arith}}$}\}$. The Chebotarev Density Theorem for function fields (see e.g. \cite[Theorem 9.13B]{Rosen}) states that for any conjugacy class $C \subset G_n$, there is a constant $\Delta$ such that
\begin{equation} \label{Cheb}
\#\{P \in U : \text{$\deg P = k$ and $\text{Frob}(P) = C$}\} \leq \frac{\#C}{\#G_n}\cdot \frac{q^{mk}}{k} + \Delta \frac{q^{mk/2}}{k}.
\end{equation}
Both $U$ and $T$ contain all but finitely many places of $\F_{q^m}(t)$, and so there exists $k_1$ such that for any $k \geq k_1$, all places of degree $k$ lie in both $U$ and $T$. 
The set of $g \in G_n$ acting on $\{\alpha_1, \ldots, \alpha_{d^n}\}$ with at least one fixed point is a union of conjugacy classes of $G_n$, and it follows from \eqref{Frobeq} and \eqref{Cheb} that for $k \geq k_1$,
$$
\#\{P : \text{$\deg P = k$ and $\{\beta_1, \ldots, \beta_k\} \subset \phi^n(\F_{q^{mk}})$}\} \leq \FPP(G_n)\cdot \frac{q^{mk}}{k} + \Delta \frac{q^{mk/2}}{k}.
$$
Thus for $k \geq k_1$ we have
\begin{equation} \label{medstep}
\frac{\#\{\beta \in \phi^n(\F_{q^{mk}}) : \deg \beta = k\}}{k} \leq \FPP(G_n)\cdot \frac{q^{mk}}{k} + \Delta \frac{q^{mk/2}}{k}.
\end{equation}
From Lemma \ref{ffcount}, we have $\#\{\beta \in \phi^n(\F_{q^{mk}}) : \deg \beta < k\} \leq 2q^{mk/2}$, and \eqref{medstep} then gives
\begin{equation} \label{almostthere}
\#\phi^n(\F_{q^{mk}}) \leq \FPP(G_n)\cdot q^{mk} + (\Delta+2) q^{mk/2}
\end{equation}
for $k \geq k_1$. Finally, combining \eqref{almostthere} with Lemma \ref{perim} and equation 
\eqref{p1red}, we obtain for $k \geq k_1$, 
\begin{align*}
\frac{\#\Per(\phi, \PP^1(\F_{q^{mk}}))}{q^{mk}+1} & \leq \frac{\#\phi^n(\PP^1(\F_{q^{mk}}))}{q^{mk}+1} \leq \frac{\#\phi^n(\F_{q^{mk}})}{q^{mk}} + (q^{mk} + 1)^{-1} \\ & \leq \FPP(G_n) + (q^{mk} + 1)^{-1} + (\Delta+2) q^{-mk/2} 
\end{align*}
Let $\delta > 0$.  Taking $k_0$ large enough so that $k_0 \geq k_1$ and
$(q^{mk_0} + 1)^{-1} + (\Delta+2) q^{-mk_0/2} < \delta$
completes the proof. 
\end{proof}

We obtain the following Corollary of Theorem \ref{fppreduction}:

\begin{corollary} \label{fppreductioncor}
Let $\Fq$ be a finite field of characteristic $p$ and $\phi \in \Fq(x)$ have degree $d$ with $2 \leq d < p$. Then for every $\epsilon > 0$ there are positive integers $M$ and $k_0$ such that
\begin{equation*} 
\frac{\#\Per(\phi, \PP^1(\F_{q^{Mk}}))}{q^{Mk}+1} <
\FPP(\text{pgIMG}(\phi)/\Fq) + \epsilon
\end{equation*}
for all $k > k_0$. Moreover, $M \leq \limsup_{n \to \infty} m_n$, where $m_n = [(K_n^{\text{arith}} \cap \overline{\F_q}) : \F_q]$.
\end{corollary}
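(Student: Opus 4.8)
The plan is to read the Corollary off from Theorem \ref{fppreduction} by letting the level $n$ grow. First I would recall that $G_n$ is the quotient of $\text{pgIMG}(\phi)/\Fq$ obtained by restricting the action to $T_n(\phi)$, so that the numbers $\FPP(G_n)$ form a non-increasing sequence whose limit is, by definition, $\FPP(\text{pgIMG}(\phi)/\Fq)$. Consequently, given $\epsilon > 0$, one can choose an integer $N_1$ with
$$\FPP(G_n) < \FPP(\text{pgIMG}(\phi)/\Fq) + \tfrac{\epsilon}{2} \qquad \text{for all } n \geq N_1.$$

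Next I would arrange the bound on $M$. Set $L := \limsup_{n \to \infty} m_n \in \Z^{+} \cup \{\infty\}$. If $L = \infty$ there is nothing to do; if $L < \infty$, then by the definition of $\limsup$ there is $N_2$ with $m_n \leq L$ for all $n \geq N_2$. I would then fix any $n \geq \max(N_1, N_2)$ and set $M := m_n = [(K_n^{\text{arith}} \cap \overline{\Fq}) : \Fq]$, so that automatically $M \leq L = \limsup_{n \to \infty} m_n$. (When $L$ is finite it is in fact attained by infinitely many $n$, since it is a $\limsup$ of positive integers, so one could even take $M = L$; but only the stated inequality is needed.)

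Finally, I would apply Theorem \ref{fppreduction} at this level $n$ with $\delta = \epsilon/2$ — its hypothesis $2 \leq d < p$ holds by assumption, and $K_n^{\text{arith}} \cap \overline{\Fq} = \F_{q^M}$ — to obtain a constant $k_0$ such that, for all $k > k_0$,
$$\frac{\#\Per(\phi, \PP^1(\F_{q^{Mk}}))}{q^{Mk}+1} < \FPP(G_n) + \tfrac{\epsilon}{2} < \FPP(\text{pgIMG}(\phi)/\Fq) + \epsilon,$$
which is exactly the claimed inequality.

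There is essentially no serious obstacle in this argument; the one place that requires a moment's care is that the sequence $(m_n)_n$ need not be monotone, so one cannot simply "pass to the limit in $n$" and read off a single modulus $M$. The fix, as above, is to note that $\FPP(G_n)$ has already come within $\epsilon/2$ of its limiting value for every sufficiently large $n$, and then to choose $n$ large enough that, in addition, $m_n$ does not exceed $\limsup_n m_n$ — at which point taking $M = m_n$ finishes the proof.
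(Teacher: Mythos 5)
Your proposal is correct and follows essentially the same route as the paper: apply Theorem \ref{fppreduction} with $\delta = \epsilon/2$ at a level $n$ chosen large enough that $\FPP(G_n)$ is within $\epsilon/2$ of $\FPP(\text{pgIMG}(\phi)/\Fq)$ and, when $\limsup_n m_n$ is finite, also large enough that $m_n \leq \limsup_n m_n$ (using that the $m_n$ are integers), then set $M = m_n$. The paper phrases the first step via an infinite set of admissible levels rather than your eventual bound from convergence of $\FPP(G_n)$, but this is an immaterial difference.
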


\begin{proof}
Let $\epsilon > 0$ be given. By definition $\FPP(\text{pgIMG}(\phi)/\Fq) = \lim_{i \to \infty} \FPP(G_i)$, and so there is an infinite set $I$ such that $\FPP(G_i) \leq \FPP(\text{pgIMG}(\phi)/\Fq) + \epsilon/2$ for any $i \in I$.  For each $i \in I$, we may take $\delta = \epsilon/2$ in Theorem \ref{fppreduction} to obtain $m_i$ and $k_0$ such that
\begin{equation*} 
\frac{\#\Per(\phi, \PP^1(\F_{q^{m_ik}}))}{q^{m_ik}+1} \leq \FPP(\text{pgIMG}(\phi)/\Fq) + \epsilon/2 + \epsilon/2
\end{equation*}
for all $k \geq k_0$. If $\limsup_{n \to \infty} m_n = \infty$, then any choice of $i \in I$ proves the Corollary. If $\limsup_{n \to \infty} m_n = L < \infty,$ then we may take $i \in I$ large enough so that $m_i \leq L$. 
\end{proof}

Recall that a finite extension $E$ of $\F_q(t)$ is geometric (over $\F_q(t))$ if $E \cap \overline{\F_q} = \F_q$. Hence $K_n^{\text{arith}}$ is geometric if and only if $m_n = 1$ for all $n \geq 1$.

\begin{corollary} \label{infsup}
Let $\Fq$ be a finite field of characteristic $p$ and $\phi \in \Fq(x)$ have degree $d$ with $2 \leq d < p$. Then 
\begin{equation*} 
\liminf_{k \to \infty}\frac{\#\Per(\phi, \PP^1(\F_{q^{k}}))}{q^{k}+1}  \leq \FPP(\text{pgIMG}(\phi)/\Fq).
\end{equation*}
If in addition $K_n^{\text{arith}}$ is geometric over $\F_q(t)$ for all $n \geq 1$, then
\begin{equation*} 
\limsup_{k \to \infty}\frac{\#\Per(\phi, \PP^1(\F_{q^{k}}))}{q^{k}+1}  \leq \FPP(\text{pgIMG}(\phi)/\Fq).
\end{equation*}
\end{corollary}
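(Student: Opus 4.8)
The plan is to deduce both inequalities directly from Corollary \ref{fppreductioncor}, which already packages the Chebotarev estimate of Theorem \ref{fppreduction} in the form we need. Write $L = \FPP(\text{pgIMG}(\phi)/\Fq)$ for brevity.

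For the first inequality I would fix $\epsilon > 0$ and invoke Corollary \ref{fppreductioncor} to obtain positive integers $M$ and $k_0$ (both allowed to depend on $\epsilon$) such that $\frac{\#\Per(\phi, \PP^1(\F_{q^{Mk}}))}{q^{Mk}+1} < L + \epsilon$ for all $k > k_0$. The set $\{Mk : k > k_0\}$ is an infinite subset of the positive integers, so the liminf of the sequence $n \mapsto \frac{\#\Per(\phi, \PP^1(\F_{q^{n}}))}{q^{n}+1}$ taken over all $n$ is bounded above by its liminf along this subsequence, which is at most $L + \epsilon$. Since $\epsilon > 0$ was arbitrary, the first displayed inequality follows.

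For the second inequality, the key observation is that the hypothesis that $K_n^{\text{arith}}$ is geometric over $\F_q(t)$ for every $n \geq 1$ means precisely that $m_n = [(K_n^{\text{arith}} \cap \overline{\F_q}) : \F_q] = 1$ for all $n$, and hence $\limsup_{n \to \infty} m_n = 1$. By the ``moreover'' clause of Corollary \ref{fppreductioncor}, we may then take $M = 1$ for \emph{every} choice of $\epsilon$: that is, for each $\epsilon > 0$ there is $k_0$ with $\frac{\#\Per(\phi, \PP^1(\F_{q^{k}}))}{q^{k}+1} < L + \epsilon$ for all $k > k_0$. This gives $\limsup_{k \to \infty} \frac{\#\Per(\phi, \PP^1(\F_{q^{k}}))}{q^{k}+1} \leq L + \epsilon$, and letting $\epsilon \to 0$ finishes the proof.

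I do not expect a genuine obstacle here, as the substantive content is entirely contained in Theorem \ref{fppreduction} and Corollary \ref{fppreductioncor}; what remains is bookkeeping. The one point deserving care is the contrast between the two parts: in the liminf statement the modulus $M$ is permitted to vary with $\epsilon$, which is harmless since a liminf over $n$ is dominated by the liminf along any subsequence, whereas in the limsup statement it is essential that the geometric hypothesis pins $M$ down to $1$ uniformly in $\epsilon$, so that the bound holds for all sufficiently large $k$ and not merely along an arithmetic progression.
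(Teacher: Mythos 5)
Your proposal is correct and matches the paper's (very terse) proof: the paper deduces the liminf bound from Corollary \ref{fppreductioncor} exactly as you do, and the limsup bound from Theorem \ref{fppreduction} with $m=1$, which is equivalent to your use of the ``moreover'' clause of Corollary \ref{fppreductioncor} forcing $M=1$ under the geometric hypothesis. The subsequence and uniformity bookkeeping you spell out is precisely the content the paper leaves implicit.
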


\begin{proof}
The first statement follows from Corollary \ref{fppreductioncor} and the second from Theorem \ref{fppreduction}.
\end{proof}

In particular, if $K_n^{\text{arith}}$ is geometric over $\F_q(t)$ for all $n \geq 1$ and $\FPP(\text{pgIMG}(\phi)/\Fq) = 0$, then the second statement of Corollary \ref{infsup} gives
\begin{equation*} 
   \lim_{k \to \infty}\frac{\#\Per(\phi, \PP^1(\F_{q^{k}}))}{q^{k}+1} = 0. 
\end{equation*}

At present the constant field sub-extensions $ \F_{q^{m_n}}$ (which we recall is  $K_n^{\text{arith}} \cap \overline{\Fq}$) are in general poorly understood. The main result is in the case of quadratic polynomials, and due to Pink:
\begin{theorem}[Pink \cite{pink2}] \label{pinkconst}
Let $\F_q$ be a finite field of odd characteristic, and let $\phi \in \F_q[x]$ have degree 2. Suppose that the unique finite critical point of $\phi$ is strictly preperiodic and that $\phi$ is not conjugate to a Chebyshev polynomial. Then
$$
K_n^{\text{arith}} \cap \overline{\Fq} \subseteq \F_q(\zeta_8),
$$
where $\zeta_8$ is a primitive $8$th root of unity. In particular, if $q$ is a square then $K_n^{\text{arith}}$ is geometric over $\F_q(t)$ for all $n \geq 1$.
\end{theorem}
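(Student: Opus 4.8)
The plan is essentially Pink's argument. The first step is a normalization: since $p$ is odd, $\phi$ is conjugate over $\F_q$ to a polynomial $x^2+c$ with $c\in\F_q$, and replacing $\phi$ by this conjugate only effects an invertible substitution in the parameter $t$, hence does not change the field of constants of $K_n^{\text{arith}}$. So we may assume $\phi(x)=x^2+c$. In these coordinates the finite critical point is $0$ (the other critical point, $\infty$, being fixed by $\phi$), the hypothesis of strict preperiodicity says the forward orbit $p_i:=\phi^i(0)$ ($i\ge1$) is a finite set, so that $P_\phi=\{p_i:i\ge1\}\cup\{\infty\}$ is finite, and ``not Chebyshev'' becomes $c\ne-2$.

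The second, and central, step is to reinterpret the quantity to be bounded and then apply Pink's recursive description of the monodromy groups. Writing $G_n^{\text{geom}}$ and $G_n^{\text{arith}}$ for the level-$n$ images of $\text{pgIMG}(\phi)/\overline{\F_q}$ and $\text{pgIMG}(\phi)/\F_q$ in $\mathrm{Aut}(X^n)$, one has that $G_n^{\text{geom}}$ is normal in $G_n^{\text{arith}}$ with procyclic quotient generated by the image of Frobenius, and $[K_n^{\text{arith}}\cap\overline{\F_q}:\F_q]=[G_n^{\text{arith}}:G_n^{\text{geom}}]$; so the task is to bound, uniformly in $n$, the order of Frobenius modulo the geometric monodromy group. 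The geometric group is generated by the inertia subgroups around the finitely many points of $P_\phi$, and passing from it to the arithmetic group amounts to adjoining Frobenius elements at those places. The tree action of each such Frobenius is controlled by the splitting of the corresponding residue extensions, hence ultimately by the classes in $\F_q^{\times}/(\F_q^{\times})^2$ of the post-critical points $p_i$, of their differences $p_i-p_j$, and of the discriminants $\mathrm{disc}(\phi^k(x)-t)\equiv\pm(p_k-t)$ (mod squares) and the resultants entering Pink's recursion at each level. The point --- and the step I expect to be the main obstacle, since it is the technical core of \cite{pink2} --- is that, because $P_\phi$ is finite, propagating these relations through the recursion shows that the field of constants $K_n^{\text{arith}}\cap\overline{\F_q}$ never grows beyond $\F_q(\sqrt{-1},\sqrt2)=\F_q(\zeta_8)$, uniformly in $n$. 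The Chebyshev polynomial must be excluded here because for $\phi=x^2-2$ the monodromy is virtually abelian and the residue extensions sweep out the whole $2$-power cyclotomic tower, so the constant extensions $\F_q(\zeta_{2^k})$ grow without bound.

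Finally, the ``in particular'' is immediate and needs no input from Pink. If $q$ is a square, write $q=q_0^2$ with $q_0$ a power of the odd prime $p$; then $q=q_0^2\equiv1\pmod 8$, so $8\mid q-1$, so $\F_q^{\times}$ contains a primitive $8$th root of unity and $\F_q(\zeta_8)=\F_q$. Combining this with the containment $K_n^{\text{arith}}\cap\overline{\F_q}\subseteq\F_q(\zeta_8)$ from the previous step, together with the trivial inclusion $\F_q\subseteq K_n^{\text{arith}}\cap\overline{\F_q}$, forces $K_n^{\text{arith}}\cap\overline{\F_q}=\F_q$ for every $n\ge1$, i.e. $K_n^{\text{arith}}$ is geometric over $\F_q(t)$.
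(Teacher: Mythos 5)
Be aware that the paper does not prove this statement at all: it is quoted verbatim from Pink \cite{pink2} (the theorem is labelled ``[Pink]''), and the paper's only role for it is to feed Corollary \ref{limzerocor}. So there is no internal proof to compare your sketch against, and your write-up is in the same position as the paper's: everything of substance is delegated to \cite{pink2}. The parts you actually argue are correct. The normalization to $x^2+c$ over $\F_q$ (char $\neq 2$) and the observation that an $\F_q$-affine conjugacy only replaces $t$ by another generator of $\F_q(t)$, hence leaves the constant field of $K_n^{\text{arith}}$ unchanged, are fine; so is the dictionary $[K_n^{\text{arith}}\cap\overline{\F_q}:\F_q]=[G_n^{\text{arith}}:G_n^{\text{geom}}]$; and the ``in particular'' clause is a complete and correct elementary argument ($q=q_0^2$ with $q_0$ odd gives $q\equiv 1\bmod 8$, so $\zeta_8\in\F_q$ and the containment collapses to $\F_q$, exactly as the paper needs for Theorem \ref{main12}). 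Your heuristic for why Chebyshev must be excluded (the constant extensions sweep out the $2$-power real-cyclotomic tower) is also accurate.

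The genuine gap, which you yourself flag, is the central containment $K_n^{\text{arith}}\cap\overline{\F_q}\subseteq\F_q(\zeta_8)$: the sentence ``propagating these relations through the recursion shows that the field of constants never grows beyond $\F_q(\sqrt{-1},\sqrt{2})$'' is a description of what Pink proves, not a proof. The bound on the Frobenius image is not a formal consequence of $P_\phi$ being finite (finiteness of $P_\phi$ holds for every map over $\F_q$); it requires Pink's explicit determination of the geometric monodromy group for strictly preperiodic quadratics, of its normalizer in $\Aut(X^*)$, and of the finite $2$-elementary quotient into which the arithmetic group maps, so that the constant field extension is pinned down by finitely many quadratic classes independent of $n$. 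None of that is reproduced or replaced by an alternative argument in your sketch. As a citation-plus-easy-deductions this matches how the paper uses the result; as a standalone proof of the displayed containment it is incomplete at precisely the step that carries all the content.
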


Together with Corollary \ref{infsup}, this gives:
\begin{corollary} \label{limzerocor}
Let $\F_q$ be a finite field of odd characteristic, and let $\phi \in \F_q[x]$ have degree 2. Suppose that $q$ is a square, the unique finite critical point of $\phi$ is strictly preperiodic, and $\phi$ is not conjugate over $\overline{\F_q}$ to a Chebyshev polynomial. If $\FPP(\text{pgIMG}(\phi)/\Fq) = 0$, then 
\begin{equation*} 
   \lim_{k \to \infty}\frac{\#\Per(\phi, \PP^1(\F_{q^{k}}))}{q^{k}+1} = 0. 
\end{equation*}
\end{corollary}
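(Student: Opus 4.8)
The plan is to obtain Corollary~\ref{limzerocor} as an immediate consequence of Pink's Theorem~\ref{pinkconst} combined with the limsup statement of Corollary~\ref{infsup}. First I would record that the numerical hypothesis of Corollary~\ref{infsup} holds in this setting: $\phi$ has degree $d = 2$, and since $\F_q$ has odd characteristic $p$ we have $p \ge 3$, so $2 \le d < p$. Next I would verify that $\phi$ satisfies all hypotheses of Theorem~\ref{pinkconst}. By assumption $\phi \in \F_q[x]$ has degree $2$, the field $\F_q$ has odd characteristic, and the unique finite critical point of $\phi$ is strictly preperiodic; finally, $\phi$ is not $\overline{\F_q}$-conjugate to a Chebyshev polynomial, which a fortiori implies that $\phi$ is not $\F_q$-conjugate to a Chebyshev polynomial. (This is the only place where the slightly different ``Chebyshev'' exclusions in the two statements must be reconciled, and the needed implication goes in the easy direction.)

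With these checks in place, the hypothesis that $q$ is a square lets me apply the second conclusion of Theorem~\ref{pinkconst}: $K_n^{\text{arith}}$ is geometric over $\F_q(t)$ for every $n \ge 1$. This is precisely the extra assumption required by the second assertion of Corollary~\ref{infsup}, which then yields
$$
\limsup_{k \to \infty} \frac{\#\Per(\phi, \PP^1(\F_{q^{k}}))}{q^{k}+1} \;\le\; \FPP(\text{pgIMG}(\phi)/\F_q).
$$
Since by hypothesis $\FPP(\text{pgIMG}(\phi)/\F_q) = 0$ and each term $\#\Per(\phi, \PP^1(\F_{q^{k}}))/(q^{k}+1)$ is nonnegative, the sequence converges to $0$, which is the assertion to be proved.

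I do not anticipate any genuine obstacle: the corollary is a formal assembly of inputs already proved in this section, and the entire argument occupies only a few lines. The only subtlety worth flagging is expository---ensuring the hypotheses of Theorem~\ref{pinkconst} are transcribed faithfully (the strictly preperiodic finite critical point and the non-Chebyshev condition), and that the degree bound $d < p$ underlying the Chebotarev step behind Corollary~\ref{infsup} is explicitly noted.
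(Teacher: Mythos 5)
Your proof is correct and follows exactly the paper's route: the paper derives Corollary~\ref{limzerocor} by combining Pink's Theorem~\ref{pinkconst} (to get that $K_n^{\text{arith}}$ is geometric when $q$ is a square) with the second, limsup statement of Corollary~\ref{infsup}, and then concluding the limit is $0$ from $\FPP(\text{pgIMG}(\phi)/\F_q)=0$. Your additional checks (that $2 = d < p$ since $p$ is odd, and that non-conjugacy over $\overline{\F_q}$ to a Chebyshev polynomial suffices for Pink's hypothesis) are accurate and only make explicit what the paper leaves implicit.
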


We now wish to show that $\FPP(\text{pgIMG}(\phi)/\Fq) = \FPP(\text{pgIMG}(\tilde{\phi})/\C)$, thereby reducing the proofs of both Theorems \ref{main1} and \ref{main12} to the computation of $\FPP(\text{pgIMG}(\tilde{\phi})/\C)$.
To do so, we take advantage of theorems about lifting Galois groups from characteristic $p$ to characteristic $0$. Let $T$ and $T'$ be two complete $d$-ary rooted trees. If $\iota: T \to T'$ is an isomorphism of rooted trees, then any $G \leq \Aut(T)$ embeds as a subgroup $\iota \circ G \circ \iota^{-1}$ of $\Aut(T')$. A different choice of $\iota$ alters the image of this embedding by a conjugacy in $\Aut(T')$. In particular, $\FPP(G) = \FPP(\iota \circ G \circ \iota^{-1})$ independent of choice of $\iota$, since $\FPP$ is invariant under conjugacy. 

\begin{definition} \label{liftabilitydef}
Let $\Fq$ be a finite field of characteristic $p$ and let $\phi \in \Fq(x)$ have degree $d \geq 2$ with $\text{pgIMG}(\phi)/\Fq = G_\infty$ acting on the tree $T_{\Fq}(\phi)$ of preimages of $t$ in $\overline{\Fqbar(t)}$. We call $\phi$ \textbf{liftable} if there exists a map $\tilde{\phi} \in \C(x)$ with $\text{pgIMG}(\tilde{\phi})/\C = \tilde{G}_\infty$ acting on the tree $T_\C(\tilde{\phi})$ of preimages of $t$ in $\overline{\C(t)}$ such that 
\begin{enumerate}
\item $\phi$ and $\tilde{\phi}$ have the same ramification portrait, and  
\item there is a tree isomorphism $\iota: T_{\Fq}(\phi) \to T_\C(\tilde{\phi})$ such that $\iota \circ G_{\infty} \circ \iota^{-1} = \tilde{G}_\infty$. 
\end{enumerate}
\end{definition}

Not all $\phi \in \F_q(x)$ are liftable; for instance if $\phi(x) = x^p - x$ then $\infty$ is the only critical point in $\mathbb{P}^1(\overline{\F_q})$, and no lift $\tilde{\phi} \in \C(x)$ can have the same ramification portrait. 

Note that condition (2) of Definition \ref{liftabilitydef} ensures that if $\phi$ is liftable, then 
\begin{equation} \label{lifteqn}
    \FPP(\text{pgIMG}(\phi)/\Fq) = \FPP(\text{pgIMG}(\tilde{\phi})/\C).
\end{equation}
In Section \ref{background} we show that the latter is equal to $\FPP(\IMG(\tilde{\phi}))$ (see p. \pageref{imgequiv}).  


We remark that 
the action of $\text{pgIMG}(\tilde{\phi)}/\C$ on $T_\C(\tilde{\phi})$ is given by the action of the topological fundamental group $\pi_1(\mathbb{P}^1_\C\setminus P_{\tilde{\phi}}, z_0)$, where $z_0$ is any point outside of $P_{\tilde{\phi}}$ 
The latter may be computed by pulling back loops in 
$\mathbb{P}^1_\C \setminus P_{\tilde{\phi}}$, which allows for the use of topological and geometric tools.  





In order to harness these new tools, we need to know that the maps we study are liftable. For this we appeal to a result of R. Pink.
\begin{theorem}[Pink \cite{pink1}, Corollary 4.4]  \label{liftability}
Let $\Fq$ be a finite field of odd characteristic, and let $\phi \in \Fq(x)$ have degree 2. Then $\phi$ is liftable. 
\end{theorem}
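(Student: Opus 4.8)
The plan is to reduce the statement to the classification of quadratic rational maps by their ramification portraits together with Pink's lifting results, which are exactly the content being cited. First I would recall that over a field of characteristic $\neq 2$, every degree-$2$ rational map $\phi \in \Fq(x)$ has exactly two critical points (counted without multiplicity), each with ramification index $2$, so the ramification portrait of $\phi$ is a finite edge-labeled directed graph built from the forward orbits of the two critical points, with all labels equal to $2$. The key observation is that there are only finitely many possibilities for the abstract shape of this portrait up to the data it records: each critical orbit either is eventually periodic landing in a common cycle, lands in a fixed point, or the two orbits merge, and Propositions \ref{quadraticlattesportraits} and \ref{dynamically exceptional characterization} together with the normal form $\phi(x) = (x^2+a)/(x^2+b)$ show how the portrait constrains $(a,b)$. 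The upshot is that the moduli space of quadratic rational maps is $2$-dimensional and the PCF locus is a countable union of points cut out by the equations $\phi^m(\gamma_i) = \phi^n(\gamma_j)$; the same equations make sense over $\C$, and for each portrait that is realized over $\Fq$ one exhibits a map over $\overline{\Q}$ (hence over $\C$) with the identical portrait.

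The heart of the argument is then to match not just the portraits but the monodromy actions on the trees of preimages, which is condition (2) of Definition \ref{liftabilitydef}. Here I would invoke Pink's work directly: in \cite{pink1} Pink computes $\text{pgIMG}(\phi)/k$ for quadratic $\phi$ purely in terms of the ramification portrait (the \emph{ramification behavior} in his terminology), via a recursive description of the iterated monodromy group as a specified subgroup of $\Aut(X^*)$ depending only on the post-critical combinatorics and the local degrees. Since the characteristic is odd, $2 = \deg\phi$ is prime to $p$, so each $\phi^n(x) - t$ is separable over $\overline{\Fq}(t)$ and $T_{\Fq}(\phi)$ is a complete binary rooted tree; the same holds over $\C$ for the lift $\tilde\phi$. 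Pink's recursion then produces the \emph{same} subgroup of $\Aut(X^*)$ (up to the conjugacy inherent in choosing a tree isomorphism $\iota$) from the shared portrait, whether one works over $\Fq$ or over $\C$. This gives the tree isomorphism $\iota: T_{\Fq}(\phi) \to T_\C(\tilde\phi)$ with $\iota \circ G_\infty \circ \iota^{-1} = \tilde G_\infty$, which is condition (2), while condition (1) is immediate from the construction of $\tilde\phi$.

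The one place requiring care — and the main obstacle — is the boundary cases of the normal form: the maps $\overline{\Fq}$-conjugate to $x^{\pm 2}$ (for which the normal form $(x^2+a)/(x^2+b)$ does not exist) and, more subtly, maps whose two critical points are not individually defined over $\Fq$ but are conjugate over a quadratic extension. For $x^{\pm 2}$ one can handle the monodromy group by hand (it is the procyclic group $\Z_2$ acting on the binary tree, realized identically over $\C$ by $z \mapsto z^2$), so these present no difficulty. For the Galois-conjugate-critical-points case, one must check that Pink's recursive description, which a priori depends on a choice of ordering/marking of the critical points, is insensitive to the Galois twist; this follows because swapping the two critical points induces an automorphism of the tree conjugating $\text{pgIMG}$ to itself, so the resulting subgroup of $\Aut(X^*)$ is well-defined up to conjugacy regardless. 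Once these boundary cases are dispatched, the theorem is exactly Corollary 4.4 of \cite{pink1} packaged in the language of Definition \ref{liftabilitydef}, and we are done.
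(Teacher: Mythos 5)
The decisive step in your proposal is the claim that Pink gives a recursive description of $\text{pgIMG}(\phi)/k$ for an arbitrary quadratic $\phi$ depending only on the ramification portrait and valid uniformly in every characteristic $\neq 2$, from which you extract condition (2) of Definition \ref{liftabilitydef}. No such characteristic-independent computation is available to be invoked: over $\C$ the monodromy action is computed topologically by lifting loops, but over $\overline{\F}_q(t)$ there is no topological fundamental group, and the explicit presentations of profinite IMGs that Pink does give (in \cite{pink2}, and only for quadratic \emph{polynomials}, not general quadratic rational maps) are themselves obtained in positive characteristic by way of the liftability statement you are trying to prove. So this step is either circular or unsupported, and it is precisely the hard part of the theorem. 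A secondary gap: you assert that each portrait occurring over $\Fq$ is realized by some map over $\overline{\Q}$ because ``the same equations make sense over $\C$,'' but solvability in characteristic $0$ with exactly the prescribed critical-orbit coincidences (no more, no fewer) is not automatic --- this is the kind of thing that genuinely fails for wildly ramified examples such as $x^p - x$, which the paper notes is not liftable --- and in the odd-characteristic quadratic case it again comes out of Pink's machinery rather than being free.

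The actual route, which the paper summarizes after the theorem statement, is geometric: Pink constructs a fine moduli scheme $M_\Gamma$ of $\Gamma$-marked quadratic morphisms that is quasi-finite over $\text{Spec}\,\Z[1/2]$ \cite[Theorem 3.3]{pink1}; consequently a $\Gamma$-marked quadratic morphism over $\F_q$ (odd characteristic) is isomorphic to the special fiber of a $\Gamma$-marked quadratic morphism over a discrete valuation ring finitely generated over $\Z_{(p)}$ \cite[Corollary 3.6]{pink1}. This simultaneously produces a characteristic-zero lift with the same portrait (condition (1)) and, crucially, places $\phi$ and its lift in a single family over a DVR. Condition (2) of Definition \ref{liftabilitydef} then follows from Grothendieck's specialization theorem for tame fundamental groups --- tameness is where the hypothesis $2 < p$ enters --- which identifies the geometric monodromy actions of the special and generic fibers on the preimage trees; see \cite[Section 4]{pink}. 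Your proposal contains none of this specialization input, and without it there is no mechanism for comparing the Galois action over $\overline{\F}_q(t)$ with the topological monodromy over $\C$.
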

 

To prove Theorem \ref{liftability}, Pink constructs a fine moduli scheme $M_\Gamma$ for $\Gamma$-marked quadratic morphisms, i.e. quadratic morphisms with specified ramification portrait $\Gamma$. The construction is explicit, and $M_\Gamma$ has several desirable properties, the most crucial being that it is quasi-finite over $\text{Spec} \, \Z[\frac{1}{2}]$ \cite[Theorem 3.3]{pink1}. These properties lead to a proof that any $\Gamma$-marked quadratic morphism over a finite field of odd characteristic $p$ lifts to characteristic zero: it is isomorphic to the special fiber of a $\Gamma$-marked quadratic morphism over $\text{Spec} \, R$, where $R$ is a discrete valuation ring that is finitely generated over $\Z_{(p)}$ \cite[Corollary 3.6]{pink1}. Liftability in the sense of Definition \ref{liftabilitydef} then follows as a direct consequence of Grothendieck's Specialization Theorem for tame fundamental groups; see \cite[Section 4]{pink}.  

The key step in Pink's argument is the quasi-finiteness of $M_\Gamma$, which is equivalent to the statement that that any quadratic morphism over a function field of characteristic $\neq 2$ is isotrivial, i.e. defined over a finite extension of the constant field after a change of variables. Using $p$-adic methods that are completely different from those of \cite{pink1}, this statement was proven in \cite[Corollary 6.3]{bijl}. 

Finally, note that because of condition (1) in Definition \ref{liftabilitydef}, a liftable map $\phi \in \Fq(x)$ is dynamically exceptional if and only if its lift is. From Theorem \ref{liftability}, Corollary \ref{infsup}, and Corollary \ref{limzerocor}, we then obtain: 
\begin{corollary} \label{finalliftcor}
Theorem \ref{maincx1} implies Theorem \ref{main1} and Theorem \ref{main12}.
\end{corollary}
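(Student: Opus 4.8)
The plan is to assemble the implication directly from the results catalogued earlier in this section, with Theorem \ref{maincx1} as the only input whose proof lies outside this section. The key observation is that all the required reductions are already in place: Corollary \ref{infsup} bounds the relevant $\liminf$ (and, in the geometric case, $\limsup$) of periodic-point proportions by $\FPP(\text{pgIMG}(\phi)/\Fq)$; Theorem \ref{liftability} guarantees that every quadratic $\phi \in \Fq(x)$ in odd characteristic is liftable, so by \eqref{lifteqn} we have $\FPP(\text{pgIMG}(\phi)/\Fq) = \FPP(\text{pgIMG}(\tilde\phi)/\C)$ for a lift $\tilde\phi \in \C(x)$ sharing the ramification portrait of $\phi$; and the discussion in Section \ref{background} (cited as p.~\pageref{imgequiv}) identifies the latter with $\FPP(\IMG(\tilde\phi))$. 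So the whole argument comes down to checking that the hypotheses of Theorems \ref{main1} and \ref{main12} translate, under lifting, into the hypotheses of Theorem \ref{maincx1}, and then quoting that theorem.

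First I would prove the implication to Theorem \ref{main1}. Suppose $\phi \in \Fq(x)$ is quadratic over a field of odd characteristic, and is neither a Latt\`es map nor $\overline{\Fq}$-conjugate to $(x^2+a)/(x^2-(a+2))$. By Theorem \ref{liftability}, $\phi$ is liftable to some $\tilde\phi \in \C(x)$ with the same ramification portrait, hence also PCF and quadratic. Here I use the crucial remark at the end of the section: because liftability preserves the ramification portrait, and because Proposition \ref{dynamically exceptional characterization} characterizes the dynamically exceptional quadratic maps (over any field of characteristic $\neq 2$) purely in terms of being a Latt\`es map or $\overline{K}$-conjugate to $(x^2+a)/(x^2-(a+2))$ — both conditions visible on the ramification portrait — the lift $\tilde\phi$ is dynamically exceptional if and only if $\phi$ is. By hypothesis $\phi$ is not, so $\tilde\phi$ is not dynamically exceptional; and $\deg\tilde\phi = 2$ is prime. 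Theorem \ref{maincx1} then yields $\FPP(\IMG(\tilde\phi)) = 0$, so $\FPP(\text{pgIMG}(\phi)/\Fq) = 0$ by the chain of equalities above, and the first statement of Corollary \ref{infsup} gives \eqref{vertliminf}. (In fact one gets the slightly stronger statement \eqref{limepsilonintro} directly from Corollary \ref{fppreductioncor}, with $m$ the constant $M$ produced there.)

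Next I would prove the implication to Theorem \ref{main12}. Now $\phi \in \Fq[x]$ is a quadratic polynomial over a field of odd characteristic, $q$ is a square, the finite critical point of $\phi$ is strictly preperiodic, and $\phi$ is not $\overline{\Fq}$-conjugate to a Chebyshev polynomial. Since $q$ is a square, Theorem \ref{pinkconst} applies: its hypotheses (finite critical point strictly preperiodic, not conjugate to Chebyshev) are exactly ours, so $K_n^{\text{arith}}$ is geometric over $\Fq(t)$ for all $n \geq 1$. Meanwhile $\phi$ is liftable by Theorem \ref{liftability}, its lift $\tilde\phi$ is quadratic and PCF, and — arguing as before via the ramification portrait and Proposition \ref{dynamically exceptional characterization} — $\tilde\phi$ is not dynamically exceptional (a polynomial with strictly preperiodic finite critical point, non-Chebyshev, is not a Latt\`es map and not conjugate to $(x^2+a)/(x^2-(a+2))$, since the latter families all have a critical point landing on a fixed point after two iterations, which for a polynomial forces the Chebyshev portrait). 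Again $\deg\tilde\phi = 2$ is prime, so Theorem \ref{maincx1} gives $\FPP(\IMG(\tilde\phi)) = 0$, hence $\FPP(\text{pgIMG}(\phi)/\Fq) = 0$, and now Corollary \ref{limzerocor} (whose hypotheses we have just verified) delivers \eqref{limzero}.

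The routine-looking but genuinely load-bearing point — and the one I would be most careful about — is the bookkeeping that the ``dynamically exceptional'' hypotheses match up correctly on both sides of the lift. This rests on two facts established earlier: that dynamical exceptionality for quadratic maps in characteristic $\neq 2$ is detected by the ramification portrait (Propositions \ref{quadraticlattesportraits}, \ref{quadraticlattesclass}, \ref{dynamically exceptional characterization}), and that lifting preserves the ramification portrait (condition (1) of Definition \ref{liftabilitydef}). Everything else is a direct concatenation of cited results, so no further obstacle arises.
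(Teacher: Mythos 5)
Your proposal is correct and follows essentially the same route as the paper, which proves this corollary simply by combining Theorem \ref{liftability}, the preservation of dynamical exceptionality under lifting (condition (1) of Definition \ref{liftabilitydef} together with the portrait characterizations of Section \ref{exceptions}), the equalities \eqref{lifteqn} and $\FPP(\text{pgIMG}(\tilde{\phi})/\C) = \FPP(\IMG(\tilde{\phi}))$, and Corollaries \ref{infsup} and \ref{limzerocor}, with $d=2$ prime feeding into Theorem \ref{maincx1}. Your added verification that the hypotheses of Theorem \ref{main12} force the lift to be non-exceptional (a quadratic polynomial is never Latt\`es since its critical point at infinity is fixed, and a strictly preperiodic finite critical point landing on a fixed point in two steps would give the Chebyshev portrait \eqref{chebram}) is exactly the bookkeeping the paper leaves implicit, and it is correct.
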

 
We now turn to Question \ref{mainquestion2}, the ``horizontal" question involving finite fields of different characteristics. 
Recall that if $K$ is a number field and $\phi \in K(x)$, then for all but finitely many primes $\p$ in the ring of integers $\O_K$ of $K$, one may reduce the coefficients of $\phi$ modulo $\p$ to obtain a morphism $\phi_\p : \mathbb{P}^1(\mathbb{F}_\p) \to \mathbb{P}^1(\mathbb{F}_\p)$ with $\deg \phi = \deg \tilde{\phi}$, where $\mathbb{F}_\p$ is the residue field $\O_K/\p$. Denote by $N(\p)$ the degree of 
$\mathbb{F}_\p$ over its prime field, so that $1 + N(\p)$ is the size of $\mathbb{P}^1(\mathbb{F}_\p)$. 


\begin{theorem} \label{numfield}
Let $K$ be a number field and $\phi \in K(x)$. Then 
\begin{equation} \label{fpp2s}
\liminf_{N(\p) \to \infty} \frac{\#\Per(\phi_\p, \F_{\p})}{1 + N(\p)} \leq \FPP(\text{pgIMG}(\phi)/\C),
\end{equation}
where the lim inf is over primes $\p$ of $K$.
\end{theorem}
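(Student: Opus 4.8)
The plan is to mirror the argument of Theorem \ref{fppreduction}, which handled a single finite field via Chebotarev for the function field $\F_{q^m}(t)$, but now carry it out uniformly over all residue characteristics. Since the lim inf on the left of \eqref{fpp2s} is already known to be bounded above by an $\IMG$-theoretic quantity in the post-critically generic case by \cite{juul1}, and since the statement here is phrased in terms of $\FPP(\text{pgIMG}(\phi)/\C)$, the natural route is to first pass to the geometric iterated monodromy group over $\overline{\Q}$ (or over $\C$, which has the same action on the tree by the discussion on p.~\pageref{imgequiv}), then apply an effective Chebotarev bound over the number field $K(t)$ to count, for each prime $\p$ of $K$ and each $n$, the proportion of $\F_\p$-points of $\PP^1$ that survive in $\phi^n(\PP^1(\F_\p))$, and finally invoke Lemma \ref{perim} to bound $\#\Per(\phi_\p,\PP^1(\F_\p))$ by $\#\phi^n(\PP^1(\F_\p))$.

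First I would fix $n \geq 1$ and work over the field $K_n := K(\alpha_1,\dots,\alpha_{d^n})(t)$, where the $\alpha_i$ are the roots of $\phi^n(x) - t$ in an algebraic closure of $K(t)$; after adjoining roots of unity and enlarging $K$ if necessary one arranges that the geometric and arithmetic monodromy groups at level $n$ agree, so that $\Gal(K_n/\overline{\Q}(t))$ is the level-$n$ quotient $G_n$ of $\text{pgIMG}(\phi)/\C$. For all but finitely many primes $\p$ of $K$, reduction mod $\p$ is well-behaved: $\phi_\p$ has degree $d$, the roots $\alpha_i$ reduce to distinct elements, and $\p$ is unramified in $K_n$. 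For such $\p$, a point $\beta \in \PP^1(\F_\p)$ lies in $\phi^n(\PP^1(\F_\p))$ precisely when the Frobenius conjugacy class $\text{Frob}(\p)$ in $G_n$, acting on $\{\alpha_1,\dots,\alpha_{d^n}\}$ (equivalently on the fiber $\phi_\p^{-n}(\beta)$), has a fixed point — this is the exact analogue of \eqref{Frobeq}. Then an effective form of the Chebotarev density theorem over the number field $K$ (or over $\F_p$ as $p$ varies, using the geometric picture: $\PP^1_{\O_K[1/S]}$ with the étale cover given by $\phi^n$) shows that the number of $\beta \in \PP^1(\F_\p)$ with fixed-point-free Frobenius is at most $(1 - \FPP(G_n))(N(\p)+1) + O(N(\p)^{1/2})$, the implied constant depending only on $n$ and the geometry of the cover, not on $\p$. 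Hence $\#\phi^n(\PP^1(\F_\p)) \leq \FPP(G_n)\,(N(\p)+1) + O(N(\p)^{1/2})$, so by Lemma \ref{perim},
\begin{equation*}
\frac{\#\Per(\phi_\p, \PP^1(\F_\p))}{1+N(\p)} \leq \FPP(G_n) + O(N(\p)^{-1/2}).
\end{equation*}
Taking $\liminf$ over $\p$ gives a bound by $\FPP(G_n)$ for every $n$, and since $\FPP(G_n) \searrow \FPP(\text{pgIMG}(\phi)/\C)$ as $n \to \infty$, the conclusion follows. (One small point: the statement only claims $\F_\p$, not $\PP^1(\F_\p)$; the difference is the single point at infinity, which changes the count by at most $1$ and is absorbed into the error term, exactly as in \eqref{p1red}.)

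The main obstacle is making the Chebotarev error term \emph{uniform in $\p$} — i.e. independent of the residue characteristic — while $n$ is held fixed. Over a single $\F_q$ this was automatic (Theorem \ref{fppreduction}), but over a number field one must either invoke an effective Chebotarev with explicit dependence on the discriminant/conductor of $K_n$ (which is fine since $n$ is fixed, so $K_n$ is a fixed number field), or — more robustly — reformulate the count geometrically: the relevant object is the branched cover $C_n \to \PP^1$ over $\O_K[1/S]$ cut out by $\phi^n(x) = t$, and the estimate becomes a Lang–Weil / Weil-bound count of $\F_\p$-points on the associated incidence variety parametrizing $(\beta, \text{fixed point in fiber})$, which gives error $O(N(\p)^{1/2})$ with constant depending only on the (fixed) geometry of $C_n$. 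This is precisely the content of \cite[Section 5]{juul1}, and indeed the cleanest exposition is to cite their Theorem directly and observe that their hypothesis of post-critical genericity is used only to \emph{identify} $G_n$ with an iterated wreath product, whereas the Chebotarev/Lang–Weil machinery producing the inequality $\liminf \leq \FPP(G_n)$ works verbatim for any PCF $\phi$ once one replaces the wreath product by the actual group $G_n = \Gal(\phi^n(x)-t \text{ over } \overline{\Q}(t))$.
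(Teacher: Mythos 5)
Your route is, in its final form, the paper's: the authors do not re-derive a uniform Chebotarev/Lang--Weil estimate but quote \cite[Proposition 5.3]{juul1} directly, exactly as your closing paragraph recommends, the point being (as you say) that post-critical genericity in \cite{juul1} is only used to identify the group with an iterated wreath product, not to prove the counting bound. They also supply a short natural-irrationalities argument showing that $\Gal$ of the level-$n$ splitting field over $\overline{\Q}(t)$ is the level-$n$ quotient $G_n$ of $\text{pgIMG}(\phi)/\C$ and that $\FPP(\text{pgIMG}(\phi)/\overline{\Q})=\FPP(\text{pgIMG}(\phi)/\C)$; you assert this identification but do not justify it (the discussion you cite only compares $\IMG(f)$ with $\text{pgIMG}(f)/\C$, not $\overline{\Q}$ with $\C$) --- routine, but it should be said.

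The one genuine gap is your handling of the arithmetic-versus-geometric monodromy issue by ``enlarging $K$'': the theorem is a statement about primes of the original $K$, and after base change you have proved a different statement, since a bound on $\liminf$ over primes of the enlarged field does not formally bound the $\liminf$ over primes of $K$. The paper instead works over the constant field extension $L_n=K_n^{\text{arith}}\cap\overline{\Q}$, where \cite[Proposition 5.3]{juul1} gives $\#\Per(\phi_{\mathfrak{P}},\F_{\mathfrak{P}})/(1+N(\mathfrak{P}))\le \FPP(G_n)+\delta$ for \emph{all} primes $\mathfrak{P}$ of $L_n$ of sufficiently large norm, and then descends to primes of $K$ via \cite[Lemma 6.3]{juul1} --- in effect one restricts to primes of $K$ that split completely in $L_n$, for which the residue fields agree. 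This descent is short but not automatic, and your write-up never performs it; your citation of Section 5 of \cite{juul1} does not cover it. A smaller imprecision: the relevant conjugacy class is not ``$\text{Frob}(\p)$ in $G_n$'' --- it is the Frobenius attached to the specialization $t\mapsto\beta$ of the reduced cover over $\F_\p$ (equivalently the $N(\p)$-power map acting on $\phi^{-n}(\beta)$), so it depends on $\beta$ as well as $\p$; your subsequent incidence-variety formulation has this right, but the sentence as written conflates the two.
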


\begin{proof}
Let $K_n^{\text{geom}}$ be the splitting field of $\phi^n(x) - t$ over $\overline{\Q}(t)$, and $G_n = \Gal(K_n^{\text{geom}}/\overline{\Q}(t))$, so that $\lim_{n \to \infty} \FPP(G_n) = \FPP(\text{pgIMG}(\phi)/\overline{\Q})$. Because $K_n^{\text{geom}}$ is an algebraic extension of $\overline{\Q}(t)$, for any extension field $F$ of $\overline{\Q}$ we have that the field of constants of $K_n^{\text{geom}} \cap F(t)$ is an algebraic extension of $\overline{\Q}$. Hence 
$K_n^{\text{geom}} \cap F(t)= \overline{\Q}(t)$. By the theorem on natural irrationalities, it follows that the Galois group of the compositum $FK_n^{\text{geom}}$ over $F(t)$ is isomorphic to $G_n$. Choosing an embedding $\overline{\Q} \hookrightarrow \C$, we may take $F = \C$. This embedding can be extended to an embedding $\overline{\overline{\Q}(t)} \hookrightarrow \overline{\C(t)}$, which carries $T_{\overline{\Q}}(\phi)$ onto $T_\C(\phi)$. It follows that $\text{pgIMG}(\phi)/\overline{\Q} \cong \text{pgIMG}(\phi)/\C$, and the action of the former on $T_{\overline{\Q}}(\phi)$ is conjugate to the action of the latter on $T_\C(\phi)$ (where the conjugacy depends on the choice of embeddings). 

Let $L_n = K_n^{\text{arith}} \cap \overline{\Q}$, and recall that $K_n^{\text{arith}}\overline{\Q}(t) = K_n^{\text{arith}}\overline{\Q} = K_n^{\text{geom}}$. Then $K_n^{\text{arith}}\overline{\Q}(t)$ is a geometric extension of $L_n(t)$ with Galois group $G_n$, by the theorem on natural irrationalities.   
From \cite[Proposition 5.3]{juul1} we have that for primes $\mathfrak{P}$ of $L_n$ and for any $\delta > 0$,
\begin{equation} \label{fpp22}
 \frac{\#\Per(\phi_\mathfrak{P}, \F_{\mathfrak{P}})}{1 + N(\mathfrak{P})} \leq \FPP(G_n) + \delta
\end{equation}
for $N(\mathfrak{P})$ sufficiently large, where $N(\mathfrak{P})$ is the norm of $\mathfrak{P}$. From \cite[Lemma 6.3]{juul1} and \eqref{fpp22} we obtain 
\begin{equation} \label{fpp23}
\liminf_{N(\p) \to \infty} \frac{\#\Per(\phi_\mathfrak{p}, \F_{\mathfrak{p}})}{1 + N(\mathfrak{p})} \leq \FPP(G_n) + \delta,
\end{equation}
where $\p$ varies over primes of $K$. 
To prove \eqref{fpp2}, let $\epsilon > 0$. Let $n$ be such that $\FPP(G_n) \leq \FPP(\text{pgIMG}(\phi)/\overline{\Q}) + \epsilon/2$. Applying \eqref{fpp23} with $\delta = \epsilon/2$ gives
\begin{equation*} 
\liminf_{N(\p) \to \infty} \frac{\#\Per(\phi_\p, \F_{\p})}{1 + N(\p)} \leq \FPP(\text{pgIMG}(\phi)/\overline{\Q}) + \epsilon,
\end{equation*}
from which \eqref{fpp2s} follows, because $\FPP(\text{pgIMG}(\phi)/\overline{\Q}) = \FPP(\text{pgIMG}(\phi)/\C)$ by the first paragraph of the proof.  
\end{proof} 

Theorem \ref{numfield} shows that the only obstacle to proving Theorem \ref{main2} is establishing that $\FPP(\text{pgIMG}(\phi)/\C) = 0$. When $\phi$ is conjugate over $\overline{K}$ to a polynomial, this is Theorem 1.1 of \cite{fpfree}. If $\phi$ has prime degree or doubly transitive monodromy, this is Theorem \ref{maincx1}. We thus have:

\begin{corollary} \label{finalliftcor2}
Theorem \ref{maincx1} implies Theorem \ref{main2}.
\end{corollary}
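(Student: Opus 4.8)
\noindent\emph{Proof proposal.}~ The plan is to obtain Corollary~\ref{finalliftcor2} as a short formal consequence of Theorem~\ref{numfield} together with Theorem~\ref{maincx1} (and, in the polynomial case, \cite[Theorem~1.1]{fpfree}). Fix once and for all an embedding $K\hookrightarrow\C$, equivalently $\overline{\Q}\hookrightarrow\C$, and regard $\phi$ as a rational function over $\C$. By Theorem~\ref{numfield},
\[
0\;\le\;\liminf_{N(\p)\to\infty}\frac{\#\Per(\phi_\p,\PP^1(\F_{\p}))}{1+N(\p)}\;\le\;\liminf_{N(\p)\to\infty}\frac{\#\Per(\phi_\p,\F_{\p})}{1+N(\p)}\;\le\;\FPP(\text{pgIMG}(\phi)/\C),
\]
where the second inequality uses $\#\Per(\phi_\p,\PP^1(\F_{\p}))\le\#\Per(\phi_\p,\F_{\p})+1$ together with $1/(1+N(\p))\to 0$. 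As recorded in Section~\ref{background}, $\text{pgIMG}(\phi)/\C$ is the closure of $\IMG(\phi)$ in $\Aut(X^{*})$, so the two groups have the same finite quotients $G_{n}$ and hence $\FPP(\text{pgIMG}(\phi)/\C)=\FPP(\IMG(\phi))$. Thus it suffices to prove $\FPP(\IMG(\phi))=0$ under each of the hypotheses~(1)--(3) of Theorem~\ref{main2}.

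When $d$ is prime or $\phi$ has doubly transitive monodromy, I would invoke Theorem~\ref{maincx1} directly, after recording that $\phi$, regarded over $\C$, still meets that theorem's standing hypotheses: it is PCF (the embedding preserves the ramification portrait, and in particular the finiteness of $P_{\phi}$) and it is not dynamically exceptional (see the last paragraph). When instead $\phi$ is $\overline{K}$-conjugate to a polynomial, it is $\C$-conjugate to some $g\in\C[x]$, which is again a PCF polynomial that is not dynamically exceptional; hence $\FPP(\IMG(g))=0$ by \cite[Theorem~1.1]{fpfree}, and since a M\"obius conjugacy replaces $\IMG(\phi)$ by a conjugate of it inside $\Aut(X^{*})$ while $\FPP$ is a conjugacy invariant, $\FPP(\IMG(\phi))=\FPP(\IMG(g))=0$. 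In every case the displayed $\liminf$ is trapped between $0$ and $\FPP(\IMG(\phi))=0$, which is exactly \eqref{globalliminf}.

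The only point that is not purely formal --- hence the lone (and very minor) obstacle --- is that ``not dynamically exceptional'' transfers from $\phi$ over $K$ (equivalently over $\overline{\Q}$) to $\phi$ over $\C$, and is unaffected by M\"obius conjugacy. Conjugacy-invariance is immediate, since $\phi\mapsto\mu\phi\mu^{-1}$ sends a witnessing set $\Gamma$ to $\mu(\Gamma)$ and $C_{\phi}$ to $C_{\mu\phi\mu^{-1}}$. For the transfer to $\C$: if $\Gamma\subset\PP^{1}(\C)$ witnesses dynamical exceptionality of $\phi$ over $\C$, then $\Gamma\subseteq\phi^{-1}(\Gamma)$ forces $\phi(\Gamma)\subseteq\Gamma$, and $\Gamma$ is finite (as is used throughout Section~\ref{exceptions}; compare the count giving $\#\Gamma\le 4$ in the proof of Proposition~\ref{dynamically exceptional characterization}). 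A finite forward-invariant set consists of preperiodic points of $\phi$, which lie in $\PP^{1}(\overline{\Q})$ because $\phi\in\overline{\Q}(x)$; thus $\Gamma\subset\PP^{1}(\overline{\Q})$ already witnesses dynamical exceptionality over $\overline{\Q}$. Hence $\phi$ not dynamically exceptional over $K$ implies the same over $\C$, completing the reduction. (One should also recall from Section~\ref{background} that $\FPP(\IMG(\phi))$ does not depend on the auxiliary choices --- base point $z_{0}$, generating loops, tree identification $T_{\C}(\phi)\cong X^{*}$ --- each of which alters $\IMG(\phi)$ only up to conjugacy in $\Aut(X^{*})$.)
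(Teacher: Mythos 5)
Your proposal is correct and follows essentially the same route as the paper: apply Theorem \ref{numfield} to reduce \eqref{globalliminf} to showing $\FPP(\text{pgIMG}(\phi)/\C)=0$, then invoke Theorem \ref{maincx1} in cases (1)--(2) and \cite[Theorem 1.1]{fpfree} in the polynomial case. Your added checks (the affine versus $\PP^1$ count, conjugacy-invariance of $\FPP$, and that non-exceptionality transfers from $\overline{K}$ to $\C$ because a finite forward-invariant witnessing set consists of preperiodic, hence algebraic, points) are careful elaborations of steps the paper leaves implicit, not a different argument.
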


\section{Background and definitions on IMGs and wreath recursion} \label{background}


The proof of Theorem \ref{maincx1}, which requires a proof of Theorem \ref{maincx2}, occupies the remainder of the article. 
From this section on, we work in a more topological context, and so use the notation $\rs$ in place of $\mathbb{P}^1_\C$. We now use $f$ to denote a rational function with complex coefficients, and we use $z$ as the variable. Given $f \in \C(z)$, we wish to understand the action of $\text{pgIMG}(f)/\C$ on $T_\C(f)$. In Section \ref{subs:wreath} we discuss tools for studying the action of an arbitrary group on a complete $d$-ary infinite rooted tree $X^*$. In Section \ref{subs:IMGs} we define the iterated monodromy group and describe its standard action on $X^*$. In Section \ref{subs:periphLoops} we give some basic properties of the monodromy action on roots of a polynomial that will be used in Section \ref{sec:IMGofPCF}.  

\subsection{Wreath recursion and definitions} \label{subs:wreath}
Let $d \geq 2$, put $X = \{0, \ldots, d-1\}$, and let $S_d$ denote the symmetric group on $d$ letters.  Denote by $X^*$ the set of all words in $X$, arranged as a tree in the natural way: there is an edge connecting $vx$ to $v$ for each $v \in X^*$ and $x \in X$. Denote by $X^n$ the set of words in $X$ of length $n$, which gives the $n$th level of $X^*$. By $X^0$ we mean the set consisting only of the empty word. An \textit{end} of $X^*$ is an infinite, non-retracing path beginning at the empty word. Thus the set of all ends of $X^*$ is the inverse limit of the $X^n$ under the natural maps $X^n \to X^{n-1}$. 

Define $\Aut(X^*)$ to be the set of tree automorphisms.
A salient feature of $X^*$ is its self-similarity, and we use this to describe elements of $\Aut(X^*)$ recursively.  

Let $g \in \Aut(X^*)$, and for a vertex $v \in X^*$ consider the subtrees $vX^*$ and $g(v)X^*$ with root $v$ and $g(v)$, respectively.  Both are naturally isomorphic to $X^*$, and identifying them gives an automorphism $g|_v \in \Aut(X^*)$, called the {\em restriction} of $g$ at $v$.

There is a natural isomorphism 
$$\psi : \Aut(X^*) \to S_d \wr \Aut(X^*),$$
where $\wr$ denotes the wreath product, that takes $g$ to $(\sigma, (g|_0, \ldots, g|_{d-1}))$, where $\sigma \in S_d$ is the action of $g$ on $X$ (i.e., on the first level of $X^*$).  In other words, we may describe $g$ by specifying its action on $X$ and its restriction at each element of $X$.  We call this the {\em wreath recursion} describing $g$.  We generally drop the outer parentheses and equate $g$ with its image under $\psi$, writing 
\begin{equation*}
g = \sigma(g|_0, \ldots, g|_{d-1}).
\end{equation*}  
We write the identity element as $1$, and when the permutation $\sigma$ is the identity, we omit it.  Hence the identity element of $\Aut(X^*)$ is given in wreath recursion by $(1,1, \ldots, 1)$.  Note that the element $a = (a, 1, 1, \ldots, 1)$ is also the identity, since by induction it acts trivially on $X^n$ for all $n$, and thus acts trivially on $X^*$.  Given $g = \sigma(g|_0, \ldots, g|_{d-1})$, we can make explicit its action on any $X^n$ thanks to the following formulas, which are straightforward to prove: 
\begin{equation} \label{nested restriction}
g|_{vw} = g|_v|_w \qquad g(vw) = g(v)g|_v(w),
\end{equation}
for any $v,w \in X^*$.  

One can multiply elements in wreath recursion form using the usual multiplication in a semi-direct product:
\begin{equation} \label{prod}
\sigma(g_0 \ldots,  g_{d-1}) \cdot \tau(h_0 \ldots,  h_{d-1}) = \sigma \tau(g_{\tau(0)}h_0 \ldots,  g_{\tau(d-1)}h_{d-1}).
\end{equation}
If we take $v \in X^*$ of length $n$, we may consider \eqref{prod} as giving the wreath recursion of $g, h \in \Aut(X^*)$ acting on $X^n$.  This gives 
\begin{equation} \label{singleelt}
(gh)(v) = g(h(v)) \qquad \text{and} \qquad (gh)|_v = g|_{h(v)} \cdot h|_v
\end{equation}

\begin{definition}
A subgroup $G$ of $\Aut(X^*)$ is \textit{level-transitive} if for all $n \geq 1$, $G$ acts transitively on $X^n$.
\end{definition}

\begin{definition} A subgroup $G$ of $\Aut(X^*)$ is \textit{self-similar} if for all $g \in G$ we have $g|_v \in G$ for every $v \in X^*$.
\end{definition}

\begin{definition}
A subgroup $G$ of $\Aut(X^*)$ is {\it recurrent} if $G$ is self-similar, $G$ acts transitively on $X$, and for each $x\in X$, the map 
\begin{equation} \label{virtual endo}
\text{$\{g \in G : g(x) = x\} \to G$ \; given by \; $g \mapsto g|_x$}
\end{equation}
is surjective. 
\end{definition}
We note that the map in \eqref{virtual endo} is known as the virtual endomorphism associated to $g$ and $x$. 

\begin{definition} \label{contractingdef}
A subgroup $G$ of $\Aut(X^*)$ is \textit{contracting} if $G$ is self-similar and there is a finite set $\mathcal{N} \subset G$ with the following property: for each $g \in G$, there is $M > 0$ such that $g|_v \in \mathcal{N}$ for every word $v \in X^*$ of length at least $M$.
\end{definition}

We record here a consequence of \cite[Corollary 2.8.5]{nek}:
\begin{proposition} \label{recprop}
A recurrent subgroup $G \leq \Aut(X^*)$ is level-transitive, and hence is infinite. 
\end{proposition}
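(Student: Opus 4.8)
The plan is to prove level-transitivity directly by induction on the level $n$, combining the surjectivity of the virtual endomorphisms with the induction hypothesis at the previous level; infiniteness is then automatic, since a level-transitive subgroup acts with an orbit of size $\#X^n = d^n$ on the $n$th level, forcing $\#G \geq d^n$ for every $n$.

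The base case $n = 1$ is exactly the transitivity of $G$ on $X$, which is part of the definition of a recurrent group. For the inductive step, suppose $G$ acts transitively on $X^n$ and pick $v, w \in X^{n+1}$, written $v = x v_1$ and $w = y w_1$ with $x, y \in X$ and $v_1, w_1 \in X^n$. First I would use transitivity on $X$ to choose $g \in G$ with $g(x) = y$; by the restriction formula \eqref{nested restriction}, $g(v) = y\, u_1$ where $u_1 := g|_x(v_1) \in X^n$, and $g|_x \in G$ by self-similarity. Next, by the induction hypothesis there is $k \in G$ with $k(u_1) = w_1$. Finally I would invoke the recurrent hypothesis at the letter $y$: since the map $\{h \in G : h(y) = y\} \to G$, $h \mapsto h|_y$, is surjective, there is $h \in G$ with $h(y) = y$ and $h|_y = k$. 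Then $(hg)(v) = h(y\, u_1) = y\, h|_y(u_1) = y\, k(u_1) = y\, w_1 = w$, so $G$ is transitive on $X^{n+1}$, completing the induction. Since $\#G \geq \#X^n = d^n$ for all $n$, the group $G$ is infinite.

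There is no serious obstacle here: the argument is a short induction, and the only thing to watch is the bookkeeping with the wreath-recursion formulas in \eqref{nested restriction} and the order of composition in $hg$ — one must match the leading letter first (via transitivity on $X$) and then the tail (via the induction hypothesis, transported through the surjective virtual endomorphism at that letter). Alternatively, one could simply cite \cite[Corollary 2.8.5]{nek}, of which this is a direct consequence.
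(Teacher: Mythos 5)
Your induction is correct, and every step uses exactly the hypotheses in the paper's definition of recurrent: the base case is transitivity on $X$, the leading letter is matched via that transitivity, the restriction $g|_x$ stays in $G$ by self-similarity, and the tail is corrected by lifting $k$ through the surjective virtual endomorphism at $y$, with the bookkeeping via \eqref{nested restriction} and \eqref{singleelt} done properly (note $(hg)|$-computations are not even needed, since you only evaluate $h$ on $g(v)$). The paper takes a shortcut here: it simply cites \cite[Corollary 2.8.5]{nek} for level-transitivity and then observes, as you do, that a group acting transitively on the sets $X^n$ of size $d^n$ must be infinite. So the two arguments differ only in that you reproduce, in a self-contained way, the standard induction underlying Nekrashevych's corollary, which makes the proposition independent of the reference; the paper's version buys brevity at the cost of opacity, and you even note the citation as an alternative. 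One cosmetic remark: your infinitude claim ``$\#G \geq d^n$'' is cleanest phrased as the orbit of any vertex of $X^n$ having size $d^n$, so the index of a point stabilizer (hence $\#G$) is at least $d^n$ for every $n$; this is exactly the paper's ``acts transitively on arbitrarily large sets.''
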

\begin{proof}
The first assertion follows immediately from \cite[Corollary 2.8.5]{nek}. A level-transitive subgroup of $\Aut(X^*)$ must be infinite, because it acts transitively on arbitrarily large sets.
\end{proof}

\subsection{Basic properties of IMGs}\label{subs:IMGs}



Throughout this section, let $f: \rs \to \rs$ be a PCF rational function of degree $d\geq 2$ with post-critical set $P_f$ (the same construction works any expanding PCF branched cover $f : \sphere \to \sphere$ as in \cite{BD4}, but we will not use the extra generality here). Fix a choice of $z_0\in\rs\setminus P_f$.
Given $\gamma \in \pi_1(\rs\setminus P_f,z_0)$ and $z \in f^{-n}(z_0)$, there is a unique lift of $\gamma$ beginning at $z$, whose endpoint we denote $z_\gamma \in f^{-n}(z_0)$. The map $z\mapsto z_\gamma$ defines a permutation of $f^{-n}(z_0)$, and the resulting homomorphism 
$$
\pi_1(\rs\setminus P_f,z_0)  \to \text{Perm}(f^{-n}(z_0))
$$ 
is called the monodromy action of $\pi_1(\rs\setminus P_f,z_0)$ on $f^{-n}(z_0)$. Denote its kernel by $K_n$. The monodromy action extends to an action on the tree $T_{f, z_0} \subset \rs$ of preimages of $f$, rooted at $z_0$. (We use this notation rather than the previous $T_k(\phi)$ because this tree is a subset of $\rs$ rather than of $\overline{k(t)}$.) Its kernel is $K = \bigcap_{n=1}^\infty K_n$, which we call the \textit{faithful kernel} of the monodromy action.


\begin{definition} \label{imgdef}
With notation as above, the {\it iterated monodromy group} of $f$, written $\IMG(f)$, is the quotient of $\pi_1(\rs\setminus P_f,z_0)$ by the faithful kernel $K$ of the monodromy action on the tree $T_{f, z_0}$.  
\end{definition}

Select a labeling bijection $\Lambda : X \to f^{-1}(z_0)$, and for $i \in \{0, \ldots, d-1\}$ select a path $\ell_i$ from $z_0$ to $\Lambda(i)$ in $\rs\setminus P_f$. 
Then $\Lambda$ extends inductively to an isomorphism $\Lambda^* : X^* \to T_{f, z_0}$ of rooted trees via the rule
\begin{equation} \label{Lstar}
\Lambda^*(xv) = \text{end of the path $f^{-n}(\ell_x)$ starting at $\Lambda^*(v)$}
\end{equation}
for $v \in X^n$ \cite[Proposition 5.2.1]{nek}.

\begin{definition} \label{standardactiondef}
Fix choices of basepoint $z_0$, labeling map $\Lambda : X \to f^{-1}(z_0)$, and paths $\{\ell_i\}$. The corresponding standard action of $\pi_1(\rs\setminus P_f,z_0)$ (resp. $\IMG(f)$) on $X^*$ is the conjugation by $\Lambda^*$ of the monodromy action of $\pi_1(\rs\setminus P_f,z_0)$ (resp. $\IMG(f)$) on $T_{f, z_0}$.
\end{definition}

A standard action gives a homomorphism $\pi_1(\rs\setminus P_f,z_0) \to \Aut(X^*)$, which descends to an injective homomorphism $\IMG(f) \hookrightarrow \Aut(X^*)$ with identical image. Thus we may identify $\IMG(f)$ with a subgroup of $\Aut(X^*)$. A different choice of $z_0, \Lambda$, or $\{\ell_i\}$ only changes this group by a conjugacy in $\Aut(X^*)$. From now on we fix a standard action of $\pi_1(\rs\setminus P_f,z_0)$, and hence of $\IMG(f)$, on $\Aut(X^*)$. \label{fixed standard action}


 For given $n \geq 1$, it is a well-known result in the theory of Riemann surfaces that the permutation group of $f^{-n}(z_0)$ induced by the monodromy action of $\pi_1(\rs \setminus P_f, z_0)$ is identical (after possibly a conjugation in the symmetric group) to that given by the action of the Galois group $\Gal(\C(f^{-n}(t))/\C(t))$ on the set $f^{-n}(t) \subset \overline{\C(t)}$. Thus after possibly conjugating in $\Aut(X^*)$, we have that the action of $\text{pgIMG}(f)/\C$ on $f^{-n}(t)$ is the same as that of $\IMG(f)$ on $X^n$ (see e.g. \cite[Theorem 8.12]{forster}. Since $\text{pgIMG}(f)/\C$ is a closed subgroup of $\Aut(X^*)$ and it has the same image as $\IMG(f)$ under the restriction maps $\Aut(X^*) \to \Aut(X^n)$, it follows that $\text{pgIMG}(f)/\C$ is the closure of $\IMG(f)$ in $\Aut(X^*)$. This is \cite[Proposition 6.4.2]{nek}. In particular, we have \label{imginclusion}
$$\IMG(f) \subset \text{pgIMG}(f)/\C \subseteq \Aut(X^*)$$
and $\FPP(\text{pgIMG}(f)/\C) = \FPP(\IMG(f))$. \label{imgequiv}




We now describe a standard action in terms of wreath recursion. Equation \eqref{eqn:actionFormula} in the following proposition is found in Proposition 5.2.2 of \cite{nek}, and equation \eqref{restriction} is an immediate consequence of Definition \ref{standardactiondef} and equation \eqref{nested restriction}

\begin{proposition}\label{imgaction}
Given a standard action of $\pi_1(\rs\setminus P_f,z_0)$ (resp. $\IMG(f)$) on $X^*$, $\gamma \in \pi_1(\rs\setminus P_f,z_0)$ (resp. $\in \IMG(f))$, and $x \in X$, let $\tilde{\gamma}_x$ be the lift of $\gamma$ starting at $\Lambda(x)$. Then the action of $\gamma$ on $X^*$ is given by 
\begin{equation}\label{eqn:actionFormula}
\gamma(xv)=\gamma(x)(\ell_{\gamma(x)}^{-1} \tilde{\gamma}_x \ell_x)(v)
\end{equation}
where $\gamma(x)$ is the element of $X$ such that $\tilde{\gamma}_x$ ends in $\Lambda(\gamma(x))$. 
Moreover, for $v \in X^*$,

\begin{equation} \label{restriction}
\gamma|_{xv} =  [(\ell_{\gamma(x)})^{-1} \tilde{\gamma}_x \ell_x]|_v.
\end{equation}

\end{proposition}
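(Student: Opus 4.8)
The plan is short: take equation \eqref{eqn:actionFormula} from \cite[Proposition 5.2.2]{nek}, and then deduce \eqref{restriction} as a formal consequence of it together with the nested-restriction identities \eqref{nested restriction}. For \eqref{eqn:actionFormula} I would either quote \cite{nek} directly or reprove it by the standard iterated-monodromy computation: for $v\in X^n$, Definition \ref{standardactiondef} says $\gamma(xv)$ is $(\Lambda^*)^{-1}$ of the endpoint of the lift of $\gamma$ through $f^{n+1}$ starting at $\Lambda^*(xv)$. Writing $f^{n+1}=f\circ f^{n}$ and using $f^{n}\big(\Lambda^*(xv)\big)=\Lambda(x)$ (from \eqref{Lstar}) replaces this by the $f^{n}$-lift of $\tilde{\gamma}_x$ starting at $\Lambda^*(xv)$; recognizing $\Lambda^*(xv)$ via \eqref{Lstar} as the end of the $f^{n}$-lift of $\ell_x$ from $\Lambda^*(v)$, a short path-chase with \eqref{Lstar} identifies the endpoint with $\Lambda^*\big(\gamma(x)\,w_0\big)$, where $w_0$ is the image of $v$ under the standard action of the loop $\ell_{\gamma(x)}^{-1}\tilde{\gamma}_x\ell_x\in\pi_1(\rs\setminus P_f,z_0)$. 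This is \eqref{eqn:actionFormula}.

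Granting \eqref{eqn:actionFormula}, I would finish as follows. That equation says precisely that, in the wreath-recursion form of the standard action, the element $\gamma$ has first-level permutation $x\mapsto\gamma(x)$ and restriction $\gamma|_x=\ell_{\gamma(x)}^{-1}\tilde{\gamma}_x\ell_x$ at each $x\in X$, where the right-hand side is understood as the standard action of that loop. Applying the first identity of \eqref{nested restriction} to the word $xv$, factored as $x$ followed by $v$, gives $\gamma|_{xv}=(\gamma|_x)|_v=\big[\ell_{\gamma(x)}^{-1}\tilde{\gamma}_x\ell_x\big]|_v$, which is \eqref{restriction}.

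There is no serious obstacle here; the only thing that needs care is the bookkeeping of conventions. One must fix once and for all the orientation and composition convention for paths (so that $\ell_{\gamma(x)}^{-1}\tilde{\gamma}_x\ell_x$ means ``traverse $\ell_x$, then $\tilde{\gamma}_x$, then $\ell_{\gamma(x)}^{-1}$'', consistently with \eqref{prod} and with \cite{nek}), and one must keep straight that the purely combinatorial restriction $\gamma|_x$ of the tree automorphism $\gamma$ coincides with the standard action of the concrete loop $\ell_{\gamma(x)}^{-1}\tilde{\gamma}_x\ell_x$. Once those conventions are pinned down, both displayed equations are immediate.
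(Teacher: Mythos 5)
Your proposal is correct and follows essentially the same route as the paper, which likewise takes \eqref{eqn:actionFormula} from Proposition 5.2.2 of \cite{nek} and obtains \eqref{restriction} as an immediate consequence of Definition \ref{standardactiondef} together with the identities \eqref{nested restriction}, exactly as in your deduction $\gamma|_{xv}=(\gamma|_x)|_v=[\ell_{\gamma(x)}^{-1}\tilde{\gamma}_x\ell_x]|_v$. Your optional re-derivation of \eqref{eqn:actionFormula} by lifting through $f^{n+1}=f\circ f^n$ is also sound and consistent with the paper's conventions.
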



A remark is in order about the statements in Proposition \ref{imgaction} regarding $\IMG(f)$. Because $\IMG(f)$ is a quotient of $\pi_1(\rs\setminus P_f,z_0)$, the quantities $\gamma$, $\tilde{\gamma}_x$, and $\gamma|_x$ are only defined up to elements of the faithful kernel. However, the elements of the faithful kernel act trivially on $T_{f,z_0}$, and hence do not affect the corresponding elements of $\Aut(X^*)$.



\begin{proposition}
\label{prop:recurrent}
A standard action of $\pi_1(\rs\setminus P_f,z_0)$ or $\IMG(f)$ on $\Aut(X^*)$ is recurrent. 
\end{proposition}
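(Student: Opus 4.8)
The goal is to show that a standard action of $\pi_1(\rs\setminus P_f,z_0)$ (and hence of $\IMG(f)$) on $\Aut(X^*)$ is recurrent, i.e. self-similar, transitive on the first level $X$, and such that each stabilizer map $\{g : g(x) = x\} \to G$, $g \mapsto g|_x$, is surjective. The plan is to verify these three properties in turn, working directly from the description of the standard action via wreath recursion given in Proposition \ref{imgaction}.

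First I would establish \emph{self-similarity}. By equation \eqref{restriction}, for $\gamma \in \pi_1(\rs\setminus P_f,z_0)$ and $x \in X$ the restriction $\gamma|_x$ is the element $(\ell_{\gamma(x)})^{-1}\tilde\gamma_x \ell_x$, where $\tilde\gamma_x$ is the lift of $\gamma$ under $f$ starting at $\Lambda(x)$. Since $\gamma$ is a loop based at $z_0$ and $\tilde\gamma_x$ runs from $\Lambda(x)$ to $\Lambda(\gamma(x))$ in $\rs \setminus f^{-1}(P_f) \subseteq \rs \setminus P_f$ (using that $P_f$ is forward-invariant so $f^{-1}(P_f) \supseteq P_f$, hence $\rs \setminus f^{-1}(P_f) \subseteq \rs \setminus P_f$), the concatenation $(\ell_{\gamma(x)})^{-1}\tilde\gamma_x \ell_x$ is again a loop based at $z_0$ in $\rs \setminus P_f$, so it represents an element of $\pi_1(\rs \setminus P_f, z_0)$, and its image in $\Aut(X^*)$ lies in the group $G$ generated by the standard action. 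Iterating using \eqref{nested restriction}, $\gamma|_v \in G$ for every $v \in X^*$. This proves $G$ is self-similar; the same holds for $\IMG(f)$ since it has the same image in $\Aut(X^*)$.

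Next, \emph{transitivity on $X$}. The first-level action of $\gamma$ permutes $f^{-1}(z_0)$ by monodromy, and this permutation action is transitive precisely because $f : \rs \setminus f^{-1}(P_f) \to \rs \setminus P_f$ is a connected covering space of degree $d$ (connectedness of the source: $\rs$ minus finitely many points is connected, and $f$ is a branched cover with connected total space $\rs$). A connected covering has transitive monodromy, so $G$ acts transitively on $X = f^{-1}(z_0)$. Equivalently one can invoke the well-known fact that $\Gal(\C(f^{-1}(t))/\C(t))$ acts transitively on the roots of $f(x) - t$, which is irreducible over $\C(t)$.

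Finally, and this is the step I expect to be the main obstacle, \emph{surjectivity of the virtual endomorphism}: given $x \in X$, I must show every $h \in G$ arises as $\gamma|_x$ for some $\gamma \in G$ with $\gamma(x) = x$. Equivalently (choosing $x$ with $\Lambda(x)$ fixed), I need to realize an arbitrary loop $h$ based at $z_0$ as $(\ell_x)^{-1}\tilde\gamma_x\ell_x$ where $\tilde\gamma_x$ is the $f$-lift of a loop $\gamma$ at $z_0$ and $\tilde\gamma_x$ is itself a loop at $\Lambda(x)$. The idea is: conjugate $h$ by $\ell_x$ to get a loop $\ell_x h \ell_x^{-1}$ based at $\Lambda(x)$, view this loop in $\rs \setminus P_f$, and push it down via $f$. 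Since $f$ restricted to the relevant open set is a covering and $f_*$ on fundamental groups is injective with image a finite-index subgroup, the pushed-down loop $f_*(\ell_x h \ell_x^{-1})$ is a loop $\gamma$ based at $z_0$; its lift starting at $\Lambda(x)$ is exactly $\ell_x h \ell_x^{-1}$, a loop, so $\gamma(x) = x$, and by \eqref{restriction} $\gamma|_x = (\ell_x)^{-1}(\ell_x h \ell_x^{-1})\ell_x = h$. The subtlety to be careful about is basepoints and the precise covering space: $f$ is not an honest covering on all of $\rs \setminus P_f$ (it branches over critical values in $P_f$), so one works over $\rs \setminus f^{-1}(P_f)$, checks that $\ell_x h\ell_x^{-1}$ can be homotoped into this smaller surface (possible since the image of $\IMG(f)$ only sees homotopy classes up to the faithful kernel, and loops around points of $f^{-1}(P_f) \setminus P_f$ that are not critical are still admissible), and then applies the covering-space correspondence. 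Assembling these three verifications completes the proof that the standard action is recurrent.
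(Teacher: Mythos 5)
Your proposal is correct and takes essentially the same route as the paper: self-similarity via the restriction formula of Proposition \ref{imgaction}, transitivity on $X$ from the connectedness of the covering $f:\rs\setminus f^{-1}(P_f)\to\rs\setminus P_f$ (the paper simply unpacks this by lifting a path from $\Lambda(i)$ to $\Lambda(j)$ and pushing it down), and surjectivity of the virtual endomorphism by pushing $\ell_x h \ell_x^{-1}$ down by $f$ and lifting back, exactly as in the paper. The subtlety you flag at the end is handled there just by choosing a representative of $h$ (and, implicitly, of the conjugated loop) avoiding the finite set $f^{-1}(P_f)$, which is all that is needed.
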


\begin{proof}
Let $G$ stand for either $\pi_1(\rs\setminus P_f,z_0)$ or $\IMG(f)$. Observe that Proposition \ref{imgaction} (with $v$ the empty word) implies that $G$ is self-similar. We now show that $G$ acts transitively on $X$. Let $i, j \in X$ and let $p$ be a path from $\Lambda(i)$ to $\Lambda(j)$ in $\rs\setminus f^{-1}(P_f)$. 
The path $f(p)$ has endpoints $f(\Lambda(i)) = f(\Lambda(j)) = z_0$, and thus $f(p)$ gives an element of $G$. 
Observe that the lift $\widetilde{f(p)}$ of $f(p)$ beginning at $\Lambda(i)$ is precisely $p$. By Proposition \ref{imgaction} we then have $(f(p))(i) = j$, showing that the action of $G$ on $X$ is transitive. 

Finally, we show that given $i \in X$ the virtual endomorphism $g\mapsto g|_{i}$ is a surjective map from $\{g \in G : g(i) = i\}$ to $G$.
Let $h\in G$ and take a representative curve for $h$ (which we will also refer to as $h$ in an abuse of notation) that avoids $f^{-1}(P_f)$.
Let the path $\bar{h}$ be the composition $\ell_i h\ell_i^{-1}$.
Notice that $\bar{h}$ is a loop in $\rs\setminus f^{-1}(P_f)$ based at $\Lambda(i)$.
So (the homotopy class of) $f(\bar{h})$ is a loop based at $z_0$, and thus gives an element of $G$. The lift of $f(\bar{h})$ beginning at $\Lambda(i)$ is $\bar{h}$, and thus $(f(\bar{h}))(i) = i$ by Proposition \ref{imgaction}. The same proposition then yields  
$$f(\bar{h})|_i = \ell_i^{-1}\bar{h}\ell_i = \ell_i^{-1}\ell_i h\ell_i^{-1} \ell_i,
$$ 
which is homotopic to $h$, and thus equals $h$ in $G$.
Therefore, the map $g\mapsto g|_{i}$ is onto.
\end{proof}



Proposition \ref{recprop} immediately gives:
\begin{corollary}
A standard action of $IMG(f)$ on $\Aut(X^*)$ is level-transitive, and hence $\IMG(f)$ is infinite.
\end{corollary}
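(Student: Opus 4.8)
The statement is an immediate consequence of the two results that precede it, so the plan is simply to chain them together. First I would invoke Proposition~\ref{prop:recurrent}, which asserts that the fixed standard action of $\IMG(f)$ on $\Aut(X^*)$ is recurrent: it is self-similar (this falls out of the wreath-recursion formula \eqref{eqn:actionFormula} applied with $v$ the empty word), it acts transitively on the first level $X$ (witnessed by elements of the form $f(p)$ for $p$ a path in $\rs \setminus f^{-1}(P_f)$ joining two preimages in $f^{-1}(z_0)$), and each virtual endomorphism $g \mapsto g|_x$ restricted to the stabilizer of $x \in X$ is surjective (witnessed by elements of the form $f(\ell_i h \ell_i^{-1})$). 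All of this is already in hand.

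Next I would feed this into Proposition~\ref{recprop}, which records that a recurrent subgroup of $\Aut(X^*)$ is level-transitive, via \cite[Corollary~2.8.5]{nek}, and that a level-transitive subgroup of $\Aut(X^*)$ must be infinite, since it surjects onto the symmetric groups acting transitively on the arbitrarily large sets $X^n$. Combining the two, the standard action of $\IMG(f)$ on $\Aut(X^*)$ is level-transitive and $\IMG(f)$ is infinite, which is exactly the claim. Since level-transitivity is preserved by conjugacy in $\Aut(X^*)$, the conclusion does not depend on the choices of $z_0$, $\Lambda$, and $\{\ell_i\}$ defining the standard action.

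There is essentially no obstacle here: the corollary is a bookkeeping consequence of Propositions~\ref{prop:recurrent} and~\ref{recprop}. The only point worth flagging for the reader is that the substantive work is upstream---establishing recurrence of the standard action (lifting loops under iterates of $f$) and the group-theoretic implication recurrent $\Rightarrow$ level-transitive---and that the present statement merely packages these for later use (in particular it underpins the martingale and expansion arguments in Sections~\ref{fpprocess} and~\ref{sec:IMGofPCF}, where level-transitivity and infinitude of $\IMG(f)$ are used repeatedly).
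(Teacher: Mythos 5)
Your proof is correct and takes exactly the paper's route: the corollary is presented there as an immediate consequence of Proposition~\ref{prop:recurrent} (the standard action is recurrent) combined with Proposition~\ref{recprop} (recurrent $\Rightarrow$ level-transitive $\Rightarrow$ infinite), just as you chain them. One small wording slip: a level-transitive subgroup need not surject onto the symmetric group of $X^n$; the correct and sufficient point, which you also state, is that it acts transitively on the arbitrarily large sets $X^n$, hence cannot be finite.
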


To this point, the results of this section hold more generally for PCF branched self-covers of the sphere. However, if $f$ is specifically a post-critically-finite rational map, the expansion properties of $f$ have further implications for the iterated monodromy group. 
Let $P_f^{per}\subset P_f$ denote the union of all periodic orbits containing a critical point. By  \cite[Theorem 19.6]{MilnorBook}, $f$ is subhyperbolic \label{subhyperbolic} because every critical orbit is finite. That theorem is proved by constructing an orbifold metric on $\rs\setminus P_f^{per}$ so that for all $p\in \rs\setminus f^{-1}(P_f)$, the derivative satisfies 
\begin{equation}\label{eqn:contraction}
||Df(p)||>1.
\end{equation}
For $p\in P_f^{per}$ denote by $\mathcal{U}(p)$ an open B\"ottcher disk containing $p$ (as in \cite[Theorem 9.1]{MilnorBook}). There is a choice of the neighborhood $\mathcal{U}(p)$ for each $p\in P_f^{per}$ so that the collection \[\mathcal{U}^{per}:=\bigcup_{p\in P_f^{per}}\mathcal{U}_{p}\] has complement $K=\rs\setminus\mathcal{U}^{per}$ with the property that $K':=f^{-1}(K)$ is compactly contained in $K$. By compactness there is a constant $0<\rho<1$ so that 
\begin{equation}\label{derivBound}
||Df(p)||\geq \frac{1}{\rho} >1    
\end{equation}
 for all $p\in K'$. 
 

In the presence of this metric expansion, certain finiteness properties hold. For example, it was used by Nekrashevych to prove the following statement  on contraction (recall Definition \ref{contractingdef}) of self-similar groups  \cite[Theorem 5.5.3]{nek}.

\begin{theorem}
\label{cor:IMGnucleus}
If $f:\rs\to\rs$ is PCF, then $IMG(f)$ is contracting.
\end{theorem}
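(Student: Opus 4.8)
The final statement to prove is Theorem~\ref{cor:IMGnucleus}: if $f : \rs \to \rs$ is PCF, then $\IMG(f)$ is contracting. The plan is to derive this from Nekrashevych's general contraction criterion \cite[Theorem 5.5.3]{nek}, whose hypothesis is exactly the metric expansion that the preceding paragraphs have already established for a subhyperbolic rational map; the work is to package the geometry of $f$ on $\rs \setminus P_f^{per}$ so that it feeds into that criterion and produces the finite nucleus $\caln \subset \IMG(f)$ required by Definition~\ref{contractingdef}.

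\textbf{Setup and strategy.} First I would recall the contracting mechanism: for a word $v \in X^n$ and $\gamma \in \IMG(f)$, the restriction $\gamma|_v$ is represented (via \eqref{restriction} and Proposition~\ref{imgaction}) by a loop obtained by lifting $\gamma$ through $f^n$ and pre/post-composing with the fixed connecting paths $\ell_x$; concatenating restrictions along a word amounts to lifting $\gamma$ through successive branches of $f^{-n}$. The key geometric input is that $f$ is expanding in the orbifold metric on $\rs \setminus P_f^{per}$: by \cite[Theorem 19.6]{MilnorBook} and the neighborhoods $\calu^{per}$ chosen above, we have $\|Df(p)\| \geq 1/\rho > 1$ for all $p \in K' = f^{-1}(K)$, where $K = \rs \setminus \calu^{per}$ satisfies $f^{-1}(K) \Subset K$. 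Dually, every branch of $f^{-1}$ restricted to $K$ contracts lengths by a factor $\rho < 1$. Thus if I represent a fixed finite generating set of $\IMG(f)$ by loops of bounded diameter in $\rs \setminus P_f$ lying in the "expanding region," then all sufficiently deep iterated lifts of these loops have diameter below any prescribed threshold $\varepsilon$, hence wander only inside a small neighborhood and represent elements of a controlled finite set.

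\textbf{Key steps.} (1) Fix a finite symmetric generating set $T$ of $\IMG(f)$ represented by loops based at $z_0$ of length at most some $L$, all avoiding $f^{-1}(P_f)$. (2) Show that after at most finitely many lifts, every lift of a generator (hence, via \eqref{nested restriction} and \eqref{singleelt}, every restriction $\gamma|_v$ of a product $\gamma$ of at most $m$ generators, for $|v|$ large depending on $m$) is a loop of length at most $\rho^{|v| - c}L$ for a constant $c$; this uses the uniform contraction $\rho$ on $K'$, together with a preliminary estimate showing that only boundedly many initial lifts can escape into $\calu^{per}$ before entering and staying in $K$ — this is where $f^{-1}(K) \Subset K$ is essential. (3) Conclude that the set of homotopy classes of loops of length $\leq \rho^{j} L$ that can arise as such deep restrictions stabilizes: because these loops all lie in a fixed small neighborhood of $K$ whose fundamental group (relative to $P_f$) is finitely generated and the loops are short, there are only finitely many of them up to homotopy once $j$ is large. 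Let $\caln$ be this finite collection of group elements. (4) Verify Definition~\ref{contractingdef}: given any $\gamma \in \IMG(f)$, write it as a product of $m$ generators; by step (2) there is $M = M(m)$ so that $\gamma|_v \in \caln$ for all $|v| \geq M$. Since $\caln$ is finite and independent of $\gamma$, this is exactly contraction. Throughout I would cite \cite[Theorem 5.5.3]{nek} to avoid reproving the metric-to-combinatorics translation in full; the honest content of the proof is checking that $f$ satisfies that theorem's hypotheses, which the displayed inequalities \eqref{eqn:contraction} and \eqref{derivBound} were set up precisely to guarantee.

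\textbf{Main obstacle.} The delicate point is step (2)–(3): controlling lifts near the periodic critical orbit points $P_f^{per}$, where the orbifold metric degenerates and $f$ is \emph{not} expanding. One must argue that a lifted loop can only make boundedly many excursions into the B\"ottcher disks $\calu(p)$ before being trapped in $K'$, using the Böttcher coordinate structure near superattracting cycles and the compact containment $f^{-1}(K) \Subset K$; equivalently, one uses that $\IMG(f)$ "sees" these points only through finitely many combinatorial types. This is exactly the subhyperbolicity argument of \cite[Theorem 5.5.3]{nek}, so in practice the cleanest route is to state the hypotheses of that theorem, observe they hold verbatim for our $f$ by \cite[Theorem 19.6]{MilnorBook} and the construction of $\calu^{per}$ on page~\pageref{subhyperbolic}, and invoke it. The remaining routine step is to note that $\IMG(f) \leq \text{pgIMG}(f)/\C$ and that contraction of a self-similar group is inherited correctly under the identifications fixed on page~\pageref{fixed standard action}, so the conclusion is independent of the choices of $z_0$, $\Lambda$, and $\{\ell_i\}$.
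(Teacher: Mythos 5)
Your proposal is correct and takes essentially the same route as the paper: the paper offers no independent proof of this statement but attributes it directly to Nekrashevych \cite[Theorem 5.5.3]{nek}, with its preceding discussion of subhyperbolicity and the orbifold-metric expansion bound \eqref{derivBound} (via \cite[Theorem 19.6]{MilnorBook}) serving exactly the hypothesis-checking role you describe. Your extra sketch of the lift-contraction and finite-nucleus mechanism is a reasonable outline of what happens inside Nekrashevych's proof, but in the end you, like the paper, rest the argument on that citation.
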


\subsection{Peripheral loops}\label{subs:periphLoops}
Let $f : \rs \to \rs$ be a PCF rational function, and recall that we have fixed a standard action of $\pi_1(\rs\setminus P_f,z_0)$ (and hence of $\IMG(f)$) on $X^*$. In Section \ref{sec:IMGofPCF} we study this action by analyzing loops, and here we record some elementary properties of loops that will prove useful. 

We say that a homotopy class of paths based at a point $z$ is a loop if it can be represented by a loop, or equivalently if every representative is a loop. 
The following lemma is an immediate consequence of Proposition \ref{imgaction}:

\begin{lemma}
\label{liftable}
The lift of $g\in\pi_1(\rs\setminus P_f,z_0)$ to $z \in T_{f, z_0}$ is a loop if and only if $g(\Lambda^*(z))=\Lambda^*(z)$.
\end{lemma}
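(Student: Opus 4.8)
The plan is to unwind the definitions; the statement is essentially tautological once one recalls how the monodromy action on $T_{f,z_0}$ is built and how the standard action on $X^*$ is extracted from it. Write $n$ for the level of $z$, so that $z \in f^{-n}(z_0)$, and let $\tilde g$ be the unique $f^n$-lift of the loop $g$ with $\tilde g(0) = z$. By definition, $\tilde g$ is a loop precisely when its terminal point equals its initial point $z$. On the other hand, the terminal point of $\tilde g$ is, by the very definition of the monodromy action of $\pi_1(\rs \setminus P_f, z_0)$ on $f^{-n}(z_0)$ and its extension to the tree $T_{f,z_0}$, exactly the image of $z$ under the permutation induced by $g$. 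Hence $\tilde g$ is a loop if and only if $g$ fixes $z$ under the monodromy action on $T_{f,z_0}$.

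It then remains to transport this equivalence along the tree isomorphism $\Lambda^* : X^* \to T_{f,z_0}$ of \eqref{Lstar}. By Definition \ref{standardactiondef}, the standard action of $g$ on $X^*$ is the conjugate by $\Lambda^*$ of the monodromy action on $T_{f,z_0}$; consequently $g$ fixes $z$ under the monodromy action if and only if $g$ fixes the corresponding word under the standard action, which in the notation of the lemma is exactly the assertion $g(\Lambda^*(z)) = \Lambda^*(z)$.

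To make the bookkeeping fully explicit I would invoke Proposition \ref{imgaction}: in \eqref{eqn:actionFormula} the symbol $\gamma(x)$ is \emph{defined} as the label of the endpoint of the lift $\tilde\gamma_x$ of $\gamma$ starting at $\Lambda(x)$, so for a level-one vertex the claim is immediate, and the nested-restriction formulas \eqref{nested restriction} together with the inductive definition \eqref{Lstar} of $\Lambda^*$ propagate it to all levels. I do not anticipate any genuine obstacle; the only thing to be vigilant about is matching conventions — which lift is meant, the roles of the connecting paths $\ell_i$, and the identification of a point of $T_{f,z_0}$ with its word in $X^*$ — all of which are pinned down by \eqref{eqn:actionFormula} precisely so that $\gamma(x)$ records where the lift lands. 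This is why the lemma is stated as an immediate consequence of Proposition \ref{imgaction}.
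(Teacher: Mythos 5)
Your proof is correct and follows essentially the same route as the paper, which simply records the lemma as an immediate consequence of Proposition \ref{imgaction}: a lift is a loop iff its endpoint equals its starting point, the endpoint is by definition the image under the monodromy action on $T_{f,z_0}$, and conjugation by $\Lambda^*$ (Definition \ref{standardactiondef}) transports fixed points of that action to fixed words of the standard action on $X^*$. You also correctly handle the mild notational abuse in the statement, reading $\Lambda^*(z)$ as the word in $X^*$ labeling the tree vertex $z$.
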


\begin{definition}
A nontrivial element $g\in\pi_1(\rs\setminus P_f,z_0)$ is \emph{peripheral about $p\in P_f$} if for any disk neighborhood $N(p)$ of $p$ there exists a representative of $g$ that is freely homotopic (i.e. homotopic with continuously moving basepoint) in $\rs \setminus P_f$ to a loop that is contained in $N(p)$. We call $g$ \textit{peripheral} if there exists a $p\in P_f$ so that $g$ is peripheral about $p$. 
\end{definition}

\begin{definition}
A nontrivial element $g\in\pi_1(\rs\setminus P_f,z_0)$ is
called \emph{primitive} if $g=h^m$ for $h\in \pi_1(\rs\setminus P_f,z_0)$ implies that $m=1$ or $m=-1$. 
\end{definition}






Fix a disk neighborhood $N(p)$ of $p$ so that each component of $f^{-1}(N(p))$ contains at most one element of $f^{-1}(p)$. Let $g\in\pi_1(\rs\setminus P_f,z_0)$ be peripheral about $p$, which by definition means that there is a loop $g_p$ that is freely homotopic to $g$ and contained in $N(p)$.  A lift $\widetilde{g}$ of $g$ is said to be \emph{associated to a point} $q\in f^{-1}(p)$ if the free homotopy $g \simeq g_p$ lifts to a free homotopy $\tilde{g} \simeq \tilde{g}_q$ (in $\rs \setminus f^{-1}(P_f)$) where $\tilde{g}_q$ is contained in the component of $f^{-1}(N_p)$ that contains $q$. We note that given $z \in f^{-1}(z_0)$ and $g$ peripheral about $p \in P_f$, the lift of $g$ beginning at $z$ is associated to precisely one $q \in f^{-1}(p)$.


\begin{lemma}
\label{perilift}
Let $g\in\pi_1(\rs\setminus P_f,z_0)$ be primitive and peripheral about $p \in P_f$, and let $\tilde{g}$ be a lift of $g$ beginning at $z \in f^{-1}(z_0)$. Suppose that $\tilde{g}$ is associated to $q\in f^{-1}(p)$. Then $q$ is non-critical if and only if $g(\Lambda(z))=\Lambda(z)$. 
\end{lemma}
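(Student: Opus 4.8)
\textbf{Proof proposal for Lemma \ref{perilift}.}

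The plan is to analyze the local degree of $f$ at the point $q \in f^{-1}(p)$ and translate this into a statement about whether the lift $\tilde{g}$ is a loop, which by Lemma \ref{liftable} is equivalent to $g(\Lambda(z)) = \Lambda(z)$. First I would recall that $g$ is freely homotopic to a loop $g_p$ contained in the disk neighborhood $N(p)$, chosen so that each component of $f^{-1}(N(p))$ contains at most one preimage of $p$. Because $g$ is primitive and peripheral about $p$, the loop $g_p$ is (up to free homotopy in $N(p) \setminus \{p\}$) a simple loop encircling $p$ exactly once; that is, $g_p$ generates $\pi_1(N(p) \setminus \{p\})$. Let $V$ be the component of $f^{-1}(N(p))$ containing $q$, so that $f|_V : V \to N(p)$ is a branched cover of degree $e := e_f(q)$, branched only over $p$ (after shrinking $N(p)$ so it contains no other critical values, which we may do since $P_f$ and hence the critical values are finite — here we use that $g$ peripheral about $p$ is insensitive to shrinking $N(p)$). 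The map $f|_V$ restricted to $V \setminus \{q\} \to N(p) \setminus \{p\}$ is then an honest covering map of degree $e$, so $\pi_1(V \setminus \{q\})$ injects into $\pi_1(N(p) \setminus \{p\}) \cong \Z$ as the subgroup of index $e$.

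Now the key step: a lift of $g_p$ starting in $V \setminus \{q\}$ closes up into a loop in $V \setminus \{q\}$ if and only if the homotopy class $[g_p]$, viewed in $\pi_1(N(p) \setminus \{p\})$, lies in the image of $\pi_1(V \setminus \{q\})$, i.e. if and only if $e$ divides the winding number of $g_p$ about $p$. Since $g$ is primitive and peripheral, that winding number is $\pm 1$, so the lift is a loop precisely when $e = 1$, i.e. when $q$ is non-critical. By the definition of $\tilde{g}$ being associated to $q$, the free homotopy $g \simeq g_p$ lifts (in $\rs \setminus f^{-1}(P_f)$) to a free homotopy $\tilde{g} \simeq \tilde{g}_q$ with $\tilde{g}_q$ contained in $V$; hence $\tilde{g}$ is a loop (based at $z$) if and only if $\tilde{g}_q$ is a loop, which by the preceding sentence happens if and only if $q$ is non-critical. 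Finally, Lemma \ref{liftable} identifies ``$\tilde{g}$ is a loop'' with ``$g(\Lambda^*(z)) = \Lambda^*(z)$'', and since $z \in f^{-1}(z_0) = T_{f,z_0} \cap X^1$ corresponds to a letter of $X$, this reads $g(\Lambda(z)) = \Lambda(z)$, completing the argument.

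The main obstacle I anticipate is the careful bookkeeping of free versus based homotopies: one must check that the free homotopy $g \simeq g_p$ genuinely lifts to a free homotopy $\tilde{g} \simeq \tilde{g}_q$ through $\rs \setminus f^{-1}(P_f)$ (this is where the hypothesis that each component of $f^{-1}(N(p))$ meets $f^{-1}(p)$ in at most one point, and contains no other ramification, is used, so that the covering-space lifting of homotopies applies cleanly), and that passing between the based loop $\tilde{g}$ at $z$ and the freely homotopic $\tilde{g}_q$ does not change whether the path closes up — which is automatic since closing-up is a free-homotopy invariant. A secondary point to be careful about is that ``primitive'' is needed only to guarantee the winding number of $g_p$ is exactly $\pm 1$ rather than some other integer; without it the conclusion would instead be ``$e_f(q) \mid \text{(winding number)}$''. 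The rest is the standard dictionary between covering maps of punctured disks and divisibility of winding numbers.
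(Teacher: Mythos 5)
Your proposal is correct and follows essentially the same route as the paper's proof: use Lemma \ref{liftable} to reduce to whether $\tilde{g}$ is a loop, transfer this to the lift $\tilde{g}_q$ of $g_p$ inside the component $U(q)$ of $f^{-1}(N(p))$ via the lifted free homotopy built into the definition of ``associated to $q$,'' and then observe that over the punctured disk the lift closes up exactly when the local degree at $q$ is $1$. The only cosmetic difference is that the paper phrases the last step via the local model $z\mapsto z^{d}$ on the disk, whereas you phrase it via the index-$e$ subgroup of $\pi_1(N(p)\setminus\{p\})\cong\Z$ and winding numbers, which is the same computation.
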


\begin{proof}
Let $\tilde{g}$ be a lift of $g$ associated to $q$, and let $U(q)$ be the component of $f^{-1}(N(p))$ that contains $q$. By Lemma \ref{liftable} we have $g(\Lambda(z)) = \Lambda(z)$ if and only if $\tilde{g}$ is a loop. By definition $\tilde{g}$ is freely homotopic to $\widetilde{g_q} \subset U(q)$ that is a lift of a loop $g_p \subset N(p)$ freely homotopic to $g$. It follows from the homotopy lifting property that $\tilde{g}$ is a loop if and only if $\widetilde{g_q}$ is a loop. 

Because $f$ is a branched cover, the restriction $f : U(q) \to N(p)$ is modeled on the unit disk map $z\mapsto z^d$ where $d \geq 1$ is the local degree of $f$ at $q$.  A primitive nontrivial loop in $\mathbb{D}\setminus\{0\}$ lifts to a loop under $z\mapsto z^d$ if and only if $d=1$, i.e. if and only if $q$ is non-critical. 
\end{proof}



\begin{lemma}
\label{periliftable}
Let $g\in\pi_1(\rs\setminus P_f,z_0)$ be primitive and peripheral about $p \in P_f$, and let $\tilde{g}$ be a lift of $g$ beginning at $z \in f^{-1}(z_0)$. If $\tilde{g}$ is a loop, then it is either trivial in $\pi_1(\rs\setminus P_f,z_0)$ or it is peripheral about a non-critical point in $P_f$.
\end{lemma}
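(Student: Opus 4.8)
The plan is to lift the free homotopy $g \simeq g_p$ (with $g_p \subset N(p)$) to a free homotopy $\tilde{g} \simeq \widetilde{g_q}$, where $q \in f^{-1}(p)$ is the point to which $\tilde{g}$ is associated and $\widetilde{g_q}$ is contained in the component $U(q)$ of $f^{-1}(N(p))$ containing $q$. Since $\tilde{g}$ is assumed to be a loop, Lemma~\ref{perilift} forces $q$ to be non-critical, so the restriction $f : U(q) \to N(p)$ is a homeomorphism, and $\widetilde{g_q}$ is the unique loop in $U(q)$ mapping homeomorphically onto $g_p$. First I would observe that $q \in f^{-1}(p)$ splits into two cases according to whether $q \in P_f$ or $q \notin P_f$.

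If $q \in P_f$, then $q$ is a non-critical point of $P_f$, and by construction $\widetilde{g_q}$ is a loop contained in the disk neighborhood $U(q)$ of $q$ (after possibly shrinking $N(p)$ so that $U(q)$ is itself a disk neighborhood of $q$ meeting $P_f$ only in $q$); since $\tilde{g}$ is freely homotopic in $\rs \setminus f^{-1}(P_f) \subseteq \rs \setminus P_f$ to $\widetilde{g_q}$, this exhibits $\tilde{g}$ as peripheral about the non-critical point $q \in P_f$. If $q \notin P_f$, then $U(q)$ can be chosen to be disjoint from $P_f$ entirely (again by shrinking $N(p)$, using that $f^{-1}(P_f)$ is finite and $q \notin f^{-1}(P_f)$ is impossible only when $p \in P_f$ has a preimage in $P_f$ — but in any case $q \notin P_f$ means $U(q) \cap P_f = \emptyset$ for small enough $N(p)$). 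Then $\widetilde{g_q}$ is a loop in the simply connected set $U(q)$ minus nothing, hence null-homotopic in $U(q) \subseteq \rs \setminus P_f$, so $\tilde{g}$ is trivial in $\pi_1(\rs \setminus P_f, z_0)$.

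The one subtlety I want to flag is the bookkeeping around the free homotopy versus based homotopy: $\tilde{g}$ is a loop based at $z \in f^{-1}(z_0)$, and the lifted free homotopy moves the basepoint, so strictly $\tilde{g}$ is \emph{freely} homotopic to $\widetilde{g_q}$ in $\rs \setminus f^{-1}(P_f)$; concluding triviality in $\pi_1(\rs \setminus P_f, z_0)$ requires that a loop freely homotopic to a null-homotopic loop is itself trivial in the fundamental group, which holds because $\rs \setminus P_f$ is path-connected and free homotopy of loops corresponds to conjugacy in $\pi_1$, so free homotopy to the constant loop forces the based class to be trivial. The main obstacle is therefore not any deep point but the careful choice of the neighborhood $N(p)$: it must be small enough that (i) each component of $f^{-1}(N(p))$ contains at most one element of $f^{-1}(p)$ (as already fixed before Lemma~\ref{perilift}), (ii) the component $U(q)$ is a disk, and (iii) $U(q)$ meets $P_f$ in at most the single point $q$. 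All three can be arranged simultaneously since $f^{-1}(P_f)$ and $f^{-1}(p)$ are finite sets. Once $N(p)$ is chosen this way, the dichotomy $q \in P_f$ versus $q \notin P_f$ gives exactly the two alternatives in the statement, and Lemma~\ref{perilift} supplies the non-criticality of $q$ needed for the homeomorphism $f : U(q) \to N(p)$ to carry $g_p$ to the loop $\widetilde{g_q}$.
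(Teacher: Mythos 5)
Your argument is correct and follows essentially the same route as the paper: lift the free homotopy $g \simeq g_p$ (this is exactly what ``associated to $q$'' provides), use Lemmas \ref{liftable} and \ref{perilift} to see $q$ is non-critical, and split on whether $q \in P_f$ (peripheral about the non-critical point $q$) or $q \notin P_f$ (the lifted loop sits in a disk missing $P_f$, hence is trivial, free homotopy corresponding to conjugacy in $\pi_1$). The only blemish is the garbled parenthetical about ``$q \notin f^{-1}(P_f)$'' --- note $q \in f^{-1}(p) \subset f^{-1}(P_f)$ always --- but the conclusion you actually use, that $U(q)$ misses the finite set $P_f$ once $N(p)$ is small, is right, so this does not affect the proof.
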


\begin{proof}
Let $q \in f^{-1}(p)$ be such that $\tilde{g}$ is associated to $q$, let $N(q)$ be a disk neighborhood of $q$, and assume that $\tilde{g}$ is a loop. Because $g$ is peripheral about $p$, we can select a loop $g_p$ that is freely homotopic to $g$ and contained in a neighborhood $N(p)$ of $p$ such that $f^{-1}(N(p))$ has a component contained in $N(q)$. Then $\tilde{g}_q$ is freely homotopic to a loop contained in $N(q)$, and hence is peripheral about $q$. Note that if $q \not\in P_f$, then $\tilde{g}$ is trivial in $\pi_1(\rs\setminus P_f,z_0)$. Because $\tilde{g}$ is a loop we have from Lemma \ref{liftable} that $g(\Lambda(z)) = \Lambda(z)$. Hence by Lemma \ref{perilift} we have that $q$ is non-critical. 
\end{proof} 

The following lemma connects the dynamical properties of the post-critical set to to the action of a loop on the tree of preimages.

\begin{lemma}
\label{periend}
Let $g\in\pi_1(\rs\setminus P_f,z_0)$ be primitive and peripheral about $p \in P_f$. Then
\begin{enumerate}
\item $g$ fixes an end of $X^*$ 
if and only if 
there is a backward orbit of $p$ that does not contain a critical point, and
\item $g$ fixes infinitely-many ends of $X^*$ if there is a backward orbit of $p$ that contains no critical point and is not a subset of $P_f$.
\end{enumerate} 
\end{lemma}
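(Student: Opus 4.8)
The plan is to analyze the lifts of $g$ through successive preimages of $p$ and keep track of which ends of $X^*$ are fixed, using Lemma \ref{perilift} and Lemma \ref{periliftable} to understand when a lift remains a loop (and remains peripheral). Fix once and for all a standard action, so that an end $\xi = x_1 x_2 x_3 \cdots$ of $X^*$ corresponds via $\Lambda^*$ to a backward orbit $z_0, z_1, z_2, \ldots$ in $T_{f,z_0}$ with $f(z_{n+1}) = z_n$; by Lemma \ref{liftable}, $g$ fixes $\xi$ if and only if, for every $n$, the lift of $g$ beginning at $z_n$ is a loop. The key observation is that, by induction using Lemma \ref{perilift} and Lemma \ref{periliftable}, if the lift of $g$ starting at $z_n$ is a loop then that lift is either trivial in $\pi_1(\rs \setminus P_f, z_0)$ or primitive and peripheral about a non-critical point $p_n \in f^{-n}(p) \cap P_f$ (where $p_0 = p$ and $f(p_{n+1}) = p_n$); moreover, the lift starting at $z_{n+1}$ is then obtained by lifting the peripheral loop about $p_n$, and by Lemma \ref{perilift} it is a loop precisely when the chosen preimage $p_{n+1}$ of $p_n$ lying ``under'' $z_{n+1}$ is non-critical. (If some lift becomes trivial, all subsequent lifts are trivial loops, so the end is automatically fixed from that point on.)

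For part (1), suppose first that $g$ fixes an end $\xi$ with corresponding backward orbit $(z_n)$. Following the recursion above, either some lift of $g$ becomes trivial, or we produce an infinite backward orbit $p = p_0, p_1, p_2, \ldots$ of $p$ with $f(p_{n+1}) = p_n$, each $p_n$ non-critical (by Lemma \ref{perilift}, since each lift is a loop), and each $p_n \in P_f$; in the first case, at the stage where triviality occurs, $q = p_n \notin P_f$, and we may instead continue choosing \emph{any} preimages, which we can take to avoid critical points (a point has at most one critical preimage and at least two preimages in degree $\geq 2$, but more simply: a trivial loop lifts to trivial loops along arbitrary preimage choices, so we may choose a backward orbit of $p_n$, hence of $p$, avoiding critical points). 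Either way $p$ has a backward orbit containing no critical point. Conversely, given a backward orbit $p = p_0, p_1, \ldots$ of $p$ with no critical point, we build a fixed end: at each stage the lift of the (primitive, peripheral, or eventually trivial) loop through the appropriate preimage of $z_n$ is a loop by Lemma \ref{perilift} because $p_{n+1}$ is non-critical; the sequence of endpoints of these lifts is a backward orbit $(z_n)$, i.e. an end, and it is fixed by $g$ by Lemma \ref{liftable}.

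For part (2), assume there is a backward orbit $p = p_0, p_1, p_2, \ldots$ of $p$ that contains no critical point and is not contained in $P_f$, so $p_N \notin P_f$ for some $N$. Running the construction of part (1) along $p_0, \ldots, p_N$ produces a lift $\tilde g_N$ of $g$ starting at some $z_N \in f^{-N}(z_0)$ that is a loop peripheral about $p_N$; but $p_N \notin P_f$, so $\tilde g_N$ is trivial in $\pi_1(\rs \setminus P_f, z_0)$. A trivial loop lifts, along every further preimage, to a trivial loop, hence to a loop; therefore \emph{every} end of $X^*$ that begins with the length-$N$ word $x_1 \cdots x_N$ corresponding to $z_N$ is fixed by $g$ (by Lemma \ref{liftable} applied at every level). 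There are $d^{\infty}$ such ends — certainly infinitely many — so $g$ fixes infinitely many ends of $X^*$.

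The main obstacle I expect is the bookkeeping in the inductive step: one must be careful that Lemma \ref{perilift} and Lemma \ref{periliftable} are being applied to a loop that is genuinely primitive and peripheral (the hypotheses of those lemmas), and that the ``point $q$ associated to the lift'' is exactly the next preimage $p_{n+1}$ in the backward orbit determined by the end — i.e. that the combinatorics of which end we fix is synchronized with which backward orbit of $p$ we follow. Showing that a lift which becomes trivial can be continued along a critical-point-avoiding backward orbit (needed for the forward direction of (1) in the degenerate case, and implicitly justifying that "not contained in $P_f$" is what forces the \emph{infinitely many} conclusion in (2)) is the one genuinely non-formal point, but it follows from the fact that once a lift is nullhomotopic in $\rs \setminus P_f$ every further lift is nullhomotopic regardless of preimage choices, so the set of fixed ends below that vertex is all of $d^{\mathbb{N}}$.
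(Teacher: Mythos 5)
Your overall strategy is exactly the paper's (the paper's proof is just two sentences: part (1) ``follows from Lemma \ref{perilift}'' and part (2) from the fact that a trivial restriction fixes every end of the subtree below it); your write-up is the natural unpacking of that, using Lemma \ref{liftable} level by level, Lemma \ref{perilift} to translate ``lift is a loop'' into ``associated preimage is non-critical,'' and Lemma \ref{periliftable} to keep the induction going. The primitivity bookkeeping you worry about is fine: when the associated preimage $q$ is non-critical the local model is degree one, so the restriction is again a peripheral loop winding once about $q$, hence primitive (or trivial exactly when $q \notin P_f$).

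There is, however, one step whose justification as written is wrong, in the forward direction of (1) in the degenerate case where some restriction becomes trivial. Your parenthetical ``a point has at most one critical preimage and at least two preimages in degree $\geq 2$'' is false in general (e.g.\ for $f(z)=(z^2-1)^2$ both preimages of $0$ are critical, and a totally ramified point has a single preimage), and your ``more simply'' argument --- that a trivial loop lifts to trivial loops along arbitrary preimage choices --- shows the end stays fixed but says nothing about whether the preimages you choose are non-critical, which is what the statement of (1) requires. The correct repair is short: by Lemma \ref{periliftable}, the restriction becomes trivial precisely when the associated preimage $q$ lies outside $P_f$; since $P_f$ contains all critical values and is forward invariant ($f(P_f)\subseteq P_f$), every preimage of a point outside $P_f$ is again non-critical and outside $P_f$, so \emph{every} backward orbit of $q$, hence of $p$ through $p_0,\dots,p_{n-1},q$, avoids critical points. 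With that substitution your argument is complete and agrees with the paper's intended proof.
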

\begin{proof}
The first statement follows from Lemma \ref{perilift}.
The second statement follows from the fact that the trivial action on a subtree fixes all ends of that subtree.
\end{proof}

\section{The fixed-point process for self-similar groups} \label{fpprocess}

Throughout this section, we assume $X = \{0, \ldots, d-1\}$ for $d \geq 2$, and let $X^n$ be the collection of words in $X$ of length $n$. In particular, $X = X^1$. Recall that we have fixed a standard action of $\pi_1(\rs\setminus P_f,z_0)$ (and hence of $\IMG(f)$) on $X^*$.  As in \eqref{n1def} in the introduction, we put 
\begin{align*} 
\mathcal{N}_1 & = \{g \in \img(f) : \text{$g|_v = g$ and $g(v) = v$ for some non-empty $v \in X^*$}\}.
\end{align*}
We denote by $\mathcal{N}_1(G)$ the analogous set for an arbitrary $G \leq \Aut(X^*)$ \label{ndefs}

In this section, we prove the following result, which is a key step in the proof of Theorem \ref{maincx1}: 

\begin{theorem} \label{cxthm1}
Let $f \in \C(z)$ be a PCF rational function of degree $d \geq 2$. Assume that $d$ is prime or that $f$ has doubly transitive monodromy. If every $g \in \mathcal{N}_1$ fixes infinitely many ends of $X^*$, then $\FPP(\IMG(f)) = 0$.
\end{theorem}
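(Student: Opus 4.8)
The plan is to run the argument of \cite{fpfree} with $\IMG(f)$ in place of the iterated monodromy group of a polynomial, confining the hypothesis on $d$ to a single place, namely the martingale property of the fixed-point process. Write $G = \IMG(f)$. By Proposition \ref{prop:recurrent} the standard action of $G$ is recurrent, and by Theorem \ref{cor:IMGnucleus} it is contracting; its closure $\text{pgIMG}(f)/\C$ is a profinite group which is contracting with the same finite nucleus $\mathcal{N} \subseteq G$ (see \cite{nek}). Let $g$ be Haar-random in $\text{pgIMG}(f)/\C$ and let $Z_n$ be the number of vertices of $X^n$ fixed by $g$; this is the fixed-point process of Section \ref{fpprocess}. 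Then $Z_0 = 1$, each $Z_n$ takes values in $\Z_{\ge 0}$, and if $Z_n = 0$ then $Z_m = 0$ for all $m \ge n$, since a vertex of $X^m$ fixed by $g$ has an ancestor in $X^n$ fixed by $g$. Since the image of $g$ in $\Aut(X^n)$ is uniformly distributed in the level-$n$ quotient $G_n$, we have $\FPP(G_n) = \Pr[Z_n \ge 1]$, and as the events $\{Z_n \ge 1\}$ are nested,
\begin{equation*}
\FPP(\IMG(f)) = \lim_{n \to \infty} \FPP(G_n) = \Pr\big[\, Z_n \ge 1 \text{ for all } n \,\big].
\end{equation*}

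Next, the hypothesis that $d$ is prime or that $f$ has doubly transitive monodromy enters through Corollaries \ref{fpprocessmain} and \ref{dtranscor}, which show that $(Z_n)_{n \ge 0}$ is a martingale for the filtration generated by the action of $g$ on the first $n$ levels of $X^*$. Since $Z_n \ge 0$ and $\mathbb{E}[Z_n] = \mathbb{E}[Z_0] = 1$, the martingale convergence theorem gives an almost sure limit $Z_n \to Z_\infty$ with $\mathbb{E}[Z_\infty] \le 1$; in particular $Z_\infty < \infty$ almost surely. Being integer-valued, $(Z_n)$ is eventually constant almost surely, so combined with the absorption at $0$ this yields $\{Z_n \ge 1 \text{ for all } n\} = \{Z_\infty \ge 1\}$ up to a null set. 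Hence $\FPP(\IMG(f)) = \Pr[Z_\infty \ge 1]$, and it suffices to prove that $Z_\infty = 0$ almost surely.

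Suppose for contradiction that $\Pr[Z_\infty \ge 1] > 0$. Combined with $Z_\infty < \infty$ a.s., this produces a positive-probability event on which $g$ fixes at least one, and only finitely many, ends of $X^*$: the ends fixed by $g$ are the infinite paths through the subtree $\bigsqcup_{n} \{v \in X^n : g(v) = v\}$, whose level-$n$ width is $Z_n$. Fix such a $g$ and an end $w = w_1 w_2 \cdots$ fixed by it. Contraction provides $M$ with $g|_{w_1 \cdots w_n} \in \mathcal{N}$ for all $n \ge M$, so by finiteness of $\mathcal{N}$ there are $M \le n_1 < n_2$ and $h \in \mathcal{N}$ with $g|_{w_1 \cdots w_{n_1}} = g|_{w_1 \cdots w_{n_2}} = h$. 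Setting $u = w_{n_1+1} \cdots w_{n_2}$, a nonempty word, the identities \eqref{nested restriction} give $h|_u = h$ and $h(u) = u$ (the latter since $g$ fixes every prefix of $w$). Iterating, $h|_{u^k} = h$ for all $k \ge 1$, so $h$ equals one of its own deep restrictions and hence lies in $\mathcal{N} \subseteq G$; thus $h \in \mathcal{N}_1$. Now the hypothesis applies to $h$: it fixes infinitely many ends of $X^*$. Since $h = g|_{v}$ with $v := w_1 \cdots w_{n_1}$ and $g(v) = v$, prepending $v$ gives a bijection between the ends of $X^*$ fixed by $h$ and the ends of $X^*$ fixed by $g$ that pass through $v$; therefore $g$ fixes infinitely many ends of $X^*$, which forces $Z_n \to \infty$ and contradicts $Z_\infty < \infty$. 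We conclude $\Pr[Z_\infty \ge 1] = 0$, i.e. $\FPP(\IMG(f)) = 0$.

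The only genuinely new ingredient, and the step I expect to be the main obstacle, is the martingale property of $(Z_n)$. For polynomial $f$, \cite{fpfree} deduces it from a spherically transitive element arising from monodromy at infinity, which $\IMG(f)$ need not possess in general; instead one must argue directly that, conditioned on $g$ fixing a vertex $v \in X^n$, the expected number of children of $v$ fixed by $g$ equals exactly $1$. This is precisely where primality of $d$ (valid for any self-similar, level-transitive subgroup of $\Aut(X^*)$, via Corollary \ref{fpprocessmain}) or double transitivity of the monodromy of $f$ (Corollary \ref{dtranscor}) is used, so the remaining work is to establish those two corollaries. Everything downstream of the martingale property is soft, relying only on $\IMG(f)$ being recurrent and contracting.
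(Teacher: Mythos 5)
Your strategy is the same as the paper's: get the martingale property from Corollaries \ref{fpprocessmain} and \ref{dtranscor}, use martingale convergence to conclude that a Haar-random element of $\text{pgIMG}(f)/\C$ almost surely fixes only finitely many ends (this is Corollary \ref{evconstcor}), and then use the contracting property together with the hypothesis on $\mathcal{N}_1$ to upgrade one fixed end to infinitely many (this is Theorem \ref{crystal}, which the paper takes from \cite{fpfree}). However, there is a genuine gap at the step where you apply contraction to the Haar-random element $g$: you justify it by asserting that the closure $\text{pgIMG}(f)/\C$ is ``contracting with the same finite nucleus $\mathcal{N} \subseteq G$,'' and this assertion is not only absent from \cite{nek} (Theorem \ref{cor:IMGnucleus} concerns the discrete group $\IMG(f)$ only), it is impossible. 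For each fixed $M$, an automorphism of $X^*$ all of whose restrictions at words of length $M$ lie in the finite set $\mathcal{N}$ is determined by its action on $X^M$ together with those $d^M$ restrictions, so only finitely many automorphisms have this property; hence any subgroup of $\Aut(X^*)$ that is contracting with a finite nucleus in the sense of Definition \ref{contractingdef} is countable, whereas $\text{pgIMG}(f)/\C$ is an infinite profinite group and so uncountable. (Concretely, for the binary odometer $a = \sigma(1,a)$ the closure is isomorphic to the dyadic integers, and no restriction of an element outside $\Z$ ever lies in the nucleus $\{1, a, a^{-1}\}$.)

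This matters because your contradiction is run on the event $\{1 \leq Z_\infty < \infty\}$, which consists almost surely of elements \emph{not} in the countable group $\IMG(f)$; for such $g$ you cannot conclude that the restrictions $g|_{w_1\cdots w_n}$ eventually lie in $\mathcal{N}$, nor that the element $h$ you extract lies in $\IMG(f)$ at all, so the hypothesis on $\mathcal{N}_1$ cannot be invoked. The paper's Theorem \ref{crystal} runs the nucleus argument only for $g \in \IMG(f)$ itself, where Definition \ref{contractingdef} genuinely applies, and defers the measure-theoretic passage from that discrete statement to the statement about Haar measure to \cite{fpfree}; that passage is exactly the delicate point, and your write-up replaces it with a false claim rather than an argument. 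To repair the proof, either restrict the nucleus step to elements of $\IMG(f)$ and supply (or cite, as the paper does) the argument of \cite{fpfree} showing this suffices to force $\FPP(\IMG(f)) = 0$, or prove directly that Haar-almost every element of the closure fixing an end has restrictions along that end eventually lying in $\mathcal{N}$; neither follows from anything you cite.
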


For each $n \geq 1$, let $G_n$ denote the quotient of $G$ by the kernel of the restriction map $G \to \Aut(X^n)$. Recall that the profinite completion $G_\infty$ of $G$ with respect to the $G_n$ (equivalently, the inverse limit of $G_n$ under the restriction maps $G_n \to G_{n-1}$) is a compact group, and its normalized Haar measure is a probability measure $\mu$ that projects to the discrete uniform measure on each $G_n$.  Moreover, $G_\infty$ carries a natural action on the set of ends $X^{\omega}$. The key step in the proof of Theorem \ref{cxthm1} is the following result.

\begin{theorem}  
\label{evconst} Suppose that $G \leq \Aut(X^*)$ is self-similar and level-transitive. If either 
\begin{enumerate}
    \item $d$ is prime, or
    \item $G$ is recurrent and acts doubly transitively on $X$,
\end{enumerate}
then
$$\mu(\{g \in G_\infty : \text{$g$ fixes infinitely many elements of $X^\omega$}\}) = 0.$$
\end{theorem}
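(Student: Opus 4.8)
The plan is to analyze the "fixed-point process" associated to $G_\infty$ and show it is a martingale that converges almost surely to an integrable limit, so that the event of fixing infinitely many ends has measure zero unless the limit is bounded away from zero on a positive-measure set, which we will rule out using the hypotheses.

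\textbf{Setup.} For $g \in G_\infty$ and $n \geq 0$, let $L_n(g)$ be the number of vertices of $X^n$ fixed by $g$. Since $G$ is level-transitive, each vertex of $X^n$ is fixed by the same proportion of group elements, so $\mathbb{E}_\mu[L_n] = 1$ for all $n$ (there are $d^n$ vertices, each fixed with probability $d^{-n}$). Define $M_n = L_n$; the claim is that $(M_n)$ is a martingale with respect to the filtration $\mathcal{F}_n = \sigma(G_n)$, i.e. the information of how $g$ acts on the first $n$ levels. The key computation is: given the action of $g$ on $X^n$, a fixed vertex $v \in X^n$ has some number $c_v \in \{1, \ldots, d\}$ of its children fixed on average (over the conditional Haar measure on $G_\infty$ refining the known action), and we need $\sum_{v \text{ fixed in } X^n} \mathbb{E}[\#\{\text{fixed children of } v\} \mid \mathcal{F}_n] = L_n$, i.e. each fixed vertex has on average exactly one fixed child. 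This is where the self-similarity and the hypotheses enter: when $v$ is fixed by $g$, the restriction $g|_v$ is (conditionally) Haar-distributed in $G_\infty$ by recurrence — in case (2) — or the relevant structure in case (1); the number of its fixed children is the number of fixed points of a permutation. When $d$ is prime, $G$ acting on $X$ (level-transitively, hence transitively) has a cyclic $d$-cycle among its elements, and more to the point we can use that the restriction of $g$ to the subtree above $v$, conditioned on $g(v)=v$, induces on $X$ a permutation whose expected number of fixed points is exactly $1$ — this is Burnside/orbit-counting applied to the transitive action on $X$, giving average number of fixed points $= 1$. In case (2), double transitivity on $X$ gives the same average-one conclusion via the standard fact that a $2$-transitive group has average number of fixed points exactly $1$ (and variance $1$). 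So $(M_n)$ is a nonnegative martingale with $\mathbb{E}[M_n] = 1$.

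\textbf{Martingale convergence.} By the martingale convergence theorem, $M_n \to M_\infty$ almost surely for some $M_\infty \geq 0$ with $\mathbb{E}[M_\infty] \leq 1$. The number of ends fixed by $g$ is $\lim_n L_n(g) = \limsup_n M_n(g)$ when this is finite; more precisely, $g$ fixes infinitely many ends if and only if $L_n(g) \to \infty$, because the fixed ends form a closed subset of $X^\omega$ whose projections to $X^n$ are exactly the fixed vertices of $g$ that lie below an infinite fixed path, and $L_n$ is non-decreasing along... (here one must be slightly careful: $L_n$ is \emph{not} monotone, since a fixed vertex may have no fixed children). The correct statement: let $F_n(g) \subseteq X^n$ be the set of vertices $v$ fixed by $g$ such that the subtree above $v$ contains at least one $g$-fixed end; then $\#F_n$ is non-increasing... no — $\#F_n$ equals the number of length-$n$ prefixes of fixed ends, which is non-decreasing in $n$ and tends to the number of fixed ends (finite or infinite). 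Since $F_n(g) \subseteq \mathrm{Fix}_{X^n}(g)$, we have $\#F_n \leq M_n$, so if $g$ fixes infinitely many ends then $M_n(g) \to \infty$. Thus
$$\{g : g \text{ fixes infinitely many ends}\} \subseteq \{g : M_\infty(g) = \infty\} = \{g : M_n(g) \to \infty\},$$
and by Fatou / almost-sure convergence of $M_n$ to a finite limit $M_\infty$ (which holds a.s. since $\sup_n \mathbb{E}[M_n] = 1 < \infty$), the set $\{M_\infty = \infty\}$ has $\mu$-measure zero. This gives the theorem.

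\textbf{Main obstacle.} The delicate point is verifying the martingale property precisely — specifically, that conditioning on $\mathcal{F}_n$ and on $g(v)=v$, the induced action of $g$ on the children of $v$ (equivalently, the first-level action of $g|_v$) has conditional expected number of fixed points exactly $1$. In case (2) one must use recurrence to know the conditional law of $g|_v$ is Haar on $G_\infty$ (so its first-level action is uniform on the transitive, $2$-transitive group $G_1$, giving average $1$ by orbit-counting), and carefully track that the conditioning $g(v)=v$ is compatible with this — this is where recurrence (surjectivity of the virtual endomorphism) is essential, and it is the reason hypothesis (2) includes recurrence. In case (1), $d$ prime, one cannot assume $g|_v$ is Haar-distributed for a general self-similar level-transitive $G$, so instead one argues directly: the conditional distribution of the first-level action of $g|_v$ is \emph{some} probability measure supported on the transitive group $G_1 \leq S_d$, but for $d$ prime a transitive subgroup of $S_d$ has the property that \emph{every} element is either a $d$-cycle (with $0$ fixed points) or the identity (with $d$ fixed points), and by transitivity... hmm, this needs the measure to weight these so the average is $1$, which again follows because the marginal on $G_n \to G_{n+1}$ data is uniform and level-transitivity forces the proportion of $g$ with $g|_v$ acting as identity on children to be exactly $1/d$ of those fixing $v$ — i.e. one re-derives it from the uniform measure on $G_{n+1}$ directly rather than from recurrence. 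I expect getting this conditional-expectation bookkeeping exactly right, in a way that covers both cases uniformly, to be the crux; the martingale convergence and the reduction of "fixes infinitely many ends" to "$M_n \to \infty$" are routine.
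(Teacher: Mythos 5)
Your overall strategy—show that the level-$n$ fixed-point count is a martingale, invoke martingale convergence, and observe that fixing infinitely many ends forces the count to diverge, a null event—is exactly the paper's route (Theorem \ref{martchar}, Corollary \ref{evconstcor}), and your convergence step and the reduction to "$M_n \to \infty$" are fine. The gap is in the only step carrying real content: the verification of the martingale property. By Theorem \ref{martchar} this is \emph{equivalent} to the kernel $H_{n+1}$ of $G_{n+1} \to G_n$ acting transitively on every sibling set $v* = \{vx : x \in X\}$; in particular it requires $H_{n+1}$ to be nontrivial for every $n$, and you never establish this. If some $H_{n+1}$ were trivial, the level-$(n+1)$ action would be completely determined by the level-$n$ action and the "average one fixed child" claim fails outright. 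Ruling this out is precisely where self-similarity is indispensable: the paper proves (Proposition \ref{valsurj}) that an infinite self-similar group has, for every $n$, elements acting trivially on $X^n$ but not on $X^{n+1}$, and infinitude comes from level-transitivity. Your proposal mentions self-similarity only in passing and never uses it.

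The specific justifications you give for the conditional computation also do not hold up. In case (2) you assert that, conditioned on the level-$n$ action with $g(v)=v$, the law of $g|_v$ is Haar on $G_\infty$ "by recurrence." Recurrence concerns restrictions of vertex stabilizers, whereas your conditioning is on a coset of the level-$n$ kernel; the restriction of that kernel at $v$ need not be all of $G$, so the claimed conditional Haar distribution is unproven (and stronger than needed). The paper bridges exactly this gap in Theorem \ref{thm:dt}: take a nontrivial element of $H_n$ (supplied by Proposition \ref{valsurj}), conjugate it using level-transitivity so that it acts nontrivially on some sibling set, then use recurrence together with double transitivity to conjugate its restriction so as to send any prescribed child $i$ to any prescribed $j$, all while staying inside the normal subgroup $H_n$. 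In case (1), your claim that every element of a transitive subgroup of $S_d$, $d$ prime, is either a $d$-cycle or the identity is false (consider $S_5$, or the Frobenius group of order $20$); the correct statement, used in the paper (Lemma \ref{Horb} and Corollary \ref{transcor}), is that the orbits of the \emph{normal} subgroup $H_n$ on $X^n$ all have the same length $r \mid d$, so $r = d$ once $H_n \neq 1$. Your fallback assertion that "level-transitivity forces the proportion of $g$ with $g|_v$ acting as the identity on the children to be exactly $1/d$ of those fixing $v$" is a restatement of what must be proven and again fails when $H_{n+1}$ is trivial. So the missing ingredients are exactly the paper's Proposition \ref{valsurj}, Corollary \ref{transcor}, and Theorem \ref{thm:dt}.
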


Recall that $G$ acts doubly transitively on $X$ if for all $i, j, k, \ell \in X$ with $i \neq j$ and $k \neq \ell$, there exists $g \in G$ with $g(i) = k$ and $g(j) = \ell$/

Theorem \ref{evconst} is proven in Corollaries \ref{evconstcor}, \ref{fpprocessmain}, and \ref{dtranscor}. The same conclusion as in Theorem \ref{evconst} is reached in Theorem 1.4 of \cite{fpfree} under the assumption that $G$ contains a spherically transitive element, which implies that $G$ is level-transitive, though not necessarily self-similar. We remark too that in the special case $d = 2$, Theorem 1.2 of \cite{galmart} implies the conclusion of Theorem \ref{evconst} under the assumptions that $G$ is level-transitive and for each $n$ the sign homomorphism $\text{sgn}_n : G_n \to \{\pm 1\}$ is surjective. In this paper we must handle groups with $d = 2$ that do not have a spherically transitive element, and for which $\text{sgn}_n$ has trivial image for all $n$ sufficiently large.

Here is a sketch of the proof of Theorem \ref{cxthm1}. We define a stochastic process -- that is, an infinite collection of random variables defined on a common probability space -- that encodes information about the number of fixed points in $X^n$ of elements of $G_n$.  We then generalize the techniques of \cite{galmart} and \cite{fpfree} and  to show that this process is a martingale provided only that $G$ is self-similar and level-transitive. An application of a martingale convergence theorem and a result of Nekrashevych on contracting actions of iterated monodromy groups yield the final steps in the proof of Theorem \ref{cxthm1}.


We now give the precise construction and proofs.

Let a group $G$ act on a set $S$, and for $g \in G$ put
$\Fix(g) = \{s \in S : g(s) = s\}.$
Define a stochastic process $Y_1, Y_2, \ldots$ on $G_\infty$ by taking 
$$Y_i(g) = \#\Fix(\pi_i(g)),$$ 
where $\pi_i$ is the restriction map $G_\infty \to G_i$, and $G_i$ acts on $X^i$.  We call this the {\em fixed point process} of $G$.
Because $\mu(\pi_i^{-1}(T)) = \#T/\#G_i$ for any $T \subseteq G_i$, we have that $\mu(Y_1 = t_1, \ldots, Y_n = t_n)$ is given by 
\begin{equation} \label{fpchar}
\frac{1}{\# G_n}\# \left\{g \in G_n : \mbox{$g$ fixes $t_i$ elements of $X^i$ for 
$i = 1,2, \ldots, n$} \right\}.
\end{equation}
We denote by $E(Y)$ the expected value of the random variable $Y$.

\begin{definition}
A stochastic process with probability measure $\mu$ and random variables $Y_1, Y_2, \ldots$ taking values in $\mathbb{R}$ is a {\em martingale} if for all $n \geq 2$ and any $t_i \in \mathbb{R}$, 
$$E(Y_n \mid Y_{1} = t_{1}, Y_2 = t_2, \ldots, Y_{n-1} = t_{n-1}) = t_{n-1},$$
provided $\mu(Y_{1} = t_{1}, Y_2 = t_2, \ldots, Y_{n-1} = t_{n-1}) > 0$.  
\end{definition}

Martingales are useful tools because they often converge in the following sense:
\begin{definition}
Let $Y_1, Y_2, \ldots$ be a stochastic process defined on the probability space $\Omega$ with probability measure $\mu$.  
The process {\em converges} if 
$$\mu \left(\omega \in \Omega : \text{$\inflim{n} Y_n(\omega)$ exists} \right) = 1.$$
\end{definition}
We give one standard martingale convergence theorem (see e.g. \cite[Section 12.3]{grimmett} for a proof).
\begin{theorem} \label{martconv}
Let $M = (Y_1, Y_2, \ldots)$ be a martingale whose random variables take nonnegative real values.  Then $M$ converges.
\end{theorem}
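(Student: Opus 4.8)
The plan is to run Doob's classical argument, whose engine is the upcrossing inequality. The first observation is that, since $Y_n \geq 0$ and the martingale property forces $E(Y_n) = E(Y_1)$ for every $n$, the process is bounded in $L^1$: $\sup_n E(|Y_n|) = E(Y_1) < \infty$. This single uniform bound is what ultimately forces convergence.

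Next I would establish the upcrossing inequality. Fix rational numbers $a < b$, and for each $n$ let $U_n = U_n[a,b]$ count the upcrossings of the interval $[a,b]$ completed by $Y_1, \ldots, Y_n$. One introduces the predictable $\{0,1\}$-valued ``gambling strategy'' $(C_k)$ that switches on the first time the path reaches a value $\leq a$ and switches off the next time it reaches a value $\geq b$ (alternating thereafter), and forms the martingale transform $(C \cdot Y)_n = \sum_{k=2}^n C_k(Y_k - Y_{k-1})$. Because $C$ is bounded and predictable with respect to the natural filtration, $(C \cdot Y)_n$ is again a martingale, so $E\bigl((C \cdot Y)_n\bigr) = 0$. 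On the other hand, pathwise each completed upcrossing adds at least $b - a$ to $(C \cdot Y)_n$ while the final, possibly incomplete, excursion subtracts at most $(Y_n - a)^-$, so $(C \cdot Y)_n \geq (b - a)U_n - (Y_n - a)^-$. Taking expectations yields
\[
(b - a)\,E(U_n) \;\leq\; E\bigl((Y_n - a)^-\bigr) \;\leq\; |a| + E(Y_n) \;=\; |a| + E(Y_1),
\]
a bound independent of $n$. Letting $n \to \infty$ and applying monotone convergence to $U_n \uparrow U_\infty[a,b]$ gives $E(U_\infty[a,b]) < \infty$, hence $U_\infty[a,b] < \infty$ almost surely.

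To finish, I would note that $\{\lim_n Y_n \text{ fails to exist in } [0,\infty]\} = \{\liminf_n Y_n < \limsup_n Y_n\}$ is contained in the countable union $\bigcup_{a < b \text{ rational}} \{U_\infty[a,b] = \infty\}$ of null sets, so $\lim_n Y_n$ exists in $[0,\infty]$ almost surely; Fatou's lemma then gives $E\bigl(\lim_n Y_n\bigr) \leq \liminf_n E(Y_n) = E(Y_1) < \infty$, so the limit is almost surely finite, which is exactly the assertion that $M$ converges. The only genuinely non-routine ingredient is the upcrossing inequality itself — in particular, verifying that the transform $C \cdot Y$ is a true martingale and that it dominates $(b - a)U_n - (Y_n - a)^-$ along every path; everything around it is bookkeeping with monotone convergence and Fatou.
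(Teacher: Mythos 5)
Your proof is correct: it is the classical Doob upcrossing argument, which is exactly the proof the paper relies on by citing \cite[Section 12.3]{grimmett} for Theorem \ref{martconv} rather than proving it in the text. Nothing further is needed.
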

Since the random variables in the fixed-point process take nonnegative integer values, we immediately have the following:  
\begin{corollary} \label{evconstcor}
Let $G \leq \Aut(X^*)$ and suppose that the fixed-point process for $G$ is a martingale.  Then 
$$\mu(\{g \in G_\infty : \text{$Y_1(g), Y_2(g), \ldots$ is eventually constant}\}) = 1.$$
In particular, 
$$\mu(\{g \in G_\infty : \text{$g$ fixes infinitely many elements of $X^\omega$}\}) = 0.$$
\end{corollary}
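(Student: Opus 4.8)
The plan is to read this off directly from the martingale convergence theorem, Theorem \ref{martconv}. By hypothesis the fixed-point process $Y_1, Y_2, \ldots$ of $G$ is a martingale, and each $Y_i$ takes values in the nonnegative integers (indeed $0 \le Y_i(g) \le d^i$, since $\pi_i(g)$ acts on the finite set $X^i$). So Theorem \ref{martconv} applies and the process converges, i.e.
\[
\mu\bigl(\{g \in G_\infty : \textstyle\lim_{n \to \infty} Y_n(g)\ \text{exists (and is finite)}\}\bigr) = 1.
\]
For a fixed $g$ the sequence $(Y_n(g))_{n \ge 1}$ is a sequence of nonnegative integers, so if it converges to a finite limit it must be eventually constant. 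Hence the set of $g \in G_\infty$ for which $(Y_n(g))_{n\ge 1}$ is eventually constant has full $\mu$-measure, which is the first assertion.

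For the second ("in particular") assertion I would show that fixing infinitely many ends forces $(Y_n(g))$ to be unbounded, hence not eventually constant. Fix $g \in G_\infty$, let $F \subseteq X^\omega$ be the set of ends fixed by $g$, and for each $n$ let $F_n \subseteq X^n$ be the set of length-$n$ prefixes of elements of $F$. An end is fixed by $g$ precisely when every one of its prefixes is fixed by the corresponding $\pi_n(g)$, so each element of $F_n$ is a fixed point of $\pi_n(g)$ on $X^n$, giving $Y_n(g) \ge \#F_n$. The truncation maps $F_{n+1} \to F_n$ are surjective, so $\#F_n$ is nondecreasing in $n$; if it were bounded it would stabilize at some level $N$, meaning every word in $F_N$ extends uniquely to an end of $F$, whence $F$ would be finite of size $\#F_N$. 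Thus $F$ infinite implies $\#F_n \to \infty$, hence $Y_n(g) \to \infty$ and $(Y_n(g))$ is not eventually constant. Therefore $\{g \in G_\infty : g\ \text{fixes infinitely many elements of}\ X^\omega\}$ is contained in the complement of the full-measure set from the first part, so it is $\mu$-null.

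I do not expect a serious obstacle here: given Theorem \ref{martconv} the argument is essentially immediate, and the only point requiring any thought is the combinatorial observation in the second paragraph — a routine compactness (K\"onig's lemma) argument relating an infinite set of fixed ends to arbitrarily large sets of fixed vertices at finite levels. The substantive content of this section lies not in this corollary but in the subsequent results (Corollaries \ref{fpprocessmain} and \ref{dtranscor}), which establish the hypothesis that the fixed-point process actually is a martingale.
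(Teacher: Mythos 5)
Your proof is correct and matches the paper's approach: the paper also derives the corollary immediately from Theorem \ref{martconv} using the fact that the $Y_i$ take nonnegative integer values (so convergence forces eventual constancy), with the ``in particular'' following because infinitely many fixed ends force $Y_n(g)$ to be unbounded. Your K\"onig-type argument for that last implication is exactly the routine step the paper leaves implicit, and it is carried out correctly.
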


Thus to prove Theorem \ref{evconst}, it suffices to show that the fixed-point process for $G$ is a martingale. We therefore characterize when this happens. Let $H_n$ be the kernel of the restriction map $G_n \to G_{n-1}$.

\begin{theorem} \label{martchar}
Let $G \leq \Aut(X^*)$. Then the fixed-point process for $G$ is a martingale if and only if for all $n \geq 1$ and $v \in X^{n-1}$, $H_n$ acts transitively on the set $v* = \{vx : x \in X\}$.
\end{theorem}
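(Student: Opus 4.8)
The plan is to compute the conditional expectation $E(Y_n \mid Y_1 = t_1, \ldots, Y_{n-1} = t_{n-1})$ directly and compare it to $t_{n-1}$. Fix $n \geq 1$ and a sequence $t_1, \ldots, t_{n-1}$ with positive probability, and let $W \subseteq G_{n-1}$ be the set of $g \in G_{n-1}$ fixing exactly $t_i$ elements of $X^i$ for $i = 1, \ldots, n-1$. For each $g \in G_n$, write $\bar g$ for its image in $G_{n-1}$; the fibers of $G_n \to G_{n-1}$ are cosets of $H_n$, all of the same size $\#H_n$. The key combinatorial identity is that for any $g \in G_n$, the word $vx \in X^n$ (with $v \in X^{n-1}$, $x \in X$) is fixed by $g$ if and only if $\bar g(v) = v$ and, writing $v*$ for the block $\{vx : x \in X\}$, the element $g$ acts on $v*$ fixing $vx$. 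Thus $Y_n(g) = \sum_{v \in \Fix(\bar g)} (\text{number of fixed points of } g \text{ acting on } v*)$. I would average this over the coset $\bar g H_n$: conditioning on $\bar g$, as $h$ ranges over $H_n$, the average number of fixed points of $gh$ on a fixed block $v*$ (with $\bar g(v) = v$) is $\frac{1}{\#H_n}\sum_{h \in H_n} \#\Fix(gh|_{v*})$.

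The heart of the argument is then a Burnside-type (orbit-counting) computation on each block $v*$. The action of $G_n$ restricted to a block $v*$ with $\bar g(v) = v$ is governed by how $H_n$ moves the letters appended at $v$: precisely, $H_n$ acts on $v*$ through some permutation group, and the coset $gH_n$ acts on $v*$ as a single coset of that permutation group inside $\Sym(v*)$. By the standard fact that a coset of a subgroup $T \leq \Sym(d)$ has average number of fixed points equal to the number of orbits of $T$ when the coset is $T$ itself — and more generally, for a coset $\sigma T$, the average of $\#\Fix$ over $\sigma T$ equals $\frac{1}{\#T}\#\{(i,\tau) : \tau \in T, \sigma\tau(i) = i\}$, which equals $1$ for every coset exactly when $T$ is transitive — I would show: the conditional average of the block-contribution over $h \in H_n$ equals $1$ for \emph{every} $v \in \Fix(\bar g)$ and every $g$ if and only if $H_n$ acts transitively on $v*$ for all $v \in X^{n-1}$. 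Summing over $v \in \Fix(\bar g)$ gives average $\#\Fix(\bar g) = t_{n-1}$ for $\bar g \in W$, hence $E(Y_n \mid Y_1 = t_1, \ldots, Y_{n-1} = t_{n-1}) = t_{n-1}$, which is the martingale property. For the converse, if $H_n$ fails to be transitive on some block $v_0*$, one exhibits a conditioning event of positive probability (using level-transitivity to route some group element to make $v_0$ fixed at level $n-1$) on which the conditional expectation differs from $t_{n-1}$; the nontransitivity forces the coset-average on that block to differ from $1$, and since all other blocks contribute nonnegatively and the "diagonal" block contributes something detectably off, the total expectation is pushed off $t_{n-1}$.

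The main obstacle I anticipate is the converse direction, specifically making the positivity-of-conditioning-event argument clean: one must produce elements of $G_n$ realizing a prescribed fixed-point pattern at levels $1, \ldots, n-1$ together with the offending block, and argue that the averaging over $H_n$ genuinely changes the expectation rather than having contributions from different blocks cancel. Here one needs to be careful that $H_n$ acts on each block $v*$ possibly through a different permutation group, and that the failure of transitivity on one block cannot be compensated by over-counting on another; isolating a single block by a suitable conditioning (for instance, conditioning on $\bar g$ having exactly one fixed point at level $n-1$, which is available by level-transitivity of $G$) should resolve this. The forward direction, by contrast, is a direct and fairly mechanical application of the orbit-counting lemma once the block decomposition of $Y_n$ is set up, so I would present that in full and treat the converse with the conditioning construction just sketched.
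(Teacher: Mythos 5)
Your forward direction is essentially the paper's argument: decompose $\Fix$ at level $n$ into blocks $v*$ over the fixed points $v$ of the level-$(n-1)$ image, note that the conditioning event is a union of cosets of $H_n$, and average over each coset; transitivity of $H_n$ on each block makes the proportion of $h \in H_n$ with $gh$ fixing a given $vx$ equal to $1/d$ (a stabilizer-coset count), so each fixed block contributes $1$ on average and the coset average is $t_{n-1}$. That part is fine.

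The converse, however, has a genuine gap. First, your argument invokes level-transitivity of $G$, which is not a hypothesis of the theorem --- the statement is for an arbitrary $G \leq \Aut(X^*)$, so any proof of the ``only if'' direction that needs level-transitivity proves a weaker statement. Second, even granting level-transitivity, the conditioning event you propose (``$\bar g$ has exactly one fixed point at level $n-1$'') need not be nonempty: if $G_{n-1}$ acts regularly (sharply transitively) on $X^{n-1}$, every nonidentity element has zero fixed points and the identity fixes all of $X^{n-1}$, so no element has exactly one. The clean conditioning, which also removes your worry about cancellation between blocks, is the event $Y_1 = d, \ldots, Y_{n-1} = d^{n-1}$, i.e.\ the element acts trivially on level $n-1$; this event is exactly (the preimage of) $H_n$ and always has positive probability since it contains the identity. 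Burnside's lemma then gives
\begin{equation*}
E\bigl(Y_n \mid Y_1 = d, \ldots, Y_{n-1} = d^{n-1}\bigr) \;=\; \frac{1}{\#H_n}\sum_{h \in H_n} \#\Fix(h) \;=\; \#\{\text{$H_n$-orbits on } X^n\}.
\end{equation*}
Each of the $d^{n-1}$ blocks $v*$ is $H_n$-invariant and contributes at least one orbit, and a block on which $H_n$ is not transitive contributes at least two, so the orbit count is strictly greater than $d^{n-1} = t_{n-1}$; no compensation from other blocks is possible because orbit counts are bounded below by one per block, never above. This is precisely how the paper handles the converse, and it requires no hypothesis on $G$ beyond what is stated.
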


\begin{proof}
Assume that $H_n$ acts transitively on $v*$. We must show
\begin{equation} \label{sloop}
E(Y_{n} \mid Y_1 = t_1, \ldots, Y_{n-1} = t_{n-1}) = t_{n-1},
\end{equation}
where $t_1, \ldots, t_{n-1}$ satisfy $\mu (Y_1 = t_1, \ldots, Y_{n-1} = t_{n-1}) > 0.$  Because the $Y_i$ take integer values, each $t_i$ must be an integer.  
By definition, the left-hand side of \eqref{sloop} is
\begin{equation} \label{sloop1}
\sum_k k \cdot \frac{\mu (Y_1 = t_1, \ldots, Y_{n-1} = t_{n-1}, Y_n = k )}
{\mu (Y_1 = t_1, \ldots, Y_{n-1} = t_{n-1})}.
\end{equation}
Put 
\begin{eqnarray*}
S & = & \{g \in G_n : \text{$g$ fixes $t_i$ elements of $X^i$ for $1 \leq i \leq n-1$}\} \\
S_k & = & \{g \in S : \text{$g$ fixes $k$ elements of $X^n$}\} 
\end{eqnarray*}
By \eqref{fpchar}, the expression in \eqref{sloop1} is equal to
$\sum_k k \cdot (\# S_k/\# S)$.
This in turn may be rewritten
\begin{equation} \label{sloop4}
\frac{1}{\#S} \sum_{g \in S} \#\Fix(g).
\end{equation}

Each $H_n$ acts trivially on $X^{n-1}$, so $S$ is invariant under multiplication by elements of $H_n$, whence $S$ is a union of cosets of $H_n$.  
Take $gH_n \subseteq S$, and let 
$$R = \{vx : v \in X^{n-1}, g(v) = v, x \in X\}.$$
Note that because $g \in S$, we have $\#R = dt_{n-1}$.  If $vx \in R$, then $g(vy) = vx$ for some unique $y \in X$.  Because $H_n$ acts transitively on $v*$, the set
$$
Q := \{h \in H_n : h(vx) = vy\}
$$
is non-empty, and is thus a coset of $\Stab_{H_n}(vx)$. By standard group theory, we then have $\#Q = \#H_n/\#O_{H_n}(vx) = \#H_n/d$, where the last equality follows from the transitivity of the action of $H_n$ on $v*$.

Now let $I(g,s)$ be the function that takes the value $1$ when $g(s) = s$ and $0$ otherwise. Then we have $\sum_{h \in H_n} I(gh, vx) = \#Q$ and hence
\begin{equation*}
\sum_{vx \in R}\sum_{h \in H_n} I(gh, vx) = \#Q \cdot dt_{n-1} = \#H_n t_{n-1}.
\end{equation*}
Inverting the order of summation and using that $g(w) \neq w$ for $w \not\in R$, we have
$$
\sum_{h \in H_n} \# \Fix(gh) = \#H_{n} t_{n-1}.
$$
But $S$ is a disjoint union of cosets of $H_n$, and hence the expression in \eqref{sloop4} equals $t_{n-1}$.

Assume now that $H_n$ does not act transitively on $v*$ for some $v \in X^{n-1}$. Then the action of $H_n$ on $X^n$ has $k$ orbits for some $k > d^{n-1}$, and so by Burnside's lemma we have 
$$\frac{1}{\#H_n} \sum_{h \in H_n} \# \Fix(h) = k > d^{n-1}.$$ Because $H_n$ is the full set of elements of $G_n$ that fix all $d^i$ elements of $X^i$ for each $i = 1, \ldots, d-1$, we have
$$E(Y_{n} \mid Y_1 = d, \ldots, Y_{n-1} = d^{n-1}) = k > d^{n-1},$$
and hence the fixed-point process for $G$ is not a martingale. 
\end{proof}

\begin{remark}
When $G$ has a spherically transitive element, it is straightforward to see that $H_n$ acts transitively on each set $v*$; indeed, a suitable power of the spherically transitive element will give such a transitive action. This together with Theorem \ref{martchar} gives a proof of \cite[Theorem 4.2]{fpfree}. 
\end{remark}

In light of Theorem \ref{martchar}, we examine the action of $H_n$ on $X^n$. 
\begin{lemma} \label{Horb}
Let $G \leq \Aut(X^*)$ act transitively on $X^n$. Let $H_n$ be the kernel of the restriction $G_n \to G_{n-1}$. Then the action of $H_n$ on $X^n$ consists of orbits of equal length $r$ for some $r \mid d$. 
\end{lemma}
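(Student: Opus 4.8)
The plan is to exploit the level-transitivity of $G$ on $X^n$ to transport orbits of $H_n$ on $X^n$ to one another, using that $H_n$ is normal in $G_n$. First I would fix the setup: $H_n \normal G_n$ is the kernel of $G_n \to G_{n-1}$, so $H_n$ acts trivially on $X^{n-1}$ and hence preserves each fiber $v* = \{vx : x \in X\}$ over a vertex $v \in X^{n-1}$. Thus the $H_n$-orbit of any word $w = vx \in X^n$ is contained in the single fiber $v*$, which has size $d$; in particular every $H_n$-orbit has size dividing $d$ (a fortiori every orbit length is at most $d$). This already shows each orbit length divides $d$; what remains is to show all orbits have the \emph{same} length $r$.

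For the equal-length claim, let $O$ and $O'$ be two $H_n$-orbits on $X^n$, say with $w \in O$ and $w' \in O'$. Since $G$ (and hence $G_n$) acts transitively on $X^n$, pick $g \in G_n$ with $g(w) = w'$. Because $H_n \normal G_n$, conjugation by $g$ permutes the $H_n$-orbits: for any $h \in H_n$, $g h g^{-1} \in H_n$, and $(ghg^{-1})(w') = (gh)(w) \in g(O)$, so $g(O)$ is exactly the $H_n$-orbit of $w' = g(w)$, namely $O'$. Hence $g$ restricts to a bijection $O \to O'$, giving $\#O = \#O'$. Therefore all $H_n$-orbits on $X^n$ have a common length $r$, and by the first paragraph $r \mid d$.

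I expect no serious obstacle here; the only point requiring care is making sure the normality of $H_n$ in $G_n$ is invoked correctly so that $g$-conjugation genuinely carries $H_n$-orbits to $H_n$-orbits (rather than merely carrying the $H_n$-orbit of $w$ into the $g H_n g^{-1}$-orbit of $g(w)$, which is the same set only because $gH_ng^{-1} = H_n$). Everything else — that $H_n$ fixes $X^{n-1}$ pointwise and so preserves fibers, and that orbit sizes in a $d$-element fiber divide $d$ by the orbit–stabilizer theorem — is routine. This lemma then feeds into the analysis of when $H_n$ acts transitively on each fiber $v*$ (i.e. when $r = d$), which by Theorem \ref{martchar} is exactly the martingale condition.
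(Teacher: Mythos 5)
Your argument is essentially the paper's proof: use transitivity of $G_n$ on $X^n$ together with normality of $H_n$ in $G_n$ to transport $H_n$-orbits onto one another (so all orbits have a common length $r$), and use that $H_n$ fixes $X^{n-1}$ pointwise so that each fiber $v*$ is a disjoint union of $H_n$-orbits. The one flaw is your justification of $r \mid d$: the claim in your first paragraph that an orbit contained in a $d$-element fiber ``has size dividing $d$,'' and the later appeal to the orbit--stabilizer theorem, are both wrong as stated --- orbit--stabilizer gives that an orbit length divides $\#H_n$, not the size of an invariant set containing it, and containment in a $d$-element set only gives length at most $d$. The correct (and immediate) reason, which is how the paper concludes, is the counting you have already set up but in the other order: once you know every orbit has the same length $r$, each fiber $v*$ of size $d$ is partitioned into some number $k$ of orbits, so $d = kr$ and hence $r \mid d$. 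So simply derive the equal-length statement first and then deduce divisibility from the partition of $v*$; with that reordering your proof is complete and coincides with the paper's.
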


\begin{proof}
Let $u, w \in X^n$. By the transitivity of the action of $G$ on $X^n$, there is $g \in G_n$ with $g(u) = w$. If $h(u) = u'$ for $h \in H_n$, then $h^g(w) = g(u')$, where $h^g := ghg^{-1} \in H_n$. Thus $g$ furnishes a map $O_{H_n}(u) \to O_{H_n}(w)$, which is invertible since $g$ is a permutation of $X^n$. Hence $\#O_{H_n}(u) = \#O_{H_n}(w)$. Now for any $v \in X^{n-1}$, $H_n$ preserves $v* = \{vx : x \in X\}$. Thus $v*$ is a set of $d$ elements that is a disjoint union of $H_n$-orbits. It follows that each orbit of $H_n$ has $r$ elements for $r \mid d$. 
\end{proof}

\begin{corollary} \label{transcor}
Let $d$ be prime and $G \leq \Aut(X^*)$. Suppose that $G$ is level-transitive and $H_n$ is non-trivial for all $n \geq 1$. Then for all $n \geq 1$ and all $v \in X^{n-1}$, $H_n$ acts transitively on the set $v*$.
\end{corollary}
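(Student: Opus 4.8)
The plan is to deduce the corollary directly from Lemma \ref{Horb} and the primality of $d$. Fix $n \geq 1$. Since $G$ is level-transitive, it acts transitively on $X^n$, so Lemma \ref{Horb} applies: the action of $H_n$ on $X^n$ consists of orbits all of a common size $r$ with $r \mid d$. Because $d$ is prime, either $r = 1$ or $r = d$.

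I would then exclude $r = 1$. If $r = 1$, every element of $X^n$ is fixed by $H_n$; but $H_n \leq G_n \leq \Aut(X^n)$, and an automorphism of the depth-$n$ tree fixing all leaves in $X^n$ is the identity (the image of an internal vertex is forced by the images of the leaves below it). Hence $H_n$ would be trivial, contrary to hypothesis. So $r = d$.

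Finally, for any $v \in X^{n-1}$, the set $v* = \{vx : x \in X\}$ has exactly $d$ elements and is $H_n$-invariant, since $H_n$ is the kernel of $G_n \to G_{n-1}$ and hence fixes $X^{n-1}$ pointwise. Thus $v*$ is a disjoint union of $H_n$-orbits, each of size $d = \#(v*)$, so $v*$ is a single $H_n$-orbit; that is, $H_n$ acts transitively on $v*$. There is no real obstacle here beyond Lemma \ref{Horb} itself; the only point to flag is that the hypothesis that $H_n$ is non-trivial is used precisely to rule out the degenerate orbit length $r = 1$, and the conclusion genuinely fails without it.
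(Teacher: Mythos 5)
Your proof is correct and follows essentially the same route as the paper's: invoke Lemma \ref{Horb} via level-transitivity to get a common orbit size $r \mid d$, use non-triviality of $H_n$ (together with faithfulness of $G_n$ on $X^n$) to rule out $r=1$, so $r=d$ by primality, and then note each $H_n$-invariant set $v*$ of size $d$ must be a single orbit. The only difference is that you spell out why $r=1$ would force $H_n$ to be trivial, a point the paper leaves implicit.
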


\begin{proof}
We may apply Lemma \ref{Horb} thanks to the level-transitivity of $G$, and the non-triviality of $H_n$ gives $r > 1$. But $d$ is prime, and so $r = d$. Now each orbit of $H_n$ is contained in $v*$ for some $v \in X^{n-1}$, and thus each orbit equals $v*$ for some $v \in X^{n-1}$.  
\end{proof}

\begin{remark}
When $d = 2$, there is in fact a \textit{single element} of $H_n$ that acts transitively on $v*$ for all $v \in X^{n-1}$. Indeed, in this case $G_n$ is a 2-group, and so by the class equation every non-trivial normal subgroup of $G_n$ has non-trivial intersection with the center $Z(G_n)$ of $G_n$. Hence there is non-trivial $h \in H_n \cap Z(G_n)$. If $h(w) = w$ for some $w \in X^n$ then $h^g(g(w)) = g(w)$ for any $g \in X^n$, and thus $h(g(w)) = g(w)$. The transitivity of $G_n$ then gives $h = e$, a contradiction. Thus $h$ acts without fixed points on $X^n$, and since $d = 2$ this is equivalent to $h$ acting transitively on each $v*$. 
\end{remark}

In light of Corollary \ref{transcor}, in some sense the crucial question is to determine when $H_n$ is nontrivial for all $n \geq 1$. When $d = 2$, it is shown in \cite[Corollary 4.9]{galmart} that when $\text{sgn}_n$ is surjective for all $n \geq 1$, then $H_n$ is non-trivial for all $n \geq 1$, but the proof is quite involved. Here, in contrast to \cite[Corollary 4.9]{galmart}, we assume that $G$ is self-similar, and this allows for a much simpler proof of a much more general result.  

To streamline our argument, we define a function $v : G \to \Z_{\geq 0} \cup \{\infty\}$ by $v(e) = \infty$ for the identity $e \in \Aut(X^*)$ and 
$$
v(g) = \max \{n \geq 0 : \text{$g$ acts trivially on $X^n$}\}
$$
for $e \neq g \in \Aut(X^*)$. 
Note that each $g \in \Aut(X^*)$ fixes the lone element of $X^0$, and hence $v(g) \geq 0$. Moreover, for $n \geq 1$, $H_n$ is non-trivial if and only if $n \in v(G)$. Finally, we remark that $v(g) = n \geq 1$ if and only if $g$ acts trivially on $X^1$ and 
\begin{equation} \label{valrest}
\min \{ v(g|_x) : x \in X  \} = n-1
\end{equation}


\begin{proposition} \label{valsurj}
Let $G \leq \Aut(X^*)$ be infinite and self-similar. Then $v$ is surjective. 
\end{proposition}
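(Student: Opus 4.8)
The plan is to play off two hypotheses against each other. On the one hand, $G$ acts \emph{faithfully} on $X^*$ (it is by definition a subgroup of $\Aut(X^*)$), and each quotient $G_n \leq \Aut(X^n)$ is finite since $\Aut(X^n)$ is a finite group; so if $G$ is infinite, the kernels of the restriction maps $G \to G_n$ must be infinite. On the other hand, $G$ is self-similar, so restriction $g \mapsto g|_x$ keeps us inside $G$, and by the characterization in \eqref{valrest} it lowers the value of $v$ in unit steps. Surjectivity of $v$ will follow by combining the statement ``$v$ takes arbitrarily large finite values'' with the statement ``the finite part of the image of $v$ is downward closed,'' together with $v(e) = \infty$.

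First I would record that $v(g) \geq n$ is equivalent to $g \in \ker(G \to G_n)$ (a tree automorphism fixing every word of length $n$ fixes every shorter word, since each length-$(n-1)$ word is the truncation of a length-$n$ word). Call this kernel $K_n$. Since $G/K_n$ embeds in the finite group $\Aut(X^n)$ and $G$ is infinite, $K_n$ is infinite for every $n$; in particular $K_n$ contains some $g \neq e$, and for this $g$ we have $n \leq v(g) < \infty$, the finiteness because the faithful action forces $g \neq e$ to move some word and hence to act nontrivially on some level. Thus $v(G) \cap \Z_{\geq 0}$ is unbounded (and nonempty: the case $n = 1$ already exhibits an element with finite $v$-value).

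Next I would show $v(G) \cap \Z_{\geq 0}$ is downward closed. Suppose $g \in G$ with $1 \leq v(g) = m < \infty$. Then $g$ acts trivially on $X^1$, so \eqref{valrest} applies and produces some $x \in X$ with $v(g|_x) = m-1$; by self-similarity $g|_x \in G$, so $m-1 \in v(G)$. Iterating this descent starting from an arbitrarily large finite value found in the previous step yields $\{0,1,2,\ldots\} \subseteq v(G)$, and since also $v(e) = \infty$ we conclude that $v$ is surjective onto $\Z_{\geq 0} \cup \{\infty\}$. No serious obstacle arises; the only point requiring care is that \eqref{valrest} guarantees a descent by exactly one at each stage—so that no integer in the image is skipped—and this is precisely where self-similarity is used, while faithfulness is what forces the kernels $K_n$ to be infinite.
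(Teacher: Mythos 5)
Your proof is correct and takes essentially the same route as the paper: your observation that each kernel $K_n=\ker(G\to G_n)$ is nontrivial, hence contains a nonidentity element of finite $v$-value at least $n$, is just the direct form of the paper's contrapositive step (a bound on $v$ would make $G\to G_N$ injective, forcing $G$ finite), and your descent using \eqref{valrest} together with self-similarity is identical to the paper's.
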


\begin{proof} Suppose first that there is $N \geq 0$ with $v(g) \leq N$ for all $g \in G \setminus \{e\}$. We claim that the natural quotient map $\pi_N : G \twoheadrightarrow G_N$ is an isomorphism, and thus $G$ is finite. Indeed, if $\pi_N(g) = \pi_N(h)$, then $gh^{-1}$ acts trivially on $X^N$, and hence $v(gh^{-1}) > N$. Thus $gh^{-1} = e$, proving the claim.  

Therefore the infinitude of $G$ implies that $v(G)$ is infinite. Suppose now that $n \in v(G)$ for some $n \geq 1$, and let $g \in G$ with $v(g) = n$. From \eqref{valrest} there is $x \in X$ with $v(g|_x) = n-1$. By the self-similarity of $G$, we have $g|_x \in G$, and thus $n-1 \in v(G)$. By induction $\{0,1, \ldots n\} \subseteq v(G)$. The infinitude of $v(G)$ then implies that $v$ is surjective. 
\end{proof}

We remark that Proposition \ref{valsurj} is not true in general if $G$ fails to be self-similar. For example, let $d = 2$ and consider the group $J = \{e, (00 \; 11)(01 \; 10)\} \leq \Aut(X^2)$. Then the iterated wreath product of $J$ gives a closed subgroup $G \leq \Aut(X^*)$ with the property that $2n \not\in v(G)$ for all $n \geq 1$. Note that in this case $G$ is a self-similar subgroup of $\Aut(Y^*)$, where $Y = X^2$.

\begin{corollary} \label{fpprocessmain}
Let $d$ be prime and $G \leq \Aut(X^*)$. Suppose that $G$ is self-similar and level-transitive. Then the fixed-point process associated to $G$ is a martingale.
\end{corollary}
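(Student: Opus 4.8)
The strategy is to chain together the three structural results established immediately above: the martingale criterion (Theorem \ref{martchar}), the transitivity-from-nontriviality result (Corollary \ref{transcor}), and the surjectivity of $v$ for infinite self-similar groups (Proposition \ref{valsurj}). By Theorem \ref{martchar}, it suffices to verify that for every $n \geq 1$ and every $v \in X^{n-1}$, the kernel $H_n$ of the restriction $G_n \to G_{n-1}$ acts transitively on the block $v* = \{vx : x \in X\}$. Corollary \ref{transcor} reduces this (given that $d$ is prime and $G$ is level-transitive) to showing that $H_n$ is nontrivial for all $n \geq 1$.

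\medskip
\noindent Thus the first step is to note that $G$ is infinite: a level-transitive subgroup of $\Aut(X^*)$ acts transitively on the sets $X^n$, whose sizes $d^n$ are unbounded, so $G$ cannot be finite (cf. the argument in Proposition \ref{recprop}). The second step is to invoke Proposition \ref{valsurj}: since $G$ is infinite and self-similar, the function $v : G \to \Z_{\geq 0} \cup \{\infty\}$ is surjective, so every $n \geq 1$ lies in $v(G)$. The third step recalls the observation made just before Proposition \ref{valsurj} that, for $n \geq 1$, the subgroup $H_n$ is nontrivial precisely when $n \in v(G)$; hence $H_n \neq \{e\}$ for all $n \geq 1$. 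The fourth step applies Corollary \ref{transcor} with these hypotheses ($d$ prime, $G$ level-transitive, $H_n$ nontrivial for all $n$) to conclude that $H_n$ acts transitively on each block $v*$. Finally, Theorem \ref{martchar} yields that the fixed-point process of $G$ is a martingale.

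\medskip
\noindent There is essentially no obstacle here beyond correctly assembling the pieces; all the genuine work has been front-loaded into Theorem \ref{martchar}, Corollary \ref{transcor}, and Proposition \ref{valsurj}. The one point requiring a moment's care is the passage from ``$G$ level-transitive'' to ``$G$ infinite,'' which is needed to apply Proposition \ref{valsurj}, and the bookkeeping that $n \in v(G) \iff H_n \neq \{e\}$, which links the surjectivity of $v$ to the nontriviality hypothesis of Corollary \ref{transcor}. No use is made of the primality of $d$ except through Corollary \ref{transcor}, where it is essential: it forces the common orbit length $r \mid d$ supplied by Lemma \ref{Horb} to equal $d$ once $r > 1$.
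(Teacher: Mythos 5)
Your proof is correct and follows exactly the paper's route: level-transitivity gives infiniteness, Proposition \ref{valsurj} then makes $v$ surjective so each $H_n$ is nontrivial, and Corollary \ref{transcor} together with Theorem \ref{martchar} yields the martingale property. The paper's proof is a one-line citation of the same three ingredients, which you have simply spelled out in full.
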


\begin{proof}
The level-transitivity of $G$ implies that $G$ is infinite, and the Corollary then follows from Theorem \ref{martchar}, Corollary \ref{transcor}, and Proposition \ref{valsurj}.
\end{proof}

\begin{theorem}
\label{thm:dt}
Let $G \leq \Aut(X^*)$ be a recurrent group whose action on $X$ is doubly transitive. Then for all $w\in X^n$ and  $i,j\in X$ with $i \neq j$, there exists $g\in H_n$ such that $g|_w$ takes $i$ to $j$.
\end{theorem}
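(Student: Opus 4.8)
Here is a plan for proving Theorem~\ref{thm:dt}.

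The plan is to prove the slightly stronger statement that $g\in H_n$ can be chosen to fix $w$, which is what the argument will deliver. Fix $w\in X^n$, and regard $H_n$ via a lift to $G$, i.e.\ as consisting of elements of $G$ acting trivially on $X^{n-1}$, so that restrictions at $w$ are defined; note that such a $g$ need only preserve the block of $w$ in $X^n$, not fix $w$. The central object is
\[
P_w:=\{\,g|_w : g\in H_n\text{ and }g(w)=w\,\}.
\]
By \eqref{singleelt} the map $g\mapsto g|_w$ is a homomorphism on $\Stab_G(w)$, so $P_w$ is a subgroup of $G$ (using self-similarity, $g|_w\in G$). It suffices to show that $P_w$ acts transitively on $X$: then for any $i\ne j$ there is $g\in H_n$ fixing $w$ with $g|_w(i)=j$.

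The first ingredient is the iterated form of recurrence: for every $u\in X^*$ and $h\in G$ there is $\delta\in\Stab_G(u)$ with $\delta|_u=h$. I would prove this by induction on $|u|$, writing $u=xu'$, applying the hypothesis to $u'$, and lifting through the surjective virtual endomorphism at $x$ using \eqref{nested restriction}; this is essentially \cite[Corollary 2.8.5]{nek}. With it I would show $P_w\trianglelefteq G$: given $\gamma\in G$, write $\gamma=\delta|_w$ with $\delta\in\Stab_G(w)$; since $\ker(G\to G_{n-1})\trianglelefteq G$, the element $\delta g\delta^{-1}$ again lies in $H_n$ and fixes $w$, and a short computation with \eqref{singleelt} gives $(\delta g\delta^{-1})|_w=\gamma(g|_w)\gamma^{-1}$, whence $\gamma P_w\gamma^{-1}=P_w$.

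Next I would show $P_w$ does not act trivially on $X$. Being recurrent, $G$ is self-similar and, by Proposition~\ref{recprop}, level-transitive, hence infinite; so Proposition~\ref{valsurj} gives $H_{n+1}\ne\{e\}$, i.e.\ some $g\in G$ is trivial on $X^n$ but not on $X^{n+1}$, hence acts nontrivially on some block $\{uy:y\in X\}$ with $u\in X^n$. Choosing $\gamma\in G$ with $\gamma(w)=u$ (level-transitivity) and replacing $g$ by $\gamma^{-1}g\gamma$, the new element is still trivial on $X^n$ (normality of $\ker(G\to G_n)$), hence fixes $w$ and belongs to the defining set of $P_w$, and its restriction at $w$ acts on $X$ as a conjugate of the nontrivial action of $g|_u$ on $X$, computed via \eqref{singleelt}. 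To finish: $G$ acts doubly transitively, hence primitively, on $X$, so the orbits of the normal subgroup $P_w$ form a $G$-invariant partition of $X$; thus $P_w$ is either transitive on $X$ or fixes $X$ pointwise, and the previous step excludes the latter.

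I expect the nontriviality step to be the main obstacle: it is the only place where the nondegeneracy inputs enter ($H_m\ne\{e\}$ for all $m$, via Proposition~\ref{valsurj}, together with level-transitivity), whereas everything else is formal manipulation of the wreath recursion. A secondary point, which dictates working with $P_w$ rather than with $H_n$ directly, is that an element of $H_n$ need not fix the vertex $w\in X^n$, only its block.
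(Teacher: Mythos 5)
Your proposal is correct, and its skeleton matches the paper's: both arguments obtain a nontrivial element of a deep level kernel with nontrivial restriction at $w$ from Proposition \ref{valsurj} combined with level-transitivity and a conjugation, and both use recurrence (in its iterated form, which does require the induction you sketch and which the paper invokes implicitly) to realize an arbitrary element of $G$ as $s|_w$ for some $s$ fixing $w$. The genuine difference is the last step: the paper finishes explicitly, choosing $t$ with $t(i)=k$ and $t(j)=\ell$ by double transitivity and computing $(s^{-1}gs)|_w=t^{-1}(g|_w)t$ via \eqref{singleelt}, whereas you package the same conjugation computation as the statement that the restriction group $P_w$ is normal in $G$, and then conclude from the standard facts that orbits of a normal subgroup form blocks and that a doubly transitive action is primitive. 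Your route is slightly more general, since it only needs the action of $G$ on $X$ to be primitive (double transitivity enters only through primitivity), while the paper's is more hands-on and produces the desired witness directly for the given pair $(i,j)$. One caveat concerns your reading of $H_n$: the paper's own proof, and the downstream use in Corollary \ref{dtranscor} via Theorem \ref{martchar}, interpret ``$g\in H_n$'' as ``$g$ acts trivially on all of $X^n$,'' i.e. on the entire level containing $w$, not merely on $X^{n-1}$ with $g(w)=w$; elements of your weaker class need not map into $\ker(G_{n+1}\to G_n)$ and so would not feed directly into the martingale criterion. This costs you nothing: your nontriviality witness is already trivial on $X^n$, and if you define $P_w$ as the set of restrictions at $w$ of elements of $G$ acting trivially on $X^n$, every step of your argument (normality via recurrence, nontriviality via Proposition \ref{valsurj} and level-transitivity, transitivity via primitivity) goes through unchanged and yields exactly the statement the paper needs.
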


\begin{proof}
First note that by Proposition \ref{recprop}, $G$ is infinite. By Proposition \ref{valsurj}, the function $v:G\to\Z\cup\{\infty\}$ defined by
$$v(g) = \max\{n\geq0\mid g \text{ acts trivially on }  X^n\}$$
is surjective, so there exists $g \in G$ with $v(g) = n$, i.e. $g\in H_n$ and $g$ is non-trivial. 

By Proposition \ref{recprop}, $G$ is level-transitive. Thus by passing to a conjugate we may assume that $g$ acts non-trivially on $w\ast =\{wx\mid x\in X\}$.
Let $h=g|_w$.
Since $h$ acts non-trivially on $X$, there exist $k, \ell \in X$ with $k \neq \ell$ such that $h(k)=\ell$.

By double-transitivity, we can choose $t\in G$ such that $t(i) = k$ and $t(j) = \ell$.
Since the action of $G$ is recurrent, we can choose $s\in G$ such that $s(w) = w$ and $s|_w = t$. Now $s^{-1}gs$ fixes $w$ and is also in $H_n$, because $H_n$ is a normal subgroup of $\Aut(X^*)$. 
From \eqref{singleelt} we then have 
$$(s^{-1}gs)|_w = s^{-1}|_wg|_ws|_w=t^{-1}ht.$$
But $(t^{-1}ht)(i) = j$, as desired.
\end{proof}

Theorems \ref{martchar} and \ref{thm:dt} immediately give:
\begin{corollary} \label{dtranscor}
Let $G \leq \Aut(X^*)$ be a recurrent group whose action on $X$ is doubly transitive. Then the fixed-point process for $G$ is a martingale.
\end{corollary}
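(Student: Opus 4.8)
The plan is to obtain Corollary \ref{dtranscor} directly by combining Theorem \ref{martchar} with Theorem \ref{thm:dt}; no further analytic or dynamical input is required. First I would recall that Theorem \ref{martchar} characterizes the martingale property combinatorially: the fixed-point process for $G$ is a martingale if and only if, for every $n \geq 1$ and every $v \in X^{n-1}$, the kernel $H_n$ of the restriction $G_n \to G_{n-1}$ acts transitively on the star $v\ast = \{vx : x \in X\}$. So the whole task reduces to verifying this transitivity statement for all $n$ and $v$, using only that $G$ is recurrent and acts doubly transitively on $X$.

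To verify it, I would fix $n \geq 1$ and $v \in X^{n-1}$. Every element $g \in H_n$ acts trivially on $X^{n-1}$ and in particular fixes $v$, so by the wreath-recursion identity $g(vx) = g(v)\,g|_v(x) = v\,g|_v(x)$; thus $H_n$ acts on $v\ast$ through the restriction homomorphism $g \mapsto g|_v$. Now Theorem \ref{thm:dt} — whose hypotheses are precisely that $G$ is recurrent and doubly transitive on $X$ — furnishes, for each ordered pair $i \neq j$ in $X$, an element $g \in H_n$ with $g|_v(i) = j$, hence $g(vi) = vj$. Together with the identity (which fixes $vi$), these elements show that the $H_n$-orbit of $vi$ is all of $v\ast$, so $H_n$ acts transitively on $v\ast$. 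As $n$ and $v$ are arbitrary, Theorem \ref{martchar} then yields that the fixed-point process of $G$ is a martingale.

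I expect the only point needing care to be the level bookkeeping: one must align the index $n$ appearing in Theorem \ref{thm:dt} (where the element acts trivially below the chosen vertex and its restriction permutes the next level of letters) with the kernel $H_n$ in the criterion of Theorem \ref{martchar} (transitivity on stars of vertices at level $n-1$). This is a matter of matching conventions rather than a genuine obstacle; all the substantive work already lives in the proofs of Theorems \ref{martchar} and \ref{thm:dt}, which in turn rely on Proposition \ref{valsurj} (surjectivity of the depth function for infinite self-similar groups), on Proposition \ref{recprop} (recurrence forces level-transitivity, hence infiniteness), and on using recurrence together with double transitivity of $G$ on $X$ to move both a nontrivial kernel element and a doubly-transitive witness into the required position.
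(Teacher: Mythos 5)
Your proposal is correct and follows essentially the same route as the paper: apply Theorem \ref{thm:dt} to each vertex $v \in X^{n-1}$ to get elements acting trivially on $X^{n-1}$ whose restrictions at $v$ realize all transpositions of letters, note their images in $G_n$ land in $H_n$ so $H_n$ is transitive on each star $v\ast$, and conclude via the criterion of Theorem \ref{martchar}. The level-index alignment you flag is exactly the one small bookkeeping point the paper's proof also handles, and you resolve it correctly.
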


\begin{proof}
By Theorem \ref{thm:dt}, for all $n\geq 1$, and all $v\in X^{n-1}$, the action of the elements of $G$ that act trivially on $X^{n-1}$ is transitive on the set $v\ast=\{vx\mid x\in X\}$.
Notice that the images under the quotient map to $G_n$ of elements of $G$ that act trivially on $X^{n-1}$ lie in $H_n$.
Thus, by Theorem \ref{martchar}, the fixed-point process for $G$ is a martingale.
\end{proof}

Suppose now that $G$ is contracting, and let $\mathcal{N} \subset G$ be a finite set as in Definition \ref{contractingdef}. If $g \in \mathcal{N}_1(G)$, then by definition there is $v \in X^*$ with $g(v) = v$ and $g|_v = g$, and hence taking $w_n$ to be the concatenation of $v$ with itself $n$ times, we have $g|_{w_n} = g$. It follows that $g \in \mathcal{N}$, and hence $\mathcal{N}_1(G)$ is finite. 







We now provide the final step in the proof of Theorem \ref{cxthm1}.
  
\begin{theorem} \label{crystal}
Suppose that $G \leq \Aut(X^*)$ is contracting and its fixed point process is a martingale. 
If every $g \in \mathcal{N}_1(G)$ fixes infinitely many ends of $X^*$, then $\FPP(G) = 0$.  
\end{theorem}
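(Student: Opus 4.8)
The plan is to realize $\FPP(G)$ as the Haar measure of an explicit event on the profinite completion $G_\infty$ of $G$ (which, since $G\le\Aut(X^*)$, is just the closure $\overline{G}$ of $G$ in $\Aut(X^*)$) and then to show that event is null. First I would record that $\FPP(G)=\lim_n\FPP(G_n)=\lim_n\mu(Y_n\ge 1)$, where $(Y_n)$ is the fixed-point process of $G$ and $\mu$ is the Haar measure on $G_\infty$. Since an automorphism fixing a word of length $n+1$ also fixes its length-$n$ prefix, the events $\{Y_{n+1}\ge 1\}$ decrease, so continuity of measure gives $\FPP(G)=\mu\bigl(\bigcap_n\{Y_n\ge 1\}\bigr)$. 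An element lying in $\bigcap_n\{Y_n\ge 1\}$ fixes at least one vertex at every level; its fixed vertices form a prefix-closed, finitely branching, infinite subtree of $X^*$, so by König's lemma it fixes an end of $X^*$, and the converse is clear. Hence $\FPP(G)=\mu(\{g\in G_\infty : g\text{ fixes at least one end of }X^*\})$.

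The heart of the proof will be the claim that every $g\in G_\infty$ fixing an end of $X^*$ in fact fixes infinitely many ends. For this I would first observe that $G_\infty$ is again self-similar (the restriction maps $h\mapsto h|_v$ are continuous and $G$ is self-similar, so $g_i\to g$ with $g_i\in G$ forces $g|_v=\lim g_i|_v\in\overline G$), and that $G_\infty$ is contracting with the same finite nucleus $\mathcal{N}\subset G$ as $G$ (a standard fact about closures of contracting groups; cf.\ \cite{nek}); in particular $\mathcal{N}_1(G_\infty)=\mathcal{N}_1(G)$, because an element $g$ with $g|_v=g$ and $g(v)=v$ for some nonempty $v$ satisfies $g=g|_{v^k}$ for all $k$ and hence lies in $\mathcal{N}\subset G$. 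Now suppose $g\in G_\infty$ fixes an end $\xi=x_1x_2\cdots$, and write $w_n=x_1\cdots x_n$. By contraction the restrictions $g|_{w_n}$ eventually lie in the finite set $\mathcal{N}$, so two of them agree: $g|_{w_{n_1}}=g|_{w_{n_2}}=:h$ for some $n_1<n_2$. With $v=x_{n_1+1}\cdots x_{n_2}$ (nonempty), the nested-restriction formulas \eqref{nested restriction} give $h|_v=g|_{w_{n_1}}|_v=g|_{w_{n_2}}=h$, and $g(w_{n_2})=w_{n_2}$ combined with $g(w_{n_1}v)=g(w_{n_1})\,g|_{w_{n_1}}(v)=w_{n_1}h(v)$ gives $h(v)=v$; thus $h\in\mathcal{N}_1(G_\infty)=\mathcal{N}_1(G)$, and by hypothesis $h$ fixes infinitely many ends of $X^*$. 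Finally, since $g$ fixes the vertex $w_{n_1}$ and $g|_{w_{n_1}}=h$, the action of $g$ on the subtree below $w_{n_1}$ is carried, under the canonical identification of that subtree with $X^*$, to the action of $h$ on $X^*$; hence the ends of $X^*$ through $w_{n_1}$ that $g$ fixes are in bijection with the ends fixed by $h$, and there are infinitely many of them. This proves the claim.

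Granting the claim, $\{g\in G_\infty : g\text{ fixes an end of }X^*\}$ coincides with $\{g\in G_\infty : g\text{ fixes infinitely many ends of }X^*\}$, which has $\mu$-measure $0$ by Corollary \ref{evconstcor} — whose hypothesis, that the fixed-point process of $G$ be a martingale, is precisely what we are assuming. Therefore $\FPP(G)=0$. I expect the main obstacle to be the claim in the second paragraph: one must pass from the abstract contracting group $G$ to its closure $G_\infty$ while knowing that the nucleus, and hence $\mathcal{N}_1$, is unchanged, and one must carefully transport the property of fixing infinitely many ends between an element of $\mathcal{N}_1$ and the element $g$ sitting above it in the tree. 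The remaining ingredients — König's lemma, continuity of measure, and the martingale input through Corollary \ref{evconstcor} (which itself rests on Theorem \ref{martconv}) — are routine once this is set up.
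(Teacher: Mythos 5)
Your overall strategy is the paper's: identify $\FPP(G)$ with the Haar measure of the set of elements of $G_\infty$ fixing an end (via K\"onig's lemma and continuity of measure), use contraction to find a repetition among the restrictions along a fixed end, extract from it an element $h$ with $h(v)=v$ and $h|_v=h$, invoke the hypothesis on $\mathcal{N}_1$ to get infinitely many fixed ends, and finish with Corollary \ref{evconstcor}. Those parts of your write-up are correct and agree with the argument the paper gives (following \cite[p.~2033]{fpfree}), and your transport of the infinitely many fixed ends from $h$ back up through the fixed vertex is fine.

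The genuine gap is the assertion in your second paragraph that the closure $G_\infty=\overline{G}$ is contracting with the same finite nucleus $\mathcal{N}$ as $G$, which you call a standard fact and on which both your identification $\mathcal{N}_1(G_\infty)=\mathcal{N}_1(G)$ and your claim that the restrictions $g|_{w_n}$ of an arbitrary $g\in G_\infty$ eventually lie in $\mathcal{N}$ depend. This is not a standard fact, and it is false in general, including for the groups this theorem is meant for. If $G$ is contracting and weakly regular branch over a nontrivial subgroup $K$ (for example $G=\IMG(z^2-1)$, the Basilica group, which is weakly regular branch over its commutator subgroup), then $K\times K\subseteq K$ lets one realize, for any choice of $h_n\in K$, the automorphism acting as $h_n$ on the subtree rooted at $0^{n-1}1$ for all $n$ as a limit of elements of $K$, hence as an element $g\in\overline{G}$; choosing the $h_n$ pairwise distinct shows no finite set can contain $g|_v$ for all long $v$, so $\overline{G}$ is not contracting for any nucleus. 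Such a $g$ fixes the end $000\cdots$, yet its restrictions along that end never enter $\mathcal{N}$ and never repeat, so the step ``two of them agree'' fails precisely for the elements of $G_\infty\setminus G$ that your argument was designed to cover. Note that the paper's own proof applies Definition \ref{contractingdef} only to an element $g$ of the discrete group $G$, where the contraction hypothesis holds verbatim, and defers the remaining bookkeeping to \cite[p.~2033]{fpfree}; your route genuinely requires contraction of the closure, so as written it does not go through and needs either a different treatment of the elements of $G_\infty\setminus G$ or a restriction of the claim to $g\in G$ together with the additional argument that this suffices for the measure statement.
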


\begin{proof} This is proven in \cite[p. 2033]{fpfree}, but we give the argument here for completeness. Let $\mathcal{N} \subset G$ be a finite set as in Definition \ref{contractingdef}. Suppose that $g \in G$ fixes some end $w = x_1x_2\cdots$ of $X^*$.  Let $v_n = x_1x_2 \cdots x_n$ for each $n \geq 1$, and consider the sequence of restrictions $g|_{v_1}, g|_{v_2}, \ldots$.  For $n$ large enough, we have $g|_{v_n} \in \mathcal{N}$, and $g|_{v_n}$ fixes the end $x_{n+1}x_{n+2} \cdots$ since $g$ fixes $w$.  Because $\mathcal{N}$ is finite, there must be $i < j$ with $g|_{v_i} = g|_{v_j}$.  Let $h = g|_{v_i}$, and note that for $w = x_{i+1}x_{i+2} \cdots x_j$ we have $h(w) = w$ and $h|_w = h$.  Hence $h \in \mathcal{N}_1(G)$, and by hypothesis fixes infinitely many ends of $X^*$.   Inserting $v_i$ on the beginning of each of these ends, we obtain infinitely many ends of $X^*$ fixed by $g$.  Hence by Corollary \ref{evconstcor}, $g$ lies in a set of measure zero, proving the theorem.  
\end{proof}

\begin{proof}[Proof of Theorem \ref{cxthm1}]
This is an immediate consequence of Corollary \ref{fpprocessmain}, Corollary \ref{dtranscor}, Theorem \ref{crystal}, and the fact that any standard action of $\IMG(\psi)$ on $X^*$ is recurrent and contracting by Proposition \ref{prop:recurrent} and Corollary \ref{cor:IMGnucleus}
\end{proof}




\section{Iterated monodromy action of PCF rational functions}\label{sec:IMGofPCF}

In light of Theorem \ref{cxthm1}, the proof of Theorem \ref{maincx1} will be complete once we establish Theorem \ref{maincx2}, which we restate here for the convenience of the reader. 
First recall that we have fixed a standard action of $\pi_1(\rs\setminus P_f,z_0)$ (and hence of $\IMG(f)$) on $X^*$, and recall the definition of $\mathcal{N}_1$ from \eqref{n1def} (or the beginning of Section \ref{fpprocess}).


\begin{theorem}[Theorem \ref{maincx2}] \label{repeat_maincx2}
Let $f \in \C(z)$ be a PCF rational function that is not dynamically exceptional. Then every element of $\mathcal{N}_1$ fixes infinitely many ends of $X^*$. 
\end{theorem}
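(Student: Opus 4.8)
The goal is to show that if $f\in\C(z)$ is PCF and \emph{not} dynamically exceptional, then every $g\in\mathcal N_1$ fixes infinitely many ends of $X^*$. I would argue the contrapositive via a structural analysis of elements of $\mathcal N_1$: suppose $g\in\mathcal N_1$ fixes only finitely many ends of $X^*$, and show $f$ is forced to be dynamically exceptional. Since $g\in\mathcal N_1$ there is a nonempty $v\in X^*$ with $g(v)=v$ and $g|_v=g$, so $g|_{w_n}=g$ for all $n$ where $w_n$ is $v$ concatenated with itself $n$ times. Pulling this back through the standard action, $g$ corresponds (after conjugating so $z\in f^{-|v|}(z_0)$ lies on the fixed path $\Lambda^*(v)$) to a loop $\gamma\in\pi_1(\rs\setminus P_f,z_0)$ whose lift under $f^{|v|}$ to $\Lambda^*(v)$ is again (a loop freely homotopic to) $\gamma$.

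**Key steps.** (1) \emph{Reduce to a primitive peripheral loop.} First I would show that a $g\in\mathcal N_1$ fixing only finitely many ends must, after replacing $f$ by an iterate and $\gamma$ by a conjugate, be represented by a \emph{primitive, peripheral} loop about some $p\in P_f$. The mechanism is the contraction from \eqref{eqn:contraction}/\eqref{derivBound}: because $f$ is subhyperbolic, lifts of any loop under iterates of $f$ shrink in the orbifold metric away from $P_f^{per}$, so a loop whose lift (under $f^{|v|}$, hence under all $f^{k|v|}$) reproduces itself cannot have "positive diameter'' relative to $P_f$ — it must be homotopic into an arbitrarily small neighborhood of a single point, and that point must lie in $P_f$ (else $\gamma$ is trivial). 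Primitivity can be arranged by replacing $\gamma$ with its primitive root, using that $\mathcal N_1$ and the fixing-finitely-many-ends property are compatible with taking roots in the relevant sense. Moreover, since $g|_v=g$ returns the loop to a neighborhood of $p$ under $f^{|v|}$, the point $p$ must satisfy $f^{|v|}(q)=p$ for a point $q$ in the backward orbit chain determined by $v$; combined with Lemma \ref{perilift} and Lemma \ref{periliftable}, $p$ is periodic, and since the orbifold metric expands, $p$ is a \emph{repelling} periodic point of $f$ lying in $P_f$. (2) \emph{Use Lemma \ref{periend}.} By Lemma \ref{periend}(2), if some backward orbit of $p$ contains no critical point and is not contained in $P_f$, then $g$ fixes infinitely many ends — contradiction. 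Hence \emph{every} backward orbit of $p$ either contains a critical point or is a subset of $P_f$. (3) \emph{Produce the exceptional set.} Let $\Gamma$ be the grand orbit structure forced by this: start from the finite set of backward-orbit points of $p$ that stay in $P_f$, and show that $\Gamma := \{x : \text{the full backward orbit of }x\text{ avoids }C_f\text{ after finitely many steps}\}$ — more precisely the maximal backward-invariant set "below'' $p$ avoiding critical points — is finite and satisfies $f^{-1}(\Gamma)\setminus C_f=\Gamma$. The point is that at each preimage step, $f^{-1}$ of a point of $\Gamma$ either lands in $\Gamma$ again or hits a critical point, by step (2); finiteness follows because $\Gamma$ is contained in the (finite) set of points whose forward orbit meets the periodic cycle of $p$ together with the preperiodic pieces pinned down by the "remain in $P_f$'' alternative, using again subhyperbolicity to rule out infinite backward orbits avoiding $C_f\cup P_f$. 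This exhibits $f$ as dynamically exceptional.

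**Main obstacle.** The crux is step (1): rigorously extracting from "$g|_{w_n}=g$ for all $n$ and $g$ fixes finitely many ends'' the conclusion that the corresponding loop is peripheral about a repelling periodic point, together with the precise bookkeeping of \emph{which} point $p$ and \emph{which} backward orbit. This is where the orbifold-metric expansion must be converted into a genuine homotopy statement — one needs that a sequence of loops $\gamma_n$ (the lifts) with uniformly bounded orbifold length, all freely homotopic to $\gamma$, must be freely homotopic into shrinking neighborhoods; equivalently, that the only loops invariant under an expanding pullback operation are the "peripheral'' ones. I expect this to require a careful compactness/geodesic argument in the orbifold metric (choosing orbifold-geodesic representatives, showing their lengths are forced to $0$ unless they degenerate onto a periodic point), analogous to arguments showing IMGs are contracting, but carried out at the level of individual loops rather than group elements. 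Once step (1) is in hand, steps (2) and (3) are comparatively formal consequences of Lemma \ref{periend} and the finiteness bounds coming from $P_f$ being finite and the periodic cycle of $p$ being a repeller.
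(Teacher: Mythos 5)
There is a genuine gap, and it sits exactly at the point your sketch treats as automatic. The theorem concerns $\mathcal{N}_1\subset \IMG(f)$, and $\IMG(f)$ is by Definition \ref{imgdef} the quotient of $\pi_1(\rs\setminus P_f,z_0)$ by the faithful kernel $K$ of the monodromy action. Your step (1) picks a loop $\gamma$ representing $g$ and asserts that its lift under $f^{|v|}$ based at $\Lambda^*(v)$ is ``again (a loop freely homotopic to) $\gamma$.'' But the relation $g|_v=g$ holds only in the quotient: the lifted loop represents an element of $\pi_1(\rs\setminus P_f,z_0)$ that agrees with $\gamma$ modulo $K$, i.e.\ has the same action on $X^*$, and need not be homotopic to $\gamma$ in $\rs\setminus P_f$ (the kernel $K$ is typically nontrivial). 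The contraction argument you then run --- lifts all freely homotopic to $\gamma$ and shrinking in the orbifold metric, hence $\gamma$ peripheral about a repelling post-critical point --- genuinely requires the self-reproduction in $\pi_1$, i.e.\ it proves the statement for elements of $\mathcal{N}_1^{\pi}$ (the paper's Proposition \ref{peripheral}), not for elements of $\mathcal{N}_1$. Bridging this is the content of Lemma \ref{lem:sequencePeriodic} and Proposition \ref{modok}: one lifts $\bar g\in\mathcal{N}_1$ to $g\in\pi_1$, where one only knows $g(w^m)=w^m$ and $g|_{w^m}\equiv g \pmod K$, and proves that the sequence of $\pi_1$-restrictions $g|_{w^m}$ is eventually constant (via convergence of the concatenated connecting paths to a limit path ending at a fixed point of $f^{|w|}$, and a comparison of consecutive lifts inside a small disk about that point); this produces an honest element of $\mathcal{N}_1^{\pi}$ with the same tree action, hence fixing only finitely many ends. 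The paper explicitly warns that this step does not follow from the contracting property of $\IMG(f)$, since $\pi_1(\rs\setminus P_f)$ is not contracting (powers of a peripheral loop about a repelling fixed point of $z^2-2$ all occur as restrictions along arbitrarily long fixed words). Your proposal has no substitute for this argument.

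Apart from that, your route is essentially the paper's: peripherality about a repelling periodic post-critical point via the orbifold expansion, reduction to primitive loops (Corollary \ref{cor:PassToPrim}), Lemma \ref{periend}(2) forcing every backward orbit of $p$ to meet $C_f$ or stay inside $P_f$, and then the construction of an exceptional set. Two smaller corrections to step (3): the right set is defined by \emph{forward} orbits, $\Sigma=\{x:\text{the forward orbit of }x\text{ contains }p\text{ and avoids }C_f\}$, as in Proposition \ref{condition}; your ``maximal backward-invariant set below $p$ avoiding critical points'' is not the right object and would not obviously be finite or satisfy $f^{-1}(\Gamma)\setminus C_f=\Gamma$. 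Finiteness is immediate from $\Sigma\subset P_f$, with no further appeal to subhyperbolicity. Also, replacing $f$ by an iterate is unnecessary here and would leave you the extra task of deducing dynamical exceptionality of $f$ from that of $f^{|v|}$; the paper only passes to iterates in the converse direction (Proposition \ref{exceptional}).
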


The key dynamical property underlying the proof of Theorem \ref{repeat_maincx2} is subhyperbolicity, i.e. that PCF rational functions are expanding away from periodic post-critical points in the orbifold metric as described on p. \pageref{subhyperbolic}. We observe that this expansion fails to hold in general for PCF branched covers $f : \sphere \to \sphere$, and there exist such covers (necessarily not rational functions) that are not dynamically exceptional yet have elements of $\mathcal{N}_1$ fixing only finitely many ends of $X^*$.

The converse of Theorem \ref{repeat_maincx2} holds as well, thus giving a characterization of exceptional rational functions. Though it is not necessary for this paper, we give a proof in Theorem \ref{thm:ExceptionalChar}.

\subsection{End behavior of non-exceptional maps: fundamental group}
The proof of Theorem \ref{repeat_maincx2} relies on lifts of loops representing elements of $\IMG(f)$. We thus work first on the level of the fundamental group and later argue that nothing is lost when passing to the faithful quotient (Proposition \ref{modok}). We define the fundamental group version of $\mathcal{N}_1$, noting that it depends on the choice of standard action made on p. \pageref{fixed standard action}.
\begin{equation}\label{eqn:piN1}
\mathcal{N}_1^{\pi} := \{g\in \pi_1(\rs\setminus P_f) :\ \exists \text{ non-empty } w\in X^* \text{ so that } g(w) = w \text{ and }g|_w = g \}.
\end{equation}

 The basepoint of the fundamental group in \eqref{eqn:piN1} is not specified because the definition is independent of basepoint in the following narrow sense. 
 Let $\alpha$ be a path in $\rs\setminus P_f$ that connects a new basepoint $z_1$ to the original basepoint $z_0$. The map $\alpha_*:\pi_1(\rs\setminus P_f,z_0)\to \pi_1(\rs\setminus P_f,z_1)$ defined by $\alpha_*(g)=\alpha^{-1}g\alpha:=g^{\alpha}$ is an isomorphism. We define a standard action of $\pi_1(\rs\setminus P_f,z_1)$ on $X^*$ by taking the paths connecting $z_1$ to $f^{-1}(z_1)$ to be $\tilde{\alpha}_x^{-1}\ell_x\alpha$ where $\tilde{\alpha}_x$ is the unique lift of $\alpha$ terminating at $\Lambda(x)$. The labeling map $\Lambda_\alpha : X \to f^{-1}(z_1)$ is defined by taking $\Lambda_\alpha(x)$ to be the beginning point of $\tilde{\alpha}_x$. Having specified the standard action at the basepoint $z_1$, we see that elements identified by the isomorphism $\alpha_*$ have equal actions on $X^*$.

Suppose that $g(x)=x$ for $g \in \pi_1(\rs\setminus P_f,z_0)$. Then the lift  of $g^\alpha$ based at $x$, denoted $\tilde{g^\alpha}$, satisfies 
\[
\tilde{g^\alpha}=\tilde{\alpha_x}^{-1}\tilde{g}_x \tilde{\alpha_x},
\] 
where $\tilde{g}_x$ is the unique lift of $g$ based at $\Lambda(x)$. A consequence of this definition is that if $g|_x=g$ and $g(x)=x$, then from Proposition \ref{imgaction} we have
\begin{align*}
g^\alpha|_x &=
(\tilde{\alpha}_x^{-1}\ell_x\alpha)^{-1}\tilde{g^\alpha}(\tilde{\alpha}_x^{-1}\ell_x\alpha)\\
&\simeq \alpha^{-1}\ell_x^{-1}\tilde{g}_x\ell_x\alpha\\
&=\alpha^{-1} g|_x\alpha\\
&\simeq
\alpha^{-1} g\alpha\\
&=g^\alpha.
\end{align*}
Extending to words of higher length using Equations \eqref{nested restriction}, we see that membership in $\mathcal{N}_1^\pi$ is unaffected by a change of basepoint.

Due to subhyperbolicity, the elements of $\mathcal{N}_1^{\pi}$ are very special. Recall the discussion of peripheral loops in Section \ref{subs:periphLoops}.

\begin{proposition}
\label{peripheral}
Each nontrivial element of $\mathcal{N}_1^{\pi}$ is  peripheral about a repelling periodic post-critical point.
\end{proposition}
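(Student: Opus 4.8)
The plan is to show that any nontrivial $g \in \mathcal{N}_1^\pi$ must be peripheral about a post-critical point, and then that this point must be periodic and repelling. First I would use the defining property: there is a nonempty $w \in X^*$ with $g(w) = w$ and $g|_w = g$. Iterating, $g|_{w^n} = g$ and $g(w^n) = w^n$ for all $n \geq 1$, so $g$ fixes the end $\bar{w} = www\cdots$ of $X^*$ and, via Lemma \ref{liftable}, the corresponding end of the tree $T_{f,z_0}$ of preimages. Let $m = |w|$. Writing $g$ as a loop based at $z_0$ (independence of basepoint was established above, so we may adjust $z_0$ freely), the relation $g|_w = g$ says that the $f^m$-lift of $g$ starting at the point $\Lambda^*(w)$ is, after conjugating by the chosen connecting path $\ell_w$, homotopic to $g$ itself. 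In other words, up to homotopy, $g$ is its own lift under a branch of $f^{-m}$ along the fixed end. This is exactly the situation where subhyperbolicity forces contraction: I would invoke the orbifold metric on $\rs \setminus P_f^{per}$ from p.~\pageref{subhyperbolic} (the expansion bounds \eqref{eqn:contraction} and \eqref{derivBound}), under which $f$ strictly expands lengths outside the B\"ottcher neighborhoods $\mathcal{U}^{per}$ of periodic critical orbits.

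The key step is a contraction argument. Represent $g$ by a loop $\gamma_0$; let $\gamma_n$ be the lift of $g$ along the end $w^n$ (the relevant branch of $f^{-mn}$), which by $g|_{w^n}=g$ is a loop homotopic to $g$ based at $z_0$ (after the standard path-conjugation). If the lifts $\gamma_n$ all stay in the region $K' = f^{-1}(K)$ where $\|Df\| \geq 1/\rho > 1$, then the orbifold lengths satisfy $\ell(\gamma_{n}) \leq \rho^m \ell(\gamma_{n-1})$, so $\ell(\gamma_n) \to 0$; but all $\gamma_n$ are freely homotopic to $g$, and a nontrivial free homotopy class has positive infimal length (the orbifold metric is complete away from $P_f^{per}$ and $P_f$ is finite), forcing $g$ to be trivial — contradiction. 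Therefore the forward iterates of (a representative of) $g$ under $f^{-m}$ along the fixed end must eventually enter one of the B\"ottcher disks $\mathcal{U}(p)$, $p \in P_f^{per}$. A loop inside a B\"ottcher disk of a repelling (in the orbifold sense) periodic point, pushed around by the dynamics, is freely homotopic to a small loop encircling $p$; pulling this free homotopy back to $z_0$ shows $g$ is freely homotopic to a loop in an arbitrarily small neighborhood of some point in the backward orbit structure. More precisely, I would argue that the only way $\ell(\gamma_n)$ can fail to go to zero is for $\gamma_n$ to be trapped near a point of $P_f^{per}$, and trapping near such a point $p$ means (by the B\"ottcher coordinate, where $f^k$ looks like $z \mapsto z^{d}$ or a linear repelling map) that $g$ is peripheral about $p$ — this uses Lemma \ref{periliftable} and the peripheral-loop machinery of Section \ref{subs:periphLoops}.

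Concretely, the cleanest route is: (i) use $g|_{w^n}=g$ and Lemma \ref{liftable} to get that $g$ fixes the end $\bar w$; (ii) show by the contraction estimate that the sequence of lift-loops $\gamma_n$ cannot have lengths bounded below unless they accumulate on $P_f^{per}$, hence (since $g$ is nontrivial) they do accumulate on some $p \in P_f^{per}$; (iii) invoke the local normal form near $p$ (B\"ottcher/linearizing coordinate) to conclude the loops $\gamma_n$ are, up to free homotopy, concentric loops about $p$, which by definition makes $g$ peripheral about $p$; and (iv) observe $p \in P_f^{per}$ lies on a periodic critical orbit, and in the orbifold metric such periodic points are exactly the \emph{repelling} ones (the expansion \eqref{eqn:contraction} is uniform away from them, so they are the sources of the dynamics), giving the claim. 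Along the way, Lemma \ref{perilift} and Lemma \ref{periliftable} control how peripherality is inherited under lifting, which is needed to be sure the accumulation point of the $\gamma_n$ is a single well-defined post-critical point rather than varying with $n$.

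I expect the main obstacle to be step (ii)–(iii): making the "contraction forces accumulation near $P_f^{per}$, and accumulation near $P_f^{per}$ forces peripherality" argument rigorous. The subtlety is that the orbifold metric degenerates precisely at the points of $P_f^{per}$, so one cannot simply say "bounded length implies the loop converges to a point"; instead one must combine the expansion estimate \eqref{derivBound} on the compact set $K'$ with a careful analysis in the B\"ottcher coordinates of \cite[Theorem 9.1]{MilnorBook} to see that a length-shrinking nested sequence of lifts either dies (contradicting nontriviality of $g$) or spirals into a single B\"ottcher disk in the manner of a peripheral loop. The finiteness of $P_f$ and the compact containment $K' \Subset K$ are what save the day, but assembling these into a clean statement about free homotopy classes — rather than just metric statements — is where the real work lies. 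Once peripherality about \emph{some} $p \in P_f$ is established, identifying $p$ as periodic (it is in $P_f^{per}$ since that is where expansion fails) and repelling (periodic points in $P_f^{per}$ are repelling in the orbifold metric by construction of the metric) is comparatively routine.
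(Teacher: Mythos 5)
Your overall strategy (lift $g$ along the fixed end $w^\infty$, use $g|_{w^i}=g$ to get free homotopies $\gamma_i\simeq\gamma$, and let the orbifold expansion do the work) is the right starting point, but the core of your step (ii) rests on a false claim, and it propagates into a conclusion that contradicts the proposition itself. You assert that a nontrivial free homotopy class in $\rs\setminus P_f$ has positive infimal length in the orbifold metric, so that shrinking lifts force a contradiction and the lifts must ``escape'' into a B\"ottcher disk $\mathcal{U}(p)$ with $p\in P_f^{per}$. This is not true: the orbifold metric lives on $\rs\setminus P_f^{per}$, and a post-critical point $p\notin P_f^{per}$ is \emph{not} a puncture of that orbifold, so a peripheral loop about such a $p$ is nontrivial in $\pi_1(\rs\setminus P_f)$ yet has arbitrarily small orbifold length. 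That is exactly the situation that occurs here, so no contradiction arises. Moreover your escape dichotomy cannot happen at all: choosing the basepoint and a representative $\gamma$ of $g$ inside $K'=f^{-1}(K)$, backward invariance $f^{-1}(K')\subset K'$ keeps \emph{every} lift $\gamma_i$ inside $K'$, so the lifts never meet $\mathcal{U}^{per}$. Finally, step (iv) reverses the classification of post-critical cycles: $P_f^{per}$ is by definition the union of periodic orbits \emph{containing a critical point}, and those cycles are superattracting; the repelling post-critical cycles are precisely the ones with no critical point. So even if your argument went through, it would make $g$ peripheral about a superattracting periodic point, which is the opposite of what the proposition asserts.

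The correct route (the paper's) is the unconditional version of your contraction estimate: with $\gamma\subset K'$, each lift $\gamma_i$ based at $\Lambda^*(w^i)$ is a loop by Lemma \ref{liftable}, is freely homotopic to $\gamma$ by $g|_{w^i}=g$ together with \eqref{restriction}, and lies in $K'$, so \eqref{derivBound} forces the lengths $\ell(\gamma_i)\to 0$; the loops therefore converge to a single point $p$, which is fixed by $f^{|w|}$ because consecutive $\gamma_i$ are related by lifting under $f^{|w|}$. Nontriviality of $g$ means these arbitrarily small loops are nontrivial in $\rs\setminus P_f$, so $p\in P_f$; and since $p$ lies in (the closure of) $K'$, it is outside the deleted B\"ottcher neighborhoods of the critical cycles, so its cycle contains no critical point and is therefore repelling. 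For large $i$ the small loop $\gamma_i$ is peripheral about $p$, and the free homotopy $\gamma_i\simeq\gamma$ transfers this to $g$. In short: the shrinking lifts do not contradict nontriviality — they \emph{are} the mechanism that exhibits $g$ as peripheral about a repelling periodic post-critical point.
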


\begin{proof}
Suppose that $g\in\mathcal{N}_1^\pi$ is nontrivial. By the remarks immediately preceding this proposition, we may assume that the basepoint of the fundamental group is in the compact subset $K'$ where the expansion of Equation \eqref{derivBound} holds. Choose a representative $\gamma$ of  $g$ so that $\gamma$ lies in $K'$. By hypothesis there exists a non-empty $w \in X^*$ where $g(w) = w$ and $g|_w =g$.  For $i \geq 1$, let $\gamma_i$ be the lift of $\gamma$ based at $\Lambda^*(w^i)$ where $w^i$ is the concatenation of $i$ copies of $w$. Since $g(w^i)=w^i$, Lemma \ref{liftable} implies that each $\gamma_i$ is a loop. Equation \eqref{restriction} implies that  $g|_{w^i}=[\ell_{w^i}^{-1}\gamma_i\ell_{w^i}]$, where there is an evident free homotopy $\ell_{w^i}^{-1}\gamma_i\ell_{w^i}\simeq \gamma_i$ in $\rs\setminus P_f$. Since $g|_{w^i}=g$ by hypothesis, it follows that there is a free homotopy $\gamma_i\simeq\gamma$ in $\rs\setminus P_f$. Each $\gamma_i$ is in the compact set $K'$ since $f^{-1}(K')\subset K'$, so Equation \eqref{derivBound} implies that the length of $\gamma_i$ converges to 0 as $i\to\infty$, and hence the curves $\gamma_i$ converge to a point $p \in \rs$. Because $g$ is nontrivial, $g$ has non-trivial restrictions at arbitrarily long words, and hence $p$ must be a periodic post-critical point. Each post-critical cycle of a PCF rational function either contains a critical point or is repelling. The compact set $K'$ was produced by deleting neighborhoods of the periodic critical cycles, and therefore $p$ is repelling. For large enough $i$, $\gamma_i$ is peripheral about $p$, and
because each $\gamma_i$ is freely homotopic to $\gamma\in g$, we conclude that $g$ is peripheral about the same point. 
\end{proof}

An immediate application of Proposition \ref{peripheral} is that   $\mathcal{N}_1^\pi$ is closed under passing to primitives:

\begin{corollary}\label{cor:PassToPrim}
If $g^m\in\mathcal{N}_1^\pi$ for some $g\in \pi_1(\rs\setminus P_f)$, then $g\in\mathcal{N}_1^\pi$.  
\end{corollary}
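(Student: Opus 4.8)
\textbf{Plan for the proof of Corollary \ref{cor:PassToPrim}.}

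The plan is to reduce the claim to the structural description of $\mathcal{N}_1^\pi$ supplied by Proposition \ref{peripheral}, together with the observation that peripheral-ness is a property preserved under taking roots. First I would dispose of the trivial case: if $g^m$ is trivial then, since $\pi_1(\rs\setminus P_f)$ is a free group (the fundamental group of a sphere minus finitely many points), it is torsion-free, so $g$ is trivial and lies in $\mathcal{N}_1^\pi$ vacuously. So assume $g^m$ is nontrivial, hence $g$ is nontrivial. By Proposition \ref{peripheral}, $g^m$ is peripheral about a repelling periodic post-critical point $p$; I would then note that if $h^m$ is peripheral about $p$ for a nontrivial $h$ in the fundamental group of a punctured sphere, then $h$ itself is peripheral about $p$. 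This is because peripheral elements about $p$ are exactly the nontrivial powers of a fixed generator $c_p$ of the (infinite cyclic) peripheral subgroup associated to the puncture $p$, and in a free group the only elements with a power lying in a cyclic subgroup $\langle c_p\rangle$ are themselves powers of $c_p$ (centralizers in free groups are cyclic, so $g^m\in\langle c_p\rangle$ forces $g$ and $c_p$ to commute, hence to lie in a common cyclic subgroup, which must be $\langle c_p\rangle$ since $c_p$ is primitive). Thus $g$ is peripheral about $p$ as well.

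Next I would run the argument of Proposition \ref{peripheral} (or its key mechanism) in the forward direction to conclude $g\in\mathcal{N}_1^\pi$. The cleanest route is to use Lemma \ref{periend}: since $g^m$ fixes infinitely many ends of $X^*$ — indeed it fixes an end, which by Lemma \ref{periend}(1) means there is a backward orbit of $p$ avoiding critical points — and since $g$ is peripheral about the \emph{same} point $p$, Lemma \ref{perilift} and the structure of lifts of peripheral loops show that $g$ and $g^m$ have identical fixed-point behavior on each $X^n$: the lift of $g$ starting at $z$ is a loop exactly when the lift of $g^m$ starting at $z$ is a loop, because both are governed by whether the associated preimage point $q\in f^{-1}(p)$ is critical, and this is intrinsic to $p$ and independent of the winding number $m$. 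Iterating, the set of vertices of $X^*$ fixed by $g$ coincides with the set fixed by $g^m$. Since $g^m\in\mathcal{N}_1^\pi$, there is a nonempty $w$ with $g^m(w)=w$ and $g^m|_w = g^m$; I would argue that the same $w$ witnesses membership of $g$ in $\mathcal{N}_1^\pi$: $g(w)=w$ follows from the coincidence of fixed vertex sets just established, and $g|_w$ is again peripheral about $p$ (it is a lift of $g$ along the path to $w$, and lifts of loops peripheral about $p$ to points over $p$ are peripheral about a point over $p$, which dynamically must again be $p$ since $w$ is a periodic word for the wreath recursion) with the same winding behavior; a short computation with the wreath recursion formula \eqref{restriction}, exactly as in the proof of Proposition \ref{peripheral}, shows $g|_w = g$.

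The main obstacle I anticipate is making rigorous the claim that $g$ and $g^m$ fix exactly the same vertices of $X^*$ — i.e. propagating the ``same peripheral point, hence same lifting behavior'' observation from level $1$ (Lemma \ref{perilift}) up the whole tree. The subtlety is that at deeper levels the relevant local degrees are products of local degrees of $f$ along a backward orbit of $p$, and one must check that a primitive peripheral loop about $p$ and its $m$-th power produce loops that fix the \emph{same} deeper vertices; this should follow by induction, using at each stage that a lift of $g^m$ is the $m$-th power of the corresponding lift of $g$ (since $g\mapsto g^m$ commutes with the lifting construction) and that $z\mapsto z^d$ sends a loop to a loop iff it sends its $m$-th power to a loop. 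Once this is in hand, the rest is the bookkeeping of Proposition \ref{peripheral}'s proof applied verbatim. An alternative, possibly shorter, route avoids the tree entirely: Proposition \ref{peripheral} already tells us $g$ is peripheral about a repelling periodic post-critical point, and one can try to show directly that \emph{every} nontrivial peripheral-about-repelling-periodic-post-critical element whose orbit data matches that of some element of $\mathcal{N}_1^\pi$ is itself in $\mathcal{N}_1^\pi$; but this seems to require essentially the same lifting analysis, so I would proceed with the induction above.
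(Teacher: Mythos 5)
Your opening moves are fine: the trivial case is handled as in the paper, and your centralizer argument in the free group $\pi_1(\rs\setminus P_f)$ showing that $g$ is peripheral about the \emph{same} point $p$ as $g^m$ is a legitimate alternative to the paper's argument (which uses parabolic versus hyperbolic deck transformations on the hyperbolic universal cover). The genuine gap is in the step where you deduce $g(w)=w$ and $g|_w=g$. Your mechanism is the claim that $g$ and $g^m$ fix exactly the same vertices of $X^*$, justified by applying Lemmas \ref{perilift} and \ref{periend} to $g^m$ and by the principle that a loop lifts to a loop under $z\mapsto z^d$ if and only if its $m$-th power does. Both of these fail: Lemmas \ref{perilift} and \ref{periend} are stated only for \emph{primitive} peripheral elements, and $g^m$ is not primitive for $|m|>1$; and under $z\mapsto z^d$ with $1<d\mid m$, a loop of winding number $1$ about $0$ does not lift to a loop while its $m$-th power (winding number $m$) does. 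Concretely, at a vertex whose associated preimage $q$ of $p$ is a critical point of local degree $d_q$ with $1<d_q\mid m$, the lift of $g^m$ closes up but the lift of $g$ does not, so in general one only has containment of the fixed-vertex set of $g$ in that of $g^m$, not equality, and your induction up the tree breaks at exactly this point.

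What actually rescues the statement is a fact special to the witness $w$, which your argument never uses: since $g^m|_w=g^m$ is nontrivial and peripheral about $p$, the lift of $g^m$ based at $\Lambda^*(w)$ is associated to $p$ itself, so $p$ is periodic with period dividing $|w|$; and because the cycle of $p$ is repelling (hence contains no critical point), $f^{|w|}$ is univalent on a neighborhood of $p$. It is this univalence along the \emph{periodic} backward orbit of $p$ — not any general comparison between $g$ and $g^m$ — that forces the lift of the primitive loop $g$ at $\Lambda^*(w)$ to be a loop, giving $g(w)=w$ by Lemma \ref{liftable}. This is precisely how the paper argues, and the appeal to the repelling (non-critical) cycle cannot be replaced by your $m$-th power equivalence. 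Once $g(w)=w$ is established, your concluding step can also be made precise more cheaply than the wreath-recursion computation you sketch: $(g|_w)^m=g^m|_w=g^m$, and uniqueness of roots in the free group gives $g|_w=g$ (the paper instead notes $g|_w=g^k$ and pins down $k=1$ using univalence and orientation-preservation of $f^{|w|}$ near $p$).
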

\begin{proof}
Let $w \in X^*$ be such that $g^m(w) = w$ and $g^m|_w = g^m$. Denote the length of $w$ by $|w|$. If $g^m$ is trivial in $\pi_1(\rs\setminus P_f)$, then so is $g$, whence $g\in\mathcal{N}_1^\pi$. Otherwise, by Proposition \ref{peripheral}, $g^m$ is peripheral about a repelling periodic point $p$. In the nontrivial case of $|P_f|>2$ the universal cover of $\rs\setminus P_f$ is the hyperbolic disk. The deck transformation corresponding to each peripheral loop is a parabolic element (a M\"obius transformation with exactly one fixed point), and the deck transformation corresponding to each nonperipheral loop is hyperbolic (a M\"obius transformation with exactly two fixed points). The power of a hyperbolic element is hyperbolic, so if $g^m$ is peripheral $g$ is also peripheral. Moreover the fixed set of the deck transformation corresponding to $g$ coincides with that of $g^m$, so $g$ must also be peripheral about $p$.  Since a repelling periodic point contains no critical point in its forward orbit, each iterate of $f$ is univalent on some neighborhood of $p$.  
Thus the lift of $g$ based at $\Lambda^*(w)$ is a loop so by Lemma \ref{liftable}, $g(w)=w$. 
Thus $g|_w=g^k$ for some $k\in\mathbb{Z}\setminus\{0\}$. 
The fact that $f^{|w|}$ is univalent and orientation preserving near $p$ implies that $k=1$. 

\end{proof}

\begin{remark} Each end of $X^*$ that is fixed by $g$ is also fixed by $g^m$. Thus if $g^m$ fixes only finitely many ends, so must $g$.
\end{remark}

Recall that a complex rational map is {\it dynamically exceptional} if there exists a finite, nonempty set $\Sigma$ with
$$f^{-1}(\Sigma)\setminus C_f = \Sigma,$$
where $C_f \subset \rs$ is the set of critical points of $f$. 
Let $p\in \Sigma$ and observe that every choice of a backward orbit of $p$ must intersect the critical set with only one possible exception: $p$ is contained in a periodic cycle (which necessarily contains no critical points, so will be a repelling cycle under forward iteration).

\begin{proposition}
\label{condition}
Suppose $f$ is a PCF rational map with an element $g\in\mathcal{N}_1^{\pi}$ that fixes only finitely-many ends of $X^*$.
Then $f$ is dynamically exceptional.
\end{proposition}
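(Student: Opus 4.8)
The plan is to manufacture, from the given loop $g$, a finite nonempty set $\Sigma\subset\rs$ satisfying $f^{-1}(\Sigma)\setminus C_f=\Sigma$; this is exactly what it means for $f$ to be dynamically exceptional. First I would normalize $g$. Since $d\geq 2$ the tree $X^*$ has infinitely many ends, all fixed by the trivial loop, so $g$ is nontrivial; hence $\pi_1(\rs\setminus P_f)$ is a nontrivial free group and $g=h^m$ for some primitive $h$. By Corollary \ref{cor:PassToPrim}, $h\in\mathcal{N}_1^\pi$, and by the remark following it $h$ also fixes only finitely many ends, so we may replace $g$ by $h$ and assume $g$ primitive. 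Proposition \ref{peripheral} then makes $g$ peripheral about a repelling periodic post-critical point $p$; its periodic cycle $\mathcal{C}$ lies in $P_f$ and, being repelling, contains no critical point. The sole dynamical input I would use is the contrapositive of Lemma \ref{periend}(2): since $g$ fixes only finitely many ends, \emph{every backward orbit of $p$ that avoids $C_f$ is contained in $P_f$}.

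I would then let $\Sigma$ be the smallest subset of $\rs$ containing $p$ and closed under taking non-critical preimages; explicitly, $z\in\Sigma$ precisely when there is a chain $z=z_0,z_1,\dots,z_n=p$ with $f(z_i)=z_{i+1}$ and $z_0,\dots,z_{n-1}\notin C_f$. The relation $f^{-1}(\Sigma)\setminus C_f=\Sigma$ is then a routine verification: $\Sigma$ is nonempty and disjoint from $C_f$; it contains $\mathcal{C}$ (the cycle is critical-point-free and each of its points iterates to $p$), so $f(\Sigma)\subseteq\Sigma$ and therefore $\Sigma\subseteq f^{-1}(\Sigma)$; and if $q\notin C_f$ with $f(q)\in\Sigma$, then prepending $q$ to a chain for $f(q)$ (legitimate since $f(q)\in\Sigma$ is disjoint from $C_f$) shows $q\in\Sigma$, giving $f^{-1}(\Sigma)\setminus C_f\subseteq\Sigma$. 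Thus the whole proposition reduces to showing that $\Sigma$ is \emph{finite}.

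This finiteness is the step I expect to be the genuine obstacle: the subtlety is that a critical-point-free backward \emph{path} from $p$ need not prolong to an infinite critical-point-free backward \emph{orbit}, so one cannot present $\Sigma$ as a union of such orbits and invoke the hypothesis directly. My plan is to organize $\Sigma\cup\{*\}$, with $*$ a formal symbol, into a rooted tree: declare the parent of $p$ to be $*$ and the parent of each $q\in\Sigma\setminus\{p\}$ to be $f(q)\in\Sigma$. This graph is locally finite, since each vertex has at most $d$ children (among the preimages of that vertex), and it is a tree, since iterating the parent map from any vertex in $\Sigma$ reaches $p$ and then $*$ after finitely many steps, precluding cycles. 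If $\Sigma$ were infinite this would be an infinite locally finite tree, so by K\"onig's lemma it would contain an infinite branch $p=q_1,q_2,q_3,\dots$ with $f(q_{i+1})=q_i$ — that is, a backward orbit of $p$ avoiding $C_f$ (all $q_i$ lie in $\Sigma$) and consisting of pairwise distinct points (distinct vertices along a branch of a tree). By the contrapositive of Lemma \ref{periend}(2) this orbit would lie in the finite set $P_f$, contradicting that it has infinitely many distinct points. Hence $\Sigma$ is finite, and $f$ is dynamically exceptional. Every step other than the K\"onig's lemma argument is either a citation or a formal set-theoretic check.
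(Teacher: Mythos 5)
Your proof is correct, and its skeleton coincides with the paper's: reduce to a primitive $g$ via Corollary \ref{cor:PassToPrim} and the ensuing remark, invoke Proposition \ref{peripheral} to get a repelling periodic $p\in P_f$, define $\Sigma$ as the points with a critical-point-free forward chain to $p$ (the same set as the paper's ``forward orbit contains $p$ and avoids $C_f$''), and verify $f^{-1}(\Sigma)\setminus C_f=\Sigma$ exactly as in the paper. The one place you genuinely diverge is the finiteness of $\Sigma$: the paper deduces $\Sigma\subseteq P_f$ directly from the contrapositive of Lemma \ref{periend}(2) and concludes finiteness, whereas you, worried that a finite critical-point-free backward chain from $p$ need not prolong to an infinite critical-point-free backward orbit, instead run a K\"onig's lemma argument on the parent tree of $\Sigma$ to produce, if $\Sigma$ were infinite, an infinite critical-point-free backward orbit with infinitely many distinct points inside the finite set $P_f$. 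That argument is valid. It is worth noting, though, that the prolongation worry can also be dispatched directly: a point all of whose preimages are critical is a critical value, hence lies in $P_f$, and more generally every preimage of a point outside $P_f$ is neither critical nor post-critical; so as soon as a critical-point-free chain from $p$ exits $P_f$ it extends freely to an infinite backward orbit avoiding $C_f$ and not contained in $P_f$, and Lemma \ref{periend}(2) then forbids this, which is how the paper's claim $\Sigma\subseteq P_f$ is justified. So the two routes differ only here: the paper's shortcut yields the stronger containment $\Sigma\subseteq P_f$ at the cost of that (unstated) observation about preimages of non-post-critical points, while your compactness argument gets the finiteness you actually need without it.
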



\begin{proof}
Since $g$ is clearly not trivial, Proposition \ref{peripheral} implies $g$ is peripheral about some post-critical point $p$ that is contained in a non-critical cycle. We may assume that $g$ is primitive and fixes only finitely many ends of $X^*$ by Corollary \ref{cor:PassToPrim} and the ensuing remark. 
Let $\Sigma\subset\rs$ be the set of points whose forward orbit contains $p$ but does not intersect $C_f$. Since $p$ lies in a non-critical cycle, $p\in\Sigma$ and so $\Sigma\neq\emptyset$. Because $g$ is primitive and peripheral, we may invoke the second part of Lemma \ref{periend} to conclude that every backward orbit of $p$ either intersects $C_f$ or is a subset of $P_f$. Thus $\Sigma\subset P_f$ and is hence finite. 

We now argue that $\Sigma= f^{-1}(\Sigma)\setminus C_f$.  Because $p$ is periodic, it follows that $f(\Sigma)\subset\Sigma$. Thus $\Sigma\subset f^{-1}(\Sigma)$ and since $\Sigma\cap C_f=\emptyset$, it follows that $\Sigma\subset f^{-1}(\Sigma)\setminus C_f$.
To see that $f^{-1}(\Sigma)\setminus C_f\subset\Sigma$, observe that if $x\in f^{-1}(\Sigma)\setminus C_f$ then $f(x)\in\Sigma$, and hence the forward orbit of $f(x)$ contains $p$. Thus the forward orbit of $x$ contains $p$, and since $x$ is not critical, $x\in\Sigma$. This proves that $f$ is dynamically exceptional.
\end{proof}

\subsection{End behavior of non-exceptional maps: IMG}


A sequence of elements $(g_n)_{n=0}^{\infty}$ in a group is said to be \emph{eventually periodic} (resp. \emph{eventually peripheral}) 
if there is some integer $N$ so that $(g_n)_{n=N}^{\infty}$ is periodic. (resp. peripheral) 
Note that periodic sequences are eventually periodic under this definition.

For any string $w\in X^n$ and a positive integer $m$, recall that we denote by $w^m$ the string in $X^{mn}$ formed by concatenating $m$ copies of $w$.

\begin{lemma}\label{lem:sequencePeriodic}
Suppose that there is $g\in\pi_1(\rs\setminus P_f)$ and a nonempty word $w$ so that $g(w^m)=w^m$ for all $m>0$. Then the sequence of restrictions $g_m:=g|_{w^m}$ is
eventually periodic.
\end{lemma}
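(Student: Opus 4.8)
The plan is to extract the recursive structure of the sequence $(g_m)$ and then invoke contraction of $\IMG(f)$ to confine it to a finite set. Write $g_0=g$. Since $w^{m+1}=w^m w$, the composition formulas \eqref{nested restriction} for the standard action give $g(w^{m+1})=g(w^m)\,g|_{w^m}(w)$ and $g|_{w^{m+1}}=(g|_{w^m})|_w$. Feeding in the hypothesis $g(w^m)=w^m$, the first identity forces $g_m(w)=w$ for every $m\geq 0$, and the second becomes the recursion $g_{m+1}=g_m|_w$. Thus $(g_m)_{m\geq0}$ is the forward orbit of $g$ under the map $h\mapsto h|_w$ on the set $\{h:h(w)=w\}$, and it suffices to show this orbit is finite: the recursion — $g_{m+1}$ being a function of $g_m$ — then makes any repetition propagate, so the orbit is eventually periodic.

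Finiteness is exactly contraction. By Theorem \ref{cor:IMGnucleus} the standard action of $\IMG(f)$ is contracting; let $\mathcal{N}$ be a finite nucleus as in Definition \ref{contractingdef}. Applying the defining property of $\mathcal{N}$ to (the image in $\IMG(f)$ of) $g$ produces $M$ with $g|_v\in\mathcal{N}$ whenever $|v|\geq M$; in particular $g_m=g|_{w^m}\in\mathcal{N}$ as soon as $m|w|\geq M$. Since $\mathcal{N}$ is finite, the pigeonhole principle gives $i<j$ with $g_i=g_j$, and then $g_{i+k}=g_{j+k}$ for all $k\geq0$ by the recursion, so $(g_m)$ is eventually periodic with period dividing $j-i$. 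As in the remark following Proposition \ref{imgaction}, the restrictions $g|_{w^m}$ here are read as elements of $\IMG(f)$ — equivalently, as tree automorphisms — which is the sense in which each depends only on $g$.

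The only place requiring care is whether eventual periodicity is being claimed for the $g_m$ as genuine elements of $\pi_1(\rs\setminus P_f)$ rather than of $\IMG(f)$: a coincidence $g_i=g_j$ in $\IMG(f)$ only yields $g_j=g_i\kappa$ with $\kappa$ in the faithful kernel, so one would additionally need the restriction sequence $\kappa,\kappa|_w,\kappa|_{w^2},\dots$ to be eventually periodic in $\pi_1$. I would obtain this from subhyperbolicity, as in the proof of Proposition \ref{peripheral}: moving the basepoint into the compact set $K'$ on which \eqref{derivBound} holds, a representative of $\kappa$ can be taken inside $K'$, its lift to $\Lambda^*(w^m)$ is again a loop in $K'$ (by Lemma \ref{liftable} and $f^{-1}(K')\subset K'$), and \eqref{derivBound} drives the length of that lift to $0$; a sufficiently short loop in $\rs\setminus P_f$ is null-homotopic or peripheral, whence Lemmas \ref{perilift}--\ref{periliftable} constrain $\kappa|_{w^m}$ to cycle among finitely many peripheral loops about a repelling post-critical cycle (or to become trivial). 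I do not expect a serious obstacle: the substance of the lemma is the recursion $g_{m+1}=g_m|_w$ together with the contraction of $\IMG(f)$, and this last technicality is in any case subsumed by the later passage to the faithful quotient.
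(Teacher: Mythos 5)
Your reduction to the recursion $g_{m+1}=g_m|_w$, $g_m(w)=w$ is fine, but the core of your argument — contraction of $\IMG(f)$ plus pigeonhole — only proves the lemma with $\pi_1(\rs\setminus P_f)$ replaced by $\IMG(f)$, and that is exactly the part the paper dismisses as "clear" in the remark following the lemma. The lemma is genuinely a statement in $\pi_1$: there is no finite set $\mathcal{N}$ as in Definition \ref{contractingdef} for the $\pi_1$-action (the paper's Chebyshev example $f(z)=z^2-2$ exhibits infinitely many classes $\alpha^m$, all peripheral about the repelling fixed point $2$, occurring as restrictions at arbitrarily long words), and the $\pi_1$-level equality is precisely what Proposition \ref{modok} consumes in order to conclude $g_{m_1}\in\mathcal{N}_1^\pi$. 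So your closing claim that the technicality is "subsumed by the later passage to the faithful quotient" has it backwards: the later passage depends on this lemma holding in $\pi_1$.

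Your proposed repair does not close the gap, for two reasons. First, shortness of the lifted loops in the orbifold metric only forces them to be trivial or peripheral about a repelling post-critical point; it does not confine them to finitely many homotopy classes, since all powers of a peripheral generator are represented by arbitrarily short loops near the puncture — again the Chebyshev example. Second, even granting that the kernel discrepancy stabilizes, it accumulates: if $g_j=g_i\kappa$ with $\kappa$ in the faithful kernel, then $\kappa(w^k)=w^k$ gives $g_{j+k}=g_{i+k}\,\kappa|_{w^k}$, so if $\kappa|_{w^k}$ is eventually a fixed $\kappa_\infty$ one gets $g_{m+n(j-i)}=g_m\kappa_\infty^{\,n}$ for large $m$, which is eventually periodic in the free group $\pi_1(\rs\setminus P_f)$ only if $\kappa_\infty$ is trivial. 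So you would need eventual triviality, not eventual peripherality, of the discrepancies — and establishing that is the real work. The paper's proof avoids the quotient entirely: it runs the expansion argument directly on a representative $\gamma\subset K'$ of $g$, shows the lifted loops $\alpha_m$ shrink to a fixed point $p$ of $F=f^{|w|}$ while the connecting paths $\lambda_m$ converge, and then compares \emph{consecutive} restrictions: $\gamma_m$ and $\gamma_{m+1}$ agree outside a small disk about $p$, and since $F$ maps $\alpha_{m+1}$ to $\alpha_m$ with degree $1$ the two small loops are freely homotopic in the punctured disk, yielding a based homotopy and hence $g_m=g_{m+1}$ for all large $m$. That degree-one comparison of consecutive lifts is what pins down the homotopy class; mere shortness cannot.
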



\begin{remark}
For a PCF rational map $f$ it is known that $IMG(f)$ is contracting (Theorem \ref{cor:IMGnucleus}). Since the finite set $\mathcal{N} \subset IMG(f)$ of Definition \ref{contractingdef} is closed under restriction, the lemma clearly holds if ``$\pi_1(\rs\setminus P_f)$" is replaced with ``$IMG(f)$". However, the same argument cannot be used to prove Lemma \ref{lem:sequencePeriodic} because there is in general no finite set $\mathcal{N}$ as in Definition \ref{contractingdef} for $G = \pi_1(\rs\setminus P_f)$.  Consider for example the Chebyshev map $f(z) = z^2-2$, which has a repelling fixed point at $2$. Let $\alpha$ be a loop that is peripheral about 2. Observe that $f^{-1}(2) = \{\pm 2\}$, and so there is $x \in X$ such that $\alpha(x) = x$ and $\alpha|_x = \alpha$. Concatenating $x$ with itself $n$ times gives a word $w \in X^n$ with $\alpha(w) = w$ and $\alpha|_w = \alpha$. These same statements hold with $\alpha$ replaced by $\alpha^m$, and because the $\alpha^m$ are pairwise non-homotopic this gives rise to an infinite subset of $\pi_1(\rs\setminus P_f)$ that can occur as restrictions of arbitrarily long words. In conclusion, Lemma \ref{lem:sequencePeriodic} is not an immediate consequence of the existing theory.

\end{remark}


\begin{remark}
The following proof in fact shows that the sequence $g_m$ is eventually constant, rather than merely eventually periodic. However, eventual periodicity is sufficient for our purposes.
\end{remark}

\begin{proof}


Recall the construction of the backward-invariant compact set $K'$ where expansion holds. As with the proof of Proposition \ref{peripheral}, we may assume the basepoint $y_0$ for the fundamental group $\pi_1(\rs\setminus P_f)$ is in $K'$. Let $F=f^{|w|}$, and fix a representative $\gamma\subset K'$ of the class $g$. Let $\gamma_m:=\gamma|_{w^m}$. If there exists $m_0$ so that the homotopy class $[\gamma_{m_0}]$ is trivial, then $[\gamma_m]$ is trivial for all $m>m_0$, and hence $[\gamma_m]$ is eventually periodic (indeed, eventually constant). For the rest of the proof we assume that $[\gamma_m]$ is non-trivial for all $m$.

We recall the explicit construction of $\gamma_m$ via a standard action, as described in Definition \ref{standardactiondef}. We assume the paths $\{\ell_i\}$ in Definition \ref{standardactiondef} are selected to lie in $K'$. Because $g(w) = w$ it follows from Proposition \ref{imgaction} that $\gamma|_w = l_1^{-1} \tilde{\gamma}_w l_1$, where $\tilde{\gamma}_w$ is the lift of $\gamma$ starting at $\Lambda^*(w)$ and $l_1$ is a concatenation of lifts of the paths $\ell_x$, corresponding to the letters in the word $w$. Because $K'$ is backward invariant and each $\ell_x \subset K'$, we have $l_1 \subset K'$. Denote by $y_1$ the endpoint of $l_1$, which by Equation \eqref{Lstar} is the same as $\Lambda^*(w)$.



Now define the sequence $y_m:=\Lambda^*(w^m)$ $\in\rs$.  Let $l_i$ be the unique lift of $l_1$ under $F^{i-1}$ based at $y_{i-1}$, and observe that $l_i$ connects $y_{i-1}$ to $y_i$ and is contained in $K'$. Finally, let $\lambda_m$ be the concatenation of the paths $l_1,\dots, l_m$, where evidently $\lambda_m$ connects $y_0$ to $y_m$. Due to the geometric expansion of $F$ on $K'$ in the orbifold metric from equation \eqref{derivBound}, the lengths of the paths $l_m$ decrease geometrically. Hence the sequence $(y_i)$ is Cauchy and converges to a point $p\in\rs$. Moreover, the length of $\lambda_m$ is uniformly bounded and so $\lambda_m$ converges to a path $\lambda_\infty$ of finite length that connects $y_0$ to $p$. 

The continuity of $F$ and the equation $F(y_i)=y_{i-1}$ imply that $F(p)=p$. Let $\alpha_m$ be the unique lift of $\gamma$ under $F^m$ based at $y_m$. The hypothesis that $g(w^m)=w^m$ together with Lemma \ref{liftable} imply that $\alpha_m$ is a loop and so $\gamma_m=\lambda^{-1}_m\alpha_m\lambda_m$ for each $m$. By  \eqref{derivBound}, the length of $\alpha_m$ converges to 0, so $\alpha_m$ is arbitrarily small for large $m$. We have already dispensed with the case that $\alpha_m$ is homotopically trivial, thus it follows that $\alpha_m$ is eventually peripheral about $p\in P_f$.  

Since both $\alpha_m$ and $\lambda_\infty\setminus\lambda_m$ have length converging to zero, 
for each disk of radius $\epsilon$ about $p$ (denoted $D_\epsilon(p)$) there exists an integer $N$ so that for $m>N$, the paths $\gamma_m$ and $\gamma_{m+1}$ coincide on the complement of $D_\epsilon(p)$ up to reparametrization. Fix $\epsilon$ so that $D_\epsilon(p)\cap P_f\setminus\{p\}=\emptyset$ and  $\alpha_m\subset D_\epsilon(p)$ for all $m>N$. 
Since $F$ maps $\alpha_{m+1}$ to $\alpha_m$ with degree 1, we have that the loops $\alpha_m$ and $\alpha_{m+1}$ are freely homotopic in $D_\epsilon(p)\setminus\{p\}$. We thus have two peripheral loops $\gamma_m$ and $\gamma_{m+1}$ that agree outside of $D_\epsilon(p)$ and are both freely homotopic to the same curve in $D_\epsilon(p)$. Therefore there is a based homotopy between $\gamma_m$ and $\gamma_{m+1}$, showing that $g_m = g_{m+1}$. 
\end{proof}




\begin{proposition}
\label{modok}
Let $f$ be a PCF rational function.
Then some element of $\mathcal{N}_1^{\pi}$ fixes only finitely many ends of $X^*$ if and only if some element of $\mathcal{N}_1$ fixes only finitely many ends of $X^*$.
\end{proposition}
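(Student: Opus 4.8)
The plan is to use that the monodromy action of $\pi_1(\rs\setminus P_f,z_0)$ on $X^*$ factors through the quotient $\IMG(f)$, that restriction at a vertex commutes with this quotient (the restriction $g|_w$ of a loop is defined, via \eqref{eqn:actionFormula} and \eqref{restriction}, so that its action on $X^*$ is the restriction of the action of $g$), and that both membership in $\mathcal{N}_1$ or $\mathcal{N}_1^\pi$ and the property of fixing infinitely many ends of $X^*$ depend only on the action on $X^*$. The only asymmetry to overcome is that the equation $g|_w=g$ defining $\mathcal{N}_1^\pi$ is an equality in $\pi_1(\rs\setminus P_f,z_0)$, whereas a naive lift to $\pi_1(\rs\setminus P_f,z_0)$ of an element of $\mathcal{N}_1$ only satisfies this modulo the faithful kernel.

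The forward implication is essentially formal: given $g\in\mathcal{N}_1^\pi$ fixing only finitely many ends, with witnessing non-empty word $w$, let $h$ be the image of $g$ in $\IMG(f)$. Passing $g(w)=w$ and $g|_w=g$ through the quotient gives $h(w)=w$ and $h|_w=h$, so $h\in\mathcal{N}_1$; and since $g$ acts on $X^*$ as $h$, the element $h$ fixes only finitely many ends.

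For the reverse implication I would start from $h\in\mathcal{N}_1$ fixing only finitely many ends, with witness a non-empty $w$. First, an easy induction (using $h(vw)=h(v)h|_v(w)$ and $h|_{vw}=h|_v|_w$ from \eqref{nested restriction}) shows $h(w^m)=w^m$ and $h|_{w^m}=h$ for all $m\ge 1$. Lift $h$ to some $g\in\pi_1(\rs\setminus P_f,z_0)$ mapping to $h$ in $\IMG(f)$; then $g(w^m)=w^m$ for all $m$, so Lemma~\ref{lem:sequencePeriodic} applies and the sequence $g_m:=g|_{w^m}$ is eventually periodic: $g_m=g_{m+P}$ for all $m\ge N$ with some $P\ge 1$. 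I would then set $g':=g|_{w^N}$. Since $g|_{w^{N+P}}=(g|_{w^N})|_{w^P}$, periodicity gives $g'|_{w^P}=g'$ in $\pi_1(\rs\setminus P_f,z_0)$; and $g(w^{N+P})=g(w^N)\,g|_{w^N}(w^P)=w^N\,g'(w^P)$ forces $g'(w^P)=w^P$. As $w^P$ is non-empty, $g'\in\mathcal{N}_1^\pi$. Finally the image of $g'$ in $\IMG(f)$ is $h|_{w^N}=h$, so $g'$ acts on $X^*$ as $h$ and hence fixes only finitely many ends.

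The main obstacle — really the only nontrivial point — is this last maneuver in the reverse direction: obtaining an honest equality $g'|_{w^P}=g'$ in $\pi_1(\rs\setminus P_f,z_0)$, not merely modulo the faithful kernel. A priori the restrictions $g|_{w^m}$ could wander forever within the coset of $g$, and it is exactly Lemma~\ref{lem:sequencePeriodic} — proved using the expansion of $f$ in the orbifold metric on the backward-invariant compact set $K'$ — that forces this sequence to stabilize. Everything else is bookkeeping with the wreath-recursion identities \eqref{nested restriction}.
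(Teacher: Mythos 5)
Your proposal is correct and follows essentially the same route as the paper: the forward direction is the quotient map, and the reverse direction lifts the element of $\mathcal{N}_1$, applies Lemma~\ref{lem:sequencePeriodic} to the restrictions $g|_{w^m}$ to get an honest equality in $\pi_1(\rs\setminus P_f)$, and then checks that the stabilized restriction lies in $\mathcal{N}_1^{\pi}$ and still fixes only finitely many ends. Your write-up is if anything slightly more explicit than the paper's in verifying $g'(w^P)=w^P$ and that the image of $g'$ in $\IMG(f)$ is $h$, but the underlying argument is the same.
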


\begin{proof}
Recall from Definition \ref{imgdef} that $\IMG(f)$ is the quotient of $\pi_1(\rs \setminus P_f)$ by the faithful kernel $K$ of the monodromy action on $X^*$. So if $g\in\mathcal{N}_1^{\pi}$ fixes only finitely-many ends of $X^*$, then its image under the quotient is an element of $\mathcal{N}_1$ that fixes only finitely-many ends of $X^*$.

Now assume there is an element $\bar{g}\in\mathcal{N}_1$ that fixes only finitely-many ends of $X^*$.
It follows from the definition of $\mathcal{N}_1$ that there is a finite string $w\in X^n$ for some $n\geq 1$ so that $\bar{g}(w)=w$ and $\bar{g}|_w=\bar{g}$.
Let $g\in\pi_1(\rs\setminus P_f)$ be in the coset of $K$ 
represented by $\bar{g}$.
 Then for each $m \geq 1$ we have $g(w^m)=w^m$, but we only know that $g|_{w^m}$ and $g$ lie in the same coset of $K$.

Define the sequence $g_m := g|_{w^m}$, observing that each $g_m$ fixes only finitely many ends of $X^*$. 
It follows from Lemma \ref{lem:sequencePeriodic} that $g_{m_1} = g_{m_2}$ for some $m_1\neq m_2$.  Then the restriction of $g_{m_1}$ to $w^{|m_2-m_1|}$ is $g_{m_2}$ (indeed, by the second remark before the proof of Lemma \ref{lem:sequencePeriodic}, we may take $m_2 - m_1 = 1$). This proves that $g_{m_1}\in\mathcal{N}_1^{\pi}$.
\end{proof}

\begin{proof}[Proof of Theorem \ref{repeat_maincx2} (a.k.a. Theorem \ref{maincx2})]
Let $f$ be a PCF rational map that is not exceptional. The contrapositive of Proposition  \ref{condition} guarantees that each element of $\mathcal{N}_1^\pi$  fixes infinitely many ends. Then Proposition \ref{modok} implies that each element of $\mathcal{N}_1$ fixes infinitely many ends.
\end{proof}

\subsection{Characterization of exceptional maps}

A characterization of dynamically exception maps is given in Theorem \ref{thm:ExceptionalChar}, though this result is not used elsewhere in this paper. The result is easily proved if the set $\Sigma$ contains a fixed point, but the presence of higher period cycles requires some minor technicality about passing to iterates.

Recall the construction of the standard tree $X^*$ from Section \ref{subs:IMGs} in terms of the labeling map 
\begin{equation}\label{eqn:labelingMap}
    \Lambda:X=\{0,\dots d-1\}\to f^{-1}(z_0).
\end{equation} In principle, one could use the construction of that section to associate a standard action to $f^n$ using a labeling map $\{0,\dots,d^n-1\}\to f^{-n}(z_0)$. However, we choose to use a labeling that is compatible with the standard action induced by $\Lambda$ in  \eqref{eqn:labelingMap}. Specifically, our new labeling map 
\[\Lambda_n:X^n\to f^{-n}(z_0)\] is defined for a given point $w\in X^n$ by $\Lambda_n(w)=\Lambda^*(w)$, where $\Lambda^*$ is the extension of $\Lambda$ to elements of $X^*$ described in \eqref{Lstar}. In this way, the point $\Lambda_n(w)\in f^{-n}(z_0)$ is labeled by a string of $n$ characters in the alphabet $X$, even though it is a ``first-level" preimage of $z_0$ under $f^n$. Define the connecting path for $z\in f^{-n}(z_0)$ to be $\ell_{\Lambda_n(z)}$. This data defines a tree isomorphism from the preimage tree $T_{f^n,z_0}$ to a standard $d^n$-ary tree which we denote $(X,f^{n})^*$, as well as a standard action by $\pi_1(\rs\setminus P_{f^{n}})$. Since $P_{f^{n}} = P_f$, we have that \[\pi_1(\rs\setminus P_{f^{n}}) = \pi_1(\rs\setminus P_f).\]

Using this newly defined standard action, we may now define the iterated analogue of Equation \ref{eqn:piN1}:
\[
\mathcal{N}_1^{\pi}(f^{m}) := \{g\in \pi_1(\rs\setminus P_f)\ :\ \exists \text{ nontrivial } w\in (X,f^{m})^* \text{ so that } g(w) = w \text{ and }g|_w = g \}
\]

\begin{lemma}
\label{iterateok}
Let $f$ be a PCF rational map, and let $m\geq 1$.
Then $\mathcal{N}_1^{\pi}(f^{m})\subset \mathcal{N}_1^{\pi}$.
\end{lemma}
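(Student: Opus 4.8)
The plan is to show that the $f^m$-standard action of $\pi_1(\rs\setminus P_f,z_0)$ on $(X,f^m)^*$ is nothing but its $f$-standard action on $X^*$ after re-bracketing the tree into blocks of $m$ consecutive levels, and then to transport a witnessing word across this identification. To set it up, for $n\geq 0$ let $c_n$ be the bijection from the set of length-$n$ words over the super-alphabet $X^m$ to $X^{mn}$ that writes each super-letter out as a block of $m$ ordinary letters and concatenates these blocks in order; these assemble into a map $c$ on vertices of $(X,f^m)^*$ with values in $X^*$, carrying the length-$(n-1)$ prefix of a word $W$ to the length-$m(n-1)$ prefix of $c(W)$. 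The key point is that the connecting path attached to a super-letter $w_i=x_1\cdots x_m\in X^m$ in the $f^m$-standard action is, up to homotopy, a suitable concatenation of the successive $f$-lifts of $\ell_{x_1},\dots,\ell_{x_m}$ --- precisely the path $l_1$ that appears in the proof of Lemma \ref{lem:sequencePeriodic}. Feeding this into the formula \eqref{Lstar} and inducting on $n$ shows that the level-$n$ labeling of the $f^m$-standard action, a bijection $(X^m)^n\to f^{-mn}(z_0)$, agrees with $\Lambda^*\circ c_n$; in particular $c$ identifies $(X,f^m)^*$ with the rebracketing of $X^*$ that keeps only the levels divisible by $m$, with level $n$ of the former matching level $mn$ of the latter.

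Since $P_{f^m}=P_f$, both standard actions are actions of the \emph{same} group $\pi_1(\rs\setminus P_f,z_0)$, and on each level the induced permutation is determined entirely by the endpoints of lifts of loops. The labeling identification of the previous paragraph therefore forces the two actions to commute with $c$: for every $g\in\pi_1(\rs\setminus P_f,z_0)$ and every vertex $W$ of $(X,f^m)^*$ we get $c(g\cdot W)=g\cdot c(W)$, where $g\cdot(-)$ denotes the respective standard action. Moreover, iterating the restriction formula \eqref{restriction} $m$ times, using \eqref{nested restriction}, and comparing with its $f^m$-analog --- again invoking that the $f^m$-connecting paths are the appropriate concatenations of $f$-lifts of the $\ell_x$ --- shows that the restriction of $g$ at $W$ computed in the $f^m$-standard action equals the restriction of $g$ at $c(W)$ computed in the $f$-standard action, \emph{as elements of $\pi_1(\rs\setminus P_f,z_0)$} and not merely modulo the faithful kernel. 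Here one also uses that the subtree of $(X,f^m)^*$ rooted at $W$ telescopes onto the subtree of $X^*$ rooted at $c(W)$ in the same manner.

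With these two compatibilities in hand the lemma is immediate. Let $g\in\mathcal{N}_1^{\pi}(f^m)$ and pick a nontrivial $W\in(X,f^m)^*$ with $g\cdot W=W$ and $g|_W=g$ in $\pi_1(\rs\setminus P_f,z_0)$, where $g|_W$ is the restriction in the $f^m$-standard action. Put $w:=c(W)$, a nonempty word in $X^*$ whose length is $m$ times that of $W$. Then $g\cdot w=c(g\cdot W)=c(W)=w$, and the $f$-restriction of $g$ at $w$ equals the $f^m$-restriction of $g$ at $W$, which is $g$. Hence $w$ witnesses $g\in\mathcal{N}_1^{\pi}$, which is exactly the claim $\mathcal{N}_1^{\pi}(f^m)\subset\mathcal{N}_1^{\pi}$.

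The only genuine work is in the first two paragraphs: one must keep the two sets of basepoint and connecting-path conventions straight and verify that the $f^m$-connecting path for each super-letter is, on the nose up to homotopy, the block-concatenation of the $f$-lifts of the $\ell_x$, so that the $m$-fold iteration of \eqref{restriction} reproduces the $f^m$-restriction formula at the level of $\pi_1(\rs\setminus P_f,z_0)$ rather than only after passing to $\IMG(f)$. Everything else is bookkeeping about the block map $c$.
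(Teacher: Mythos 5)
Your proof is correct and follows essentially the same route as the paper: the paper defines the $f^m$-standard action precisely so that its labeling is $\Lambda^*$ (restricted to levels divisible by $m$) and its connecting paths are the concatenated $f$-lifts of the $\ell_x$, so the identification you establish via the block map $c$ — and the resulting equality of actions and of restrictions in $\pi_1(\rs\setminus P_f)$ — is exactly the compatibility the paper invokes, using $f^{m|w|}=(f^m)^{|w|}$ and Proposition \ref{imgaction}. The only difference is that you verify explicitly what the paper builds into its construction of $\Lambda_n$ and the connecting paths, so your argument is a more detailed version of the same proof.
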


\begin{proof}

Let $g\in \mathcal{N}_1^{\pi}(f^{m})$.
Then there exists nontrivial $w\in (X,f^{m})^*$ so that $g(w)=w$ and $g|_w = g$. By construction $\Lambda^*(w)=\Lambda_m(w)$. Since $f^{m|w|}=(f^m)^{|w|}$, Proposition \ref{imgaction} implies that the action of $g$ on $w$ is independent of whether $w$ is a vertex in $X^*$ or $(X,f^m)^*$. Likewise, Equation \ref{restriction} of Proposition \ref{imgaction} implies that $g|_w$ is independent of whether $w$ is a vertex in $X^*$ or $(X,f^m)^*$. Thus considering $w$ now as an element of $X^*$, we have that  the standard action on $X^*$ satisfies $g(w)=w$ and $g|_w = g$.

\end{proof}

\begin{proposition}
\label{exceptional}
Let $f$ be a dynamically exceptional map that is PCF. Then for some $n$, there exists $g\in\mathcal{N}_1^{\pi}(f^{\circ n})$ that fixes only finitely-many ends of $(X,f^{\circ n})^*$.
\end{proposition}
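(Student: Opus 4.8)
The plan is to build $g$ by hand as a primitive loop peripheral about a repelling fixed point of a suitable iterate $F=f^{\circ n}$. Starting from a dynamically exceptional set $\Sigma$ for $f$ (so $f^{-1}(\Sigma)\setminus C_f=\Sigma$, and in particular $\Sigma\cap C_f=\emptyset$), a ramification count over $\Sigma$ gives $\sum_{\alpha\in C_f\cap f^{-1}(\Sigma)}e_f(\alpha)=(d-1)\#\Sigma\geq 1$, so some critical point of $f$ has image in $\Sigma$ and hence $\Sigma\cap P_f\neq\emptyset$. Since $\Sigma\cap P_f$ is finite and forward invariant it contains a cycle, which contains no critical point (as $\Sigma$ does not) and is therefore repelling. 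Fix a point $p$ of this cycle, let $n$ be its period, and set $F=f^{\circ n}$, so that $P_F=P_f$ and $p$ is a repelling fixed point of $F$ lying in $P_F$.

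Next I would replace $\Sigma$ by $\Sigma_0:=\{z\in\rs : F^{\circ j}(z)=p \text{ for some }j\geq 0,\ \text{and } z,F(z),\dots,F^{\circ(j-1)}(z)\notin C_F\}$. One checks readily that $p\in\Sigma_0$; that $\Sigma_0\subseteq\Sigma$ (downward induction along the $f$-orbit of $z$, using $f^{-1}(\Sigma)\setminus C_f=\Sigma$), so $\Sigma_0$ is finite; that $F^{-1}(\Sigma_0)\setminus C_F=\Sigma_0$ directly from the definition; and that every $F$-orbit starting in $\Sigma_0$ reaches $p$, so $\{p\}$ is the unique cycle of $F|_{\Sigma_0}$ and $\Sigma_0$, viewed as a functional graph under $F$, is a union of finite trees feeding into the fixed point $p$. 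The crucial point is the claim $\Sigma_0\subseteq P_F$, which I would prove by contradiction: pick $x\in\Sigma_0\setminus P_F$ with $j(x):=\min\{j\geq 0:F^{\circ j}(x)=p\}$ as large as possible (note $x\neq p$ as $p\in P_F$); any $F$-preimage of $x$ lying in $\Sigma_0$ would have $j$-value $j(x)+1>j(x)$, hence lie in $P_F$ by maximality, forcing $x=F(\cdot)\in P_F$, a contradiction; so $F^{-1}(x)\cap\Sigma_0=\emptyset$, and then $F^{-1}(\Sigma_0)\subseteq\Sigma_0\cup C_F$ forces $F^{-1}(x)\subseteq C_F$, making $x$ a critical value of $F$ and hence $x\in P_F$, again a contradiction.

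With the claim in hand, let $g$ be a primitive loop peripheral about $p$ in $\rs\setminus P_F$; it is nontrivial. Since $p\notin C_F$, the map $F$ is univalent near $p$, so exactly as in the Chebyshev computation in the remark preceding Lemma \ref{lem:sequencePeriodic}, the lift of $g$ tracking the preimage $p$ of $p$ is again a loop peripheral about $p$; taking $w\in X^n$ to be the level-one vertex of $(X,f^{\circ n})^*$ labeling that preimage of $z_0$, Lemma \ref{liftable} and Proposition \ref{imgaction} give $g(w)=w$ and $g|_w=g$, so $g\in\mathcal{N}_1^{\pi}(f^{\circ n})$. To count the ends fixed by $g$: if $g$ fixes an end $(\zeta_0=z_0,\zeta_1,\zeta_2,\dots)$, then tracing restrictions and applying Lemmas \ref{perilift} and \ref{periliftable} inductively, $g|_{\zeta_0}$ is peripheral about $p_0:=p$, and $\zeta_i$ being fixed forces its associated point $p_i\in F^{-1}(p_{i-1})$ to be non-critical, hence $p_i\in F^{-1}(\Sigma_0)\setminus C_F=\Sigma_0\subseteq P_F$, so $g|_{\zeta_i}$ is peripheral about $p_i\in\Sigma_0$ and is never trivial. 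Thus $(p_i)_{i\geq0}$ is an infinite backward $F$-orbit of $p$ inside $\Sigma_0$, and by the tree structure of $\Sigma_0$ (a backward orbit that ever leaves $p$ strictly increases its distance to $p$, hence terminates) the only such orbit is $p,p,p,\dots$; since $e_F(p)=1$, at each level exactly one child of $\zeta_{i-1}$ carries a restriction peripheral about $p$, so $g$ fixes exactly one end of $(X,f^{\circ n})^*$, in particular only finitely many.

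The main obstacle is the end count in the last step, whose real content is the claim $\Sigma_0\subseteq P_F$: were it to fail, some restriction $g|_{\zeta_i}$ would become trivial in $\pi_1(\rs\setminus P_F)$, fixing an entire infinite subtree and hence infinitely many ends. The passage from $f$ to $F=f^{\circ n}$ is the ``minor technicality'' the section introduction refers to: it converts the repelling post-critical cycle into a repelling fixed point, so that $g$ is genuinely self-replicating along a single vertex $w$ and the bookkeeping with $\mathcal{N}_1^{\pi}(f^{\circ n})$ and $(X,f^{\circ n})^*$ goes through without complication.
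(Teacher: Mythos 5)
Your overall architecture matches the paper's: pass to an iterate $F=f^{\circ n}$ so that a non-critical (hence repelling) post-critical periodic point $p$ of the exceptional set becomes fixed, take a primitive loop $g$ peripheral about $p$, show that $g$ is self-replicating at the level-one vertex $w$ tracking $p$, and count fixed ends via critical-point-free backward orbits of $p$. Your treatment of the end count is in fact more explicit than the paper's one-sentence conclusion: the claim $\Sigma_0\subseteq P_F$ is precisely what rules out a restriction along a fixed branch becoming trivial in $\pi_1(\rs\setminus P_F)$ (which would yield infinitely many fixed ends), and your maximality argument for that claim, as well as the identification of the constant backward orbit as the only critical-point-free one, are correct.

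There is, however, a genuine gap at the central step, the verification that $g\in\mathcal{N}_1^{\pi}(f^{\circ n})$. Univalence of $F$ near $p$, Lemma \ref{liftable}, and Proposition \ref{imgaction} give you $g(w)=w$ and the formula $g|_w=\ell_w^{-1}\tilde g_w\ell_w$, and the Chebyshev-style reasoning shows only that this restriction is again a primitive loop peripheral about $p$, i.e.\ \emph{freely} homotopic to $g$; equivalently, $g|_w=hgh^{-1}$ for some $h\in\pi_1(\rs\setminus P_F)$. Membership in $\mathcal{N}_1^{\pi}(f^{\circ n})$ requires the based equality $g|_w=g$, and for an arbitrary peripheral representative, basepoint, and connecting path the conjugator $h$ need not be trivial (nor is it true that every primitive peripheral loop about $p$ lies in $\mathcal{N}_1^{\pi}$). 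Producing the exact equality is the technical heart of the paper's proof: one linearizes $F$ at $p$, takes a fundamental annulus $A\subset\rs\setminus P_f$ with boundary circles $C$ and $C'$ satisfying $F(C')=C$, places the basepoint $z$ on $C$, takes $g$ to wind once around the annulus, lets $g'$ be the lift contained in $C'$ ending at $z'$, and chooses the connecting path $\ell_{z'}$ inside $A$; then $\ell_{z'}^{-1}g'\ell_{z'}$ and $g$ represent the same generator of $\pi_1(A,z)\cong\Z$, hence are homotopic rel basepoint in $A$ and a fortiori in $\rs\setminus P_f$. One must also justify the freedom to make these special choices of basepoint and connecting path despite the standard action having been fixed; that is what the basepoint-change discussion preceding Proposition \ref{peripheral} supplies. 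Without this construction, or some equivalent device forcing the conjugating element to vanish, your $g$ is not known to be self-replicating, and the rest of your argument (which correctly assumes $g|_w=g$ when propagating restrictions along $w^m$) does not get started.
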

\begin{proof}
Recall that for a dynamically exceptional map, the set $\Sigma$ satisfies $f(\Sigma)\subset\Sigma$, so there must be some point $p\in\Sigma$ that is periodic. By the defining property of $\Sigma$, the point $p$ cannot lie in a critical cycle. Since $f$ is PCF, $p$ must then be repelling. Passing to an iterate, we assume that $p$ is fixed. Let $\lambda:=f'(p)$. 

Recall that fixed repelling periodic points are linearizable \cite[Thm 8.2]{MilnorBook}, namely there is a univalent holomorphic change of coordinates $\phi(z)=w$ on some neighborhood $U$ of $p$ so that $\phi(p)=0$ and $\phi\circ f\circ \phi^{-1}=\lambda w$. Choose $U$ so that $f(U)$ intersects the post-critical set only at $p$ (this is possible since $f$ is PCF). Let $A$ be the preimage under $\phi$ of a fundamental annulus in coordinates. Then $\partial A$ consists of two topological circles $C$ and $C'$ with $f(C')=C$.

Fix a basepoint $z\in C$ and an orientation on $C$. Let $g$ be a loop based at $z$ that winds once around $p$ (i.e. is primitive) and respects the orientation. Let $g'$ be the unique lift of $g$ contained in $C'$, where evidently the map $g'\to g$ is univalent. Let $z'\in C'$ be the unique preimage of $z$ under this map. Let $\ell_{z'}$ be some choice of connecting path in $A$ that joins $z$ to $z'$. The path $\ell_{z'}^{-1} g'\ell_{z'}$ is a loop in $A$ based at $z$. Using the annular coordinates defined by $A\subset \rs\setminus P_f$, it can be shown that $\ell_{z'}^{-1} g' \ell_{z'}$ is homotopic to $g$ relative to the basepoint. Since $f$ is orientation preserving, $g$ and $g'$ have the same orientation. Let $w$ be the label of the point $z'$, i.e. $\Lambda(w)=z'$. Then from what was just argued, $g|_w=g$. By the univalence of $g'\to g$, it follows that $g(w)=w$.

Since $f$ is dynamically exceptional and $p$ is fixed, any backward orbit other than the constant one at the fixed point $p$ will meet a critical point.
By Lemma \ref{perilift}, the only end of $X^*$ that the action of $g$ will fix is $w^\infty$.
\end{proof}

\begin{theorem}\label{thm:ExceptionalChar}
A PCF complex rational map $f$ is dynamically exceptional if and only if there is an element $g\in\mathcal{N}_1$ that fixes only finitely many ends of $X^*$.\end{theorem}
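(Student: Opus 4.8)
The plan is to prove the two implications separately. The reverse implication is immediate: if some $g \in \mathcal{N}_1$ fixes only finitely many ends of $X^*$, then $f$ is dynamically exceptional by the contrapositive of Theorem \ref{repeat_maincx2}.

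For the forward implication, assume $f$ is dynamically exceptional. The strategy is to invoke Proposition \ref{exceptional} to produce a witness at some iterate and then transport it back to $f$. Concretely, Proposition \ref{exceptional} furnishes an integer $n \geq 1$ and an element $g \in \mathcal{N}_1^{\pi}(f^{\circ n})$ fixing only finitely many ends of $(X, f^{\circ n})^*$; here $g$ lies in $\pi_1(\rs\setminus P_{f^{\circ n}}) = \pi_1(\rs\setminus P_f)$, so it also acts on $X^*$. First I would observe that the ends of $(X, f^{\circ n})^*$ are canonically the ends of $X^*$: both trees present the preimage tree $T_{f,z_0}$, with $(X, f^{\circ n})^*$ recording only the levels divisible by $n$, so every infinite path in $(X, f^{\circ n})^*$ extends uniquely to an infinite path in $T_{f,z_0}$. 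Since a tree automorphism fixing a vertex fixes all of its ancestors, $g$ fixes an end of $(X, f^{\circ n})^*$ if and only if it fixes the corresponding end of $X^*$; in particular $g$ fixes only finitely many ends of $X^*$. Next, Lemma \ref{iterateok} gives $g \in \mathcal{N}_1^{\pi}$, and finally Proposition \ref{modok} yields an element of $\mathcal{N}_1$ fixing only finitely many ends of $X^*$, completing the proof.

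I do not expect a genuine obstacle, as the theorem is essentially a bookkeeping assembly of Theorem \ref{repeat_maincx2}, Proposition \ref{exceptional}, Lemma \ref{iterateok}, and Proposition \ref{modok}. The one step deserving a sentence of justification is the identification of the ends of $(X, f^{\circ n})^*$ with those of $X^*$ together with the compatibility of the two standard actions — exactly the compatibility already verified inside the proof of Lemma \ref{iterateok}, namely that the action of $g$ on a word $w$ and the restriction $g|_w$ do not depend on whether $w$ is regarded as a vertex of $X^*$ or of $(X, f^{\circ n})^*$.
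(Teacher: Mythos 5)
Your proposal is correct and follows essentially the same route as the paper: the forward direction assembles Proposition \ref{exceptional}, Lemma \ref{iterateok}, Proposition \ref{modok}, and the identification of the ends of $(X,f^{\circ n})^*$ with those of $X^*$ (you simply invoke the end identification before Proposition \ref{modok} rather than after, which is immaterial), while your reverse direction via the contrapositive of Theorem \ref{repeat_maincx2} is just a packaged form of the paper's appeal to Propositions \ref{modok} and \ref{condition}. Your added sentence justifying the equivariant bijection of ends is a harmless elaboration of what the paper leaves implicit.
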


\begin{proof}
Suppose that $f$ is dynamically exceptional and PCF.
Then by Proposition \ref{exceptional}, there is an element of $\mathcal{N}_1^{\pi}(f^{m})$ that fixes only finitely-many elements of $(X,f^{m})^*$.
By Lemma \ref{iterateok}, this element is also an element of $\mathcal{N}_1^{\pi}$. Proposition \ref{modok} guarantees existence of an element in $\mathcal{N}_1$ that fixes only finitely-many ends of $(X,f^{m})^*$, and by the identification of the ends of $(X,f^{m})^*$ with the ends of $X^*$, it only fixes finitely-many ends of $X^*$ as well.

Suppose now instead that $f$ is a PCF rational map such that there is an element of $\mathcal{N}_1$ that fixes only finitely-many ends.
Then by Proposition \ref{modok} there is an element of $\mathcal{N}_1^\pi$ that fixes only finitely-many ends.
By Proposition \ref{condition}, the map is dynamically exceptional.
\end{proof}

\bibliographystyle{plain}

\begin{thebibliography}{10}

\bibitem{BD4}
Laurent Bartholdi and Dzmitry Dudko.
\newblock Algorithmic aspects of branched coverings {IV}/{V}. {E}xpanding maps.
\newblock {\em Trans. Amer. Math. Soc.}, 370:7679--7714, 2018.

\bibitem{bijl}
Robert Benedetto, Patrick Ingram, Rafe Jones, and Alon Levy.
\newblock Attracting cycles in {$p$}-adic dynamics and height bounds for
  postcritically finite maps.
\newblock {\em Duke Math. J.}, 163(13):2325--2356, 2014.

\bibitem{Flajolet}
Philippe Flajolet and Andrew~M. Odlyzko.
\newblock Random mapping statistics.
\newblock In {\em Advances in cryptology---{EUROCRYPT} '89 ({H}outhalen,
  1989)}, volume 434 of {\em Lecture Notes in Comput. Sci.}, pages 329--354.
  Springer, Berlin, 1990.

\bibitem{forster}
Otto Forster.
\newblock {\em Lectures on {R}iemann surfaces}, volume~81 of {\em Graduate
  Texts in Mathematics}.
\newblock Springer-Verlag, New York, 1991.
\newblock Translated from the 1977 German original by Bruce Gilligan, Reprint
  of the 1981 English translation.

\bibitem{garton2021periodic}
Derek Garton.
\newblock Periodic points of polynomials over finite fields, 2021.

\bibitem{grimmett}
Geoffrey Grimmett and David Stirzaker.
\newblock {\em Probability and random processes}.
\newblock Oxford University Press, New York, third edition, 2001.

\bibitem{galmart}
Rafe Jones.
\newblock Iterated {G}alois towers, their associated martingales, and the
  {$p$}-adic {M}andelbrot set.
\newblock {\em Compos. Math.}, 143(5):1108--1126, 2007.

\bibitem{fpfree}
Rafe Jones.
\newblock Fixed-point-free elements of iterated monodromy groups.
\newblock {\em Trans. Amer. Math. Soc.}, 367(3):2023--2049, 2015.

\bibitem{juul2}
Jamie Juul.
\newblock The image size of iterated rational maps over finite fields.
\newblock {\em Int. Math. Res. Not. IMRN}, (5):3362--3388, 2021.

\bibitem{juul1}
Jamie Juul, P\"ar Kurlberg, Kalyani Madhu, and Tom~J. Tucker.
\newblock Wreath products and proportions of periodic points.
\newblock {\em Int. Math. Res. Not. IMRN}, (13):3944--3969, 2016.

\bibitem{manes}
Michelle Manes and Bianca Thompson.
\newblock Periodic points in towers of finite fields for polynomials associated
  to algebraic groups.
\newblock {\em Rocky Mountain J. Math.}, 49(1):171--197, 2019.

\bibitem{milnorquadratic}
John Milnor.
\newblock Geometry and dynamics of quadratic rational maps.
\newblock {\em Experiment. Math.}, 2(1):37--83, 1993.
\newblock With an appendix by the author and Lei Tan.

\bibitem{MilnorBook}
John Milnor.
\newblock {\em Dynamics in One Complex Variable}, volume 160 of {\em Annals of
  Mathematics studies}.
\newblock Princeton University Press, 2006.

\bibitem{milnor}
John Milnor.
\newblock On {L}att\`es maps.
\newblock In {\em Dynamics on the {R}iemann sphere}, pages 9--43. Eur. Math.
  Soc., Z\"urich, 2006.

\bibitem{nek}
Volodymyr Nekrashevych.
\newblock {\em Self-similar groups}, volume 117 of {\em Mathematical Surveys
  and Monographs}.
\newblock American Mathematical Society, Providence, RI, 2005.

\bibitem{pink1}
Richard Pink.
\newblock {Finiteness and liftability of postcritically finite quadratic
  morphisms in arbitrary characteristic}.
\newblock Available at http://arxiv.org/abs/1305.2841.

\bibitem{pink2}
Richard Pink.
\newblock {Profinite iterated monodromy groups arising from quadratic
  polynomials}.
\newblock Available at http://arxiv.org/abs/1307.5678.

\bibitem{pink}
Richard Pink.
\newblock On the order of the reduction of a point on an abelian variety.
\newblock {\em Math. Ann.}, 330(2):275--291, 2004.

\bibitem{pollard}
J.~M. Pollard.
\newblock \href{http://dx.doi.org/10.1007/BF01933667}{A {M}onte {C}arlo method
  for factorization}.
\newblock {\em Nordisk Tidskr. Informationsbehandling (BIT)}, 15(3):331--334,
  1975.

\bibitem{Rosen}
Michael Rosen.
\newblock {\em Number theory in function fields}, volume 210 of {\em Graduate
  Texts in Mathematics}.
\newblock Springer-Verlag, New York, 2002.

\bibitem{ugolini1}
Simone Ugolini.
\newblock On the iterations of certain maps {$X\mapsto K\cdot(X+X^{-1})$} over
  finite fields of odd characteristic.
\newblock {\em J. Number Theory}, 142:274--297, 2014.

\end{thebibliography}

\end{document}